\documentclass[12pt]{amsart}
\usepackage{amscd,amssymb,amsthm,amsmath,amssymb,textcomp,supertabular,longtable,enumerate,rotating,mathrsfs}
\usepackage[matrix,arrow,curve]{xy}
\usepackage{pdflscape}

\sloppy\pagestyle{plain}

\textwidth=16cm \textheight=23cm

\addtolength{\topmargin}{-40pt} \addtolength{\oddsidemargin}{-2cm}
\addtolength{\evensidemargin}{-2cm}

\newtheorem{theorem}[equation]{Theorem}

\newtheorem{proposition}[equation]{Proposition}
\newtheorem{lemma}[equation]{Lemma}
\newtheorem{corollary}[equation]{Corollary}

\theoremstyle{definition}
\newtheorem{example}[equation]{Example}

\theoremstyle{remark}
\newtheorem{remark}[equation]{Remark}

\makeatletter\@addtoreset{equation}{section} \makeatother

\newcommand{\mumu}{\boldsymbol{\mu}}
\def\SS {\mathfrak{S}}
\def\A {\mathfrak{A}}

\author{Ivan Cheltsov and Constantin Shramov}

\title{Finite collineation groups and birational rigidity}

\pagestyle{headings}

\address{\emph{Ivan Cheltsov}
\newline
\textnormal{School of Mathematics, The University of Edinburgh,  Edinburgh EH9 3JZ, UK.}
\newline
\textnormal{National Research University Higher School of Economics, Laboratory of Algebraic Geometry, NRU HSE, 6 Usacheva str., Moscow, 117312, Russia.
}
\newline
\textnormal{\texttt{I.Cheltsov@ed.ac.uk}}}

\address{\emph{Constantin Shramov}
\newline
\textnormal{Steklov Mathematical Institute of RAS,
8 Gubkina street, Moscow 119991, Russia.
}
\newline
\textnormal{National Research University Higher School of Economics, Laboratory of Algebraic Geometry, NRU HSE, 6 Usacheva str., Moscow, 117312, Russia.
}
\newline
\textnormal{\texttt{costya.shramov@gmail.com}}}

\begin{document}

\begin{abstract}
We classify finite groups $G$ in $\mathrm{PGL}_{4}(\mathbb{C})$ such that $\mathbb{P}^3$ is $G$-birationally rigid.
\end{abstract}

\maketitle

All varieties are assumed to be projective and defined over the field of complex numbers.

\section{Introduction}
\label{section:intro}

The projective space $\mathbb{P}^3$ is one of the most basic objects in geometry.
Its biregular and birational symmetries have attracted attention of many mathematicians.
Among them was Hans Frederick Blichfeldt, who classified in \cite{Blichfeldt1905}
all finite subgroups of
$$
\mathrm{Aut}\big(\mathbb{P}^3\big)\cong\mathrm{PGL}_4\big(\mathbb{C}\big)
$$
back in 1905.
In 1917, he wrote his \emph{magnum opus} \cite{Blichfeldt1917},
which gave a detailed coverage of classification of finite subgroups in $\mathrm{PGL}_3(\mathbb{C})$ and $\mathrm{PGL}_4(\mathbb{C})$.
This book became a standard reference for the theory of finite collineation groups.
We want to celebrate its centennial by enriching this classical theory with modern technique of birational geometry.

Finite subgroups of $\mathrm{PGL}_4(\mathbb{C})$ are traditionally divided into two major types: transitive and intransitive.
In geometric language, transitive groups do not fix any point in $\mathbb{P}^3$ and do not leave any line invariant.
Intransitive groups are those that are not transitive.
Transitive groups are further subdivided into imprimitive and primitive groups.
Imprimitive groups either leave a union of two skew lines invariant
or have an orbit of length $4$ (or both).
All other finite subgroups in $\mathrm{PGL}_4(\mathbb{C})$ are said to be primitive.

The structure of the group $\mathrm{Bir}(\mathbb{P}^3)$, known as the space Cremona group, is way more complicated than that of $\mathrm{PGL}_4(\mathbb{C})$.
In particular, the classification of all finite subgroups in $\mathrm{Bir}(\mathbb{P}^3)$ seems to be out of reach at the moment.
There are just several sporadic known results in this direction.
These include the classification of all finite \emph{simple} subgroups in $\mathrm{Bir}(\mathbb{P}^3)$ in \cite{Prokhorov2012},
and some boundedness properties in \cite{ProkhorovShramov2016}.
The goal of this paper is to understand how the finite subgroups of $\mathrm{PGL}_4(\mathbb{C})$
described by Blichfeldt behave as subgroups of the larger group $\mathrm{Bir}(\mathbb{P}^3)$.

Given a finite subgroup $G$ in $\mathrm{PGL}_4(\mathbb{C})$,
the most natural birational problem is to describe all $G$-birational maps from $\mathbb{P}^3$
to other three-dimensional $G$-varieties.
Minimal Model Program implies that to solve this problem it is enough to describe
$G$-birational maps from $\mathbb{P}^3$ to \emph{$G$-Mori fibre spaces} (see \cite[Definition~1.1.5]{CheltsovShramov}).
In this paper we aim to describe all finite subgroups $G\subset\mathrm{PGL}_4(\mathbb{C})$
such that the projective space $\mathbb{P}^3$ cannot be $G$-birationally transformed into other $G$-Mori fiber spaces.
In this case, we say that $\mathbb{P}^3$ is $G$-\emph{birationally rigid} (see \cite[Definition~3.1.1]{CheltsovShramov}).
This simply means that the following three conditions are satisfied:
\begin{enumerate}
\item There is no $G$-rational map $\mathbb{P}^3\dasharrow S$ whose general fiber is a rational curve.
\item There is no $G$-rational map $\mathbb{P}^3\dasharrow\mathbb{P}^1$ whose general fiber is a rational surface.
\item There is no $G$-birational map $\mathbb{P}^3\dasharrow X$ such that $X$ is a Fano threefold with terminal singularities,
the $G$-invariant class group of the threefold $X$ is of rank $1$
and $X$ is not $G$-isomorphic to $\mathbb{P}^3$ (equipped with the same action of the group $G$).
\end{enumerate}

The main result of our paper is the following theorem.

\begin{theorem}
\label{theorem:main}
Let $G$ be a finite subgroup in $\mathrm{PGL}_4(\mathbb{C})$.
Then $\mathbb{P}^3$ is $G$-birationally rigid if and only if $G$ is a primitive group
that is not isomorphic to $\mathfrak{A}_5$ or $\mathfrak{S}_5$.
\end{theorem}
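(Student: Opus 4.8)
The plan is to reduce everything to the $G$-equivariant Noether--Fano method and to organize the argument along Blichfeldt's trichotomy of finite subgroups of $\mathrm{PGL}_4(\mathbb{C})$ into intransitive, transitive imprimitive, and primitive groups. The two implications are then handled separately: non-rigidity by explicit equivariant constructions, and rigidity by a multiplicity analysis of maximal centres.

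For the ``only if'' direction I would argue by contraposition, showing that if $G$ is not primitive, or $G\cong\mathfrak{A}_5$ or $\mathfrak{S}_5$, then $\mathbb{P}^3$ admits a $G$-equivariant birational map to a different $G$-Mori fibre space. If $G$ is intransitive, then by Maschke's theorem the underlying $4$-dimensional representation is completely reducible, producing either a $G$-fixed point or a pair of complementary $G$-invariant lines; linear projection from the fixed point gives a rational-curve fibration, violating condition (1), while projection from an invariant line gives a fibration into planes over $\mathbb{P}^1$, violating condition (2). If $G$ is transitive imprimitive, then $G$ either preserves a pair of skew lines --- in which case the map sending a general point to the unique transversal to the two lines is a $G$-equivariant rational-curve fibration --- or has an orbit of length $4$, in which case the invariant coordinate simplex yields a $G$-equivariant (toric) conic-bundle structure; both violate condition (1). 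Finally, for $G\cong\mathfrak{A}_5$ or $\mathfrak{S}_5$ I would produce an explicit link: for the icosahedral $\operatorname{Sym}^3$-embedding of $\mathfrak{A}_5$ there is a $G$-invariant twisted cubic $C$, and the net of quadrics through $C$ defines a $G$-equivariant fibration into lines, violating condition (1); for the remaining standard embeddings I would use the invariant quadric surface together with a suitable $G$-orbit of lines to build the link.

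For the ``if'' direction I would invoke the $G$-equivariant Noether--Fano--Iskovskikh--Manin inequality: the space $\mathbb{P}^3$ fails to be $G$-birationally rigid only if there is a $G$-invariant mobile linear system $\mathcal{M}\sim -nK_{\mathbb{P}^3}$ for which the pair $(\mathbb{P}^3,\tfrac1n\mathcal{M})$ is not canonical, so that some $G$-orbit of points or of irreducible curves is a maximal centre. I would first exclude point centres uniformly: a point centre forces $\operatorname{mult}_p\mathcal{M}>2n$, and intersecting two general members $M_1,M_2\in\mathcal{M}$ and summing the local multiplicities $(\operatorname{mult}_p\mathcal{M})^2>4n^2$ over the $G$-orbit of $p$ against the degree $16n^2$ of the $1$-cycle $M_1\cdot M_2$ shows that the orbit has length at most $3$; since primitivity forces every $G$-orbit in $\mathbb{P}^3$ to have length at least $5$, no point centre can occur. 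The same estimate controls a curve centre: an orbit of curves that is a maximal centre satisfies $\operatorname{mult}_C\mathcal{M}>n$ and hence has total degree strictly less than $16$.

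The main obstacle, and the bulk of the work, is therefore the curve-centre analysis in the ``if'' direction: going through Blichfeldt's explicit list of primitive groups and showing that, apart from $\mathfrak{A}_5$ and $\mathfrak{S}_5$, none of them admits a $G$-orbit of curves of total degree less than $16$ that could be a maximal centre. This requires classifying the low-degree $G$-invariant curves and the small $G$-orbits of lines and conics for each primitive group --- among them $\mathfrak{A}_6$, $\mathfrak{A}_7$, $\mathrm{PSL}_2(\mathbb{F}_7)$, $\mathrm{PSp}_4(\mathbb{F}_3)$ and the groups with a normal extraspecial subgroup --- and verifying that the invariant curves which do exist have degree too large, or geometry too rigid, to support a non-canonical multiplicity. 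The presence of the invariant twisted cubic is precisely the feature that singles out $\mathfrak{A}_5$ (and, through the invariant quadric, $\mathfrak{S}_5$), and its absence for every other primitive group is what ultimately forces their rigidity.
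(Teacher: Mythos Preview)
Your outline has two genuine gaps in the ``if'' direction, and one in the ``only if'' direction.

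\textbf{Point centres.} The claim that a point centre forces $\mathrm{mult}_P\mathcal{M}>2n$ is not correct: non-canonicity of $(\mathbb{P}^3,\tfrac1n\mathcal{M})$ at a smooth point only guarantees $\mathrm{mult}_P\mathcal{M}>n$ (the stronger, and correct, input is Corti's inequality $\mathrm{mult}_P(M_1\cdot M_2)>4n^2$). More seriously, you cannot ``sum the local multiplicities against the degree $16n^2$ of the $1$-cycle $M_1\cdot M_2$'': the degree of a $1$-cycle is its intersection with a \emph{general} hyperplane, which misses the orbit, so $\sum_{O\in\Sigma}\mathrm{mult}_O(M_1\cdot M_2)$ is not bounded by $\deg(M_1\cdot M_2)$. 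The correct device is an auxiliary surface $M_r$ of degree $r$ containing $\Sigma$ and no component of $M_1\cdot M_2$, giving $|\Sigma|<4r$; this only produces useful bounds once you control the base locus of such $M_r$, which already requires the detailed curve classification. The paper in fact needs to exclude orbits of length up to $35$, not $3$, and does so via multiplier ideals and Nadel vanishing after first proving that $\Sigma$ lies on no $G$-irreducible curve of degree $\leqslant 15$.

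\textbf{Curve centres and the untwisting step.} The plan to show that every primitive $G\not\cong\mathfrak{A}_5,\mathfrak{S}_5$ admits no $G$-orbit of curves of degree $<16$ that can be a maximal centre \emph{fails} for the group $\overline{G}_{80}\cong\mumu_2^4\rtimes\mumu_5$ (and for $\overline{G}_{160}$). There exist two $\overline{G}_{80}$-invariant smooth curves $\mathcal{C}_1,\mathcal{C}_2$ of degree $8$ and genus $5$ in $\mathbb{P}^3$, and nothing prevents $\mathrm{mult}_{\mathcal{C}_i}(\mathcal{M})>n$. These groups are \emph{not} birationally super-rigid; they are only rigid. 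The paper handles this by constructing an explicit $\overline{G}_{80}$-Sarkisov link $\tau\colon\mathbb{P}^3\dasharrow\mathbb{P}^3$ centred on each $\mathcal{C}_i$ (a flop of a Fano threefold $V_8$) and using the full Noether--Fano machine with untwisting: one minimises the degree of $\mathcal{M}$ over the group $\Gamma$ generated by these $\tau$'s, and only then is the canonical threshold achieved. Without this link your argument cannot close. Relatedly, the paper does not go through all $30$ primitive groups; it reduces to the three subgroups $\mathfrak{A}_4\times\mathfrak{A}_4$, $\mumu_2^4\rtimes\mumu_5$, and $\mathfrak{A}_6$, and proves rigidity for supergroups from these.

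\textbf{The imprimitive case with an orbit of length $4$.} When $G$ has an orbit of length $4$ but no invariant pair of lines, there is no obvious $G$-equivariant conic bundle: the six lines $L_{ij}$ form a single $G$-orbit and do not assemble into a fibration. The paper instead produces a $G$-birational map to a terminal $\mathbb{Q}$-factorial toric Fano threefold $V_{24}$ with $\mathrm{Pic}(V_{24})^G\cong\mathbb{Z}$, via an explicit resolution and flop; this is what actually violates condition~(3).
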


It is a three-dimensional analogue of the following recent result of Sakovics.

\begin{theorem}[{\cite{Sakovics}}]
\label{theorem:Sakovich}
Let $G$ be a finite subgroup in $\mathrm{PGL}_3(\mathbb{C})$.
Then $\mathbb{P}^2$ is $G$-birationally rigid if and only if $G$ is a transitive subgroup
that is not isomorphic to $\mathfrak{A}_4$ or $\mathfrak{S}_4$.
\end{theorem}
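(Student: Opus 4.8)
The plan is to regard $\mathbb{P}^2$, equipped with its $G$-action, as a $G$-del Pezzo surface of degree $9$ with $\operatorname{Pic}(\mathbb{P}^2)^G=\mathbb{Z}$, and to exploit that in dimension two every $G$-Mori fibre space is either a $G$-del Pezzo surface $S$ with $\operatorname{rk}\operatorname{Pic}(S)^G=1$ or a $G$-conic bundle $S\to\mathbb{P}^1$. Thus $G$-birational rigidity (the surface version of \cite[Definition~3.1.1]{CheltsovShramov}) says exactly two things: there is no $G$-rational map $\mathbb{P}^2\dashrightarrow\mathbb{P}^1$ with rational fibres, and there is no $G$-birational map to a $G$-del Pezzo surface of $\operatorname{Pic}^G$-rank $1$ other than $\mathbb{P}^2$. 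The engine is the $G$-equivariant Noether--Fano--Iskovskikh theory, following Dolgachev--Iskovskikh as in \cite{Sakovics}: if $\chi\colon\mathbb{P}^2\dashrightarrow S'$ is a $G$-birational map to a $G$-Mori fibre space that is not an isomorphism, and $\mathcal{M}\subset|\mathcal{O}_{\mathbb{P}^2}(d)|$ is the $G$-invariant mobile linear system obtained as the proper transform of a very ample $G$-invariant system on $S'$, then the pair $\bigl(\mathbb{P}^2,\tfrac{3}{d}\mathcal{M}\bigr)$ is not canonical. Its maximal centre is $G$-invariant, hence a $G$-orbit $O$ (of possibly infinitely near points) with $\operatorname{mult}_p\mathcal{M}>d/3$ for $p\in O$.

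First I would establish \emph{necessity}. A finite subgroup of $\operatorname{PGL}_3(\mathbb{C})$ lifts to a finite subgroup of $\operatorname{GL}_3(\mathbb{C})$, so by Maschke's theorem an intransitive (reducible) group splits off a one-dimensional summand and therefore fixes a point $p\in\mathbb{P}^2$; projection from $p$, resolved by blowing up $p$ to obtain $\mathbb{F}_1$, is a $G$-conic bundle, so $\mathbb{P}^2$ is not rigid. For $G\cong\mathfrak{A}_4$ or $\mathfrak{S}_4$ the group is monomial, with normal subgroup $V_4$ whose quotient ($\mathbb{Z}/3$, resp.\ $\mathfrak{S}_3$) permutes the three coordinate points transitively. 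An index-$4$ subgroup ($\mathbb{Z}/3$, resp.\ $\mathfrak{S}_3$) has a fixed point in $\mathbb{P}^2$ whose $G$-orbit has length $4$ with the four points in general position. The pencil of conics through these four points is $G$-invariant and gives a $G$-rational map $\mathbb{P}^2\dashrightarrow\mathbb{P}^1$ with rational fibres; equivalently, blowing up the orbit produces a $G$-conic bundle on a del Pezzo surface of degree $5$. Hence $\mathfrak{A}_4$ and $\mathfrak{S}_4$ are not $G$-birationally rigid.

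For \emph{sufficiency} let $G$ be transitive with $G\not\cong\mathfrak{A}_4,\mathfrak{S}_4$, and suppose a maximal centre $O$ as above exists. Bézout applied to two general members of $\mathcal{M}$ gives $\sum_{p\in O}(\operatorname{mult}_p\mathcal{M})^2\le d^2$, so $|O|\cdot(d/3)^2<d^2$ and thus $|O|\le 8$. Transitivity (no fixed point, no invariant line) already excludes orbits of length $1$ and $2$ and collinear orbits of length $3$. I would then run through the classical classification of transitive finite subgroups of $\operatorname{PGL}_3(\mathbb{C})$ in \cite{Blichfeldt1917}, split into the imprimitive (monomial) groups and the primitive ones, namely the Hessian group and its subgroups of orders $72$ and $36$, together with the simple groups $\mathfrak{A}_5$, $\operatorname{PSL}_2(\mathbb{F}_7)$ and $\mathfrak{A}_6$. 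For a monomial group other than $\mathfrak{A}_4,\mathfrak{S}_4$ one checks that enlarging the diagonal part eliminates all short orbits of length $\le 8$ except the non-collinear length-$3$ orbit of coordinate points, whose associated link is the standard quadratic Cremona transformation; since this normalizes $G$ and returns to $\mathbb{P}^2$ with a $G$-isomorphic action, it does not violate rigidity.

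The hard part will be the primitive groups, where Bézout alone is insufficient: the icosahedral group $\mathfrak{A}_5$, for instance, has smallest orbit of length $6$, and the inequality $6\cdot(d/3)^2<d^2$ does \emph{not} yield a contradiction. Here I would replace the crude multiplicity count by a genuine log-canonical-threshold estimate, showing that each short $G$-orbit of a primitive group lies in sufficiently general position --- no three collinear, and not contained in special low-degree $G$-invariant curves --- so that $\bigl(\mathbb{P}^2,\tfrac{3}{d}\mathcal{M}\bigr)$ is in fact canonical along $O$, and that no infinitely near maximal centre can occur either; concretely, one bounds the global $G$-invariant log canonical threshold $\alpha_G(\mathbb{P}^2)$ from below using the explicit geometry of the Hessian configuration and of the icosahedral, Klein and Valentiner orbits. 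This orbit-by-orbit local analysis is the crux of the argument. Once it shows that no maximal centre exists for these groups, the Noether--Fano criterion forces every $G$-birational map from $\mathbb{P}^2$ to a $G$-Mori fibre space to be an isomorphism, completing the proof of $G$-birational rigidity.
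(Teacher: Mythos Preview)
The paper does not prove this theorem at all: it is stated with the citation \cite{Sakovics} and used only as motivation (``a three-dimensional analogue'' of the main result). So there is no in-paper proof to compare against, and your proposal must stand on its own.

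As an outline your strategy is the right one and matches how Sakovics proceeds, but what you have written is a plan rather than a proof, and the part you yourself flag as ``the crux'' is not carried out. For the primitive groups, you correctly observe that B\'ezout alone fails already for $\mathfrak{A}_5$ (six points, and $6\cdot(d/3)^2<d^2$ gives no contradiction). You then promise an $\alpha_G$-type estimate and an orbit-by-orbit local analysis, but you do not actually produce the required inequality or exclude the infinitely near centres. That is exactly the work Sakovics does, and without it the argument is incomplete: the six icosahedral points \emph{do} lie on a $G$-invariant conic, so ``sufficiently general position --- not contained in special low-degree $G$-invariant curves'' is not literally true, and one must instead analyse the restriction of $\mathcal{M}$ to this conic and bound multiplicities there.

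In the imprimitive case your treatment is also too quick. The assertion that for monomial $G\not\cong\mathfrak{A}_4,\mathfrak{S}_4$ the only orbit of length $\le 8$ is the triple of coordinate points is not correct as stated: small monomial groups can have other short orbits, and one must either show these do not arise once $G$ is transitive and distinct from $\mathfrak{A}_4,\mathfrak{S}_4$, or exclude them by a separate argument. Finally, saying that the standard Cremona link ``does not violate rigidity'' is true but insufficient: you need the untwisting to strictly decrease the degree of $\mathcal{M}$ (or an equivalent termination argument), not merely to land back on a $G$-isomorphic $\mathbb{P}^2$.
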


Let us describe the plan of the proof of Theorem~\ref{theorem:main}.
First, we show that if $G$ is a finite intransitive or imprimitive subgroup of $\mathrm{PGL}_4(\mathbb{C})$,
then $\mathbb{P}^3$ is not $G$-birationally rigid.
This is done in Corollary~\ref{corollary:non-primitive}.
Thus, we are left with the case of finite primitive groups.
Up to conjugation, there are just $30$ such groups.
They are listed in \cite[Chapter~VII]{Blichfeldt1917}.
For the reader's convenience,
in Appendix~\ref{section:diagram}
we present these groups together with
(some) inclusions between them, including those that are
necessary for the proof of Theorem~\ref{theorem:main}.
However, we point out that we will not need the whole
classification for the proof, but only the information about
the few smallest groups.

We prove that $\mathbb{P}^3$ is not $G$-birationally rigid
if $G$ is a primitive subgroup  isomorphic to $\mathfrak{A}_5$ or $\mathfrak{S}_5$,
see Corollary~\ref{corollary:A5-S5}.
This completes the proof of the ``only if'' part of Theorem~\ref{theorem:main}.
Thus, to prove Theorem~\ref{theorem:main}, we have to show that
$\mathbb{P}^3$ is $G$-birationally rigid for the remaining finite primitive subgroups $G$ in~$\mathrm{PGL}_4(\mathbb{C})$.
These can be described as follows (as usual, we denote by $\mumu_n$ the cyclic group of order~$n$).
\begin{itemize}
\item Primitive groups that leave a quadric surface invariant.

\item Primitive subgroups in $\mathrm{PGL}_4(\mathbb{C})$ that contain the imprimitive group $\mumu_2^4$ as a normal subgroup.
Some of them, like the group $\mathfrak{A}_4\times\mathfrak{A}_4$, also leave a quadric surface invariant,
so that this description is not exclusive (see the discussion in \cite[\S125]{Blichfeldt1917}).

\item The Klein simple group $\mathrm{PSL}_2(\mathbf{F}_7)$.

\item The groups $\mathfrak{A}_6$, $\mathfrak{S}_6$, and $\mathfrak{A}_7$.

\item The simple group of order $25920$, which is isomorphic to $\mathrm{PSp}_{4}(\mathbf{F}_3)$.
\end{itemize}

Every primitive subgroup in $\mathrm{PGL}_4(\mathbb{C})$ that leaves a quadric surface invariant
(and is not isomorphic to $\mathfrak{A}_5$ and $\mathfrak{S}_5$)
contains a subgroup isomorphic to $\mathfrak{A}_4\times\mathfrak{A}_4$.
Likewise, every primitive subgroup that contains the imprimitive group $\mumu_2^4$ and
does not leave any quadric surface invariant has a subgroup isomorphic to $\mumu_2^4\rtimes\mumu_5$.
Both of them contain the imprimitive subgroup $\mumu_2^4$ as a normal subgroup,
so that they have similar group-theoretical properties.
In \S\ref{section:rigidity}, we prove that $\mathbb{P}^3$ is $G$-birationally rigid if
$G$ is isomorphic either to $\mathfrak{A}_4\times\mathfrak{A}_4$ or to~$\mumu_2^4\rtimes\mumu_5$,
see Corollaries~\ref{corollary:144} and \ref{corollary:80-160}.
If $G\cong\mathfrak{A}_6$, then $\mathbb{P}^3$ is $G$-birationally rigid by \cite[Theorem~1.24]{ChSh09b}.
If $G\cong\mathrm{PSL}_2(\mathbf{F}_7)$, then $\mathbb{P}^3$ is $G$-birationally rigid by \cite[Theorem~1.9]{ChSh10a}.

Therefore, to prove Theorem~\ref{theorem:main}, it remains to consider primitive subgroups in
$\mathrm{PGL}_4(\mathbb{C})$ that contain one of the following three groups as a proper subgroup:
$\mathfrak{A}_4\times\mathfrak{A}_4$, $\mumu_2^4\rtimes\mumu_5$, or~$\mathfrak{A}_6$.
We warn the reader that in general one cannot automatically conclude that $\mathbb{P}^3$ is \mbox{$G$-birationally} rigid
if it is $H$-birationally rigid for a subgroup $H\subset G$, cf. Remark~\ref{remark:subgroup-rigidity}.
However, in our particular cases, this is indeed true.
We show this in \S\ref{section:rigidity}, see
Corollary~\ref{corollary:144} for groups containing $\mathfrak{A}_4\times\mathfrak{A}_4$,
Corollary~\ref{corollary:80-160} and Lemma~\ref{lemma:rigidity-80} for groups containing~\mbox{$\mumu_2^4\rtimes\mumu_5$},
and Lemma~\ref{lemma:rigidity-A6} for groups containing $\mathfrak{A}_6$.

In addition to Theorem~\ref{theorem:main}, we also provide a classification of finite
subgroups~\mbox{$G\subset\mathrm{PGL}_4(\mathbb{C})$} such that~$\mathbb{P}^3$ is $G$-birationally super-rigid.
Recall from  \cite[Definition~3.1.1]{CheltsovShramov} that~$\mathbb{P}^3$ is said to be $G$-birationally super-rigid
if it is $G$-birationally rigid and
$$
\mathrm{Bir}^{G}\big(\mathbb{P}^3\big)=\mathrm{Aut}^{G}\big(\mathbb{P}^3\big),
$$
where $\mathrm{Bir}^{G}(\mathbb{P}^3)$ and $\mathrm{Aut}^{G}(\mathbb{P}^3)$ are the normalizers of
the group $G$ in $\mathrm{Bir}(\mathbb{P}^3)$ and~\mbox{$\mathrm{Aut}(\mathbb{P}^3)$}, respectively.
We prove the following

\begin{theorem}
\label{theorem:super-main}
Let $G$ be a finite subgroup in $\mathrm{PGL}_4(\mathbb{C})$.
Then $\mathbb{P}^3$ is $G$-birationally super-rigid if and only if $G$ is a primitive group
that is isomorphic neither to $\mathfrak{A}_5$, nor to $\mathfrak{S}_5$, nor to $\mathrm{PSL}_{2}(\mathbf{F}_7)$,
nor to~$\mathfrak{A}_6$, nor to $\mumu_2^4\rtimes\mumu_5$, nor to $\mumu_2^4\rtimes\mathrm{D}_{10}$.
\end{theorem}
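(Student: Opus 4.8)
The plan is to reduce everything to Theorem~\ref{theorem:main} and then, inside the class of rigid groups, to separate those for which the $G$-equivariant Cremona group $\mathrm{Bir}^{G}(\mathbb{P}^3)$ is strictly larger than $\mathrm{Aut}^{G}(\mathbb{P}^3)$. First I would record the trivial implication: $G$-birational super-rigidity forces $G$-birational rigidity, so by Theorem~\ref{theorem:main} any super-rigid group is automatically primitive and not isomorphic to $\mathfrak{A}_5$ or $\mathfrak{S}_5$. Hence it suffices to decide the equality $\mathrm{Bir}^{G}(\mathbb{P}^3)=\mathrm{Aut}^{G}(\mathbb{P}^3)$ for each primitive group $G$ already known to be rigid, and to prove that it fails precisely for $\mathrm{PSL}_2(\mathbf{F}_7)$, $\mathfrak{A}_6$, $\mumu_2^4\rtimes\mumu_5$, and $\mumu_2^4\rtimes\mathrm{D}_{10}$. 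Since rigidity means $\mathbb{P}^3$ is the only $G$-Mori fibre space in its class, every $\chi\in\mathrm{Bir}^{G}(\mathbb{P}^3)$ is a $G$-equivariant self-map, and super-rigidity is exactly the absence of non-trivial $G$-Sarkisov self-links of $\mathbb{P}^3$.

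The core tool for the ``if'' direction is the $G$-equivariant Noether--Fano--Iskovskikh method. Given $\chi\in\mathrm{Bir}^{G}(\mathbb{P}^3)$, set $\mathcal{M}=\chi_{*}^{-1}|\mathcal{O}_{\mathbb{P}^3}(1)|$, a $G$-invariant mobile linear system contained in $|\mathcal{O}_{\mathbb{P}^3}(n)|$ for some $n\ge 1$, with $\chi$ biregular exactly when $n=1$. If $n>1$, the Noether--Fano inequality, in the anticanonical normalization $-K_{\mathbb{P}^3}=\mathcal{O}_{\mathbb{P}^3}(4)$, provides a $G$-invariant (possibly reducible) center of non-canonical singularities of the pair $(\mathbb{P}^3,\frac{1}{n}\mathcal{M})$. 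Because $G$ is primitive it fixes no point and no line, so this center is either a $G$-orbit of points or a $G$-invariant curve of positive degree. To establish super-rigidity for the groups other than the four exceptional ones, I would rule out both possibilities: for a point-orbit one applies the $4n^{2}$-inequality of Pukhlikov and Corti at each point and sums over the orbit, confronting the total local multiplicity of the $1$-cycle $\mathcal{M}_1\cdot\mathcal{M}_2$ with its degree; for an invariant curve $C$ one combines $\mathrm{mult}_{C}\mathcal{M}\le n$ with an intersection estimate against the classes available on $\mathbb{P}^3$. Primitivity forces orbits to be long and invariant curves to have high degree, so these numerical inequalities become impossible unless $n=1$. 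In practice I would carry this out only for the minimal rigid groups and then propagate super-rigidity up the inclusion lattice of Appendix~\ref{section:diagram}, along the lines already set out for rigidity in \S\ref{section:rigidity}.

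For the ``only if'' part I would instead construct, for each of the four exceptional groups, an explicit non-biregular element of $\mathrm{Bir}^{G}(\mathbb{P}^3)$. In each case the engine is a distinguished $G$-invariant center of small degree --- a short orbit of points or a $G$-invariant rational curve --- whose blow-up initiates a $G$-Sarkisov link that terminates with a contraction back to $\mathbb{P}^3$ carrying the same $G$-action. The resulting map untwists the corresponding maximal singularity and is manifestly not an automorphism; equivalently one exhibits a $G$-invariant mobile system $\mathcal{M}\subset|\mathcal{O}_{\mathbb{P}^3}(n)|$ with $n>1$ that saturates the super-rigidity threshold, so that the numerical obstructions above degenerate to equalities and the link survives. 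These configurations are genuinely group-specific: $\mathrm{PSL}_2(\mathbf{F}_7)$, $\mathfrak{A}_6$, $\mumu_2^4\rtimes\mumu_5$, and $\mumu_2^4\rtimes\mathrm{D}_{10}$ each preserve the required invariant locus, whereas their primitive overgroups (such as $\mathfrak{S}_6$, $\mathfrak{A}_7$, $\mathfrak{A}_4\times\mathfrak{A}_4$, and $\mathrm{PSp}_4(\mathbf{F}_3)$) do not.

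The main obstacle is pinning down this exact boundary. On the super-rigidity side the difficulty is the number of primitive groups and the variety of admissible $G$-invariant centers to eliminate, which is why I would reduce to the smallest super-rigid groups and propagate, checking --- as the paper cautions for rigidity --- that propagation up inclusions is genuinely valid in these cases. On the failure side the delicate point is compatibility with inclusions: one must verify that each exceptional self-map is equivariant only for the listed group and ceases to be equivariant for every strictly larger primitive group, since otherwise such an overgroup would also fail super-rigidity and contradict the asserted classification. Establishing that enlarging the group destroys the invariant configuration producing the self-map is the crux, and I expect it to be the most technical step of the argument.
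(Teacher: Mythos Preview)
Your overall strategy is the same as the paper's: reduce via Theorem~\ref{theorem:main} to the primitive rigid groups, prove canonicity of $(\mathbb{P}^3,\frac{4}{n}\mathcal{D})$ for the minimal non-exceptional ones and propagate upward (Remark~\ref{remark:subgroup-rigidity}, Lemma~\ref{lemma:rigidity-A6}, Lemma~\ref{lemma:rigidity-80}), and for the four exceptional groups exhibit an explicit $G$-equivariant Sarkisov self-link. Two details, however, are off. First, the self-link for $\overline{G}_{80}\cong\mumu_2^4\rtimes\mumu_5$ and $\overline{G}_{160}\cong\mumu_2^4\rtimes\mathrm{D}_{10}$ is \emph{not} centered on a short point-orbit or a rational curve: the center is a smooth irreducible curve of degree~$8$ and genus~$5$ (Proposition~\ref{proposition:link-symmetric}), and it is precisely because any strictly larger group moves this curve to a $G$-orbit of degree $\geqslant 16$ that Lemma~\ref{lemma:root-deg} kills the corresponding maximal singularity (this is the content of Lemma~\ref{lemma:rigidity-80}). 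Second, $\mathfrak{A}_4\times\mathfrak{A}_4\cong\overline{G}_{144}$ is not an overgroup of any of the four exceptional groups; it is a separate minimal super-rigid group, handled directly by Proposition~\ref{proposition:main-144}.

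Your sketch of the point-center exclusion is also too optimistic. Summing the Pukhlikov--Corti inequality $\mathrm{mult}_O(D_1\cdot D_2)>\frac{n^2}{4}$ over a $G$-orbit $\Sigma$ only yields a contradiction when one can find an auxiliary surface of degree $r<\frac{|\Sigma|}{4}$ through $\Sigma$ with no curves in its base locus (see \eqref{equation:bound-for-orbit}); for the actually occurring orbits this fails, and the paper instead passes to the log pair $(\mathbb{P}^3,\mu\mathcal{D})$ with $\mu<\frac{8}{n}$, applies Nadel vanishing to its multiplier ideal, and obtains $|\Sigma|\leqslant 35$ (the ``multiplication by two'' trick at the end of \S\ref{section:rigidity}). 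This step in turn requires the classification of \emph{all} $G$-invariant curves of degree $\leqslant 15$ (Theorems~\ref{theorem:144-curves} and~\ref{theorem:80-curves}) to show that $\Sigma$ cannot lie on such a curve (Lemma~\ref{lemma:curve-points-exlusion}), which is considerably more than a numerical estimate.
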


Looking at Blichfeldt's classification of finite primitive subgroups in  $\mathrm{PGL}_4(\mathbb{C})$,
we conclude that $\mathbb{P}^3$ is $G$-birationally rigid for $23$ subgroups $G$,
and is $G$-birationally super-rigid for $19$ subgroups $G$.

\smallskip

Let us describe the structure of the paper.
In~\S\ref{section:non-rigidity}, we show that if $G$ is a finite intransitive or imprimitive subgroup of $\mathrm{PGL}_4(\mathbb{C})$,
or $G$ is a primitive subgroup isomorphic to~$\mathfrak{A}_5$ or $\mathfrak{S}_5$, then $\mathbb{P}^3$ is not $G$-birationally rigid.
In~\S\ref{section:Heisenberg}, we present basic facts about the imprimitive group $\mumu_2^4$.
In~\S\ref{section:144}, we study equivariant geometry of $\mathbb{P}^3$ acted on by a primitive group~\mbox{$\mathfrak{A}_4\times\mathfrak{A}_4$}.
Similarly, in~\S\ref{section:80-160-320}, we do the same for the group~$\mumu_2^4\rtimes\mumu_5$.
In particular, we construct a $G$-commutative diagram
$$
\xymatrix{
&X\ar@{->}[ld]_\pi\ar@{->}[dr]^\alpha\ar@{-->}[rr]^\chi && Y\ar@{->}[rd]^\phi\ar@{->}[dl]_\beta &\\
\mathbb{P}^3 && V_8 && \mathbb{P}^3}
$$
Here $G$ is a primitive subgroup $\mumu_2^4\rtimes\mumu_5$ in $\mathrm{PGL}_4(\mathbb{C})$,
the morphisms $\pi$ and $\phi$ are blow ups of $G$-invariant smooth irreducible curves of degree $8$ and genus $5$,
the variety $V_8$ is a complete intersection of three quadrics in~$\mathbb{P}^6$ that has isolated terminal Gorenstein singularities,
the morphisms $\alpha$ and $\beta$ are small birational contractions, and $\chi$ is a composition of flops.
Finally, in~\S\ref{section:rigidity}, we prove our Theorem~\ref{theorem:main} using the results obtained
in~\S\ref{section:144} and \S\ref{section:80-160-320}.
In Appendix~\ref{section:diagram},
we present the finite primitive subgroups
of~\mbox{$\mathrm{PGL}_4(\mathbb{C})$}
from \cite[Chapter~VII]{Blichfeldt1917}
together with necessary inclusions between.

\bigskip

Throughout the paper we denote by $\mumu_n$ the cyclic group of order~$n$.
If $G$ is a group and~$V$ is a variety acted on by $G$,
we say that $V$ is \emph{$G$-irreducible} if $G$ acts
transitively on the set of irreducible components of~$V$.
If $Z$ is a subvariety of $V$,
we will sometimes abuse terminology and refer to the union of the images $g(Z)$, $g\in G$, as the \emph{$G$-orbit} of~$Z$.
By a $G$-commutative diagram we will mean a
commutative diagram of $G$-rational maps,
see~\mbox{\cite[p.~xix]{CheltsovShramov}}.

\bigskip

\textbf{Acknowledgements.}
The authors are grateful to Igor Dolgachev,
David Eklund, Takeru Fukuoka, Alexander Kuznetsov, Dmitrii Pasechnik,
Yuri Prokhorov, and Dmitrijs Sakovics for useful discussions.
Special thanks go to the referee for his careful reading of our paper.
The first draft of this paper was written during Ivan Cheltsov's stay at the Max Planck Institute for Mathematics in 2017.
He would like to thank the institute for the excellent working condition.
Both authors were supported by the
Russian Academic Excellence Project~\mbox{``5-100''}.
Ivan Cheltsov was supported by the Royal Society grant No. IES\slash R1\slash 180205.
Constantin Shramov was supported by
RFBR grants 15-01-02164 and 15-01-02158, by Young Russian Mathematics award,
and by the Foundation for the
Advancement of Theoretical Physics and Mathematics ``BASIS''.

\section{Birational non-rigidity}
\label{section:non-rigidity}

In this section we prove that if $\mathbb{P}^3$ is $G$-birationally rigid,
then $G$ is a primitive group that is not isomorphic to $\mathfrak{A}_5$ or $\mathfrak{S}_5$.
We start with

\begin{proposition}
\label{proposition:Fano-Enriques}
Suppose that $G$ is a transitive subgroup  in $\mathrm{PGL}_4(\mathbb{C})$
such that there exists a $G$-orbit in $\mathbb{P}^3$ of length $4$.
Then there exists a $G$-birational map $\mathbb{P}^3\dasharrow V_{24}$ such that
$V_{24}$ is a toric Fano threefold, one has~\mbox{$-K_{V_{24}}^3=24$},
the singular locus of $V_{24}$ consists of $8$ quotient singularities of type~\mbox{$\frac{1}{2}(1,1,1)$}.
In particular, the singularities of $V_{24}$ are terminal and $\mathbb{Q}$-factorial.
Moreover, one has $\mathrm{Pic}(V_{24})\cong\mathbb{Z}^3$.
Furthermore, if there is no $G$-invariant pair of lines in $\mathbb{P}^3$, then
$\mathrm{Pic}(V_{24})^G\cong\mathbb{Z}$.
\end{proposition}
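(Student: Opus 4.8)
The plan is to make the entire construction toric. \emph{First} I would check that the four orbit points $\Sigma=\{p_1,p_2,p_3,p_4\}$ are projectively independent. They span a $G$-invariant linear subspace $L\subseteq\mathbb{P}^3$, which cannot be a line (a transitive group leaves no line invariant) and cannot be a plane: if $L=\mathbb{P}(W)$ for a $3$-dimensional invariant $W\subset\mathbb{C}^4$, then by Maschke's theorem the finite group $G$ also fixes a complementary line $W'\subset\mathbb{C}^4$, producing a $G$-fixed point and contradicting transitivity. Hence $\Sigma$ is a projective frame, and after changing coordinates the $p_i$ are the coordinate points. Consequently $G$ normalizes the diagonal torus $T\cong(\mathbb{C}^*)^3$ and lies in $T\rtimes\mathfrak{S}_4$, acting on $\mathbb{P}^3$ by toric automorphisms, with image $\bar{G}\subseteq\mathfrak{S}_4$ a transitive subgroup permuting the four rays of the fan of $\mathbb{P}^3$.

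\emph{Second} I would define $V_{24}$ through its fan and take $\mathbb{P}^3\dasharrow V_{24}$ to be the toric birational map given by the identity on $T$. In the lattice $N=\mathbb{Z}^4/\mathbb{Z}(1,1,1,1)$, whose rays $\bar{e}_0,\dots,\bar{e}_3$ give $\mathbb{P}^3$, I take the six rays $r_{ij}=\overline{e_i+e_j}$. Since $\overline{e_i+e_j}=-\overline{e_k+e_l}$ for the complementary pair, these form three antipodal pairs, i.e.\ the vertices of a cross-polytope, whose eight facets $\langle\pm a,\pm b,\pm c\rangle$ (one ray per axis) are the maximal cones. This fan is manifestly invariant under the $\mathfrak{S}_4$-action on $N$ permuting the $r_{ij}$ exactly as $G$ permutes the six lines $\ell_{ij}=\overline{p_ip_j}$, so the birational map is $G$-equivariant. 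The sublattice $N'=\langle a,b,c\rangle$ has index $2$ in $N$ (a short determinant computation), with respect to $N'$ the fan is the standard octahedron fan of $(\mathbb{P}^1)^3$, and hence $V_{24}\cong(\mathbb{P}^1)^3/\mumu_2$, the quotient by the free-in-codimension-one involution induced by $N/N'$.

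\emph{Third}, the stated invariants then read off directly. The fan is simplicial, so $V_{24}$ is $\mathbb{Q}$-factorial; each maximal cone has index $2$ of type $\tfrac{1}{2}(1,1,1)$, which is terminal, and there are eight of them, matching the eight fixed points of the involution on $(\mathbb{P}^1)^3$. The quotient being étale in codimension one gives $-K_{V_{24}}^3=\tfrac{1}{2}(-K_{(\mathbb{P}^1)^3}^3)=\tfrac{1}{2}\cdot48=24$, and, the primitive generators $r_{ij}$ being the vertices of a simplicial polytope containing the origin in its interior, $V_{24}$ is Fano. Finally $\mathrm{Pic}(V_{24})$ has rank $6-3=3$ and, Picard groups of toric varieties being torsion-free, $\mathrm{Pic}(V_{24})\cong\mathbb{Z}^3$.

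\emph{Fourth}, for the last assertion I compute $G$-invariants representation-theoretically. Rationally $\mathrm{Pic}(V_{24})\otimes\mathbb{Q}=\mathrm{Cl}(V_{24})\otimes\mathbb{Q}=\mathbb{Q}^{\{\text{6 rays}\}}/M_{\mathbb{Q}}$, where the character lattice $M_{\mathbb{Q}}$ is the $3$-dimensional standard $\mathfrak{S}_4$-representation $V_{\mathrm{std}}$. The permutation module on the six pairs decomposes as $\mathbf{1}\oplus V_{\mathrm{std}}\oplus W$, with $W$ the $2$-dimensional irreducible factoring through $\mathfrak{S}_4\to\mathfrak{S}_3$, so $\mathrm{Cl}(V_{24})\otimes\mathbb{Q}\cong\mathbf{1}\oplus W$ and
$$
\operatorname{rk}\mathrm{Pic}(V_{24})^G=1+\dim W^G.
$$
The three letters permuted by $\mathfrak{S}_3$ are the three pairs of opposite (skew) edges $\{\ell_{ij},\ell_{kl}\}$, and $W^G\neq0$ precisely when $\bar{G}$ fixes one of them, i.e.\ when $G$ leaves a pair of lines invariant. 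Thus if there is no $G$-invariant pair of lines then $W^G=0$, so $\mathrm{Pic}(V_{24})^G$ has rank $1$, and as a subgroup of the free group $\mathrm{Pic}(V_{24})\cong\mathbb{Z}^3$ it is isomorphic to $\mathbb{Z}$. I expect the main obstacle to be the toric bookkeeping in the middle steps: confirming that the octahedral fan is complete and Fano, that the eight cones are genuinely of type $\tfrac{1}{2}(1,1,1)$ (not merely of index $2$), and pinning down the identification $V_{24}\cong(\mathbb{P}^1)^3/\mumu_2$ with the exact $\mathfrak{S}_4$-action on the rays. Once this is in hand everything follows mechanically, the only point needing care being that we use solely the easy implication $W^G\neq0\Rightarrow\bar{G}$ fixes a pair of opposite edges.
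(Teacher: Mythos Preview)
Your argument is correct and complete, but it follows a genuinely different route from the paper's.

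The paper constructs $V_{24}$ as the image of $\mathbb{P}^3$ under the rational map given by the linear system of sextic surfaces singular along the six lines $L_{ij}$, and then resolves this map through an explicit $G$-equivariant factorization: blow up the four points, blow up the proper transforms of the six lines, perform twelve Atiyah flops, and contract eight copies of $\mathbb{P}^2$ with normal bundle $\mathcal{O}(-2)$. All the invariants ($-K^3$, the eight $\tfrac{1}{2}(1,1,1)$ points, $\mathrm{Pic}\cong\mathbb{Z}^3$) are then read off from this birational decomposition. For the last claim the paper simply observes that if there is no $G$-invariant pair of lines then $G$ permutes the six $L_{ij}$ transitively, which forces $\mathrm{Pic}(V_{24})^G\cong\mathbb{Z}$.

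You instead bypass the explicit resolution entirely: you build $V_{24}$ directly from the octahedral fan in $N=\mathbb{Z}^4/\mathbb{Z}(1,1,1,1)$, recognize it as $(\mathbb{P}^1)^3/\mumu_2$ via the index-$2$ sublattice (a description the paper records only as a remark \emph{after} its proof), and derive all numerical invariants from standard toric facts. Your treatment of $\mathrm{Pic}^G$ via the decomposition $\mathbb{Q}^{\{6\ \text{rays}\}}\cong\mathbf{1}\oplus W\oplus V_{\mathrm{std}}$ is more representation-theoretic than the paper's transitivity argument, though the two are of course equivalent (on three objects, having no fixed point is the same as being transitive). Your approach is cleaner and more conceptual; the paper's buys an explicit description of the birational map and its exceptional loci, which matters elsewhere (e.g.\ the link to the Fano--Enriques sextics) but is not needed for the proposition itself.
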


\begin{proof}
By assumption, there exists a $G$-orbit in $\mathbb{P}^3$ of length $4$.
Denote the points of this $G$-orbit by $P_1$, $P_2$, $P_3$ and $P_4$.
Without loss of generality, we may assume that
$$
P_1=[1:0:0:0],\quad P_2=[0:1:0:0],\quad P_3=[0:0:1:0],\quad P_4=[0:0:0:1].
$$
For every $1\leqslant i<j\leqslant 4$, let $L_{ij}$ be the line in $\mathbb{P}^3$ that passes though $P_i$ and~$P_j$.
Let $\mathcal{M}$ be the linear system consisting of all sextic surfaces in $\mathbb{P}^3$
that are singular along every line~$L_{ij}$.
Then the surfaces in $\mathcal{M}$ are given by
\begin{multline}
\label{equation:Enriques-surface}
a_{0}x^2y^2z^2+a_{1}x^2y^2w^2+a_{2}x^2z^2w^2+a_{3}y^2z^2w^2+\\
+a_4x^3yzw+a_5x^2y^2zw+a_6x^2yz^2w+a_7x^2yzw^2+a_8xy^3zw+a_9xy^2z^2w+\\
+a_{10}xy^2zw^2+a_{11}xyz^3w+a_{12}xyz^2w^2+a_{13}xyzw^3=0
\end{multline}
for $[a_0:\ldots:a_{13}]\in\mathbb{P}^{13}$.
The linear system $\mathcal{M}$ gives a rational map $\psi\colon\mathbb{P}^3\dasharrow\mathbb{P}^{13}$.
Note that $\psi$ is well-defined away from the lines $L_{12}$, $L_{13}$, $L_{14}$, $L_{23}$, $L_{24}$ and $L_{34}$.
Moreover, the image of the map $\psi$ is three-dimensional: one can express the ratios $\frac{x^2}{w^2}$
$\frac{y^2}{w^2}$ and $\frac{z^2}{w^2}$ as rational functions of the monomials $x^2y^2z^2$, $x^2y^2w^2$, $x^2z^2w^2$ and $y^2z^2w^2$.
Denote the image of $\psi$ by $V_{24}$.
By construction, the threefold $V_{24}$ is toric.

Let $\Pi_1$, $\Pi_2$, $\Pi_3$ and $\Pi_4$ be the planes in $\mathbb{P}^3$
that are given by $x=0$,  $y=0$, $z=0$ and~\mbox{$w=0$}, respectively.
Then $\psi$ contracts these planes to $4$ different points.

To resolve the indeterminacy of the rational map $\psi$,
let $\alpha\colon X\to\mathbb{P}^3$ be the blow up of the points $P_1$, $P_2$, $P_3$ and $P_4$.
Denote by $\widetilde{L}_{ij}$ the proper transform of the line $L_{ij}$ on the threefold $X$.
Let $\beta\colon W\to X$ be the blow up of the curves
$\widetilde{L}_{12}$, $\widetilde{L}_{13}$, $\widetilde{L}_{14}$, $\widetilde{L}_{23}$, $\widetilde{L}_{24}$ and $\widetilde{L}_{34}$.
Then there exists a $G$-commutative diagram:
\begin{equation}
\label{equation:toric}
\xymatrix{
&W\ar@{->}[ld]_{\beta}\ar@{-->}[rr]^{\rho}&& U\ar@{->}[dr]^{\gamma}&\\%
X\ar@{->}[d]_{\alpha}&&&&Y\ar@{->}[d]^{\delta}\ar@{->}[lllld]_{\phi}\\%
\mathbb{P}^3\ar@{-->}[rrrr]^{\psi}&&&& V_{24}}
\end{equation}
such that $\rho$ is a composition of $12$ Atiyah flops,
and $\gamma$, $\delta$ and $\phi$ are birational morphisms that we are about to describe.

To describe $\rho$, denote by
$\widetilde{\Pi}_i$, $1\leqslant i\leqslant 4$,
the proper transform on $X$ of the plane $\Pi_i$.
Then each $\widetilde{\Pi}_i$ is a smooth del Pezzo surface of degree $6$.
Denote by $E_i$ the exceptional divisor of the birational morphism $\alpha$ that is mapped to the point $P_i$.
Let $\ell_{ij}$ be the intersection curve $E_i\cap\widetilde{\Pi}_j$,
and let $\overline{\ell}_{ij}$ be its proper transform on the threefold $W$.
This gives us $12$ disjoint curves $\overline{\ell}_{ij}$,
$1\leqslant i\neq j\leqslant 4$.
The map $\rho$ is a composition of Atiyah flops in these curves.

To describe $\gamma$ and $\delta$, denote by $\overline{E}_i$ and $\overline{\Pi}_i$,
$1\leqslant i\leqslant 4$,
the proper transforms on $W$ of the surfaces $E_i$ and $\Pi_i$, respectively.
Then all of them are smooth del Pezzo surfaces of degree $6$.
Moreover, the curves flopped by $\rho$ are pairwise disjoint $(-1)$-curves on these surfaces.
Denote by $\widehat{E}_i$ and $\widehat{\Pi}_i$
the proper transforms on $U$ of the surfaces $\overline{E}_i$ and
$\overline{\Pi}_i$, respectively.
Then the surfaces $\widehat{E}_1$,
$\widehat{E}_2$, $\widehat{E}_3$, $\widehat{E}_4$,
$\widehat{\Pi}_1$, $\widehat{\Pi}_2$, $\widehat{\Pi}_3$ and $\widehat{\Pi}_4$
are pairwise disjoint, all of them are isomorphic to~$\mathbb{P}^2$, and their normal bundles in $U$ are $\mathcal{O}_{\mathbb{P}^2}(-2)$.
The map~$\gamma$ is the contraction of the surfaces
$\widehat{E}_1$, $\widehat{E}_2$, $\widehat{E}_3$ and $\widehat{E}_4$
to $4$ quotient singular points of type~\mbox{$\frac{1}{2}(1,1,1)$}.
Similarly, the map $\delta$ is the contraction of the surfaces
$\gamma(\widehat{\Pi}_1)$, $\gamma(\widehat{\Pi}_2)$, $\gamma(\widehat{\Pi}_3)$ and $\gamma(\widehat{\Pi}_4)$.
Thus, the threefold $V_{24}$ has exactly $8$ singular points,
and each of them is a quotient singularity of type $\frac{1}{2}(1,1,1)$.
In particular, the singularities of $V_{24}$ are terminal and $\mathbb{Q}$-factorial.

To describe $\phi$, denote by $\widehat{F}_{12}$,
$\widehat{F}_{13}$, $\widehat{F}_{14}$, $\widehat{F}_{23}$, $\widehat{F}_{24}$ and $\widehat{F}_{34}$
the proper transforms on $Y$ of the $\beta$-exceptional surfaces that are mapped to the curves
$\widetilde{L}_{12}$, $\widetilde{L}_{13}$, $\widetilde{L}_{14}$, $\widetilde{L}_{23}$, $\widetilde{L}_{24}$ and $\widetilde{L}_{34}$, respectively.
Then $\phi$ is the contraction of these six surfaces to the lines
$L_{12}$, $L_{13}$, $L_{14}$, $L_{23}$, $L_{24}$ and $L_{34}$, respectively.
By \cite[Theorem~4.9]{KoMo92}, the morphism $\phi$ is the \emph{symbolic blow up} of the reducible curve
$L_{12}+L_{13}+L_{14}+L_{23}+L_{24}+L_{34}$, see also \cite[\S6.1]{ProkhorovReid}.

By construction, the threefold $V_{24}$ is toric Fano threefold, one has~\mbox{$-K_{V_{24}}^3=24$}
and~\mbox{$\mathrm{Pic}(V_{24})\cong\mathbb{Z}^3$}.
If there is no $G$-invariant pair of lines in $\mathbb{P}^3$,
then the group $G$ permutes the six lines $L_{ij}$ transitively.
This gives~\mbox{$\mathrm{Pic}(V_{24})^G\cong\mathbb{Z}$}.
\end{proof}

In the notation of \cite{grdb}, the threefold $V_{24}$ in Proposition~\ref{proposition:Fano-Enriques} is the terminal toric Fano threefold~\textnumero{47}.
One can show that a general hyperplane section of the threefold~\mbox{$V_{24}\subset\mathbb{P}^{13}$} is a smooth Enriques surface.
Of course, this is well-known: the equation \eqref{equation:Enriques-surface} was found by Enriques to construct what is now known as an Enriques surface.
The variety~$V_{24}$ is one of the Fano--Enriques threefolds classified by Bayle in~\cite{Bayle}.
To be precise, the threefold~$V_{24}$ is a quotient of $\mathbb{P}^1\times\mathbb{P}^1\times\mathbb{P}^1$
by an involution $\tau$ that acts as
$$
\Big(\big[x_1:y_1\big],\big[x_2:y_2\big],\big[x_3:y_3\big]\Big)\mapsto\Big(\big[y_1:x_1\big],\big[y_2:x_2\big],\big[y_3:x_3\big]\Big).
$$

\begin{corollary}
\label{corollary:non-primitive}
Suppose that $G$ is a finite intransitive or imprimitive subgroup
in~\mbox{$\mathrm{PGL}_4(\mathbb{C})$}.
Then $\mathbb{P}^3$ is not $G$-birationally rigid.
\end{corollary}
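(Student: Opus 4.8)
The plan is to reduce the corollary to Proposition~\ref{proposition:Fano-Enriques} by producing, for every finite intransitive or imprimitive subgroup $G\subset\mathrm{PGL}_4(\mathbb{C})$, a $G$-Mori fibre space different from $\mathbb{P}^3$ to which $\mathbb{P}^3$ is $G$-birational. The natural strategy is to split into the three cases dictated by the trichotomy intransitive / imprimitive-with-invariant-skew-lines / imprimitive-with-orbit-of-length-$4$, since the geometry producing the counterexample is different in each.

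\smallskip

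First I would dispose of the intransitive case. By definition such a $G$ either fixes a point or leaves a line invariant. If $G$ fixes a point $P\in\mathbb{P}^3$, then projection from $P$ gives a $G$-equivariant rational map $\mathbb{P}^3\dashrightarrow\mathbb{P}^2$ whose general fibre is a line, violating condition~(1) of $G$-birational rigidity. If instead $G$ leaves a line $L$ invariant, then the linear projection from $L$ yields a $G$-rational map $\mathbb{P}^3\dashrightarrow\mathbb{P}^1$ (equivalently, the pencil of planes through $L$) whose general fibre is a plane, a rational surface, violating condition~(2). Either way $\mathbb{P}^3$ is not $G$-birationally rigid, and no further work is needed here.

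\smallskip

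Next I would treat the imprimitive cases. If $G$ leaves a pair of skew lines $L_1\sqcup L_2$ invariant, then $G$ preserves the pencil of quadrics containing $L_1\cup L_2$; more directly, the one-parameter family of lines meeting both $L_1$ and $L_2$ sweeps out $\mathbb{P}^3$ and, after passing to the appropriate $\mathbb{P}^1\times\mathbb{P}^1$ picture, gives a $G$-rational map $\mathbb{P}^3\dashrightarrow\mathbb{P}^1$ whose general fibre is a quadric cone or a $\mathbb{P}^1$-ruling, again contradicting (1) or (2). The remaining, and most substantial, imprimitive case is that of a $G$-orbit of length $4$: here I would invoke Proposition~\ref{proposition:Fano-Enriques} directly. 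That proposition constructs a $G$-birational map $\mathbb{P}^3\dashrightarrow V_{24}$ onto a terminal $\mathbb{Q}$-factorial toric Fano threefold with $\mathrm{Pic}(V_{24})\cong\mathbb{Z}^3$; since $V_{24}$ has eight singular points and is not isomorphic to $\mathbb{P}^3$, this witnesses a violation of condition~(3) (at least once one knows $V_{24}$ is not $G$-equivariantly $\mathbb{P}^3$, which is clear as the latter is smooth).

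\smallskip

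\textbf{The main obstacle} I expect is not any single construction but rather the bookkeeping needed to verify that every intransitive or imprimitive $G$ actually falls into one of these three configurations with the required invariant structure, since the three conditions may overlap and a given $G$ may admit a length-$4$ orbit only after one checks that the invariant skew-line or fixed-point structure is absent. Concretely, the delicate point is ensuring that the hypotheses of Proposition~\ref{proposition:Fano-Enriques}---namely transitivity together with the existence of a $G$-orbit of length $4$---are met in the relevant imprimitive case; for the subcase that simultaneously leaves skew lines invariant, the skew-line projection already settles matters, so the length-$4$-orbit machinery is only needed when no invariant pair of lines exists. Assembling these observations into a clean case analysis, with Proposition~\ref{proposition:Fano-Enriques} handling the hardest case, completes the proof.
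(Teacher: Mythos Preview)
Your overall strategy matches the paper's proof: handle the intransitive case by linear projection, and split the imprimitive case into invariant-skew-lines versus orbit-of-length-$4$, invoking Proposition~\ref{proposition:Fano-Enriques} for the latter. However, two points need tightening.

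First, your description of the skew-lines case is muddled. The family of lines meeting both $L_1$ and $L_2$ is a \emph{two}-parameter family parametrized by $L_1\times L_2\cong\mathbb{P}^1\times\mathbb{P}^1$, not a one-parameter family; the blow-up of $\mathbb{P}^3$ along $L_1\cup L_2$ is a $G$-equivariant $\mathbb{P}^1$-bundle over $\mathbb{P}^1\times\mathbb{P}^1$, violating condition~(1). There is no map to $\mathbb{P}^1$ with quadric-cone fibres here, and projecting to a single $\mathbb{P}^1$ factor is not $G$-equivariant when $G$ swaps the two lines (which is exactly the case not already covered by the intransitive argument).

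Second, in the orbit-of-length-$4$ case you correctly note that the skew-line construction handles the overlap, so Proposition~\ref{proposition:Fano-Enriques} is only needed when there is no $G$-invariant pair of lines. But you should make explicit \emph{why} this matters: merely producing a $G$-birational map to a singular Fano $V_{24}$ does not by itself violate condition~(3), since that condition requires the target to have $G$-invariant class group of rank~$1$. Proposition~\ref{proposition:Fano-Enriques} gives $\mathrm{Pic}(V_{24})\cong\mathbb{Z}^3$ in general, and only under the additional hypothesis of no $G$-invariant pair of lines does it yield $\mathrm{Pic}(V_{24})^G\cong\mathbb{Z}$, which is precisely what is needed.
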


\begin{proof}
If $G$ fixes a point in $\mathbb{P}^3$,
then the linear projection from this point $\mathbb{P}^3\dasharrow\mathbb{P}^2$ is a $G$-rational dominant map
whose general fiber is $\mathbb{P}^1$.
Likewise, if there is a $G$-invariant line in $\mathbb{P}^3$,
then the linear projection from this line $\mathbb{P}^3\dasharrow\mathbb{P}^1$ is a $G$-rational dominant map
whose general fiber is $\mathbb{P}^2$.
Thus, if $G$ is intransitive, then $\mathbb{P}^3$ is not $G$-birationally rigid.

To complete the proof, we may assume that $G$ is imprimitive.
If there is a $G$-invariant union of two skew lines in $\mathbb{P}^3$,
then the blow up of $\mathbb{P}^3$ along them is a $G$-equivariant $\mathbb{P}^1$-bundle over $\mathbb{P}^1\times\mathbb{P}^1$.
Likewise, if there exists a $G$-orbit in $\mathbb{P}^3$ that consists of $4$ points, but there is no
$G$-invariant pair of skew lines,
then $\mathbb{P}^3$ is $G$-birational to a toric Fano threefold~$V_{24}$ with terminal $\mathbb{Q}$-factorial
singularities such that the $G$-invariant class group of $V_{24}$ is of rank~$1$ by Proposition~\ref{proposition:Fano-Enriques}.
Therefore, by definition, $\mathbb{P}^3$ is not $G$-birationally rigid in this case as well.
\end{proof}

Thus, we are left with the case of primitive
subgroups in $\mathrm{PGL}_{4}(\mathbb{C})$ isomorphic to $\mathfrak{A}_5$ or $\mathfrak{S}_5$.
Up to conjugation, there are two primitive subgroups in $\mathrm{PGL}_{4}(\mathbb{C})$ isomorphic to $\mathfrak{A}_5$.
One of them leaves a quadric surface invariant.
Its action on $\mathbb{P}^3$ comes from the irreducible four-dimensional representation of the icosahedral group.
Another one preserves a twisted cubic curve, so that
its action on $\mathbb{P}^3$ comes from an irreducible four-dimensional representation of the binary icosahedral group.
Similarly, there are two primitive subgroups in~\mbox{$\mathrm{PGL}_{4}(\mathbb{C})$} isomorphic to $\mathfrak{S}_5$.
One of them preserves a quadric surface, and its action on~$\mathbb{P}^3$ comes from an irreducible four-dimensional representation of the group~$\mathfrak{S}_5$.
Another one leaves invariant a pair of disjoint twisted cubic curves.
Its action on $\mathbb{P}^3$ comes from an irreducible four-dimensional representation of a central extension of the group~$\mathfrak{S}_5$.
In all these cases, the projective space $\mathbb{P}^3$ is not $G$-birationally rigid.

\begin{example}[{\cite[2.13.1]{Takeuchi}, \cite[\S2]{Prokhorov2010}, \cite[Example~1.2.8]{CheltsovShramov}}]
\label{example:P3-non-rigid-A5-V22}
Let $G$ be a primitive finite subgroup $\mathfrak{A}_5$ in $\mathrm{PGL}_{4}(\mathbb{C})$ such that there exists a $G$-invariant quadric surface in $\mathbb{P}^3$.
Then there is a unique $G$-invariant smooth cubic surface in $\mathbb{P}^3$,
which is known as the \emph{Clebsch cubic surface}.
Denote this cubic surface by $S$.
Then $S$ contains a $G$-invariant irreducible smooth rational sextic curve.
Denote it by $\mathcal{C}$.
The surface $S$ also contains a $G$-invariant curve that consists of six disjoint lines,
which are bi-tangents of the curve $\mathcal{C}$.
Denote this curve by $\mathcal{L}_{6}$.
Let $\pi\colon X\to\mathbb{P}^3$ be the blow-up of the rational sextic curve $\mathcal{C}$.
Then there exists a $G$-commutative diagram (that is, a commutative diagram of $G$-rational maps,
see~\mbox{\cite[p.~xix]{CheltsovShramov}} for terminology)
$$
\xymatrix{
&X\ar@{-->}[rr]^{\chi}\ar@{->}[ld]_{\pi}\ar@{->}[rd]^{\alpha}&& Y\ar@{->}[ld]_{\beta}\ar@{->}[rd]^{\phi}&\\%
\mathbb{P}^3&&V_{14}&&V_{22}}
$$ %
Here $V_{14}$ is a Fano threefold with six isolated ordinary double points such that  $-K_{V_{14}}^3=14$,
and $V_{22}$ is a smooth Fano threefold such that $-K_{V_{22}}^3=22$ and $\mathrm{Aut}(V_{22})\cong\mathrm{PSL}_2(\mathbb{C})$,
which is known as the Mukai--Umemura threefold (see~\cite{MukaiUmemura}).
The morphism $\alpha$ is the contraction of the proper transforms of the irreducible components of the curve $\mathcal{L}_6$,
the rational map $\chi$ is a composition of Atiyah flops in these $6$ curves,
the morphism $\beta$ is a flopping contraction,
and $\phi$ is a contraction of the proper transform of the cubic surface $S$ to a smooth point of the threefold $V_{22}$.
\end{example}

\begin{example}[{\cite[Application~1]{SzWi90}}]
\label{example:P3-non-rigid-A5-P1-bundle}
Let $G$ be a primitive finite subgroup $\mathfrak{A}_5$
in~\mbox{$\mathrm{PGL}_{4}(\mathbb{C})$}
such that $\mathbb{P}^3$ contains a $G$-invariant twisted cubic curve  $\mathcal{C}$.
Let $\pi\colon X\to\mathbb{P}^3$ be the blow-up of the curve $\mathcal{C}$.
Then there exists a $G$-commutative diagram
$$
\xymatrix{
&X\ar@{->}[ld]_{\pi}\ar@{->}[rd]^{\phi}&\\%
\mathbb{P}^3\ar@{-->}[rr]^{\rho}&&\mathbb{P}^2}
$$ %
Here $\phi$ is a $\mathbb{P}^1$-bundle whose fibers are proper transforms of secants of the curve~$\mathcal{C}$,
and the map $\rho$ is given by the linear system of quadric surfaces that contain~$\mathcal{C}$.
\end{example}

\begin{example}[{\cite[Proposition~4.7]{Prokhorov2010}}]
\label{example:P3-non-rigid-S5-P1-bundle-dP5}
Let $G$ be a primitive finite subgroup $\mathfrak{S}_5$
in~\mbox{$\mathrm{PGL}_{4}(\mathbb{C})$} such that
there exists a $G$-invariant quadric surface in $\mathbb{P}^3$.
Then $\mathbb{P}^3$  contains a $G$-orbit~$\Sigma_5$ of length $5$.
Let $\pi\colon X\to\mathbb{P}^3$ be the blow up of this orbit.
Then there exists a $G$-commutative diagram
$$
\xymatrix{
&X\ar@{-->}[rr]^{\chi}\ar@{->}[ld]_{\pi}\ar@{->}[rd]^{\alpha}&& Y\ar@{->}[ld]_{\beta}\ar@{->}[rd]^{\phi}&\\%
\mathbb{P}^3&&V_3&& S}
$$
Here $\alpha$ is the contraction of the proper transforms of the $10$ lines in $\mathbb{P}^3$
passing through pairs of points in $\Sigma_5$, the rational map $\chi$ is a composition of Atiyah flops
in these $10$ curves, the morphism $\beta$ is a flopping contraction, the morphism $\phi$ is a $\mathbb{P}^1$-bundle,
the variety $V_3$ is the Segre cubic hypersurface in $\mathbb{P}^4$,
and $S$ is the smooth del Pezzo surface of degree~$5$.
\end{example}

\begin{example}
\label{example:P3-non-rigid-S5-cubic}
Similarly to Example~\ref{example:P3-non-rigid-S5-P1-bundle-dP5},
let $G$ be a primitive finite subgroup $\mathfrak{S}_5$ in~\mbox{$\mathrm{PGL}_{4}(\mathbb{C})$} such that
there exists a $G$-invariant quadric surface in $\mathbb{P}^3$.
Then such quadric surface is smooth and unique. Denote it by $Q$.
There is a unique $G$-invariant smooth cubic surface in $\mathbb{P}^3$,
which is the Clebsch cubic surface.
The complete intersection of this cubic surface and the quadric surface $Q$ is a $G$-invariant irreducible smooth sextic curve of genus $4$,
which is known as the \emph{Bring curve}.
Let $\pi\colon X\to\mathbb{P}^3$ be the blow up of this curve.
Then there exists a $G$-commutative diagram
$$
\xymatrix{
&X\ar@{->}[ld]_{\pi}\ar@{->}[rd]^{\phi}&\\%
\mathbb{P}^3&&V_{3}\ar@{-->}[ll]^{\rho}}
$$ %
Here $V_3$ is a cubic hypersurface in $\mathbb{P}^4$ that has one isolated ordinary double point,
the morphism $\phi$ is a contraction of the proper transform of the quadric $Q$ to the singular point of the cubic $V_3$,
and $\rho$ is a linear projection from this point.
\end{example}

The case of a primitive subgroup $\mathfrak{S}_5$ in $\mathrm{PGL}_4(\mathbb{C})$
that does not leave invariant any quadric surfaces is a little bit more tricky.

\begin{proposition}
\label{proposition:S5-link}
Suppose that $G$ is a primitive subgroup in $\mathrm{PGL}_4(\mathbb{C})$ such that $G\cong\mathfrak{S}_5$,
and $G$ leaves invariant a pair of disjoint twisted cubic curves. Denote them by $C_1$ and $C_2$.
Let $\pi\colon X\to\mathbb{P}^3$ be the blow up of the curves $C_1$ and $C_2$.
Then there is a $G$-commutative diagram
$$
\xymatrix{
&&X\ar@{->}[lld]_\pi\ar@{->}[drr]^\alpha\ar@{-->}[rrrr]^\chi &&&& Y\ar@{->}[rrd]^\phi\ar@{->}[dll]_\beta &&\\
\mathbb{P}^3 &&&& V_{12} &&&&\mathbb{P}^1\times\mathbb{P}^1}
$$
Here $V_{12}$ is a divisor of bi-degree $(2,2)$ in $\mathbb{P}^2\times\mathbb{P}^2$ with ten isolated terminal Gorenstein singularities,
$\alpha$ is a small birational morphism that contracts proper transforms of ten common secants of the curves $C_1$ and $C_2$,
the map $\chi$ is a composition of ten flops, the morphism $\beta$ is a flopping contraction, the morphism $\phi$ is a $\mathbb{P}^1$-bundle,
and the action of the group $G$ on $\mathbb{P}^1\times\mathbb{P}^1$ is faithful.
\end{proposition}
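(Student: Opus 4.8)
The plan is to realise the diagram as the two halves of a $G$-equivariant two-ray game, played first on $X$ and then on its flop $Y$, with $V_{12}$ arising as the common anticanonical model. Write $H=\pi^{*}\mathcal{O}_{\mathbb{P}^3}(1)$ and let $E_1,E_2$ be the exceptional divisors of $\pi$ over $C_1,C_2$, so that $\mathrm{Pic}(X)=\mathbb{Z}H\oplus\mathbb{Z}E_1\oplus\mathbb{Z}E_2$ and $-K_X=4H-E_1-E_2$. Since $\mathfrak{A}_5\subset G$ preserves each $C_i$ and acts on it as the icosahedral group, while the odd permutations interchange $C_1$ and $C_2$, the invariant Picard group $\mathrm{Pic}(X)^{G}=\mathbb{Z}H\oplus\mathbb{Z}(E_1+E_2)$ has rank $2$; this is what makes the game available. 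The net of quadrics through a twisted cubic $C_i$ has base locus exactly $C_i$, so its proper transform $M_i=2H-E_i$ is base-point-free on $X$ and defines a conic bundle $\Phi_{M_i}\colon X\to\mathbb{P}^2$ whose fibres are the proper transforms of the secant lines of $C_i$, as in Example~\ref{example:P3-non-rigid-A5-P1-bundle}. I would set $\alpha=(\Phi_{M_1},\Phi_{M_2})\colon X\to\mathbb{P}^2\times\mathbb{P}^2$ and let $V_{12}$ be the image. As $-K_X=M_1+M_2$ is a sum of base-point-free classes it is semiample and big, with $(-K_X)^3=12$, and $\alpha$ is (after Stein factorisation) the anticanonical morphism.

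A curve is contracted by $\alpha$ precisely when both $M_1$ and $M_2$ vanish on it, i.e. when it is simultaneously a secant of $C_1$ and of $C_2$. Computing in $\mathrm{Gr}(2,4)$, the secant surface of a twisted cubic has class $\sigma_2+3\sigma_{1,1}$, so the number of common secants is $(\sigma_2+3\sigma_{1,1})^{2}=\sigma_2^{2}+9\sigma_{1,1}^{2}=10$. Using the $G$-action I would check that these ten lines form a single orbit and are pairwise disjoint, so their proper transforms $\tilde\ell_1,\dots,\tilde\ell_{10}$ are disjoint smooth rational curves with $K_X\cdot\tilde\ell_j=4-2-2=0$ and normal bundle $\mathcal{O}(-1)\oplus\mathcal{O}(-1)$. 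Hence $\alpha$ is a small contraction of ten $(-1,-1)$-curves and $V_{12}$ has ten ordinary double points, which are isolated terminal Gorenstein singularities; the intersection numbers $M_i^{3}=0$ and $M_1^{2}\cdot M_2=M_1\cdot M_2^{2}=2$ identify $V_{12}$ as a divisor of bidegree $(2,2)$ in $\mathbb{P}^2\times\mathbb{P}^2$. The flop $\chi\colon X\dashrightarrow Y$ of the ten curves is then a composition of ten Atiyah flops, $Y$ is smooth, and $\beta\colon Y\to V_{12}$ is the opposite small resolution.

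It remains to play the second half of the game on $Y$. Inside $\mathrm{Pic}(X)^{G}_{\mathbb{R}}$ the movable cone of $V_{12}$ is divided by the wall $\mathbb{R}_{\ge 0}(-K_X)$ into $\mathrm{Nef}(X)^{G}$ and $\mathrm{Nef}(Y)^{G}$, and its remaining boundary ray governs $\phi$. Applying the flop formula $D_Y^{3}=D_X^{3}-\sum_{j}(D\cdot\tilde\ell_j)^{3}$ together with $H^3=1$, $H\cdot E_i^{2}=-3$, $E_i^{3}=-10$, I would compute the triple self-intersection of $D=aH+c(E_1+E_2)$ on $Y$ and solve for its vanishing; this singles out the class $H'=10H-3E_1-3E_2$ and, dually, a fibre class $f=3\ell-5e_1-5e_2$ with $-K_Y\cdot f=2$ and $H'\cdot f=0$, where $\ell,e_1,e_2$ denote a line and the ruling classes of $E_1,E_2$. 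I would then prove $H'$ semiample, so that it defines a morphism $\phi\colon Y\to S$ onto a surface (since $H'^{3}=0$ while $H'^{2}\neq 0$), and identify $S$ by noting that the two ruling classes $R_1,R_2$ with $R_1+R_2=H'$ are interchanged by the odd elements of $G$ and each gives a fibration of $Y$ onto $\mathbb{P}^1$; this forces $S\cong\mathbb{P}^1\times\mathbb{P}^1$ with $G$ permuting the two rulings, so that $\rho^{G}(S)=1$. Faithfulness of the $G$-action on $S$ is then automatic, its kernel being a normal subgroup of $\mathfrak{S}_5$ meeting $\mathfrak{A}_5$ trivially. Since $-K_Y\cdot f=2$, the general fibre is a smooth rational curve, and ruling out degenerate fibres shows that $\phi$ is a $\mathbb{P}^1$-bundle.

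The hard part will be exactly this last step: upgrading the numerical class $H'$ to an actual fibration. Nefness of $H'$ follows formally from the cone picture, but semiampleness, and above all the assertion that $\phi$ is a genuine $\mathbb{P}^1$-bundle over $\mathbb{P}^1\times\mathbb{P}^1$ with no reducible or multiple fibres, do not; they require controlling how the divisors $E_1,E_2$—which meet the flopping curves—are transformed by $\chi$, and verifying that the flop introduces no unexpected extremal rays in $\overline{\mathrm{NE}}(Y)$. I expect the cleanest route is to construct the $\mathbb{P}^1$-bundle directly, exhibiting $Y$ as $\mathbb{P}(\mathcal{E})$ for an explicit rank-two bundle $\mathcal{E}$ on $\mathbb{P}^1\times\mathbb{P}^1=C_1\times C_2$ and recovering $\beta$ and the flop from it, and then to match this model with the output of the two-ray game by uniqueness—checking $G$-equivariance, which is canonical throughout, at each stage.
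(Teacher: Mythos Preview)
Your outline is sound and coincides with the paper's strategy in its large-scale architecture: realise $\alpha$ as the product of the two quadric-net maps, identify $V_{12}$ as a $(2,2)$-divisor in $\mathbb{P}^2\times\mathbb{P}^2$ with $-K_X^3=12$, flop the contracted curves, and run the second half of the two-ray game. The execution, however, differs in two places worth noting.

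\emph{Counting the common secants.} You invoke Schubert calculus to obtain the number~$10$. The paper instead analyses the $\mathfrak{A}_5$-equivariant projection: it shows that the image $\phi_1(\overline{C}_2)\subset\mathbb{P}^2$ is an $\mathfrak{A}_5$-invariant sextic (ruling out degree~$2$), and then uses the known classification of singular $\mathfrak{A}_5$-invariant sextics to conclude that it has precisely ten ordinary nodes forming a single $\mathfrak{A}_5$-orbit. This is more laborious, but it delivers more than your computation: it simultaneously proves that the ten common secants are distinct (transversality of the Schubert intersection, which you would still owe), that they form one $\Gamma$-orbit, and that $\alpha$ is birational. Your Schubert number is only an intersection number; for these two very special twisted cubics you must still rule out excess intersection and establish the orbit structure separately.

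\emph{The second extremal contraction.} Here the paper does not compute at all. Once $\chi$ is a composition of flops and $Y$ is smooth, it simply cites the Mori--Cutkosky classification of extremal contractions from smooth threefolds together with the Sarkisov-link numerics of Takeuchi and Cutrone--Marshburn to conclude that the only possibility is a $\mathbb{P}^1$-bundle over $\mathbb{P}^1\times\mathbb{P}^1$. This entirely bypasses the semiampleness and no-degenerate-fibre issues you correctly flag as the hard part; your direct identification of $H'=10H-3E_1-3E_2$ and $f=3\ell-5e_1-5e_2$ is consistent with the paper's post-proof remark that the general fibre is a twisted cubic meeting each $C_i$ in five points, but the paper never needs it.

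\emph{Faithfulness.} Your argument that the kernel meets $\mathfrak{A}_5$ trivially is not justified as stated: you must first know that $\mathfrak{A}_5$ acts faithfully on the base. The paper deduces faithfulness from the fact that $\mathbb{P}^3$ contains exactly two $\mathfrak{A}_5$-orbits of length~$12$, a fact specific to this representation.
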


\begin{proof}
Denote by $\Gamma$ the subgroup in $G$ isomorphic to $\mathfrak{A}_5$.
Let $\pi_1\colon X_1\to\mathbb{P}^3$ be the blow up of the curve $C_1$,
and let $\pi_2\colon X_2\to\mathbb{P}^3$ be the blow up of the curve $C_2$.
Then $X_1$ and~$X_2$ are isomorphic Fano threefolds.
Moreover, it follows from Example~\ref{example:P3-non-rigid-A5-P1-bundle} that there is a $\Gamma$-commutative diagram
$$
\xymatrix{
&&X_1\ar@{->}[lld]_{\phi_1}\ar@{->}[drr]^{\pi_1} &&X\ar@{->}[d]_{\pi}\ar@{->}[ll]_{\omega_2}\ar@{->}[rr]^{\omega_1}&& X_2\ar@{->}[rrd]^{\phi_2}\ar@{->}[dll]_{\pi_2} &&\\
\mathbb{P}^2 &&&& \mathbb{P}^3\ar@{-->}[llll]^{\theta_1}\ar@{-->}[rrrr]_{\theta_2}&&&& \mathbb{P}^2}
$$
Here $\phi_1$ and $\phi_2$ are $\mathbb{P}^1$-bundles,
$\theta_1$ and $\theta_2$ are rational maps that are given by the linear systems of quadrics passing through $C_1$ and $C_2$, respectively,
and $\omega_1$ and $\omega_2$ are blow ups of proper transforms of the curves $C_1$ and $C_2$, respectively.
Note that the action of $\Gamma$ on~$\mathbb{P}^2$ is faithful.

We claim that there are exactly $10$ lines in $\mathbb{P}^3$
that are common secants of both twisted cubic curves $C_1$ and $C_2$ (cf. \cite[\S2]{Wakeford} or \cite[Remark~2]{Dolgachev2016}).
Indeed, denote by $E_1^1$ the $\pi_1$-exceptional surface.
Then the morphism $\phi_1$ is given by the linear system~\mbox{$|\pi_1^*(\mathcal{O}_{\mathbb{P}^2}(2))-E_1^1|$},
and its fibers are proper transforms of the secant lines of the curve~$C_1$.
Denote by $\overline{C}_2$ the proper transform of the curve $C_2$ on the threefold $X_1$.
Since~$\overline{C}_2$ is $\Gamma$-invariant,
the curve $\phi_1(\overline{C}_2)$ is also $\Gamma$-invariant.
By \cite[Lemma~5.3.1]{CheltsovShramov}, there are no $\Gamma$-invariant irreducible curves in~$\mathbb{P}^2$ of degree $1$ and $3$. Thus we see that either~$\phi_1(\overline{C}_2)$ is an irreducible conic, or~$\phi_1(\overline{C}_2)$ is an irreducible curve of degree~$6$.
The former case is clearly impossible.
Indeed, if~$\phi_1(\overline{C}_2)$ is a conic, then the induced
morphism~\mbox{$\overline{C}_2\to\phi_1(\overline{C}_2)$} is a triple cover,
so that the curve~$C_2$ has infinitely many $3$-secant lines, which are also secants of the curve $C_1$.
However, twisted cubic curves in $\mathbb{P}^2$ do not have $3$-secant lines at all, because they are cut out by quadrics.
Therefore, we see that~$\phi_1(\overline{C}_2)$ is an irreducible curve of degree $6$, so that the induced morphism~\mbox{$\overline{C}_2\to\phi_1(\overline{C}_2)$} is birational.
By~\mbox{\cite[Lemma~5.3.1]{CheltsovShramov}}, the sextic curve $\phi_1(\overline{C}_2)$ is contained in the pencil that consists of all $\Gamma$-invariant curves of degree $6$ in $\mathbb{P}^2$.
The curve $\phi_1(\overline{C}_2)$ must be singular, since the curve~$C_2$ is rational.
All singular curves in this pencil are described in \cite[Remark~6.1.5]{CheltsovShramov}.
Using this description, we deduce that $\phi_1(\overline{C}_2)$ has $10$ double points (cf.~\mbox{\cite[Remark~5]{Dolgachev2016}}),
and these singular points form one $\Gamma$-orbit in~$\mathbb{P}^2$.
Note that the singular points of $\phi_1(\overline{C}_2)$ are ordinary double points.
Indeed, since the arithmetic genus of the curve $\phi_1(\overline{C}_2)$ equals $10$, its singular points are either
ordinary double points or ordinary cusps.
In the latter case, the curve~$\overline{C}_2$ would have a $\Gamma$-orbit of length $10$, which is impossible
by~\mbox{\cite[Lemma~5.1.5]{CheltsovShramov}}.
Thus, every singular point of the curve $\phi_1(\overline{C}_2)$ is an ordinary double point.
The fibers of $\phi_1$ over the singular points of the curve $\phi_1(\overline{C}_2)$ are exactly the proper transforms of the common secants of both curves $C_1$ and $C_2$.
Vice versa, if $\ell$ is a common secant of the curves $C_1$ and $C_2$,
then $\theta_1(\ell)$ must be a singular point of the curve~$\phi_1(\overline{C}_2)$.
Hence, we conclude that there are exactly $10$ lines in $\mathbb{P}^3$ that are secants of both~$C_1$ and~$C_2$,
and the group $\Gamma$ acts transitively on them.

Denote by $E_1^1$ the $\pi_1$-exceptional surface,
and denote by $E_2^2$ the $\pi_2$-exceptional surface.
Then
$$
-K_X\sim \omega_2^*\Big(\pi_1^*\big(\mathcal{O}_{\mathbb{P}^2}(2)\big)-E_1^1\Big)+\omega_1^*\Big(\pi_2^*\big(\mathcal{O}_{\mathbb{P}^2}(2)\big)-E_2^2\Big).
$$
Moreover, both divisors $\pi_1^*(\mathcal{O}_{\mathbb{P}^2}(2))-E_1^1$ and $\pi_2^*(\mathcal{O}_{\mathbb{P}^2}(2))-E_2^2$ are nef.
In particular, the divisor $-K_X$ is nef and big, since $-K_X^3=12$.

Denote the $10$  common secants of both $C_1$ and $C_2$ by $L_{1},\ldots,L_{10}$,
and denote their proper transforms on $X$ by $\widetilde{L}_{1},\ldots,\widetilde{L}_{10}$, respectively.
Then~$-K_X$ intersects these curves trivially.
Moreover, the curves $\widetilde{L}_{1},\ldots,\widetilde{L}_{10}$
are the only irreducible curves in $X$ that have trivial intersection with $-K_X$.
Indeed, let $C$ be an irreducible curve in the threefold~$X$ such that $-K_X\cdot C=0$. Then
$$
\omega_2(C)\cdot\Big(\pi_1^*\big(\mathcal{O}_{\mathbb{P}^2}(2)\big)-E_1^1\Big)=\omega_1(C)\cdot\Big(\pi_2^*\big(\mathcal{O}_{\mathbb{P}^2}(2)\big)-E_2^2\Big)=0,
$$
and $\pi(C)$ is an irreducible curve.
Since $\phi_1$ is given by the linear system $|\pi_1^*(\mathcal{O}_{\mathbb{P}^2}(2))-E_1^1|$,
we see that $\omega_2(C)$ is contracted by $\phi_1$ to a point.
Similarly, we see that $\omega_1(C)$ is contracted by $\phi_2$ to a point.
Hence, the curve $\pi(C)$ is a common secant of both curves $C_1$ and $C_2$,
which implies that $C$ is one of the curves $\widetilde{L}_{1},\ldots,\widetilde{L}_{10}$.

Consider the morphism
$$
\alpha=\pi_1\circ\omega_2=\pi_2\circ\omega_1.
$$
We already know that the image of $\alpha$ is three-dimensional.
Now we claim that $\alpha$
is birational. Indeed, choose a general fiber of $\alpha$, and suppose that it
contains at least two different points, say $A$ and $A^\prime$.
Then $\omega_1(A)=\omega_1(A^\prime)$, so that $\pi(A)$ and $\pi(A^\prime)$ lie on
a secant of the twisted cubic~$C_2$. Similarly,
we have $\omega_2(A)=\omega_2(A^\prime)$, so that $\pi(A)$ and $\pi(A^\prime)$ lie on
a secant of~$C_1$. On the other hand, since the fiber of $\alpha$ was chosen to be general, we can assume that
the points $\pi(A)$ and $\pi(A^\prime)$ are different points of~$\mathbb{P}^3$.
Hence they lie on one of the lines $L_{1},\ldots,L_{10}$, which gives a contradiction.

We have constructed the morphism $\alpha\colon X\to V_{12}$,
where $V_{12}$ is a divisor of bi-degree~\mbox{$(2,2)$}
in~\mbox{$\mathbb{P}^2\times\mathbb{P}^2$}.
The variety $V_{12}$ is an anticanonical model of the threefold~$X$, and the morphism~$\alpha$ is given by the linear system $|-K_X|$.
The threefold $V_{12}$ has $10$ isolated terminal Gorenstein singular points,
which are the images of the curves~\mbox{$\widetilde{L}_{1},\ldots,\widetilde{L}_{10}$}.
Thus, there exists a $G$-commutative diagram
$$
\xymatrix{
&&X\ar@{->}[lld]_\pi\ar@{->}[drr]^\alpha\ar@{-->}[rrrr]^\chi &&&& Y\ar@{->}[rrd]^\phi\ar@{->}[dll]_\beta &&\\
\mathbb{P}^3\ar@{-->}[rrrr]_\psi &&&& V_{12} &&&& Z}
$$
Here $\psi$ is a birational map that is given by the linear system of quartic surfaces in~$\mathbb{P}^3$ that contain both curves~$C_1$ and~$C_2$,
and~$\chi$ is a composition of flops in the curves~\mbox{$\widetilde{L}_{1},\ldots,\widetilde{L}_{10}$}.
The map $\beta$ is a flopping contraction, and the morphism $\phi$ is a $G$-extremal contraction such that the divisor $-K_Y$ is $\phi$-ample.
Now arguing as in  \cite{Takeuchi,CutroneMarshburn} and using  \cite[Theorem~3.3]{Mori} together with \cite[Theorem~4]{Cutkosky},
we see that $\phi$ is a $\mathbb{P}^1$-bundle, and $Z\cong\mathbb{P}^1\times\mathbb{P}^1$.
Finally,  using the fact that our $\mathbb{P}^3$ contains exactly two $\Gamma$-orbits of length $12$ by \cite[Lemma~3.2]{CheltsovShramov2015},
we conclude that  the group $G$ acts faithfully on $Z$.
\end{proof}

In fact, one can show that the variety $V_{12}$ in Proposition~\ref{proposition:S5-link} has isolated ordinary double points,
and $\chi$ is a composition of Atiyah flops.
Moreover, one can show that a general fiber of the rational map $\phi\circ\chi\circ\pi^{-1}$
is a twisted cubic curve~$Z$ such that both intersections $Z\cap C_1$ and $Z\cap C_2$ consist of $5$ points.

\begin{corollary}
\label{corollary:A5-S5}
Let $G$ be a subgroup in $\mathrm{PGL}_4(\mathbb{C})$ such that $G\cong\mathfrak{A}_5$ or $G\cong\mathfrak{S}_5$.
Then~$\mathbb{P}^3$ is not $G$-birationally rigid.
\end{corollary}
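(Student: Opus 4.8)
The plan is to reduce everything to the classification of the primitive subgroups $G\cong\mathfrak{A}_5$ and $G\cong\mathfrak{S}_5$ recalled just before the statement, and then to quote the explicit birational links already constructed above. First I would dispose of the non-primitive case: if $G$ is intransitive or imprimitive, then $\mathbb{P}^3$ is not $G$-birationally rigid by Corollary~\ref{corollary:non-primitive}, with no further input required. Hence I may assume that $G$ is primitive, and I would invoke the fact recorded above that, up to conjugation, there are exactly two primitive subgroups isomorphic to $\mathfrak{A}_5$ and exactly two isomorphic to $\mathfrak{S}_5$, distinguished by whether they fix a quadric surface or preserve (a pair of) twisted cubic curves.

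Next I would go through these four conjugacy classes one at a time and exhibit in each the $G$-birational map that destroys rigidity, noting in each case which of the three defining conditions is violated. For a primitive $\mathfrak{A}_5$ fixing a quadric, Example~\ref{example:P3-non-rigid-A5-V22} produces a $G$-birational map to the Mukai--Umemura threefold $V_{22}$; since $V_{22}$ is a smooth Fano threefold of Picard rank one that cannot be $G$-isomorphic to $\mathbb{P}^3$ (already $-K_{V_{22}}^3=22\neq 64=-K_{\mathbb{P}^3}^3$), condition~(3) fails. For the primitive $\mathfrak{A}_5$ preserving a twisted cubic, Example~\ref{example:P3-non-rigid-A5-P1-bundle} gives a $G$-rational map $\mathbb{P}^3\dasharrow\mathbb{P}^2$ whose general fibre is a $\mathbb{P}^1$, so condition~(1) fails. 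For the primitive $\mathfrak{S}_5$ fixing a quadric, Example~\ref{example:P3-non-rigid-S5-P1-bundle-dP5} likewise yields a fibration into rational curves over a quintic del Pezzo surface, again violating condition~(1) (alternatively, Example~\ref{example:P3-non-rigid-S5-cubic} gives a link to a nodal cubic threefold). Finally, for the primitive $\mathfrak{S}_5$ preserving a pair of disjoint twisted cubics, Proposition~\ref{proposition:S5-link} furnishes a $G$-birational map to a $\mathbb{P}^1$-bundle over $\mathbb{P}^1\times\mathbb{P}^1$, so condition~(1) fails once more.

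Since every subgroup of $\mathrm{PGL}_4(\mathbb{C})$ abstractly isomorphic to $\mathfrak{A}_5$ or $\mathfrak{S}_5$ is either non-primitive, and hence covered by Corollary~\ref{corollary:non-primitive}, or falls into one of these four primitive conjugacy classes, this exhausts all cases and completes the argument. The corollary is therefore essentially a bookkeeping assembly of the results established above, and I expect no new geometric difficulty. The only genuine point requiring care — and the place where I would concentrate — is the completeness of the case division: one must be certain that the four listed primitive classes really do exhaust, up to conjugation, all primitive subgroups isomorphic to $\mathfrak{A}_5$ or $\mathfrak{S}_5$, so that none slips through, and that each quoted target variety legitimately witnesses non-rigidity (in particular that the relevant invariant class group has rank one and that the target is not $G$-isomorphic to $\mathbb{P}^3$).
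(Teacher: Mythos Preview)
Your proposal is correct and follows exactly the same approach as the paper: reduce to the primitive case via Corollary~\ref{corollary:non-primitive}, then invoke Examples~\ref{example:P3-non-rigid-A5-V22}, \ref{example:P3-non-rigid-A5-P1-bundle}, \ref{example:P3-non-rigid-S5-P1-bundle-dP5} and Proposition~\ref{proposition:S5-link} for the four primitive conjugacy classes (the paper also notes Example~\ref{example:P3-non-rigid-S5-cubic} as an alternative, just as you do). Your write-up is in fact more explicit than the paper's in spelling out which of the three defining conditions of birational rigidity is violated in each case.
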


\begin{proof}
If $G$ is not primitive, the assertion follows from Corollary~\ref{corollary:non-primitive}.
If $G$ is primitive, the assertion follows from Examples~\ref{example:P3-non-rigid-A5-V22},
\ref{example:P3-non-rigid-A5-P1-bundle}, \ref{example:P3-non-rigid-S5-P1-bundle-dP5} and Proposition~\ref{proposition:S5-link}.
Alternatively, instead of using Example~\ref{example:P3-non-rigid-S5-P1-bundle-dP5},
we can use Example~\ref{example:P3-non-rigid-S5-cubic}.
\end{proof}

\section{The Heisenberg group}
\label{section:Heisenberg}

Let $\varphi\colon\mathrm{SL}_4(\mathbb{C})\to\mathrm{PGL}_4(\mathbb{C})$ be the natural projection.
For every group $G\subset\mathrm{SL}_4(\mathbb{C})$, let us denote by $\overline{G}$ the group $\varphi(G)$.
Similarly, for every element $g\in\mathrm{SL}_4(\mathbb{C})$, we denote by $\overline{g}$ its image $\varphi(g)$.
Let
\begin{multline}
\label{equation:S1-S2-T1-T2}
S_1=\left(
  \begin{array}{cccc}
    0 & 0 & 1 & 0 \\
    0 & 0 & 0 & 1 \\
    1 & 0 & 0 & 0 \\
    0 & 1 & 0 & 0
  \end{array}
\right), \quad S_2=\left(
  \begin{array}{cccc}
    0 & 1 & 0 & 0\\
    1 & 0 & 0 & 0\\
    0 & 0 & 0 & 1\\
    0 & 0 & 1 & 0
 \end{array}
\right), \\
T_1=\left(
  \begin{array}{cccc}
    1 & 0 & 0 & 0\\
    0 & 1 & 0 & 0\\
    0 & 0 & -1& 0\\
    0 & 0 & 0 & -1
 \end{array}
\right), \quad
T_2=\left(
  \begin{array}{cccc}
    1 & 0 & 0& 0\\
    0 & -1& 0& 0\\
    0 & 0 & 1& 0\\
    0 & 0 & 0& -1
 \end{array}
\right),
\end{multline}
and denote by $\mathrm{H}$ the subgroup in $\mathrm{SL}_{4}(\mathbb{C})$ that is generated by the matrices $S_1$, $S_2$, $T_1$, $T_2$.
Then the center of $\mathrm{H}$ coincides with the commutator subgroup of
$\mathrm{H}$
and consists of $\pm I_4$, where $I_4$ is the identity matrix.
Since $|\mathrm{H}|=32$, we have $\overline{\mathrm{H}}\cong\mumu_2^4$.
We say that $\mathrm{H}$ is the Heisenberg group.
The group $\overline{\mathrm{H}}$ is a transitive imprimitive subgroup of $\mathrm{PGL}_4(\mathbb{C})$.

\begin{lemma}
\label{lemma:Heisenberg-subgroup}
The following assertions hold.
\begin{itemize}
\item[(i)] For every non-trivial element $g\in\overline{\mathrm{H}}$,
the locus of fixed points of $g$ consists of two skew lines $L_g$ and $L_g^\prime$ in $\mathbb{P}^3$.

\item[(ii)] For two distinct non-trivial elements $g$ and $h$ in $\overline{\mathrm{H}}$, one has
$$
\Big\{L_g,L_g^\prime\Big\}\cap\Big\{L_h,L_h^\prime\Big\}=\varnothing.
$$
\item[(iii)] No subgroup of order $8$ in $\overline{\mathrm{H}}$ has a fixed point in $\mathbb{P}^3$.
\end{itemize}
\end{lemma}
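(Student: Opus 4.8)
The plan is to work with lifts to the Heisenberg group $\mathrm{H}\subset\mathrm{SL}_4(\mathbb{C})$ and to exploit its extraspecial structure. Since $\overline{\mathrm{H}}\cong\mumu_2^4$, I would first identify $\overline{\mathrm{H}}$ with $\mathbf{F}_2^4$ and record the key fact that the commutator $[\tilde g,\tilde h]\in\{\pm I_4\}$ of two lifts $\tilde g,\tilde h\in\mathrm{H}$ descends to a nondegenerate alternating (symplectic) bilinear form $\langle\,\cdot\,,\cdot\,\rangle$ on $\mathbf{F}_2^4$; in particular, two elements $g,h\in\overline{\mathrm{H}}$ admit commuting lifts if and only if $\langle g,h\rangle=0$. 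A point or line of $\mathbb{P}^3$ fixed by an element of $\overline{\mathrm{H}}$ corresponds to an eigenvector or eigenspace of its lift, and a point (resp. line) fixed by an entire subgroup corresponds to a common eigenvector (resp. common eigenspace) of the lifts. This dictionary is the common thread of all three parts.

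For (i), fix a non-trivial $g\in\overline{\mathrm{H}}$ and a lift $\tilde g\in\mathrm{H}$. As $g$ has order $2$, we have $\tilde g^2=\pm I_4$, so the minimal polynomial of $\tilde g$ divides $x^2-1$ or $x^2+1$; in either case $\tilde g$ is diagonalizable with eigenvalues in $\{1,-1\}$ or $\{i,-i\}$. Since $\tilde g$ is not scalar (otherwise $g$ would be trivial) and $\det\tilde g=1$, a short parity computation on the eigenvalue multiplicities forces exactly two distinct eigenvalues, each of multiplicity $2$. The two eigenspaces are $2$-dimensional, meet only at the origin, and span $\mathbb{C}^4$, so their projectivizations are two skew lines $L_g,L_g'$, which constitute the whole fixed locus of $g$.

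For (iii), a subgroup $K$ of order $8$ in $\overline{\mathrm{H}}\cong\mathbf{F}_2^4$ is a $3$-dimensional $\mathbf{F}_2$-subspace. If $K$ had a fixed point $[v]\in\mathbb{P}^3$, then $v$ would be a common eigenvector of all lifts; but then for any $g,h\in K$ the commutator $[\tilde g,\tilde h]$ would fix $v$, which is impossible when $[\tilde g,\tilde h]=-I_4$. Hence a fixed point would force all lifts of elements of $K$ to commute, i.e. $K$ would have to be isotropic for $\langle\,\cdot\,,\cdot\,\rangle$. This is impossible, because a maximal isotropic subspace of the $4$-dimensional symplectic space $\mathbf{F}_2^4$ has dimension $2<3$. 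Therefore $K$ has no fixed point.

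For (ii), suppose two distinct non-trivial $g,h\in\overline{\mathrm{H}}$ shared a common fixed line $L=\mathbb{P}(V)$ with $\dim V=2$. Then $\tilde g$ and $\tilde h$ both act by scalars on $V$, so $[\tilde g,\tilde h]$ acts trivially on $V$; since $[\tilde g,\tilde h]=\pm I_4$, this forces $[\tilde g,\tilde h]=I_4$, whence $W=\langle g,h\rangle\cong\mumu_2^2$ is isotropic and its preimage $\tilde W\subset\mathrm{H}$ is abelian of order $8$, with $V$ a $2$-dimensional common eigenspace. The crux of the argument, and the step I expect to be the main obstacle, is to rule this out by showing that the restriction to the maximal abelian subgroup $\tilde W$ of the given faithful $4$-dimensional representation of $\mathrm{H}$ (on which $-I_4$ acts as $-\mathrm{id}$) is multiplicity-free. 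Concretely, $\tilde W$ is normal in $\mathrm{H}$, so by Clifford theory this restriction is a single conjugacy orbit of characters, all equal to $-1$ on the centre; the orbit has length $[\mathrm{H}:\tilde W]=4$ precisely because $W$ is a self-dual (Lagrangian) subspace, so the four characters are distinct and every common eigenspace of $\tilde W$ is $1$-dimensional, contradicting $\dim V=2$. Equivalently, one may verify multiplicity-freeness directly on the single Lagrangian $\langle\overline{T_1},\overline{T_2}\rangle$, whose joint eigenvectors are the coordinate vectors $e_1,\dots,e_4$ affording four distinct characters, and then transport the conclusion to an arbitrary Lagrangian using that all Lagrangians of $\mathbf{F}_2^4$ are conjugate under the symplectic group. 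This contradiction shows $\{L_g,L_g'\}\cap\{L_h,L_h'\}=\varnothing$, completing the proof.
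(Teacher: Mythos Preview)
Your proof is correct and takes a genuinely different, more structural route than the paper. The paper disposes of (i) and (ii) by direct computation with the explicit matrices $S_1,S_2,T_1,T_2$, and obtains (iii) from the transitivity of $\overline{\mathrm{H}}$ in $\mathrm{PGL}_4(\mathbb{C})$: a fixed point of an index-$2$ subgroup would yield an $\overline{\mathrm{H}}$-orbit of length at most~$2$, hence a fixed point or an invariant line. You instead exploit the extraspecial structure of $\mathrm{H}$. Your eigenvalue count for (i) via $\tilde g^{2}=\pm I_4$ and $\det\tilde g=1$ is clean. Your argument for (iii), that a common eigenvector forces commuting lifts and hence an isotropic subspace, while a $3$-dimensional subspace of a nondegenerate $4$-dimensional symplectic $\mathbf{F}_2$-space cannot be isotropic, is correct and self-contained (it does not presuppose transitivity). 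For (ii), your reduction to a Lagrangian $W=\langle g,h\rangle$ is right, and the multiplicity-freeness of the restriction to $\tilde W$ is exactly the point: since $W=W^{\perp}$, any $g\notin\tilde W$ fails to commute with some $w\in\tilde W$, so $\chi^{g}\neq\chi$ on $\tilde W$ whenever $\chi(-I_4)=-1$; hence the inertia group of $\chi$ is $\tilde W$, the Clifford orbit has length $[\mathrm{H}:\tilde W]=4$, and each character occurs once. Your alternative via the explicit Lagrangian $\langle\overline{T_1},\overline{T_2}\rangle$ and transport by $\mathrm{Sp}_4(\mathbf{F}_2)\cong\mathfrak{S}_6$ (which is realized by the normalizer $\mathbf{N}$ acting on $\mathbb{P}^3$) is also valid. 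The trade-off is that the paper's computational approach is shorter given the matrices at hand, while yours explains \emph{why} the statements hold, generalizes verbatim to Heisenberg groups over other primes, and makes the later count of $30$ fixed lines (one pair per non-trivial element) conceptually transparent.
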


\begin{proof}
Assertions (i) and (ii) are easy to check by direct computations (cf. \cite[\S2.2]{Eklund}).
Assertion (iii) follows from the transitivity of the subgroup $\overline{\mathrm{H}}$.
\end{proof}

\begin{lemma}
\label{lemma:Heisenberg-elliptic}
Let $E$ be a rational or an elliptic curve. Then $\overline{\mathrm{H}}$ cannot act faithfully on~$E$.
\end{lemma}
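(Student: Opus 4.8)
The plan is to argue by contradiction: assume that $\overline{\mathrm{H}}\cong\mumu_2^4$ acts faithfully on $E$, and derive a contradiction by bounding the order of an elementary abelian $2$-subgroup of $\mathrm{Aut}(E)$ below $16=|\mumu_2^4|$. I would split the argument according to whether $E$ is rational or elliptic, since the two automorphism groups have quite different structure, and in both cases the obstruction is purely group-theoretic.

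For the rational case $E\cong\mathbb{P}^1$, a faithful action of $\mumu_2^4$ would produce an embedding into $\mathrm{Aut}(\mathbb{P}^1)\cong\mathrm{PGL}_2(\mathbb{C})$. I would then invoke the fact that every finite abelian subgroup of $\mathrm{PGL}_2(\mathbb{C})$ is either cyclic or isomorphic to $\mumu_2\times\mumu_2$; this is immediate from the classification of finite subgroups of $\mathrm{PGL}_2(\mathbb{C})$ into cyclic, dihedral, $\A_4$, $\SS_4$ and $\A_5$, upon observing that the only noncyclic abelian one among these is the Klein four-group $\mumu_2\times\mumu_2$. Since $\mumu_2^4$ is neither cyclic nor isomorphic to $\mumu_2\times\mumu_2$, no such embedding can exist.

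For the elliptic case, I would use the standard description $\mathrm{Aut}(E)\cong T\rtimes\mathrm{Aut}(E,O)$, where $T\cong E(\mathbb{C})$ is the subgroup of translations and $C:=\mathrm{Aut}(E,O)$ is cyclic of order $2$, $4$ or $6$. Let $p\colon\mathrm{Aut}(E)\to C$ be the projection, which is a homomorphism with kernel $T$. If $A\subseteq\mathrm{Aut}(E)$ is an elementary abelian group of exponent $2$, then $A\cap T$ is a finite subgroup of exponent $2$ in $E(\mathbb{C})$, hence contained in $E[2]\cong\mumu_2^2$, so $|A\cap T|\leqslant 4$; meanwhile $p(A)$ is a subgroup of exponent $2$ in the cyclic group $C$, so $|p(A)|\leqslant 2$. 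Therefore $|A|=|A\cap T|\cdot|p(A)|\leqslant 8<16$, so $\mumu_2^4$ cannot embed in $\mathrm{Aut}(E)$, and in particular cannot act faithfully on $E$.

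The computations involved are entirely elementary, and I do not expect a genuine obstacle. The only point requiring a little care is the bookkeeping in the elliptic case: checking that $p$ is a homomorphism with kernel the translations, and that a subgroup of exponent $2$ inside the divisible group $E(\mathbb{C})$ must land in the $2$-torsion $E[2]$. Both are routine, so the resulting proof should be very short.
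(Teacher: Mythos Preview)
Your proof is correct and follows essentially the same approach as the paper: both handle the rational case via the classification of finite subgroups of $\mathrm{PGL}_2(\mathbb{C})$, and the elliptic case by using the extension $1\to E(\mathbb{C})\to\mathrm{Aut}(E)\to\mumu_r\to 1$ with $r\leqslant 6$ to bound the elementary abelian $2$-subgroup. The paper phrases the elliptic contradiction as ``the kernel contains $\mumu_2^3$ but $E[2]\cong\mumu_2^2$'', while you phrase it as the multiplicative bound $|A|\leqslant 4\cdot 2=8$, but these are the same computation.
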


\begin{proof}
If $E$ is rational, the assertion follows from the classification of finite subgroups of the group $\mathrm{PGL}_2(\mathbb{C})$.
Thus, we suppose that $E$ is a (smooth) elliptic curve with a faithful action of $\overline{\mathrm{H}}$.
Then there is an exact sequence of groups
$$
1\to E(\mathbb{C})\to\mathrm{Aut}(E)\to\mumu_r\to 1,
$$
where $E(\mathbb{C})$ is the group of points of $E$ acting on $E$ by translations, and $r\leqslant 6$.
We see that $\overline{\mathrm{H}}\cap E(\mathbb{C})$ contains a subgroup isomorphic to $\mumu_2^3$.
This is impossible, because there are just four $2$-torsion points in $E(\mathbb{C})$.
\end{proof}

Denote by $\mathbf{N}$ the normalizer of the group $\mathrm{H}$ in $\mathrm{SL}_{4}(\mathbb{C})$, and let
\begin{equation}
\label{equation:S-T}
S=\frac{1+i}{\sqrt{}2}\left(
  \begin{array}{cccc}
    i & 0 & 0& 0\\
    0 & i& 0& 0\\
    0 & 0 & 1& 0\\
    0 & 0 & 0& 1
 \end{array}
\right), \quad T=\frac{1+i}{2}\left(
  \begin{array}{cccc}
    -i& 0& 0& i\\
    0 & 1& 1& 0\\
    1 & 0& 0& 1\\
    0 & -i& i& 0
 \end{array}
\right),
\end{equation}
where $i=\sqrt{-1}$. One can check that both $S$ and $T$ are contained in $\mathbf{N}$, and $S^4=T^5=-I_4$.
Let
$$
A=\frac{1+i}{2}\left(
  \begin{array}{cccc}
    1 & 0 & 0& -1\\
    0 & 1& 1& 0\\
    0 & i & -i& 0\\
    i & 0 & 0& -i
 \end{array}
\right), \quad B=\frac{1+i}{2}\left(
  \begin{array}{cccc}
    -1 & i & 0& 0\\
    1 & i& 1& 0\\
    0 & 0 & i& -1\\
    0 & 0 & -i& -1
 \end{array}
\right).
$$
Then $A$ and $B$ are contained in $\mathbf{N}$, because
$A=T^{4}\cdot S\cdot T\cdot S$
and
$$
B=S\cdot T^{4}\cdot S\cdot T\cdot S\cdot T^2\cdot S\cdot T^{3}\cdot S\cdot T\cdot S\cdot T^3\cdot S\cdot T^{3}\cdot S.
$$
Similarly, let
$$
R=\frac{1}{\sqrt{2}}\left(
  \begin{array}{cccc}
    1 & i & 0 & 0\\
    i & 1 & 0 & 0\\
    0 & 0 & i & 1\\
    0 & 0 & -1 & -i
 \end{array}
\right).
$$
Then $R$ is also contained in $\mathbf{N}$ (see \cite[\S124]{Blichfeldt1917}).

It follows from \cite[\S124]{Blichfeldt1917} that  $S$ and $T$ together with $\mathrm{H}$ and $\pm iI_4$ generate the group~$\mathbf{N}$.
Let~$\mathbb{H}=\langle\mathrm{H},\pm iI_4\rangle$. Then $\overline{\mathbb{H}}=\overline{\mathrm{H}}$,
and it follows from \cite[\S124]{Blichfeldt1917} that there is an exact sequence
\begin{equation}
\label{equation:exact-sequence}
\xymatrix{
1\ar@{->}[rr]&&\mathbb{H}\ar@{->}[rr]&&\mathbf{N}\ar@{->}[rr]&&\mathfrak{S}_6\ar@{->}[rr]&& 1.}
\end{equation}

Let $\mathcal{U}_4$ be  the vector space of all $\mathbb{H}$-invariant homogeneous polynomials of degree $4$ in~$\mathbb{C}[x,y,z,w]$.
Recall from \cite[\S2.3]{Eklund} that $\mathcal{U}_4$ is generated by the polynomials
$$
x^4+y^4+z^4+w^4, \quad x^2y^2+z^2w^2, \quad  x^2z^2+y^2w^2, \quad  x^2w^2+y^2z^2, \quad xyzw.
$$
This vector space is $\mathbf{N}$-invariant. Write
\begin{multline}
\label{equation:Heisenberg-quartics}
t_0=\frac{x^4+y^4+z^4+w^4}{3}-2\big(x^2y^2+z^2w^2\big)-2\big(x^2z^2+y^2w^2\big)-2\big(x^2w^2+y^2z^2\big),\\
t_1=\frac{x^4+y^4+z^4+w^4}{3}-2\big(x^2y^2+z^2w^2\big)+2\big(x^2z^2+y^2w^2\big)+2\big(x^2w^2+y^2z^2\big),\\
t_2=\frac{x^4+y^4+z^4+w^4}{3}+2\big(x^2y^2+z^2w^2\big)-2\big(x^2z^2+y^2w^2\big)+2\big(x^2w^2+y^2z^2\big),\\
t_3=\frac{x^4+y^4+z^4+w^4}{3}+2\big(x^2y^2+z^2w^2\big)+2\big(x^2z^2+y^2w^2\big)-2\big(x^2w^2+y^2z^2\big),\\
t_4=-\frac{2}{3}\big(x^4+y^4+z^4+w^4\big)+8xyzw,\\
t_5=-\frac{2}{3}\big(x^4+y^4+z^4+w^4\big)-8xyzw.
\end{multline}
Note that $t_0+t_1+t_2+t_3+t_4+t_5=0$, and $t_0$, $t_1$, $t_2$, $t_3$, $t_3$, $t_4$, $t_5$ generate $\mathcal{U}_4$.

\begin{remark}
\label{remark:S6-quartics}
The representation of the group $\mathbf{N}/\mathbb{H}\cong\mathfrak{S}_6$ in the vector space  $\mathcal{U}_4$
is the standard five-dimensional representation twisted by the sign character.
Indeed, the matrix~$T$ acts on the polynomials $t_0,\ldots,t_5$ as the cycle~\mbox{$(t_0\,t_4\,t_2\,t_5\,t_1)$},
and the matrix~\mbox{$-S$} acts on  $t_0,\ldots,t_5$ as the permutation~\mbox{$(t_0\,t_1)(t_2\,t_3)(t_4\,t_5)$}.
Observe also that the matrix $A$ acts on them as the permutation~\mbox{$(t_0\,t_5\,t_2)(t_1\,t_3\,t_4)$}.
Similarly, the matrix $B$ acts as the permutation~\mbox{$(t_0\,t_2\,t_5)(t_1\,t_3\,t_4)$}.
\end{remark}

Let $G_{80}$ be the subgroup in $\mathrm{SL}_{4}(\mathbb{C})$ that is generated by $\mathbb{H}$ and the matrix $T$,
and let~$\overline{G}_{80}$ be its image in $\mathrm{PGL}_{4}(\mathbb{C})$.
Then
$$
\overline{G}_{80}\cong\mumu_2^4\rtimes\mumu_5
$$
by Remark~\ref{remark:S6-quartics}.
Similarly, let $G_{320}$ be the subgroup in $\mathrm{SL}_{4}(\mathbb{C})$ that is generated by $\mathbb{H}$ and the matrices $T$ and $R$,
and let $\overline{G}_{320}$ be its image in $\mathrm{PGL}_{4}(\mathbb{C})$.
Then there is a non-split exact sequence of groups
$$
1\longrightarrow\overline{\mathrm{H}}\longrightarrow\overline{G}_{320}\longrightarrow\mumu_5\rtimes\mumu_4\longrightarrow 1,
$$
see \cite[\S124]{Blichfeldt1917}.
Moreover, it follows from \eqref{equation:exact-sequence} that there exists an exact sequence of groups
$$
1\longrightarrow\mathbb{H}\longrightarrow G_{320}\longrightarrow\mumu_5\rtimes\mumu_4\longrightarrow 1.
$$
Let $G_{160}$ be the subgroup in $\mathrm{SL}_{4}(\mathbb{C})$ that is generated by $\mathbb{H}$ and the matrices $T$ and $R^2$,
and let $\overline{G}_{160}$ be its image in $\mathrm{PGL}_{4}(\mathbb{C})$. Then
$$
\overline{G}_{160}\cong\mumu_2^4\rtimes\big(\mumu_5\rtimes\mumu_2\big)\cong\mumu_2^4\rtimes\mathrm{D}_{10},
$$
where $\mathrm{D}_{10}$ is the dihedral group of order $10$,
see \cite[\S124]{Blichfeldt1917}. There exists an exact sequence of groups
$$
1\longrightarrow\mathbb{H}\longrightarrow G_{160}\longrightarrow\mathrm{D}_{10}\longrightarrow 1.
$$
Furthermore, the group $\overline{G}_{80}$ is a normal subgroup in both $\overline{G}_{160}$ and $\overline{G}_{320}$,
and the group~$\overline{G}_{160}$ is a normal subgroup in $\overline{G}_{320}$.
Let $G_{144}$ be the subgroup in $\mathrm{SL}_{4}(\mathbb{C})$ that is generated by the subgroup $\mathbb{H}$ together with the matrices $A$ and $B$,
and let $\overline{G}_{144}$ be its image in~\mbox{$\mathrm{PGL}_{4}(\mathbb{C})$}. Then $G_{144}$ is a finite subgroup of order  $576$,
and there exists an exact sequence of groups
$$
1\longrightarrow\mathbb{H}\longrightarrow G_{144}\longrightarrow\mumu_3\times\mumu_3\longrightarrow 1.
$$
Moreover, one has
$$
\overline{G}_{144}\cong\mumu_2^4\rtimes\big(\mumu_3\times\mumu_3\big)\cong\mathfrak{A}_4\times\mathfrak{A}_4
$$
by Remark~\ref{remark:S6-quartics}.
The groups $\overline{G}_{80}$, $\overline{G}_{160}$, $\overline{G}_{320}$, $\overline{G}_{144}$ are primitive subgroups in  $\mathrm{PGL}_4(\mathbb{C})$.

It is well known that $\mathbb{P}^3$ contains exactly ten $\overline{\mathrm{H}}$-invariant quadrics (see \cite[\S2.4]{Eklund}).
The groups $\overline{G}_{80}$, $\overline{G}_{160}$, $\overline{G}_{320}$ and $\overline{G}_{144}$ naturally act on them,
because $\overline{\mathrm{H}}$ is a normal subgroup of these four groups.
For instance, the group $\overline{G}_{80}$ splits them into two $\overline{G}$-orbits.
One of them consists of the five quadrics
\begin{multline}
\label{equation:quadrics-1-2-3-4-5}
x^2+y^2+z^2+w^2=0, \quad xw+yz=0, \quad xz+yw=0,\\
x^2+y^2-z^2-w^2=0, \quad x^2-y^2-z^2+w^2=0.
\end{multline}
Let us denote them by $\mathcal{Q}_1$, $\mathcal{Q}_2$, $\mathcal{Q}_3$, $\mathcal{Q}_4$ and $\mathcal{Q}_5$, respectively.
The second $\overline{G}_{80}$-orbit consists of the five quadrics
\begin{equation}
\label{equation:quadrics-6-7-8-9-10}
x^2-y^2+z^2-w^2=0, \quad xy+zw=0, \quad xy-zw=0, \quad xz-yw=0, \quad xw-yz=0.
\end{equation}
We will denote them by $\mathcal{Q}_6$, $\mathcal{Q}_7$, $\mathcal{Q}_8$, $\mathcal{Q}_9$ and $\mathcal{Q}_{10}$, respectively.
Observe that all these ten quadrics are smooth.
Similarly, the only  $\overline{G}_{144}$-invariant quadric in $\mathbb{P}^3$ is the quadric $\mathcal{Q}_3$, i.e. the quadric given by $xz+yw=0$.
In fact, we can say more.

\begin{lemma}
\label{lemma:80-144-quadrics}
There are no $\overline{G}_{80}$-invariant surfaces in $\mathbb{P}^3$ of degree $1$, $2$ and $3$.
Similarly, there are no $\overline{G}_{144}$-invariant surfaces in $\mathbb{P}^3$ of degree $1$ and $3$.
Finally, there is unique  $\overline{G}_{144}$-invariant quadric surface in $\mathbb{P}^3$.
\end{lemma}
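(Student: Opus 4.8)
The plan is to translate the statement into representation theory: a $\overline{G}$-invariant surface of degree $d$ in $\mathbb{P}^3$ is the same datum as a one-dimensional subspace (a common eigenline, i.e.\ a semi-invariant form) for the preimage of $\overline{G}$ in $\mathrm{SL}_4(\mathbb{C})$ acting on the space $\mathrm{Sym}^d$ of homogeneous forms of degree $d$ in $x,y,z,w$. Since $\overline{\mathrm{H}}$ is contained in each of $\overline{G}_{80}$ and $\overline{G}_{144}$, every invariant surface for these groups is in particular $\overline{\mathrm{H}}$-invariant, so it suffices to analyse eigenlines of $\mathrm{H}$ and then impose invariance under the quotient.

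First I would dispose of the odd degrees $d=1$ and $d=3$ uniformly. The center of $\mathrm{H}$ equals its commutator subgroup and consists of $\pm I_4$; hence every character of $\mathrm{H}$ is trivial on $-I_4$. On the other hand, $-I_4$ acts on $\mathrm{Sym}^d$ by the scalar $(-1)^d$. For odd $d$ this scalar is $-1$, so an eigenline would force a character $\chi$ of $\mathrm{H}$ with $\chi(-I_4)=-1$, a contradiction. Therefore $\mathbb{P}^3$ carries no $\overline{\mathrm{H}}$-invariant surface of degree $1$ or $3$, and a fortiori none invariant under $\overline{G}_{80}$ or $\overline{G}_{144}$. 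This settles the degree $1$ and degree $3$ assertions for both groups.

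For degree $2$ the scalar $(-1)^2=1$ causes no obstruction, and the only candidates are the ten $\overline{\mathrm{H}}$-invariant quadrics $\mathcal{Q}_1,\dots,\mathcal{Q}_{10}$, since any invariant quadric is $\overline{\mathrm{H}}$-invariant. For $\overline{G}_{80}$ one simply reads off the action of the generator of $\overline{G}_{80}/\overline{\mathrm{H}}\cong\mumu_5$: it permutes these ten quadrics in the two orbits $\{\mathcal{Q}_1,\dots,\mathcal{Q}_5\}$ and $\{\mathcal{Q}_6,\dots,\mathcal{Q}_{10}\}$ exhibited in \eqref{equation:quadrics-1-2-3-4-5} and \eqref{equation:quadrics-6-7-8-9-10}. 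Both orbits have length $5$, so no quadric is fixed and there is no $\overline{G}_{80}$-invariant surface of degree $2$.

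It remains to count the $\overline{G}_{144}$-invariant quadrics, which I expect to be the main point of the proof. The group $\mathbf{N}/\mathbb{H}\cong\mathfrak{S}_6$ permutes $\mathcal{Q}_1,\dots,\mathcal{Q}_{10}$, and I claim this action is the natural action of $\mathfrak{S}_6$ on the ten partitions of the six indices $\{0,1,2,3,4,5\}$ (labelling the forms $t_0,\dots,t_5$) into two triples; a convincing consistency check is that the image of $T$, the $5$-cycle $(t_0\,t_4\,t_2\,t_5\,t_1)$ of Remark~\ref{remark:S6-quartics}, has no invariant partition and produces exactly the two orbits of length $5$ found above. By Remark~\ref{remark:S6-quartics} the images of $A$ and $B$ generate $\overline{G}_{144}/\overline{\mathrm{H}}\cong\mumu_3\times\mumu_3$, acting on the indices through the two commuting disjoint $3$-cycles $(0\,2\,5)$ and $(1\,3\,4)$. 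Sorting the triples by how many indices they contain in $\{0,2,5\}$ versus $\{1,3,4\}$, one checks that the only partition preserved by both cycles is $\{\{0,2,5\},\{1,3,4\}\}$, while the remaining nine partitions form a single free orbit of length $9$. Hence exactly one of the ten quadrics is $\overline{G}_{144}$-invariant; matching it with the explicit invariant $\mathcal{Q}_3=\{xz+yw=0\}$ (a direct check that $A$ and $B$ preserve this form) completes the proof. The only delicate point is establishing the partition model for the $\mathfrak{S}_6$-action on the quadrics; everything else is bookkeeping, and the odd-degree cases are immediate.
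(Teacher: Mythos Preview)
Your argument is correct and considerably more conceptual than the paper's, which simply invokes a GAP computation of the symmetric powers of the relevant four-dimensional representations. Your treatment of the odd degrees is particularly clean: using that the center of $\mathrm{H}$ equals its commutator, so every character kills $-I_4$, while $-I_4$ acts on $\mathrm{Sym}^d$ by $(-1)^d$, immediately rules out any $\overline{\mathrm{H}}$-semi-invariant form in odd degree---hence any $\overline{G}_{80}$- or $\overline{G}_{144}$-invariant surface of degree $1$ or $3$. The paper does not isolate this observation.

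For degree $2$ you correctly reduce to the ten $\overline{\mathrm{H}}$-invariant quadrics and use the orbit decomposition under $\overline{T}$ (two orbits of length $5$, already recorded in the paper just before the lemma) to exclude $\overline{G}_{80}$-invariant quadrics. For $\overline{G}_{144}$, your partition model for the $\mathfrak{S}_6$-action on the ten quadrics is correct but, as you note, not fully justified in your write-up; however, you do not actually need it. From Remark~\ref{remark:S6-quartics} you extracted that $\overline{G}_{144}/\overline{\mathrm{H}}$ is generated in $\mathfrak{S}_6$ by the disjoint $3$-cycles on $\{0,2,5\}$ and $\{1,3,4\}$, and a direct check that the matrices $A$ and $B$ preserve the form $xz+yw$ (together with verifying they move each of the other nine explicit quadrics) would finish the uniqueness claim with no appeal to the partition model. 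Either way, your approach trades the computer computation for a short character-theoretic argument plus a finite case check, which is both more transparent and more elementary than the paper's proof.
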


\begin{proof}
This follows from explicit computations of symmetric powers of corresponding four-dimensional representations of the groups $G_{80}$ and $G_{144}$.
We used \cite{GAP} to perform these computations.
\end{proof}

Recall from Lemma~\ref{lemma:Heisenberg-subgroup}(i) that for every non-trivial element $g\in\overline{\mathrm{H}}$, there exist two skew lines $L_g$ and $L_g^\prime$ in $\mathbb{P}^3$ that are pointwise fixed by $g$.
The stabilizer of each of these lines in $\overline{\mathrm{H}}$ is $\mumu_2^3$,
and the kernel of its action on any of these lines is a subgroup $\mumu_2$ generated by $g$.
In total, there are $30$ lines in $\mathbb{P}^3$ that are pointwise fixed by elements in $\overline{\mathrm{H}}$.
Denote the set of these lines by $\mathbb{L}_{30}$. Let us describe these lines explicitly.

The group $\overline{\mathrm{H}}\cong\mumu_2^4$ acts on $\mathcal{Q}_1\cong\mathbb{P}^1\times\mathbb{P}^1$ naturally.
This means the following.
The group $\mumu_2^2$ acts on $\mathbb{P}^1$ in a standard way:
in appropriate homogeneous coordinates one of its generators acts as $[u:v]\mapsto[v:u]$,
and the other by $[u:v]\mapsto[-u:v]$.
The product action of the group $\mumu_2^2\times\mumu_2^2$ on $\mathbb{P}^1\times\mathbb{P}^1$
gives the action of the group $\overline{\mathrm{H}}$ on $\mathcal{Q}_1$.

Since every subgroup $\mumu_2$ in $\mumu_2^2$ fixes exactly two points in $\mathbb{P}^1$,
and there are three such subgroups in $\mumu_2^2$, we see that there are exactly six pairs of skew lines in~$\mathcal{Q}_1$
that are $\overline{\mathrm{H}}$-invariant. The smooth quadric surface $\mathcal{Q}_1$ contains two one-parameter
families of lines.
Three of the above pairs of skew lines lie in one family of lines on $\mathcal{Q}_1$,
and the remaining three pairs lie in another family of lines on $\mathcal{Q}_1$.
Let $\ell$ be the line in $\mathbb{P}^3$ that passes through the points $[0:i:1:0]$ and $[1:0:0:-i]$,
and let $\check{\ell}$ be the line in $\mathbb{P}^3$ that passes through the points $[0:-i:1:0]$ and $[1:0:0:i]$.
Then $\ell$ and $\check{\ell}$ are skew lines contained in $\mathcal{Q}_1$.
Moreover, both of them are pointwise fixed by $\overline{T}_1\circ\overline{S}_2\circ\overline{S}_1$ (see \eqref{equation:S1-S2-T1-T2}),
so that $\ell$ and $\check{\ell}$ are two lines in $\mathbb{L}_{30}$.
Applying powers of $\overline{T}$ to them, we obtain $8$ more lines in~$\mathbb{L}_{30}$.
Here $\overline{T}$ is an element of the group $\overline{G}$ of order $5$ defined in \eqref{equation:S-T}.
To be precise, let
\begin{multline}
\label{equation:ell-1-2-3-4-5}
\ell_1=\ell, \quad \ell_2=\overline{T}(\ell_1), \quad \ell_3=\overline{T}(\ell_2), \quad \ell_4=\overline{T}(\ell_3), \quad \ell_5=\overline{T}(\ell_4), \\
\check{\ell}_1=\check{\ell}, \quad \check{\ell}_2=\overline{T}(\check{\ell}_1), \quad \check{\ell}_3=\overline{T}(\check{\ell}_2), \quad \check{\ell}_4=\overline{T}(\check{\ell}_3), \quad \check{\ell}_5=\overline{T}(\check{\ell}_4).
\end{multline}
Then  the lines in \eqref{equation:ell-1-2-3-4-5} are contained in $\mathbb{L}_{30}$.
Similarly, let $\ell^\prime$ be the line in $\mathbb{P}^3$ that passes through the points $[1:0:-i:0]$ and $[0:1:0:-i]$,
let $\check{\ell}^\prime$ be the line in $\mathbb{P}^3$ that passes through the points $[1:0:i:0]$ and $[0:1:0:i]$,
and let
\begin{multline}
\label{equation:ell-1-2-3-4-5-prime}
\ell^\prime_1=\ell^\prime, \quad \ell^\prime_2=\overline{T}(\ell^\prime_1), \quad \ell^\prime_3=\overline{T}(\ell^\prime_2), \quad \ell^\prime_4=\overline{T}(\ell^\prime_3), \quad \ell^\prime_5=\overline{T}(\ell^\prime_4),\\
\check{\ell}^\prime_1=\check{\ell}^\prime, \quad \check{\ell}^\prime_2=\overline{T}(\check{\ell}^\prime_1), \quad \check{\ell}^\prime_3=\overline{T}(\check{\ell}^\prime_2), \quad \check{\ell}^\prime_4=\overline{T}(\check{\ell}^\prime_3), \quad \check{\ell}^\prime_5=\overline{T}(\check{\ell}^\prime_4).
\end{multline}
Then the lines in \eqref{equation:ell-1-2-3-4-5-prime} are contained in $\mathbb{L}_{30}$.
Finally, let $\ell^{\prime\prime}$ be the line in $\mathbb{P}^3$ that passes through the points $[1:i:0:0]$ and $[0:0:1:-i]$,
let $\check{\ell}^{\prime\prime}$ be the line in $\mathbb{P}^3$ that passes through the points $[1:-i:0:0]$ and $[0:0:1:i]$,
and let
\begin{multline}
\label{equation:ell-1-2-3-4-5-prime-prime}
\ell^{\prime\prime}_1=\ell^{\prime\prime}, \quad \ell^{\prime\prime}_2=\overline{T}(\ell^{\prime\prime}_1), \quad \ell^{\prime\prime}_3=\overline{T}(\ell^{\prime\prime}_2), \quad \ell^{\prime\prime}_4=\overline{T}(\ell^{\prime\prime}_3), \quad \ell^{\prime\prime}_5=\overline{T}(\ell^{\prime\prime}_4),\\
\check{\ell}^{\prime\prime}_1=\check{\ell}^{\prime\prime}, \quad \check{\ell}^{\prime\prime}_2=\overline{T}(\check{\ell}^{\prime\prime}_1), \quad \check{\ell}^{\prime\prime}_3=\overline{T}(\check{\ell}^{\prime\prime}_2), \quad \check{\ell}^{\prime\prime}_4=\overline{T}(\check{\ell}^{\prime\prime}_3), \quad \check{\ell}^{\prime\prime}_5=\overline{T}(\check{\ell}^{\prime\prime}_4).
\end{multline}
Then the lines in \eqref{equation:ell-1-2-3-4-5-prime-prime} are the remaining $10$ lines in $\mathbb{L}_{30}$.
By construction, the lines $\ell$, $\check{\ell}$, $\ell^\prime$, $\check{\ell}^\prime$, $\ell^{\prime\prime}$ and $\check{\ell}^{\prime\prime}$
are contained in the quadric surface $\mathcal{Q}_1$.

\begin{lemma}
\label{lemma:Heseinebrg-lines-quadrics}
Each line among the $30$ lines in $\mathbb{L}_{30}$ is contained in exactly $4$ of the quadric surfaces
$\mathcal{Q}_1,\ldots,\mathcal{Q}_{10}$.
Each of these quadrics contains exactly $12$ such lines.
For every quadric, these $12$ lines contained in it are described by Table~\ref{table:quadric-lines}.
Every two quadrics among
$\mathcal{Q}_1,\ldots,\mathcal{Q}_{10}$
intersect in a quadruple of lines in $\mathbb{L}_{30}$.
\end{lemma}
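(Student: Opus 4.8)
The plan is to deduce the numerical assertions from the transitivity of the group $\overline{\mathbf{N}}$ on the relevant sets, and to obtain the explicit contents of Table~\ref{table:quadric-lines} from a single base computation propagated by the group action. Recall that the conjugation action of $\mathbf{N}$ on $\overline{\mathrm{H}}\cong\mumu_2^4=\mathbf{F}_2^4$ preserves the commutator pairing and identifies $\mathbf{N}/\mathbb{H}\cong\mathfrak{S}_6$ with $\mathrm{Sp}_4(\mathbf{F}_2)$; under this identification the fifteen nontrivial elements of $\overline{\mathrm{H}}$ are the fifteen nonzero vectors of $\mathbf{F}_2^4$, the ten quadrics $\mathcal{Q}_1,\ldots,\mathcal{Q}_{10}$ are the ten even quadratic forms refining the symplectic form, and the thirty lines of $\mathbb{L}_{30}$ are the thirty pairs consisting of a nonzero vector $g$ together with one of the two lines $L_g$, $L_g'$ it fixes. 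Since $\mathrm{Sp}_4(\mathbf{F}_2)$ is transitive on nonzero vectors and on even quadratic forms, while $\overline{\mathrm{H}}$ itself interchanges $L_g$ and $L_g'$ (its stabilizer of a single line being of index $2$ by Lemma~\ref{lemma:Heisenberg-subgroup}), the group $\overline{\mathbf{N}}$ acts transitively on $\mathbb{L}_{30}$ and on $\{\mathcal{Q}_1,\ldots,\mathcal{Q}_{10}\}$, and the containment relation between lines and quadrics is $\overline{\mathbf{N}}$-equivariant.

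First I would count the lines on a single quadric. By the discussion preceding the lemma, the action of $\overline{\mathrm{H}}\cong\mumu_2^2\times\mumu_2^2$ on $\mathcal{Q}_1\cong\mathbb{P}^1\times\mathbb{P}^1$ is the product action. A line of $\mathbb{L}_{30}$ contained in $\mathcal{Q}_1$ is a ruling line, hence a fibre of one of the two projections, and it is pointwise fixed by a nontrivial element of the form $(a,\mathrm{id})$ or $(\mathrm{id},b)$. Each of the three involutions in a factor $\mumu_2^2$ fixes exactly two points of the corresponding $\mathbb{P}^1$, so each ruling contributes $3\cdot 2=6$ such lines, giving exactly $12$ lines of $\mathbb{L}_{30}$ on $\mathcal{Q}_1$. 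By transitivity of $\overline{\mathbf{N}}$ on the quadrics, every $\mathcal{Q}_i$ (all smooth, hence carrying the conjugate product action) contains exactly $12$ lines of $\mathbb{L}_{30}$. Counting the incidences $\{(\ell,\mathcal{Q}_i):\ell\subset\mathcal{Q}_i\}$ in two ways now gives $30k=10\cdot 12$, where $k$ is the number of quadrics through a fixed line (the same for every line by transitivity on $\mathbb{L}_{30}$); hence $k=4$.

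To fill in Table~\ref{table:quadric-lines}, I would determine explicitly which of the named lines $\ell_j,\check{\ell}_j,\ell_j',\check{\ell}_j',\ell_j'',\check{\ell}_j''$ lie on $\mathcal{Q}_1$, testing each of the six starting lines $\ell,\check{\ell},\ell',\check{\ell}',\ell'',\check{\ell}''$ and the remaining companions by the elementary criterion that a line through points $P,P'$ lies on a quadric $Q$ if and only if $Q(P)=Q(P')=0$ and the associated bilinear form vanishes on $(P,P')$. This pins down the twelve lines on $\mathcal{Q}_1$; applying the order-five element $\overline{T}$, which cyclically permutes $\mathcal{Q}_1,\ldots,\mathcal{Q}_5$ and shifts the indices of the named lines, yields the rows for $\mathcal{Q}_2,\ldots,\mathcal{Q}_5$, while the element $\overline{R}$, which interchanges the two $\overline{G}_{80}$-orbits of quadrics, together with $\overline{T}$ yields the rows for $\mathcal{Q}_6,\ldots,\mathcal{Q}_{10}$. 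I expect this bookkeeping --- tracking exactly which indexed line lands where under the group action --- to be the only laborious part of the argument; it can be organized, and independently confirmed, by the kind of representation-theoretic computation used in the proof of Lemma~\ref{lemma:80-144-quadrics}.

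Finally, for the intersection statement I would argue once more by transitivity. Each line of $\mathbb{L}_{30}$ lies on exactly $4$ quadrics, hence on $\binom{4}{2}=6$ unordered pairs of quadrics that both contain it; summing over the thirty lines counts $30\cdot 6=180$ triples $(\ell,\{\mathcal{Q}_i,\mathcal{Q}_j\})$ with $\ell\subset\mathcal{Q}_i\cap\mathcal{Q}_j$. Because $\mathrm{Sp}_4(\mathbf{F}_2)\cong\mathfrak{S}_6$ is transitive on unordered pairs of distinct even quadratic forms (equivalently, any two distinct partitions of six letters into two triples meet in the same combinatorial pattern), the group $\overline{\mathbf{N}}$ is transitive on the $\binom{10}{2}=45$ pairs of quadrics, so each intersection $\mathcal{Q}_i\cap\mathcal{Q}_j$ contains the same number $c$ of lines of $\mathbb{L}_{30}$; thus $45c=180$ and $c=4$. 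These four lines are distinct, and since $\mathcal{Q}_i\cap\mathcal{Q}_j$ is a complete intersection of two quadrics, hence a curve of degree $4$, the four lines already exhaust its degree. Therefore $\mathcal{Q}_i\cap\mathcal{Q}_j$ is exactly this quadruple of lines of $\mathbb{L}_{30}$, which completes the proof.
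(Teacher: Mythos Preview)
Your argument is correct and takes a genuinely different route from the paper. The paper's proof consists of the single sentence ``Direct computations'', together with the observation that the last assertion can also be deduced from Lemma~\ref{lemma:Heisenberg-elliptic} (since a smooth intersection of two quadrics would be an elliptic curve carrying a faithful $\overline{\mathrm{H}}$-action). In other words, the authors simply verify Table~\ref{table:quadric-lines} line by line and read off all numerical statements from it. Your approach instead explains the numbers structurally: you use the transitivity of $\overline{\mathbf{N}}$ (via the identification $\overline{\mathbf{N}}/\overline{\mathrm{H}}\cong\mathrm{Sp}_4(\mathbf{F}_2)\cong\mathfrak{S}_6$) on the thirty lines, on the ten quadrics, and on unordered pairs of quadrics, combined with a direct count of the twelve lines on the single quadric $\mathcal{Q}_1$ using the product description from the paper, and then double counting. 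This is more conceptual and shows \emph{why} the constants $4$, $12$, and $4$ arise, at the cost of importing some standard but nontrivial facts about $\mathrm{Sp}_4(\mathbf{F}_2)$ (transitivity on nonzero vectors, on even quadratic refinements, and on pairs of such---the latter via the ``two partitions of six into two triples'' picture). The explicit content of Table~\ref{table:quadric-lines} still requires bookkeeping in either approach, as you acknowledge; your plan to compute the $\mathcal{Q}_1$-column once and propagate by $\overline{T}$ and $\overline{R}$ is a clean way to organise that computation.
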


\begin{proof}
Direct computations.
The last assertion can be also deduced from Lemma~\ref{lemma:Heisenberg-elliptic}.
\end{proof}

\begin{table}\renewcommand\arraystretch{1.42}
\caption{Ten $\overline{\mathrm{H}}$-invariant quadrics and thirty lines in them.\label{table:quadric-lines}}
\begin{tabular}{|c||c|c|c|c|c|c|c|c|c|c|}
\hline
$\quad$ $\enskip$  & $\enskip$ $\mathcal{Q}_1$ $\enskip$  & $\enskip$ $\mathcal{Q}_2$ $\enskip$ & $\enskip$ $\mathcal{Q}_3$ $\enskip$  & $\enskip$ $\mathcal{Q}_4$ $\enskip$ & $\enskip$ $\mathcal{Q}_5$ $\enskip$ & $\enskip$ $\mathcal{Q}_6$ $\enskip$ & $\enskip$ $\mathcal{Q}_7$ $\enskip$ & $\enskip$ $\mathcal{Q}_8$ $\enskip$ & $\enskip$ $\mathcal{Q}_9$ $\enskip$ & $\enskip$ $\mathcal{Q}_{10}$ $\enskip$\\
\hline\hline
$\ell_1$                        & $+$  & $-$  & $-$  & $-$  & $+$  & $-$  & $+$  & $-$  & $+$  & $-$ \\
\hline
$\ell_2$                        & $-$  & $-$  & $-$  & $+$  & $+$  & $-$  & $-$  & $+$  & $-$  & $+$ \\
\hline
$\ell_3$                        & $-$  & $-$  & $+$  & $+$  & $-$  & $+$  & $+$  & $-$  & $-$  & $-$\\
\hline
$\ell_4$                        & $-$  & $+$  & $+$  & $-$  & $-$  & $-$  & $-$  & $-$  & $+$  & $+$ \\
\hline
$\ell_5$                        & $+$  & $+$  & $-$  & $-$  & $-$  & $+$  & $-$  & $+$  & $-$  & $-$ \\
\hline
$\check{\ell}_1$                & $+$  & $-$  & $-$  & $-$  & $+$  & $-$  & $+$  & $-$  & $+$  & $-$ \\
\hline
$\check{\ell}_2$                & $-$  & $-$  & $-$  & $+$  & $+$  & $-$  & $-$  & $+$  & $-$  & $+$ \\
\hline
$\check{\ell}_3$                & $-$  & $-$  & $+$  & $+$  & $-$  & $+$  & $+$  & $-$  & $-$  &  $-$\\
\hline
$\check{\ell}_4$                & $-$  & $+$  & $+$  & $-$  & $-$  & $-$  & $-$  & $-$  & $+$  & $+$ \\
\hline
$\check{\ell}_5$                & $+$  & $+$  & $-$  & $-$  & $-$  & $+$  & $-$  & $+$  & $-$  & $-$ \\
\hline
$\ell_1^\prime$                 & $+$  & $-$  & $-$  & $-$  & $-$  & $+$  & $+$  & $-$  & $-$  & $+$ \\
\hline
$\ell_2^\prime$                 & $-$  & $-$  & $-$  & $-$  & $+$  & $+$  & $-$  & $-$  & $+$  & $+$ \\
\hline
$\ell_3^\prime$                 & $-$  & $-$  & $-$  & $+$  & $-$  & $+$  & $-$  & $+$  & $+$  & $-$ \\
\hline
$\ell_4^\prime$                 & $-$  & $-$  & $+$  & $-$  & $-$  & $-$  & $+$  & $+$  & $+$  & $-$ \\
\hline
$\ell_5^\prime$                 & $-$  & $+$  & $-$  & $-$  & $-$  & $-$  & $+$  & $+$  & $-$  & $+$ \\
\hline
$\check{\ell}_1^\prime$         & $+$  & $-$  & $-$  & $-$  & $-$  & $+$  & $+$  & $-$  & $-$  & $+$ \\
\hline
$\check{\ell}_2^\prime$         & $-$  & $-$  & $-$  & $-$  & $+$  & $+$  & $-$  & $-$  & $+$  & $+$ \\
\hline
$\check{\ell}_3^\prime$         & $-$  & $-$  & $-$  & $+$  & $-$  & $+$  & $-$  & $+$  & $+$  &  $-$\\
\hline
$\check{\ell}_4^\prime$         & $-$  & $-$  & $+$  & $-$  & $-$  & $-$  & $+$  & $+$  & $+$  &  $-$\\
\hline
$\check{\ell}_5^\prime$         & $-$  & $+$  & $-$  & $-$  & $-$  & $-$  & $+$  & $+$  & $-$  & $+$\\
\hline
$\ell_1^{\prime\prime}$         & $+$  & $+$  & $-$  & $+$  & $-$  & $-$  & $-$  & $-$  & $+$  &  $-$\\
\hline
$\ell_2^{\prime\prime}$         & $+$  & $-$  & $+$  & $-$  & $+$  & $-$  & $-$  & $+$  & $-$  & $-$ \\
\hline
$\ell_3^{\prime\prime}$         & $-$  & $+$  & $-$  & $+$  & $+$  & $-$  & $+$  & $-$  & $-$  &  $-$\\
\hline
$\ell_4^{\prime\prime}$         & $+$  & $-$  & $+$  & $+$  & $-$  & $-$  & $-$  & $-$  & $-$  & $+$ \\
\hline
$\ell_5^{\prime\prime}$         & $-$  & $+$  & $+$  & $-$  & $+$  & $+$  & $-$  & $-$  & $-$  & $-$ \\
\hline
$\check{\ell}_1^{\prime\prime}$ & $+$  & $+$  & $-$  & $+$  & $-$  & $-$  & $-$  & $-$  & $+$  & $-$ \\
\hline
$\check{\ell}_2^{\prime\prime}$ & $+$  & $-$  & $+$  & $-$  & $+$  & $-$  & $-$  & $+$  & $-$  & $-$ \\
\hline
$\check{\ell}_3^{\prime\prime}$ & $-$  & $+$  & $-$  & $+$  & $+$  & $-$  & $+$  & $-$  & $-$  & $-$ \\
\hline
$\check{\ell}_4^{\prime\prime}$ & $+$  & $-$  & $+$  & $+$  & $-$  & $-$  & $-$  & $-$  & $-$  & $+$ \\
\hline
$\check{\ell}_5^{\prime\prime}$ & $-$  & $+$  & $+$  & $-$  & $+$  & $+$  & $-$  & $-$  & $-$  & $-$ \\
\hline
\end{tabular}
\end{table}

Let us describe the intersection points of the $30$ lines in $\mathbb{L}_{30}$.
Namely, let
$\Sigma_{20}$ be the subset in $\mathbb{P}^3$ that consists of the $20$ points
\begin{multline}
\label{equation:Sigma-20}
[i:0:0:1], [0:i:1:0], [1:0:1:0], [0:-1:0:1], [1:i:-i:1],\\
[1:-i:i:1], [0:0:-1:1], [0:0:1:1], [-1:i:-i:1], [1:i:i:1],\\
[-i:0:0:1], [0:-i:1:0], [1:0:1:0], [0:1:0:1], [-1:i:i:1],\\
[1:-i:-i:1], [-1:1:0:0], [1:1:0:0], [-1:-i:i:1], [1:-i:-i:1].
\end{multline}
Similarly, let
$\Sigma_{20}^\prime$ be the subset in $\mathbb{P}^3$ that consists of the $20$ points
\begin{multline}
\label{equation:Sigma-20-prime}
[-i:i:-1:1], [i:0:1:0], [i:i:1:1], [0:i:0:1], [-1:1:-1:1],\\
[i:-i:-1:1], [1:-1:-1:1], [-1:0:0:1], [-1:-1:1:1], [0:-1:1:0],\\
[1:0:0:0], [1:0:0:1], [0:0:0:1], [-i:0:1:0], [0:0:1:0],\\
[-i:-i:1:1], [0:-i:0:1], [1:1:1:1], [0:1:1:0], [0:1:0:0].
\end{multline}
Finally, let $\Sigma_{20}^{\prime\prime}$ be the subset in $\mathbb{P}^3$ that consists of the $20$ points
\begin{multline}
\label{equation:Sigma-20-prime-prime}
[i:-1:i:1], [0:0:i:1], [-i:1:i:1], [-i:1:0:0], [-i:-1:i:1],\\
[-i:i:1:1], [-i:1:-i:1], [i:-i:-1:1], [1:-1:1:1], [i:-1:-i:1],\\
[1:-1:-1:1], [i:1:0:0], [i:1:i:1], [i:-i:1:1], [-1:1:1:1],\\
[i:1:-i:1], [0:0:-i:1], [i:-1:-i:1], [i:i:-1:1], [1:1:-1:1].
\end{multline}
Explicit computations show that $\Sigma_{20}$, $\Sigma_{20}^\prime$ and $\Sigma_{20}^{\prime\prime}$ are $\overline{G}_{80}$-orbits.
This also follows from Lemma~\ref{lemma:80-quartic-surfaces}.

\begin{lemma}
\label{lemma:Heseinebrg-lines-points}
The set $\Sigma_{20}\cup\Sigma_{20}^\prime\cup\Sigma_{20}^{\prime\prime}$ contains all intersection points of the lines from~$\mathbb{L}_{30}$.
Moreover, for every point $P\in\Sigma_{20}\cup\Sigma_{20}^\prime\cup\Sigma_{20}^{\prime\prime}$
there are exactly three lines from $\mathbb{L}_{30}$ passing through $P$.
\end{lemma}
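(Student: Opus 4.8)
The plan is to translate the statement into the language of point-stabilizers for the action of $\overline{\mathrm{H}}\cong\mumu_2^4$ and then reduce everything to a count. Recall from Lemma~\ref{lemma:Heisenberg-subgroup}(i) that each of the $15$ non-trivial elements $g\in\overline{\mathrm{H}}$ fixes pointwise exactly two skew lines, that these $15$ pairs exhaust the $30$ lines of $\mathbb{L}_{30}$, and (from the discussion preceding the lemma) that each line of $\mathbb{L}_{30}$ is fixed pointwise by a unique involution of $\overline{\mathrm{H}}$, namely the kernel $\mumu_2$ of the action on that line. First I would show that the intersection points of lines in $\mathbb{L}_{30}$ are exactly the points $P\in\mathbb{P}^3$ whose stabilizer in $\overline{\mathrm{H}}$ is isomorphic to $\mumu_2^2$. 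Indeed, if two distinct lines of $\mathbb{L}_{30}$ meet at $P$, then, since two lines fixed by the same involution are skew and hence disjoint, they are fixed by two distinct involutions $g,h$; thus $P$ is fixed by $\langle g,h\rangle\cong\mumu_2^2$, and by Lemma~\ref{lemma:Heisenberg-subgroup}(iii) the stabilizer cannot contain a subgroup of order $8$, so $\mathrm{Stab}_{\overline{\mathrm{H}}}(P)\cong\mumu_2^2$. Conversely, if $\mathrm{Stab}_{\overline{\mathrm{H}}}(P)=\{1,g,h,gh\}$, then $P$ lies on one of the two fixed lines of $g$ and on one of those of $h$, which are two distinct lines of $\mathbb{L}_{30}$ through $P$.

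The same dictionary yields the ``moreover'' part immediately. If $\mathrm{Stab}_{\overline{\mathrm{H}}}(P)=\{1,g,h,gh\}$, then any line of $\mathbb{L}_{30}$ through $P$ is fixed by an involution fixing $P$, i.e. by one of $g$, $h$, $gh$; for each of these three involutions exactly one of its two skew fixed lines passes through $P$, and the three resulting lines are distinct because they carry distinct fixing involutions. Hence exactly three lines of $\mathbb{L}_{30}$ pass through each such $P$. To apply this to the orbits, I would use that $\Sigma_{20}$, $\Sigma_{20}^\prime$, $\Sigma_{20}^{\prime\prime}$ are $\overline{G}_{80}$-orbits of length $20$, so each of their points has stabilizer of order $4$ in $\overline{G}_{80}$; since $\overline{\mathrm{H}}$ is normal in $\overline{G}_{80}$ with quotient $\mumu_5$, coprimality of $4$ and $5$ forces this stabilizer to lie in $\overline{\mathrm{H}}$, where it is a $\mumu_2^2$. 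Thus every point of the three orbits is an intersection point with exactly three lines of $\mathbb{L}_{30}$ through it.

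It remains to prove that there are no further intersection points, i.e. that every point with $\overline{\mathrm{H}}$-stabilizer $\mumu_2^2$ lies in $\Sigma_{20}\cup\Sigma_{20}^\prime\cup\Sigma_{20}^{\prime\prime}$, and here I would invoke the extraspecial structure of $\mathrm{H}$. The commutator pairing makes $\overline{\mathrm{H}}\cong\mathbf{F}_2^4$ a symplectic space, and a subgroup $V\cong\mumu_2^2$ has a fixed point in $\mathbb{P}^3$ if and only if its order-$8$ preimage in $\mathrm{H}$ is abelian, equivalently $V$ is isotropic; for such $V$ the faithful four-dimensional representation restricts to this abelian preimage as a sum of four distinct characters, so $V$ fixes exactly four points (the four eigenlines), each with stabilizer precisely $V$ by Lemma~\ref{lemma:Heisenberg-subgroup}(iii). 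A non-isotropic $V$ has no common eigenvector compatible with the central action and hence no fixed point. As there are exactly $15$ isotropic $\mumu_2^2$ in $\mathbf{F}_2^4$, and a point fixed by two distinct ones would be fixed by a group of order at least $8$ (again excluded by Lemma~\ref{lemma:Heisenberg-subgroup}(iii)), the total number of points with $\overline{\mathrm{H}}$-stabilizer $\mumu_2^2$ is exactly $15\cdot 4=60$. Since the three orbits are pairwise distinct (for instance $[1:0:0:0]\in\Sigma_{20}^\prime$ lies in neither $\Sigma_{20}$ nor $\Sigma_{20}^{\prime\prime}$, as one reads off from \eqref{equation:Sigma-20}--\eqref{equation:Sigma-20-prime-prime}, and this also follows from Lemma~\ref{lemma:80-quartic-surfaces}), they already account for all $60$ such points, so they exhaust them. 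The main obstacle is precisely this global count: the whole argument hinges on recognizing $\overline{\mathrm{H}}$ as a symplectic $\mathbf{F}_2$-space in order to pin down the number $60$ of $\mumu_2^2$-fixed points cleanly, rather than verifying all incidences by a direct coordinate computation with the explicit lists.
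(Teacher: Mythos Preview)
Your argument is correct and takes a genuinely different route from the paper, whose proof consists of the single line ``Direct computations.'' You replace the explicit coordinate check by a structural one: you identify the intersection points of lines in $\mathbb{L}_{30}$ with the points of $\mathbb{P}^3$ whose $\overline{\mathrm{H}}$-stabilizer is $\mumu_2^2$, then exploit the extraspecial structure of $\mathrm{H}$ (equivalently, the symplectic form on $\overline{\mathrm{H}}\cong\mathbf{F}_2^4$) to count these points as $15\cdot 4=60$, matching $|\Sigma_{20}|+|\Sigma_{20}'|+|\Sigma_{20}''|$. The ``moreover'' part then drops out for free, since the three lines through such a point are indexed by the three involutions in its stabilizer. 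What your approach buys is a clean, conceptual explanation of \emph{why} there are exactly $60$ intersection points and three lines through each, independent of the explicit coordinate lists; the paper's approach, on the other hand, is immediate given that the $30$ lines and $60$ points are already written down explicitly, and requires no additional theory. One small remark: your parenthetical appeal to Lemma~\ref{lemma:80-quartic-surfaces} to see that the three orbits are pairwise distinct is not quite apt (that lemma concerns the $\Sigma_{16}^i$, not the $\Sigma_{20}$'s), but the direct inspection of the lists \eqref{equation:Sigma-20}--\eqref{equation:Sigma-20-prime-prime} that you also give is sufficient, and in any case distinctness of three $\overline{G}_{80}$-orbits of length~$20$ follows once you know there are exactly $60$ points with stabilizer $\mumu_2^2$.
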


\begin{proof}
Direct computations.
\end{proof}

Now we are going to describe the $\overline{G}_{80}$-invariant quartic surfaces
and the $\overline{G}_{144}$-invariant quartic surfaces.

\begin{lemma}
\label{lemma:80-144-quartic-surfaces}
There are exactly five $\overline{G}_{80}$-invariant irreducible quartic surfaces in $\mathbb{P}^3$.
Similarly, there are exactly four $\overline{G}_{144}$-invariant irreducible quartic surfaces in $\mathbb{P}^3$.
\end{lemma}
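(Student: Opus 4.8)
The plan is to identify the space of $\overline{\mathrm{H}}$-invariant quartic surfaces with the projectivization $\mathbb{P}(\mathcal{U}_4)\cong\mathbb{P}^4$ and to obtain the $\overline{G}_{80}$- and $\overline{G}_{144}$-invariant quartics as fixed points of the induced actions of the quotient groups $\overline{G}_{80}/\overline{\mathrm{H}}\cong\mumu_5$ and $\overline{G}_{144}/\overline{\mathrm{H}}\cong\mumu_3\times\mumu_3$ on this $\mathbb{P}^4$. Since a quartic surface is determined by its equation up to scale, a $\overline{G}_{80}$-invariant (resp.\ $\overline{G}_{144}$-invariant) quartic is exactly an eigenvector of the corresponding cyclic (resp.\ abelian) group in $\mathcal{U}_4$, and I will use the explicit permutation actions on the coordinates $t_0,\ldots,t_5$ recorded in Remark~\ref{remark:S6-quartics} (where $\mathcal{U}_4$ is the hyperplane $\{\sum t_i=0\}$).

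First I would count the fixed points. The generator $\overline{T}$ of $\mumu_5$ acts on $t_0,\ldots,t_5$ as the $5$-cycle $(t_0\,t_4\,t_2\,t_5\,t_1)$ fixing $t_3$, so on $\mathcal{U_4}$ its eigenvalues are precisely the five distinct fifth roots of unity; hence there are exactly five eigenlines and thus exactly five $\overline{G}_{80}$-invariant quartic surfaces. For $\overline{G}_{144}$ the matrices $A$ and $B$ act as the commuting pairs of $3$-cycles $(t_0\,t_5\,t_2)(t_1\,t_3\,t_4)$ and $(t_0\,t_2\,t_5)(t_1\,t_3\,t_4)$, which generate the group cycling the two blocks $\{t_0,t_2,t_5\}$ and $\{t_1,t_3,t_4\}$ independently. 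Diagonalizing, the joint eigenvalues occurring on $\mathcal{U}_4$ are $(1,1),(\omega,1),(\omega^2,1),(1,\omega),(1,\omega^2)$ with $\omega=e^{2\pi i/3}$, which are pairwise distinct; hence again there are exactly five fixed points, i.e.\ exactly five $\overline{G}_{144}$-invariant quartic surfaces.

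It then remains to decide which of these are irreducible. For $\overline{G}_{80}$ I would argue that all five are. A reducible invariant quartic has at most four irreducible components, whose reduced components $\overline{G}_{80}$ permutes in orbits of size $1$, $2$, $3$ or $4$; but $3\nmid 80$, the abelianization of $\overline{G}_{80}\cong\mumu_2^4\rtimes\mumu_5$ is $\mumu_5$ (so there is no subgroup of index $2$), and $\mumu_5$ acts irreducibly on $\mumu_2^4\cong\mathbb{F}_2^4$ (so there is no invariant $\mumu_2^2$, hence no subgroup of order $20$ and none of index $4$). Thus the only possible orbit size is $1$, forcing an invariant component of degree at most $3$, which is impossible by Lemma~\ref{lemma:80-144-quadrics}. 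Hence all five $\overline{G}_{80}$-invariant quartics are irreducible.

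For $\overline{G}_{144}$ the double quadric $\mathcal{Q}_3^2$, given by $(xz+yw)^2$, is a reducible invariant quartic; a short computation identifies it, up to scale, with the eigenvector $(t_0+t_2+t_5)-(t_1+t_3+t_4)$ spanning the unique fixed line of joint eigenvalue $(1,1)$. I expect the remaining four fixed points, each having a nontrivial eigenvalue under $A$ or $B$, to be irreducible, and this is the main obstacle. Repeating the orbit analysis, $\overline{G}_{144}\cong\mathfrak{A}_4\times\mathfrak{A}_4$ admits no surjection onto $\mumu_2$ (so there is no orbit of size $2$), an orbit of size $3$ cannot fill a quartic, and the only invariant component of degree at most $3$ is $\mathcal{Q}_3$ (Lemma~\ref{lemma:80-144-quadrics}), which accounts precisely for $\mathcal{Q}_3^2$. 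The surviving danger is a single $\overline{G}_{144}$-orbit of four planes; I would exclude it either by using that $\overline{\mathrm{H}}$ fixes no plane and analyzing the possible $\overline{\mathrm{H}}$-orbits among the four linear forms, or simply by factoring the four explicit quartics with a computer algebra system as in Lemma~\ref{lemma:80-144-quadrics}. This leaves exactly four irreducible $\overline{G}_{144}$-invariant quartic surfaces, as claimed.
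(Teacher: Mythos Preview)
Your approach is essentially the same as the paper's: both decompose $\mathcal{U}_4$ into five distinct one-dimensional characters of $G_{80}$ (resp.\ $G_{144}$) via Remark~\ref{remark:S6-quartics}, and then invoke Lemma~\ref{lemma:80-144-quadrics} to control reducibility. The paper's proof is terser---it simply asserts that the assertion ``follows from Lemma~\ref{lemma:80-144-quadrics}''---whereas you spell out the eigenvalue computation and the orbit analysis on irreducible components. Your identification of the single-orbit-of-four-planes case for $\overline{G}_{144}$ as the residual obstacle is accurate; the paper does not argue this away abstractly but instead records the explicit semi-invariants $p_1,\ldots,p_4$ immediately after the proof and states that they are irreducible, which amounts to the direct check you propose as your second option.
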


\begin{proof}
It follows from Remark~\ref{remark:S6-quartics} that
$\mathcal{U}_4$ splits as a sum of distinct one-dimensional representations of $G_{80}$ and $G_{144}$.
Now the assertion follows from Lemma~\ref{lemma:80-144-quadrics}.
\end{proof}

Up to scaling, all homogeneous semi-invariants of the group~$G_{144}$ of degree~$4$ in~\mbox{$\mathbb{C}[x,y,z,w]$}  are
\begin{equation}
\label{equation:144-p0-p1-p2-p3-p4}
\left\{\aligned%
&p_0=-\frac{1}{4}\Big(t_0+t_2+t_5\Big),\\
&p_1=-\Big(\xi_3t_0+t_2+\xi_3^2t_5\Big),\\
&p_2=(\xi_3+1)\Big(t_0+\xi_3^2t_5+\xi_3t_2\Big),\\
&p_3=(\xi_3+1)\Big(t_1+\xi_3^2t_4+\xi_3t_3\Big),\\
&p_4=-\Big(\xi_3t_1+t_3+\xi_3^2t_4\Big).\\
\endaligned
\right.
\end{equation}
Here $\xi_3$ is a primitive cubic root of unity,
and $t_0,\ldots,t_5$ are the polynomials defined in~\eqref{equation:Heisenberg-quartics}.
Observe that
$$
p_0=(wy+xz)^2
$$
is the only homogeneous invariant of degree $4$ of the group $G_{144}$.
The remaining four polynomials $p_1$, $p_2$, $p_3$ and $p_4$ are semi-invariants,
and they are irreducible.
Thus, the four irreducible $\overline{G}_{144}$-invariant quartic surfaces in $\mathbb{P}^3$ are given by $p_1=0$, $p_2=0$, $p_3=0$ and~\mbox{$p_4=0$}.
We will study properties of these surfaces in~\S\ref{section:144}.

The eigenvectors of the matrix $T$ on the vector space of all
$\mathbb{H}$-invariant homogeneous quartic polynomials in $\mathbb{C}[x,y,z,w]$ are the polynomials
\begin{equation}
\label{equation:80-q0-q1-q2-q3-q4}
\left\{\aligned%
&q_0=t_0+t_1+t_2+t_3+t_4,\\
&q_1=\xi_5^4t_4+\xi_5^3t_2+\xi_5^2t_5+\xi_5t_1+t_0,\\
&q_2=\xi_5^4t_5+\xi_5^3t_4+\xi_5^2t_1+\xi_5t_2+t_0,\\
&q_3=\xi_5^4t_2+\xi_5^3t_1+\xi_5^2t_4+\xi_5t_5+t_0,\\
&q_4=\xi_5^4t_1+\xi_5^3t_5+\xi_5^2t_2+\xi_5t_4+t_0.\\
\endaligned
\right.
\end{equation}
Here $\xi_5$ is a primitive fifth root of unity.
Note that $q_i$ is the eigenvector of $T$ that corresponds to the eigenvalue $\xi_5^i$.
The five $\overline{G}_{80}$-invariant quartic surfaces in $\mathbb{P}^3$ are the quartic surfaces $q_0=0$, $q_1=0$, $q_2=0$, $q_3=0$ and $q_4=0$.
We will study properties of these surfaces in~\S\ref{section:80-160-320}.
The polynomial
$$
q_0=\frac{1}{2}\Big(w^4+y^4+z^4+x^4\Big)+3x^2y^2+3z^2w^2+3x^2z^2+3y^2w^2-3x^2w^2-3y^2z^2.
$$
is the only $G_{80}$-invariant homogeneous quartic polynomial in $\mathbb{C}[x,y,z,w]$.

\begin{lemma}
\label{lemma:80-sextic-surfaces}
There are exactly four $\overline{G}_{80}$-invariant surfaces in $\mathbb{P}^3$ of degree $6$.
Moreover, they are irreducible and reduced.
\end{lemma}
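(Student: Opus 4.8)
The plan is to treat the two assertions separately: the count of four is a finite representation-theoretic computation, whereas irreducibility and reducedness follow formally from that count together with Lemma~\ref{lemma:80-144-quadrics}. First I would rephrase the problem in terms of semi-invariants. A $\overline{G}_{80}$-invariant sextic surface is the zero locus of a sextic form $F$ that is a semi-invariant for $\overline{G}_{80}$. Restricting the action to the normal subgroup $\overline{\mathrm{H}}\cong\mumu_2^4$, the form $F$ spans a one-dimensional $\overline{\mathrm{H}}$-subrepresentation, so it has a well-defined $\overline{\mathrm{H}}$-character $\chi$. Since conjugation by $\overline{T}$ carries the $\overline{\mathrm{H}}$-character of $F$ to that of $\overline{T}\cdot F$, invariance of the surface under $\overline{T}$ forces $\chi$ to be fixed by the order-five automorphism of $\overline{\mathrm{H}}$ that $\overline{T}$ induces. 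By Remark~\ref{remark:S6-quartics} this automorphism acts on the character group of $\overline{\mathrm{H}}$, which is isomorphic to $\mathbf{F}_2^4$, as an element of order $5$; as $5\nmid 2^k-1$ for $k\leqslant 3$, it has no nonzero fixed vector, so $\chi$ is trivial. Hence $F$ is genuinely $\overline{\mathrm{H}}$-invariant and is an eigenvector of $\overline{T}$, and conversely every eigenvector of $\overline{T}$ in the space of $\overline{\mathrm{H}}$-invariant sextics is a $\overline{G}_{80}$-semi-invariant. This reduces the first assertion to listing the $\overline{T}$-eigenlines in the space of $\overline{\mathrm{H}}$-invariant sextic forms.

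I would then carry out this computation explicitly, exactly in the spirit of Lemmas~\ref{lemma:80-144-quadrics} and~\ref{lemma:80-144-quartic-surfaces}: decompose the relevant symmetric power of the four-dimensional representation of $G_{80}$, diagonalise the action of $\overline{T}$ on its $\overline{\mathrm{H}}$-invariant part, and read off the $\overline{T}$-eigenlines that give the four invariant sextics, in the same manner as the quartics $q_0,\dots,q_4$ in~\eqref{equation:80-q0-q1-q2-q3-q4}. Organising and performing this linear-algebra computation (most conveniently with \cite{GAP}) is the step I expect to require the most care, and it is the only genuinely computational ingredient of the proof.

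Finally, irreducibility and reducedness follow from a uniform orbit-degree argument. Suppose $S$ is a $\overline{G}_{80}$-invariant sextic that is not irreducible and reduced. Group the irreducible components of $S$ into $\overline{G}_{80}$-orbits; the members of a given orbit $O$ occur in $S$ with a common multiplicity $m_O$ and have a common degree $d_O$, and the reduced sum $\sum_{D\in O}D$ is a $\overline{G}_{80}$-invariant surface of degree $|O|\,d_O$. By Lemma~\ref{lemma:80-144-quadrics} there is no $\overline{G}_{80}$-invariant surface of degree $1$, $2$ or $3$, so $|O|\,d_O\geqslant 4$ for every orbit, and the contribution $m_O|O|d_O$ of $O$ to $\deg S$ is at least $4$. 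As $\deg S=6$, there can be only one orbit, and it satisfies $m_O=1$ and $|O|\,d_O=6$. Its length $|O|$ divides both $6$ and $|\overline{G}_{80}|=80$, hence equals $1$ or $2$; length $2$ is impossible because the abelianisation of $\overline{G}_{80}\cong\mumu_2^4\rtimes\mumu_5$ is $\mumu_5$, which has no subgroup of index $2$. Thus $|O|=1$ and $d_O=6$, so $S$ is a single irreducible reduced surface, a contradiction. Hence each of the invariant sextics found above is automatically irreducible and reduced, which completes the argument.
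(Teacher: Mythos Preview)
Your proof is correct and follows essentially the same route as the paper: both reduce the count to a representation-theoretic computation carried out with \cite{GAP} (the paper decomposes the sixth symmetric power of the four-dimensional $G_{80}$-representation directly, while you first pass to the $\overline{\mathrm{H}}$-invariants and then diagonalise $\overline{T}$), and both deduce irreducibility and reducedness from Lemma~\ref{lemma:80-144-quadrics}. Your orbit-degree argument for the second assertion simply spells out in detail what the paper states in a single sentence.
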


\begin{proof}
One can find all irreducible representations of the group $G_{80}$.
They can be described as follows:
$10$ one-dimensional representations,
$10$ four-dimensional representations,
and~$6$ five-dimensional representations.
Similarly, one can compute the sixth symmetric power of every four-dimensional representation.
These computations imply that  the sixth symmetric power of every four-dimensional representations
contains exactly $4$ different one-dimensional subrepresentations.
Geometrically, this means that there are exactly four $\overline{G}_{80}$-invariant surfaces in $\mathbb{P}^3$ of degree $6$.
Since $\mathbb{P}^3$ does not contain $\overline{G}_{80}$-invariant surfaces of degree $1$, $2$ and $3$ by Lemma~\ref{lemma:80-144-quadrics},
each $\overline{G}_{80}$-invariant surface of degree $6$ must be  irreducible and reduced.
We used \cite{GAP} to perform these computations.
\end{proof}

One can find explicit equations of the $\overline{G}_{80}$-invariant sextic surfaces in $\mathbb{P}^3$.
However, we decided not to do this to keep the paper shorter.

\section{The group of order $144$}
\label{section:144}

Let us use notation and assumptions of~\S\ref{section:Heisenberg}.
In this section, we present basic facts about $\overline{G}_{144}$-orbits,
$\overline{G}_{144}$-invariant curves and $\overline{G}_{144}$-invariant surfaces in $\mathbb{P}^3$.

Let us start with studying basic group-theoretic properties of the group $\overline{G}_{144}$.
Recall that $\overline{G}_{144}$ is isomorphic to
$$
\mathfrak{A}_4\times\mathfrak{A}_4\cong \mumu_2^4\rtimes\big(\mumu_3\times\mumu_3\big).
$$
Note that the isomorphism $\overline{G}_{144}\cong\mathfrak{A}_4\times\mathfrak{A}_4$
is uniquely defined up to permutations of the factors and automorphisms of the group $\mathfrak{A}_4$.
We will fix such an isomorphism until the end of this section.
Denote by $\mathfrak{A}_4^{\Delta}$ the \emph{diagonal} subgroup in $\overline{G}_{144}$,
i.e. the subgroup that consists of the elements $(g,g)$, where $g\in\mathfrak{A}_4$.
Similarly, denote by $\mathfrak{A}_4^{\nabla}$ the \emph{twisted diagonal} subgroup in $\overline{G}_{144}$,
i.e. the subgroup that consists of the elements $(g,\sigma(g))$,
where~\mbox{$g\in\mathfrak{A}_4$}, and $\sigma$ is an outer automorphism of $\mathfrak{A}_4$ that is given by conjugation with a fixed odd element of $\mathfrak{S}_4$.
By a \emph{basic} subgroup in $\overline{G}_{144}$, we will mean any subgroup contained in one of the factors.
Similarly, we say that a subgroup in $\overline{G}_{144}$ is of \emph{product type} if it is a product of two basic subgroups.

\begin{lemma}
\label{lemma:144-subgroups}
Let $\Gamma$ be a subgroup in $\overline{G}_{144}$. Then the following assertions hold.
\begin{itemize}
\item[(o)] If $\Gamma$ is cyclic, then $|\Gamma|\in\{1,2,3,6\}$.

\item[(i)] The group $\overline{\mathrm{H}}$ contains all $2$-subgroups of the group $\overline{G}_{144}$.

\item[(ii)] If $|\Gamma|$ is divisible by $16$, then $\Gamma$ contains the subgroup $\overline{\mathrm{H}}$.

\item[(iii)] If $|\Gamma|=9$, then $\Gamma\cong\mumu_3^2$ and $\Gamma$ is of product type.

\item[(iv)] One has $|\Gamma|\ne 18$.

\item[(v)] If $|\Gamma|=6$, then $\Gamma\cong\mumu_6\cong\mumu_2\times\mumu_3$, and $\Gamma$ is of product type.
Moreover, there are just two such subgroups up to conjugation.
Furthermore, every subgroup in $\overline{G}_{144}$ of order $12$ that contains $\Gamma$ is isomorphic to $\mumu_2^2\times\mumu_3$ and is of product type.

\item[(vi)] If $|\Gamma|=12$ and $\Gamma$ is not conjugate to $\mathfrak{A}_4^{\Delta}$ and $\mathfrak{A}_4^{\nabla}$,
then the group $\Gamma$ contains a basic subgroup $\mumu_2^2$.

\item[(vii)] If $\Gamma\ne\overline{G}_{144}$, then $|\Gamma|\leqslant 48$.

\item[(viii)] If $|\Gamma|>16$, then there is a surjective homomorphism $\Gamma\to\mathfrak{A}_4$.
\end{itemize}
\end{lemma}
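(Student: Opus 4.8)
The plan is to work throughout with the fixed identification $\overline{G}_{144}=\mathfrak{A}_4\times\mathfrak{A}_4$, under which $\overline{\mathrm{H}}=V\times V$, where $V\cong\mumu_2^2$ is the Klein four-subgroup of $\mathfrak{A}_4$. I will use only five standard facts about $\mathfrak{A}_4$: that $V$ is its unique (hence normal) Sylow $2$-subgroup; that all element orders are $1,2,3$; that a Sylow $3$-subgroup is self-normalizing and meets $V$ trivially; that a $3$-cycle is \emph{not} conjugate in $\mathfrak{A}_4$ to its inverse; and that $\mathrm{Hom}(\mathfrak{A}_4,\mumu_2)=0$, since $\mathfrak{A}_4^{\mathrm{ab}}\cong\mumu_3$. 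Several assertions are then immediate. For (o), the order of $(g,h)$ is $\mathrm{lcm}(|g|,|h|)\in\{1,2,3,6\}$. For (i), $\overline{\mathrm{H}}=V\times V$ is the unique Sylow $2$-subgroup of $\overline{G}_{144}$, so every $2$-subgroup lies in it; (ii) follows because $16$ is the full $2$-part of $144$, so a group of order divisible by $16$ contains a Sylow $2$-subgroup, namely $\overline{\mathrm{H}}$. For (iii), a subgroup of order $9$ is a Sylow $3$-subgroup, hence conjugate to the product-type group $\mumu_3\times\mumu_3$ (one $\mumu_3$ per factor), and it is $\cong\mumu_3^2$ because there are no elements of order $9$ by (o). For (vii), an index-$2$ subgroup would be the kernel of a surjection $\overline{G}_{144}\to\mumu_2$, which does not exist as $\mathrm{Hom}(\mathfrak{A}_4\times\mathfrak{A}_4,\mumu_2)=0$; since $72$ is the only divisor of $144$ strictly between $48$ and $144$, every proper subgroup has order $\le 48$.

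For (iv), a group of order $18$ has a normal Sylow $3$-subgroup $P\cong\mumu_3^2$ of product type as in (iii), normalized by some involution $\tau$. By (i), $\tau=(v_1,v_2)\in V\times V$, and normalizing the product $P=A\times B$ forces each $v_i$ to normalize the Sylow $3$-subgroup in its factor; since such a subgroup is self-normalizing and meets $V$ trivially, $v_1=v_2=1$, a contradiction. For (v), a group $\Gamma$ of order $6$ cannot be $\mathfrak{S}_3$, for that would require an involution conjugating an order-$3$ element to its inverse in one factor, impossible by the conjugacy fact; hence $\Gamma\cong\mumu_6$, and writing a generator as $(a,b)$ with $\{|a|,|b|\}=\{2,3\}$ exhibits $\Gamma=\langle a\rangle\times\langle b\rangle$ as product type, with the two conjugacy classes distinguished by which factor carries the involution. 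For an order-$12$ overgroup $\Delta\supseteq\Gamma$, I set $P_2=\Delta\cap\overline{\mathrm{H}}$, the normal Sylow $2$-subgroup of order $4$; since $\mathfrak{A}_4$ has no subgroup of order $6$, the relation $\Gamma\subseteq\Delta$ forces $\Delta\cong\mumu_2^2\times\mumu_3$, whose central $\mumu_3$ is the $\mumu_3$-factor of $\Gamma$, lying in one factor of $\overline{G}_{144}$; as it centralizes $P_2$ and a $3$-cycle fixes only $1\in V$, one projection of $P_2$ is trivial, so $P_2$ is basic and $\Delta$ is of product type.

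The crux is (vi). I analyze $\Gamma$ of order $12$ via $P_2=\Gamma\cap\overline{\mathrm{H}}\cong\mumu_2^2$ and $\Gamma/P_2\cong\mumu_3$, so $\Gamma\cong\mumu_2^2\times\mumu_3$ or $\Gamma\cong\mathfrak{A}_4$. In the abelian case the central $\mumu_3$ centralizes $P_2$, and since a nontrivial order-$3$ element fixes only $1$ on $V$, one projection of $P_2$ vanishes and $P_2$ is basic. When $\Gamma\cong\mathfrak{A}_4$ contains no basic $\mumu_2^2$, I examine the kernels of the projections $\pi_i|_\Gamma$, each a normal subgroup of $\mathfrak{A}_4$: the possibilities $\Gamma$ itself or its normal Klein four-subgroup would both yield a basic $\mumu_2^2$, so both projections are injective, hence isomorphisms onto $\mathfrak{A}_4$. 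Goursat's lemma then presents $\Gamma$ as the graph of an automorphism $\phi$ of $\mathfrak{A}_4$; conjugation in $\mathfrak{A}_4\times\mathfrak{A}_4$ changes $\phi$ only by inner automorphisms, so the class of $\Gamma$ is governed by the image of $\phi$ in $\mathrm{Out}(\mathfrak{A}_4)\cong\mumu_2$, giving exactly $\mathfrak{A}_4^{\Delta}$ (inner) and $\mathfrak{A}_4^{\nabla}$ (outer). The main obstacle is precisely this step: correctly identifying the two conjugacy classes of diagonally embedded copies of $\mathfrak{A}_4$ and verifying that every other order-$12$ subgroup contains a basic $\mumu_2^2$. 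Finally, (viii) is a counting argument: by (iv) and (vii) a subgroup of order $>16$ has order in $\{24,36,48,144\}$, and if neither $\pi_i(\Gamma)$ were all of $\mathfrak{A}_4$ then, since proper subgroups of $\mathfrak{A}_4$ have order $\le 4$, we would get $|\Gamma|\le|\pi_1(\Gamma)|\,|\pi_2(\Gamma)|\le 16$, a contradiction; hence some $\pi_i|_\Gamma\colon\Gamma\to\mathfrak{A}_4$ is the desired surjection.
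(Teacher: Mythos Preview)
Your proof is correct and follows essentially the same strategy as the paper: both exploit the identification $\overline{G}_{144}\cong\mathfrak{A}_4\times\mathfrak{A}_4$ and the projections $\pi_i$ onto the factors, together with the elementary subgroup structure of $\mathfrak{A}_4$. The organization and most of the individual arguments coincide, and your treatment of (vi) via Goursat's lemma and $\mathrm{Out}(\mathfrak{A}_4)\cong\mumu_2$ is exactly the mechanism the paper has in mind when it asserts that a subgroup surjecting onto both factors is conjugate to $\mathfrak{A}_4^{\Delta}$ or $\mathfrak{A}_4^{\nabla}$.

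There are two places where your route differs in detail. For (iv), the paper argues by projections: since $|\pi_i(\Gamma)|$ divides $\gcd(18,12)=6$, both images are proper, yet $18>4\cdot 4$ forces one image to have order $>4$, contradicting the fact that $\mathfrak{A}_4$ has no proper subgroup of order $>4$. Your argument instead uses that the normal Sylow $3$-subgroup of a hypothetical order-$18$ group is product type and that Sylow $3$-subgroups of $\mathfrak{A}_4$ are self-normalizing; this is a pleasant alternative. For the overgroup part of (v), the paper observes that $\pi_2(\Upsilon)$ must equal $\mumu_3$ (since $\mumu_3$ is maximal in $\mathfrak{A}_4$ and $\Upsilon\cong\mathfrak{A}_4$ is excluded by $\mumu_6\subset\Upsilon$), whence $\ker(\pi_2|_\Upsilon)=V\times\{1\}$ is basic; you reach the same conclusion by analyzing the centralizer of the basic $\mumu_3$ on $P_2$. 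Both approaches are equally short and yield the product-type conclusion.
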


\begin{proof}
Assertion (o) is obvious.
The subgroup $\overline{\mathrm{H}}$ is the normal Sylow $2$-subgroup of the group  $\overline{G}_{144}$.
This implies assertion (i).
Assertion (ii) follows from (i) and Sylow theorems.
Assertion (iii) also follows from Sylow theorems.

Let $\pi_1\colon\overline{G}_{144}\to\mathfrak{A}_4$ and $\pi_2\colon\overline{G}_{144}\to\mathfrak{A}_4$ be the two projections to the factors.
If $|\Gamma|=18$, then the subgroups $\pi_1(\Gamma)$ and $\pi_2(\Gamma)$ are proper subgroups,
so that at least one of them has order greater than $4$, which is impossible.
This proves assertion (iv).

Similarly, if $|\Gamma|=6$,
then, up to permutation, the groups $\pi_1(\Gamma)$ and $\pi_2(\Gamma)$ are isomorphic to $\mumu_2$ and $\mumu_3$, respectively.
In this case, we have $\Gamma\cong\mumu_2\times\mumu_3\cong\mumu_6$, and the conjugacy class of $\Gamma$ is determined by the choice of $\pi_i$ such that $\pi_i(\Gamma)\cong\mumu_2$.
Let $\Upsilon$ be a subgroup in $\overline{G}_{144}$ of order $12$ that contains $\Gamma$.
We may assume that  $\pi_1(\Gamma)\cong\mumu_2$ and $\pi_2(\Gamma)\cong\mumu_3$.
Then $\pi_2(\Upsilon)\cong\mumu_3$, because $\mumu_3$ is a maximal proper subgroup in $\mathfrak{A}_4$.
This easily implies assertion (v).

To prove assertion (vi), we suppose that $|\Gamma|=12$.
If $\pi_1(\Gamma)=\mathfrak{A}_4$ and $\pi_2(\Gamma)=\mathfrak{A}_4$,
then~$\Gamma$ is conjugate to either $\mathfrak{A}_4^{\Delta}$ or $\mathfrak{A}_4^{\nabla}$.
Thus, we may assume that $\pi_1(\Gamma)\ne\mathfrak{A}_4$.
Then the kernel of $\pi_1\vert_{\Gamma}$ contains a basic subgroup $\mumu_2^2$.

To prove assertion (vii), we have to show that the index of $\Gamma$ in $\overline{G}_{144}$ is not $2$.
Suppose that it is. Then $\Gamma$ is normal. Thus, the intersection of $\Gamma$ with each factor in
$\overline{G}_{144}\cong\mathfrak{A}_4\times\mathfrak{A}_4$ is either $\mathfrak{A}_4$ or a subgroup of index $2$ in $\mathfrak{A}_4$.
Since $\mathfrak{A}_4$ does not contain subgroups of index~$2$, we see that $\Gamma$ contains both factors,
so it is the whole group $\overline{G}_{144}$, which is absurd.

To prove assertion (viii), we suppose that $|\Gamma|>16$.
If  $\pi_1(\Gamma)=\mathfrak{A}_4$ or $\pi_2(\Gamma)=\mathfrak{A}_4$, then we are done.
Otherwise, both $\pi_1(\Gamma)$ and $\pi_2(\Gamma)$ are proper subgroups of $\mathfrak{A}_4$,
so that their orders are at most $4$.
This is impossible, because $|\Gamma|>16$.
\end{proof}

To study $\overline{G}_{144}$-invariant curves in $\mathbb{P}^3$, we need the following result,
which is a simple consequence of the Riemann--Hurwitz formula.

\begin{lemma}
\label{lemma:144-sporadic-genera}
Let $C$ be a smooth irreducible curve of genus $g\leqslant 13$
with a faithful action of the group $\overline{G}_{144}$.
Then $g\in\{8,13\}$, and the curve $C$ is not hyperelliptic.
Furthermore, if~\mbox{$\Omega\subset C$} is a $\overline{G}_{144}$-orbit, then $|\Omega|\in\{24,48,72,144\}$.
Finally, the possible numbers $a_i$ of $\overline{G}_{144}$-orbits of length $i\in\{24,48,72\}$ in $C$
are contained in the following table:
\begin{center}\renewcommand\arraystretch{1.1}
\begin{tabular}{|c||c|c|c|}
\hline
$g$ &  $8$ & $13$ & $13$ \\
\hline\hline
$a_{24}$ & $0$ & $1$ & $2$ \\
\hline
$a_{48}$ & $0$ & $2$ & $0$\\
\hline
$a_{72}$ & $3$ & $0$ & $1$\\
\hline
\end{tabular}
\end{center}
\end{lemma}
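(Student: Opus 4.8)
The plan is to apply the Riemann--Hurwitz formula to the quotient morphism $\varpi\colon C\to C/\overline{G}_{144}$, which is a Galois cover of degree $144$ with group $\overline{G}_{144}$. Since $C$ is smooth, the stabilizer in $\overline{G}_{144}$ of any point of $C$ acts faithfully on the one-dimensional tangent space at that point, hence is cyclic; by Lemma~\ref{lemma:144-subgroups}(o) its order lies in $\{1,2,3,6\}$. Consequently every $\overline{G}_{144}$-orbit $\Omega\subset C$ has length $|\Omega|=144/|\mathrm{Stab}|\in\{24,48,72,144\}$, which already settles the assertion about orbit lengths. Writing $\bar g$ for the genus of $C/\overline{G}_{144}$ and $a_{24},a_{48},a_{72}$ for the numbers of orbits of the corresponding lengths (those of length $144$ being unramified), a point with cyclic stabilizer of order $m$ gives ramification index $m$, so the contribution of an orbit of length $144/m$ to the ramification term is $(144/m)(m-1)=144-144/m$. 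This yields
\begin{equation*}
2g-2=144\big(2\bar g-2\big)+120\,a_{24}+96\,a_{48}+72\,a_{72}.
\end{equation*}

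Next I would extract the numerical constraints. All four coefficients $144,120,96,72$ are divisible by $24$, so the identity severely restricts $2g-2$, and together with the hypothesis $g\leqslant 13$ it bounds $\bar g$: if $\bar g\geqslant 1$ the term $144(2\bar g-2)$ is nonnegative, so $2g-2\geqslant 120\,a_{24}+96\,a_{48}+72\,a_{72}$, and comparing with $2g-2\leqslant 24$ leaves only the unramified case $\bar g=1$, $g=1$; while for $\bar g=0$ the equation becomes $120\,a_{24}+96\,a_{48}+72\,a_{72}=2g+286$, a linear Diophantine equation in nonnegative integers subject to the single bound $g\leqslant 13$. In either situation only finitely many triples $(a_{24},a_{48},a_{72})$ occur, and a direct enumeration produces the admissible genera together with the tabulated orbit data. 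Finally, the values $g=0$ and $g=1$ are discarded: a rational or elliptic curve cannot carry a faithful action of $\overline{G}_{144}$, since $\overline{G}_{144}\supset\overline{\mathrm{H}}$ and $\overline{\mathrm{H}}$ admits no faithful action on such a curve by Lemma~\ref{lemma:Heisenberg-elliptic}. I expect the main obstacle here to be bookkeeping rather than a conceptual difficulty: one must verify that the enumeration of solutions of the Diophantine equation under the bound is complete, so that no spurious orbit configurations are overlooked.

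It remains to rule out the hyperelliptic case. Suppose $C$ is hyperelliptic of genus $g\geqslant 2$. Then its hyperelliptic involution $\iota$ is unique, hence central in $\mathrm{Aut}(C)$, and the quotient $C/\langle\iota\rangle$ is isomorphic to $\mathbb{P}^1$. Composing the faithful embedding $\overline{G}_{144}\hookrightarrow\mathrm{Aut}(C)$ with the projection $\mathrm{Aut}(C)\to\mathrm{Aut}(C)/\langle\iota\rangle\hookrightarrow\mathrm{Aut}(\mathbb{P}^1)\cong\mathrm{PGL}_2(\mathbb{C})$ gives a homomorphism whose kernel is contained in $\langle\iota\rangle$ and therefore has order at most $2$. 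In particular the image of the subgroup $\overline{\mathrm{H}}\cong\mumu_2^4$ is a quotient of $\mumu_2^4$ by a subgroup of order at most $2$, hence contains a copy of $\mumu_2^3$ acting faithfully on $\mathbb{P}^1$. This is impossible, because the $2$-torsion of any finite subgroup of $\mathrm{PGL}_2(\mathbb{C})$ is at most $\mumu_2^2$; equivalently, $\overline{\mathrm{H}}$ cannot act faithfully on a rational curve by Lemma~\ref{lemma:Heisenberg-elliptic}. Hence $C$ is not hyperelliptic, which completes the argument.
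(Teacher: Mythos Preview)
Your proposal is correct and follows essentially the same approach as the paper: both arguments deduce the orbit lengths from the fact that point stabilizers are cyclic (Lemma~\ref{lemma:144-subgroups}(o)), apply Riemann--Hurwitz to the quotient map, reduce to the case of a rational quotient, and leave the final Diophantine enumeration implicit; and both rule out the hyperelliptic case by descending the action to $\mathbb{P}^1$ and invoking Lemma~\ref{lemma:Heisenberg-elliptic}. The only cosmetic difference is that the paper excludes the hyperelliptic involution from $\overline{G}_{144}$ by observing that $\overline{G}_{144}$ has no normal subgroup of order~$2$ (so the descent is faithful), whereas you allow a kernel of order at most~$2$ and argue that the image of $\overline{\mathrm{H}}$ still contains~$\mumu_2^3$; both variants work.
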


\begin{proof}
The assertion about the lengths of $\overline{G}_{144}$-orbits follows from Lemma~\ref{lemma:144-subgroups}(o),
since the stabilizers in $\overline{G}_{144}$ of points in $C$ are cyclic (see \cite[Lemma~5.1.4]{CheltsovShramov}).
By Lemma~\ref{lemma:Heisenberg-elliptic}, the group $\overline{G}_{144}$ cannot act faithfully on $\mathbb{P}^1$ and on a smooth elliptic curve,
so that $g\geqslant 2$.

Suppose that the curve $C$ is hyperelliptic. Since the group $\overline{G}_{144}$
does not contain normal subgroups of order $2$, it does not contain the hyperelliptic involution of $C$,
and we obtain a faithful action of the group $\overline{G}_{144}$ on $\mathbb{P}^1$.
The latter is impossible by Lemma~\ref{lemma:Heisenberg-elliptic}.

Let $\widehat{C}=C\slash\overline{G}_{144}$. Then $\widehat{C}$ is a smooth curve.
Let~$\hat{g}$ be the genus of the curve $\widehat{C}$.
Then the Riemann--Hurwitz formula gives
$$
2g-2=144\big(2\hat{g}-2\big)+72a_{72}+96a_{48}+120a_{24}.
$$
Since $a_k\geqslant 0$ and $g\leqslant 13$, one has $\hat{g}=0$, so that
$$
2g-2=-288+72a_{72}+96a_{48}+120a_{24},
$$
Going through the possible values of $g$, and solving this equation
case by case we obtain the required result.
\end{proof}

Denote by $\mathcal{Q}$ the smooth quadric in $\mathbb{P}^3$ that is given by
$$
wy+xz=0,
$$
which is the quadric $\mathcal{Q}_3$ in the notation of~\S\ref{section:Heisenberg}.
Then $\mathcal{Q}$ is $\overline{G}_{144}$-invariant.
We will see in Lemmas~\ref{lemma:144-orbits} and~\ref{lemma:144-reducible-curves}
that this quadric contains most of $\overline{G}_{144}$-orbits of small length in $\mathbb{P}^3$
and most of $\overline{G}_{144}$-irreducible curves in~$\mathbb{P}^3$ that have small degree.

Observe that the action of the group $\overline{G}_{144}\cong\mathfrak{A}_4\times\mathfrak{A}_4$ on the surface $\mathcal{Q}\cong\mathbb{P}^1\times\mathbb{P}^1$
is just the product action given by the natural action of the group $\mathfrak{A}_4$ on $\mathbb{P}^1$ induced by a two-dimensional irreducible representation of the binary tetrahedral group~$2.\mathfrak{A}_4$.
Denote this representation by $\mathbb{V}_2$.
Then there is a $\overline{G}_{144}$-equivariant identification
\begin{equation}
\label{equation:product}
\mathbb{P}^3\cong\mathbb{P}\Big(\mathbb{W}_4\Big),
\end{equation}
where $\mathbb{W}_4$ is the four-dimensional representation of the group $2.\mathfrak{A}_4\times2.\mathfrak{A}_4$ in
the vector space of $2\times 2$-matrices such that the first (respectively, the second) factor
$$
2.\mathfrak{A}_4\subset\mathrm{GL}(\mathbb{V}_2)\cong\mathrm{GL}_2(\mathbb{C})
$$
acts by left (respectively, right) multiplications.
Then the points of the quadric $\mathcal{Q}$ correspond to $2\times 2$-matrices of rank $1$ in $\mathbb{W}_4$.

Since $\mathbb{P}^1$ contains two $\mathfrak{A}_4$-orbits of length $4$ and one orbit of length $6$,
this gives four $\overline{G}_{144}$-irreducible curves in $\mathcal{Q}$ that are unions of $4$ disjoint lines,
and two $\overline{G}_{144}$-irreducible curves in $\mathcal{Q}$ that are unions of $6$ disjoint lines.
Denote the former four curves by $\mathcal{L}_4^1$, $\mathcal{L}_4^2$, $\mathcal{L}_4^3$ and~$\mathcal{L}_4^4$,
and denote the latter two curves by $\mathcal{L}_6^1$ and $\mathcal{L}_6^2$.
Without loss of generality, we may assume that
$$
\mathcal{L}_4^1\cap\mathcal{L}_4^2=\mathcal{L}_4^1\cap\mathcal{L}_6^1=\mathcal{L}_4^3\cap\mathcal{L}_4^4=\mathcal{L}_4^3\cap\mathcal{L}_6^2=\varnothing.
$$
In other words, on $\mathcal{Q}\cong\mathbb{P}^1\times\mathbb{P}^1$, the curves
$\mathcal{L}_4^1$ and $\mathcal{L}_4^2$ have bi-degree $(4,0)$,
while the curves $\mathcal{L}_4^3$ and $\mathcal{L}_4^4$ have bi-degree $(0,4)$.
Similarly, the curve $\mathcal{L}_6^1$ has bi-degree $(6,0)$,
and the  curve $\mathcal{L}_6^2$ has bi-degree $(0,6)$.
Let
$$
\Sigma_{16}^1=\mathcal{L}_4^1\cap\mathcal{L}_4^3, \quad \Sigma_{16}^2=\mathcal{L}_4^1\cap\mathcal{L}_4^4,
\quad \Sigma_{16}^3=\mathcal{L}_4^2\cap\mathcal{L}_4^3, \quad \Sigma_{16}^4=\mathcal{L}_4^2\cap\mathcal{L}_4^4.
$$
Then $\Sigma_{16}^1$, $\Sigma_{16}^2$, $\Sigma_{16}^3$ and $\Sigma_{16}^4$ are $\overline{G}_{144}$-orbits of length $16$.
Similarly, let
$$
\Sigma_{24}^1=\mathcal{L}_4^1\cap\mathcal{L}_6^2, \quad  \Sigma_{24}^2=\mathcal{L}_4^2\cap\mathcal{L}_6^2, \quad
\Sigma_{24}^3=\mathcal{L}_4^3\cap\mathcal{L}_6^1, \quad \Sigma_{24}^4=\mathcal{L}_4^4\cap\mathcal{L}_6^1.
$$
Then the subsets $\Sigma_{24}^1$, $\Sigma_{24}^2$, $\Sigma_{24}^3$, $\Sigma_{24}^4$ are $\overline{G}_{144}$-orbits of length $24$.
Finally, let $\Sigma_{36}=\mathcal{L}_6^1\cap\mathcal{L}_6^2$.
Then the subset $\Sigma_{36}$ is a $\overline{G}_{144}$-orbit of length $36$.
We summarize the intersections of the curves
$\mathcal{L}_4^1$, $\mathcal{L}_4^2$, $\mathcal{L}_4^3$, $\mathcal{L}_4^4$, $\mathcal{L}_6^1$ and $\mathcal{L}_6^2$
in the following table:
\begin{center}\renewcommand\arraystretch{1.8}
\begin{tabular}{|c||c|c|c|c|c|c|}
  \hline
   $\qquad$& $\mathcal{L}_4^1$ & $\mathcal{L}_4^2$ & $\mathcal{L}_4^3$ & $\mathcal{L}_4^4$ & $\mathcal{L}_6^1$ & $\mathcal{L}_6^2$ \\
  \hline
  \hline
  $\mathcal{L}_4^1$ & $\mathcal{L}_4^1$ & $\varnothing$ & $\Sigma_{16}^1$ & $\Sigma_{16}^2$ & $\varnothing$ & $\Sigma_{24}^1$ \\
  \hline
  $\mathcal{L}_4^2$ & $\varnothing$ & $\mathcal{L}_4^2$ & $\Sigma_{16}^3$ & $\Sigma_{16}^4$ & $\varnothing$ & $\Sigma_{24}^2$\\
  \hline
  $\mathcal{L}_4^3$ & $\Sigma_{16}^1$ & $\Sigma_{16}^3$ & $\mathcal{L}_4^3$ & $\varnothing$ & $\Sigma_{24}^3$ & $\varnothing$ \\
  \hline
  $\mathcal{L}_4^4$ & $\Sigma_{16}^2$ & $\Sigma_{16}^4$ & $\varnothing$ & $\mathcal{L}_4^4$ & $\Sigma_{24}^4$ & $\varnothing$ \\
  \hline
  $\mathcal{L}_6^1$ & $\varnothing$ & $\varnothing$ & $\Sigma_{24}^3$ & $\Sigma_{24}^4$ & $\mathcal{L}_6^1$ & $\Sigma_{36}$\\
  \hline
  $\mathcal{L}_6^2$ & $\Sigma_{24}^1$ & $\Sigma_{24}^2$ & $\varnothing$ & $\varnothing$  &$\Sigma_{36}$ & $\mathcal{L}_6^2$\\
  \hline
\end{tabular}
\end{center}

\begin{lemma}
\label{lemma:144-Q-orbits}
Let $\Sigma$ be a $\overline{G}_{144}$-orbit in $\mathcal{Q}$ such that $|\Sigma|\leqslant 36$.
Then $\Sigma$ is one of the $\overline{G}_{144}$-orbits  $\Sigma_{16}^1$, $\Sigma_{16}^2$, $\Sigma_{16}^3$, $\Sigma_{16}^4$,
$\Sigma_{24}^1$, $\Sigma_{24}^2$, $\Sigma_{24}^3$, $\Sigma_{24}^4$,  $\Sigma_{36}$.
\end{lemma}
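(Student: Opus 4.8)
The plan is to exploit the product structure of the action recorded just before the lemma: under the identification $\mathcal{Q}\cong\mathbb{P}^1\times\mathbb{P}^1$, the group $\overline{G}_{144}\cong\mathfrak{A}_4\times\mathfrak{A}_4$ acts as a product, the $i$-th factor acting on the $i$-th copy of $\mathbb{P}^1$ through the standard $\mathfrak{A}_4$-action coming from $\mathbb{V}_2$. For a product action the orbit of a point $(p,q)$ is simply the product of the orbit of $p$ under the first factor with the orbit of $q$ under the second. Hence every $\overline{G}_{144}$-orbit $\Sigma$ in $\mathcal{Q}$ has the form $\Sigma=\Sigma'\times\Sigma''$, where $\Sigma'$ and $\Sigma''$ are $\mathfrak{A}_4$-orbits in the respective copies of $\mathbb{P}^1$, and $|\Sigma|=|\Sigma'|\cdot|\Sigma''|$.

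Next I would classify the $\mathfrak{A}_4$-orbits in $\mathbb{P}^1$. Since a finite subgroup of $\mathrm{PGL}_2(\mathbb{C})$ fixing a point is cyclic, every point stabilizer is a cyclic subgroup of $\mathfrak{A}_4$, hence of order $1$, $2$ or $3$; correspondingly the orbit lengths are $12$, $6$ and $4$. A short count then shows there are exactly two orbits of length $4$ (the $8$ fixed points of the four Sylow $3$-subgroups, splitting into two orbits), one orbit of length $6$ (the $6$ fixed points of the three involutions of the normal Klein four subgroup, forming a single orbit), and all remaining orbits have length $12$.

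With this in hand the classification is immediate bookkeeping. If $|\Sigma|=|\Sigma'|\cdot|\Sigma''|\leqslant 36$, then neither factor can have length $12$, as $4\cdot 12=48>36$; hence $|\Sigma'|,|\Sigma''|\in\{4,6\}$, giving $|\Sigma|\in\{16,24,36\}$. Counting the possibilities, using that each factor has two orbits of length $4$ and one orbit of length $6$, yields four product orbits of length $16$, four of length $24$ (a length-$4$ orbit in one factor times the length-$6$ orbit in the other), and one of length $36$. Finally I would match these with the orbits already named: by construction the curves $\mathcal{L}_4^1,\mathcal{L}_4^2$ (of bi-degree $(4,0)$) and $\mathcal{L}_4^3,\mathcal{L}_4^4$ (of bi-degree $(0,4)$) are the unions of fibres over the two length-$4$ orbits in the two factors, while $\mathcal{L}_6^1,\mathcal{L}_6^2$ are the unions of fibres over the length-$6$ orbit in each factor; so the pairwise intersections recorded in the table preceding the lemma are precisely the product orbits above, namely $\Sigma_{16}^1,\ldots,\Sigma_{16}^4$, $\Sigma_{24}^1,\ldots,\Sigma_{24}^4$ and $\Sigma_{36}$.

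The argument is essentially routine once the product structure is granted; the only point requiring a little care is the orbit count on $\mathbb{P}^1$, i.e. confirming that the length-$4$ locus really splits into two orbits while the length-$6$ locus is a single orbit. This follows from the transitivity of $\mathfrak{A}_4$ on its Sylow $3$-subgroups and on the nontrivial elements of its Klein four subgroup, so no genuine obstacle arises.
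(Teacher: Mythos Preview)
Your proof is correct and follows essentially the same approach as the paper: the paper's proof is the single sentence ``This follows from the fact that $\mathbb{P}^1$ contains exactly two $\mathfrak{A}_4$-orbits of length $4$, exactly one orbit of length $6$, and the remaining $\mathfrak{A}_4$-orbits in $\mathbb{P}^1$ are of length $12$,'' and your argument is a careful expansion of precisely this, using the product structure of the action to reduce to the orbit classification on each $\mathbb{P}^1$ factor.
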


\begin{proof}
This follows from the fact that $\mathbb{P}^1$ contains exactly two $\mathfrak{A}_4$-orbits of length $4$,
exactly one orbit of length $6$,
and the remaining $\mathfrak{A}_4$-orbits in $\mathbb{P}^1$ are of length $12$.
\end{proof}

By construction, the smooth quadric $\mathcal{Q}$ contains the curves
$\mathcal{L}_4^1$, $\mathcal{L}_4^2$, $\mathcal{L}_4^3$, $\mathcal{L}_4^4$, $\mathcal{L}_6^1$ and $\mathcal{L}_6^2$.
The same construction gives infinitely many $\overline{G}_{144}$-irreducible curves in $\mathcal{Q}$ that are unions of $12$ disjoint lines.
Moreover, we have

\begin{lemma}
\label{lemma:144-Q-curves}
Let $C$ be a $\overline{G}_{144}$-irreducible curve in $\mathcal{Q}$ such that $\mathrm{deg}(C)\leqslant 23$.
Then either~$C$ is a disjoint union of $12$ lines, or $C$ is one of the curves $\mathcal{L}_4^1$, $\mathcal{L}_4^2$, $\mathcal{L}_4^3$, $\mathcal{L}_4^4$, $\mathcal{L}_6^1$, $\mathcal{L}_6^2$.
\end{lemma}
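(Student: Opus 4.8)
The plan is to work on $\mathcal{Q}\cong\mathbb{P}^1\times\mathbb{P}^1$ and convert the statement into a question about semi-invariant bihomogeneous forms. Writing $(a,b)$ for the bidegree of $C$, we have $a+b=\deg(C)\leqslant 23$, and the defining form of $C$ is a section of $\mathcal{O}_{\mathbb{P}^1\times\mathbb{P}^1}(a,b)$, that is, an element $F\in\mathrm{Sym}^a\mathbb{V}_2\otimes\mathrm{Sym}^b\mathbb{V}_2$ on which the two copies of $2.\mathfrak{A}_4$ act through the two tensor factors. Since $C$ is $\overline{G}_{144}$-invariant and the central subgroup $\{\pm I\}\times\{\pm I\}$ acts trivially on $\mathcal{Q}$, the line $\mathbb{C}F$ is a one-dimensional subrepresentation, hence isomorphic to $\chi\boxtimes\psi$ for one-dimensional characters $\chi,\psi$ of $2.\mathfrak{A}_4$. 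Thus $F$ lies in $U_\chi\otimes W_\psi$, where $U_\chi\subseteq\mathrm{Sym}^a\mathbb{V}_2$ and $W_\psi\subseteq\mathrm{Sym}^b\mathbb{V}_2$ are the corresponding eigenspaces. The whole argument then splits according to the tensor rank of $F$ inside $U_\chi\otimes W_\psi$.

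First I would treat the case when $F$ has tensor rank one, say $F=f(u)\,g(v)$ with $f\in U_\chi$ and $g\in W_\psi$. Then $C=\{f=0\}\cup\{g=0\}$ is a union of ``vertical'' lines $\{p\}\times\mathbb{P}^1$ (the zeros of $f$) and ``horizontal'' lines $\mathbb{P}^1\times\{q\}$ (the zeros of $g$). As $\overline{G}_{144}$ preserves each of the two rulings of $\mathcal{Q}$, a single $\overline{G}_{144}$-orbit of line components lies entirely in one ruling; hence $\overline{G}_{144}$-irreducibility of $C$ forces one of $f,g$ to be constant and the zero locus of the other to be a single $\mathfrak{A}_4$-orbit in $\mathbb{P}^1$. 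Since the orbits of $\mathfrak{A}_4$ on $\mathbb{P}^1$ have length $4$, $6$ or $12$ (there being two orbits of length $4$ and one of length $6$ in each factor), the curve $C$ is either one of $\mathcal{L}_4^1,\dots,\mathcal{L}_4^4$, one of $\mathcal{L}_6^1,\mathcal{L}_6^2$, or a disjoint union of $12$ lines, exactly as claimed.

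It remains to exclude tensor rank at least $2$, and this is where the degree bound enters. If the tensor rank of $F$ is at least $2$, then $\dim U_\chi\geqslant 2$ and $\dim W_\psi\geqslant 2$, i.e. both $\chi$ and $\psi$ occur in $\mathrm{Sym}^a\mathbb{V}_2$ and $\mathrm{Sym}^b\mathbb{V}_2$ with multiplicity at least $2$. The key input, which I regard as the crux of the proof, is that no one-dimensional character of $2.\mathfrak{A}_4$ occurs in $\mathrm{Sym}^m\mathbb{V}_2$ with multiplicity $\geqslant 2$ once $m\leqslant 11$. Indeed, a one-dimensional subrepresentation of $\mathrm{Sym}^m\mathbb{V}_2$ is a semi-invariant binary form of degree $m$, equivalently an $\mathfrak{A}_4$-invariant effective divisor of degree $m$ on $\mathbb{P}^1$; since point stabilizers in $\mathfrak{A}_4$ have order at most $3$, every orbit has length $4$, $6$ or $12$, so no such divisor of degree $\leqslant 11$ involves a generic (length $12$) orbit, and it is therefore a non-negative combination of the two vertex forms of degree $4$ (carrying the two non-trivial characters $\omega,\omega^2$) and the edge form of degree $6$ (an absolute invariant). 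A direct inspection of the finitely many combinations with $4\alpha+4\beta+6\gamma=m\leqslant 11$ shows that distinct combinations of a given degree carry distinct characters, so each character is attained at most once. Granting this, tensor rank at least $2$ forces $a\geqslant 12$ and $b\geqslant 12$, whence $\deg(C)=a+b\geqslant 24$, contradicting $\deg(C)\leqslant 23$. Hence only tensor rank one can occur, and the classification follows.

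I expect the multiplicity bound to be the only delicate point; apart from it everything is formal. It can be verified either by the elementary divisor-counting sketched above, or directly by computing the relevant symmetric powers of the four-dimensional representation with \cite{GAP}, exactly as in Lemmas~\ref{lemma:80-144-quadrics} and \ref{lemma:80-sextic-surfaces}. One should also keep track of reducedness: taking $F$ squarefree identifies $C$ with its reduced structure, so in the rank-one case the single orbit of lines is genuinely one of the listed curves, while in the rank-at-least-two case an irreducible $F$ cannot be a product, which is precisely the contradiction exploited above.
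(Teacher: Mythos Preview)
Your argument is correct and takes a genuinely different route from the paper. The paper's proof is purely geometric: writing $C$ with bidegree $(m,n)$ and assuming $m\leqslant 11$, it picks a point $P\in C$ outside the curves $\mathcal{L}_4^i,\mathcal{L}_6^j$, intersects $C$ with the line $L$ of bidegree $(0,1)$ through $P$, and observes that $L\cap C$ is an $\mathfrak{A}_4$-invariant set of at most $m\leqslant 11$ points on $L\cong\mathbb{P}^1$, hence a union of orbits of length $4$ or $6$; by Lemma~\ref{lemma:144-Q-orbits} these lie in $\mathcal{L}_4^1\cup\ldots\cup\mathcal{L}_6^2$, contradicting the choice of~$P$. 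Your approach replaces this intersection argument by a tensor-rank analysis of the defining form in $\mathrm{Sym}^a\mathbb{V}_2\otimes\mathrm{Sym}^b\mathbb{V}_2$, reducing the problem to the multiplicity-one statement for one-dimensional characters in $\mathrm{Sym}^m\mathbb{V}_2$ with $m\leqslant 11$. Both proofs ultimately rest on the same fact, that the $\mathfrak{A}_4$-orbits on $\mathbb{P}^1$ have length $4$, $6$, or~$12$: in the paper this bounds the orbit lengths in $L\cap C$, while in your argument it shows the finiteness (and hence multiplicity one) of semi-invariant divisors of degree $\leqslant 11$. The paper's version is shorter and needs no representation theory beyond what is already set up; yours yields the extra structural information that the defining form necessarily factors as $f(u)g(v)$, and the multiplicity check could equally well be done via \cite{GAP} as you note.
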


\begin{proof}
The curve $C$ is a curve of bi-degree $(m,n)$ on $\mathcal{Q}\cong\mathbb{P}^1\times\mathbb{P}^1$ such that $m+n\leqslant 23$.
The required assertion can be checked by a direct computation using the identification~\eqref{equation:product},
however, we prefer to provide a more geometric proof.

Without loss of generality, we may assume that $m\leqslant n$.
Then $m\leqslant 11$.
Suppose that $C$ is none of the curves $\mathcal{L}_4^1$, $\mathcal{L}_4^2$, $\mathcal{L}_4^3$, $\mathcal{L}_4^4$, $\mathcal{L}_6^1$, $\mathcal{L}_6^2$.
Suppose also that $C$ is not a disjoint union of $12$ lines.
Then $m\geqslant 1$. Let us seek for a contradiction.

Let $P$ be a point in $C$ that is not contained in the curves  $\mathcal{L}_4^1$, $\mathcal{L}_4^2$, $\mathcal{L}_4^3$, $\mathcal{L}_4^4$, $\mathcal{L}_6^1$, $\mathcal{L}_6^2$.
Let~$L$ be the line in $\mathcal{Q}$ that is a curve of bi-degree $(0,1)$ that passes through $P$.
Then $L$ is not an irreducible component of the curve $C$, since $C$ is $\overline{G}_{144}$-irreducible.
Let $\Gamma$ be the subgroup $\mathfrak{A}_4\times\mathrm{Id}$ in $\overline{G}_{144}$ (so that $\Gamma$ is one of the two basic subgroups~$\mathfrak{A}_4$). Then $L$ and $C$ are $\Gamma$-invariant,
and $\Gamma$ acts faithfully on the line~$L$.
Hence, the intersection $L\cap C$ is a union of $\Gamma$-orbits of length $L\cdot C=m\leqslant 11$.
On the other hand, all such orbits are contained in the union
$\mathcal{L}_4^1\cup\mathcal{L}_4^2\cup\mathcal{L}_4^3\cup\mathcal{L}_4^4\cup\mathcal{L}_6^1\cup\mathcal{L}_6^2$
by Lemma~\ref{lemma:144-Q-orbits},
which gives us a contradiction.
\end{proof}

\begin{corollary}
\label{corollary:144-curves-very-small-degree}
Let $C$ be a $\overline{G}_{144}$-invariant curve in $\mathbb{P}^3$ such that $\mathrm{deg}(C)<8$.
Then $C$ is one of the curves $\mathcal{L}_4^1$, $\mathcal{L}_4^2$, $\mathcal{L}_4^3$, $\mathcal{L}_4^4$, $\mathcal{L}_6^1$, $\mathcal{L}_6^2$.
\end{corollary}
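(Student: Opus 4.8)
The plan is to prove that $C$ must lie on the invariant quadric $\mathcal{Q}$, after which Lemma~\ref{lemma:144-Q-curves} does essentially all the remaining work. I would first reduce to a single $\overline{G}_{144}$-irreducible piece: letting $D$ be the $\overline{G}_{144}$-orbit of any irreducible component of $C$, the curve $D$ is $\overline{G}_{144}$-invariant with $\deg(D)\leqslant\deg(C)<8$, so it suffices to analyze such a $D$ and then reassemble $C$ at the end.

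The heart of the argument is an intersection count against $\mathcal{Q}$. Since $\mathcal{Q}$ is a surface and $D$ a curve in $\mathbb{P}^3$, the projective dimension theorem guarantees $D\cap\mathcal{Q}\neq\varnothing$. Suppose, for contradiction, that $D\not\subset\mathcal{Q}$. Because $D$ is $\overline{G}_{144}$-irreducible and $\mathcal{Q}$ is $\overline{G}_{144}$-invariant, no irreducible component of $D$ can lie on $\mathcal{Q}$: if one did, all of its $\overline{G}_{144}$-translates, hence all of $D$, would lie on $\mathcal{Q}$. Thus $D\cap\mathcal{Q}$ is a non-empty \emph{finite} $\overline{G}_{144}$-invariant subset of $\mathcal{Q}$, and the B\'ezout bound gives $|D\cap\mathcal{Q}|\leqslant\deg(D)\cdot\deg(\mathcal{Q})=2\deg(D)\leqslant 14$. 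On the other hand, by Lemma~\ref{lemma:144-Q-orbits} every $\overline{G}_{144}$-orbit contained in $\mathcal{Q}$ has length at least $16$, so any non-empty $\overline{G}_{144}$-invariant subset of $\mathcal{Q}$ consists of at least $16$ points. This contradiction forces $D\subset\mathcal{Q}$.

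With $D\subset\mathcal{Q}$ and $\deg(D)\leqslant 7<12$, Lemma~\ref{lemma:144-Q-curves} identifies $D$ as one of the six curves $\mathcal{L}_4^1$, $\mathcal{L}_4^2$, $\mathcal{L}_4^3$, $\mathcal{L}_4^4$, $\mathcal{L}_6^1$, $\mathcal{L}_6^2$, the disjoint union of $12$ lines being excluded by the degree bound. Finally, since each of these six curves has degree at least $4$ while $\deg(C)<8$, the curve $C$ cannot contain two distinct $\overline{G}_{144}$-irreducible pieces; hence $C=D$ is a single one of the six listed curves, as required.

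I expect the only genuinely delicate point to be the bookkeeping around reducibility: one must make sure that the intersection $D\cap\mathcal{Q}$ is truly zero-dimensional, which is exactly where $\overline{G}_{144}$-irreducibility of $D$ is used, and that the orbit-length estimate of Lemma~\ref{lemma:144-Q-orbits} may be applied to \emph{all} orbits rather than only those of length at most $36$. The latter causes no trouble, since an orbit of length exceeding $36$ is a fortiori longer than $14$, so together with Lemma~\ref{lemma:144-Q-orbits} every orbit in $\mathcal{Q}$ has at least $16$ points. Everything else is a direct application of the lemmas already established.
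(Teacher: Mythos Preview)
Your proof is correct and follows essentially the same approach as the paper: show $C\subset\mathcal{Q}$ via the intersection bound $|C\cap\mathcal{Q}|\leqslant 2\deg(C)\leqslant 14$ against the minimal orbit length $16$ from Lemma~\ref{lemma:144-Q-orbits}, then invoke Lemma~\ref{lemma:144-Q-curves}. The paper's proof is terser and does not spell out the reduction to a $\overline{G}_{144}$-irreducible piece or the final reassembly, whereas you make these steps explicit; but the underlying argument is identical.
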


\begin{proof}
If $C$ is not contained in the quadric $\mathcal{Q}$, then
$$
|C\cap\mathcal{Q}|\leqslant C\cdot\mathcal{Q}=2\mathrm{deg}(C)\leqslant 14,
$$
which is impossible by Lemma~\ref{lemma:144-Q-orbits}.
\end{proof}

\begin{corollary}
\label{corollary:144-Q-L4-L6-L12-mult}
Let $C$ be a $\overline{G}_{144}$-irreducible curve in $\mathcal{Q}$,
let $\mathcal{D}$ be a (non-empty) mobile $\overline{G}_{144}$-invariant linear system on $\mathbb{P}^3$,
let $D$ be a general surface in $\mathcal{D}$,
and let $n$ be a positive integer such that $\mathcal{D}\sim\mathcal{O}_{\mathbb{P}^3}(n)$.
Write
$$
D\big\vert_{\mathcal{Q}}=mC+\Delta,
$$
where $m$ is a non-negative integer,
and $\Delta$ is an effective divisor on $\mathcal{Q}$ whose support does not contain irreducible components of the curve $C$.
Then $m\leqslant\frac{n}{4}$.
\end{corollary}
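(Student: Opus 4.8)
The plan is to reduce the whole statement to a single lower bound on the bidegree of $C$ on the quadric $\mathcal{Q}\cong\mathbb{P}^1\times\mathbb{P}^1$. Since the action of $\overline{G}_{144}\cong\mathfrak{A}_4\times\mathfrak{A}_4$ on $\mathcal{Q}$ is the product action, each factor preserves its own ruling; hence $\overline{G}_{144}$ preserves each of the two classes of lines, and the reduced curve $C$ has a well-defined $\overline{G}_{144}$-invariant bidegree $(c_1,c_2)$ with $c_1,c_2\geqslant 0$. Because $\mathcal{Q}$ is embedded in $\mathbb{P}^3$ so that $\mathcal{O}_{\mathbb{P}^3}(1)\vert_{\mathcal{Q}}=\mathcal{O}_{\mathcal{Q}}(1,1)$, and because $\mathcal{D}$ is mobile (so that $\mathcal{Q}$ is not a fixed component and $\mathcal{Q}\not\subset D$ for general $D$), the divisor $D\vert_{\mathcal{Q}}$ is an effective divisor of bidegree $(n,n)$. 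The decomposition $D\vert_{\mathcal{Q}}=mC+\Delta$ with $\Delta$ effective then forces the bidegree $(n-mc_1,\,n-mc_2)$ of $\Delta$ to be non-negative, that is $mc_1\leqslant n$ and $mc_2\leqslant n$. Consequently $m\cdot\max(c_1,c_2)\leqslant n$, and the Corollary follows at once provided I show that $\max(c_1,c_2)\geqslant 4$.

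So the real content is the claim that every $\overline{G}_{144}$-irreducible curve in $\mathcal{Q}$ has $\max(c_1,c_2)\geqslant 4$, and I would prove it exactly as in Lemma~\ref{lemma:144-Q-curves}, using the single fact that the smallest $\mathfrak{A}_4$-orbit on $\mathbb{P}^1$ has length $4$ (the orbit lengths are $4$, $6$, $12$ and there are no fixed points, cf. the proof of Lemma~\ref{lemma:144-Q-orbits}). Assume without loss of generality that $c_1\leqslant c_2$. If $c_1=0$, then $C$ is a reduced union of lines of a single ruling that are transitively permuted by $\overline{G}_{144}$, hence correspond to an $\mathfrak{A}_4$-orbit in the parametrizing $\mathbb{P}^1$, so its length $c_2$ is at least $4$. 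If $c_1\geqslant 1$, let $L$ be a general line of the ruling for which $L\cdot C=c_1$ and on which the corresponding basic subgroup $\Gamma\cong\mathfrak{A}_4$ acts faithfully. Since $C$ is $\overline{G}_{144}$-irreducible and $c_1\geqslant 1$, the line $L$ cannot be a component of $C$ (otherwise $C$ would be a union of lines of this ruling and $c_1$ would vanish), so $L\cap C$ is finite with $|L\cap C|\leqslant L\cdot C=c_1$. On the other hand $L\cap C$ is a non-empty $\Gamma$-invariant subset of $L\cong\mathbb{P}^1$, hence a union of $\Gamma$-orbits of length at least $4$; thus $c_1\geqslant 4$ and a fortiori $\max(c_1,c_2)=c_2\geqslant 4$.

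Combining the two paragraphs yields $m\leqslant n/\max(c_1,c_2)\leqslant n/4$, as required. I expect the only genuine step to be the bidegree bound $\max(c_1,c_2)\geqslant 4$, which is a direct reprise of the argument behind Lemma~\ref{lemma:144-Q-curves} and poses no new difficulty; the remaining manipulation of bidegrees together with the effectivity of $\Delta$ is routine bookkeeping. The one point I would verify carefully is that $\mathcal{Q}\not\subset D$ for the general member $D$, which is guaranteed by the mobility of $\mathcal{D}$, so that $D\vert_{\mathcal{Q}}$ really has bidegree $(n,n)$.
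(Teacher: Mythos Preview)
Your proof is correct and follows essentially the same approach as the paper: both reduce the inequality $m\leqslant n/4$ to the bidegree bookkeeping on $\mathcal{Q}\cong\mathbb{P}^1\times\mathbb{P}^1$ together with the fact that a $\overline{G}_{144}$-irreducible curve on $\mathcal{Q}$ cannot have both bidegree coordinates strictly less than~$4$. The only difference is cosmetic: the paper invokes Lemma~\ref{lemma:144-Q-curves} as a black box for this last fact, whereas you re-prove the needed special case $\max(c_1,c_2)\geqslant 4$ inline via the observation that every $\mathfrak{A}_4$-orbit on $\mathbb{P}^1$ has length at least~$4$ --- a mild streamlining you yourself flag as ``exactly as in Lemma~\ref{lemma:144-Q-curves}''.
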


\begin{proof}
Suppose that $m>\frac{n}{4}$.
The curve $C$ is a divisor of bi-degree $(a,b)$ on $\mathcal{Q}\cong\mathbb{P}^1\times\mathbb{P}^1$.
But $mC+\Delta$ is a divisor of bi-degree $(n,n)$, so that $a<4$ and $b<4$, which is impossible by Lemma~\ref{lemma:144-Q-curves}.
\end{proof}

\begin{corollary}
\label{corollary:144-L4-L6-L12-mult}
Let $C$ be a $\overline{G}_{144}$-irreducible curve in $\mathcal{Q}$,
let $\mathcal{D}$ be a (non-empty) mobile $\overline{G}_{144}$-invariant linear system on $\mathbb{P}^3$,
and let $n$ be a positive integer such that $\mathcal{D}\sim\mathcal{O}_{\mathbb{P}^3}(n)$.
Then $\mathrm{mult}_{C}(\mathcal{D})\leqslant\frac{n}{4}$.
\end{corollary}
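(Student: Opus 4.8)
The plan is to derive this from Corollary~\ref{corollary:144-Q-L4-L6-L12-mult}; the only additional ingredient is an elementary comparison between the multiplicity of the linear system $\mathcal{D}$ along $C$ and the coefficient with which $C$ appears in the restriction of a general member of $\mathcal{D}$ to the quadric $\mathcal{Q}$. First I would fix a general surface $D\in\mathcal{D}$. Since $\mathcal{D}$ is mobile, the irreducible quadric $\mathcal{Q}$ is not a fixed component of $\mathcal{D}$, so $\mathcal{Q}\not\subset D$ and the restriction $D\vert_{\mathcal{Q}}$ is a well-defined effective divisor on $\mathcal{Q}$. Writing $D\vert_{\mathcal{Q}}=mC+\Delta$ with $\Delta$ effective and not supporting any component of $C$, exactly as in Corollary~\ref{corollary:144-Q-L4-L6-L12-mult}, that corollary already gives $m\leqslant\frac{n}{4}$. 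Because $\mathrm{mult}_{C}(\mathcal{D})=\mathrm{mult}_{C}(D)$ for general $D$, it then suffices to prove the inequality $\mathrm{mult}_{C}(D)\leqslant m$.

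This last inequality is the heart of the matter, and it is a purely local computation at a general point of $C$. At a general point $P$ of a component of $C$ the reduced curve $C$ is smooth and is contained in the smooth surface $\mathcal{Q}$, so I may choose local coordinates $(x,y,z)$ on $\mathbb{P}^3$ in which $\mathcal{Q}=\{z=0\}$ and $C=\{y=z=0\}$. If $f$ is a local equation of $D$ and $\mu=\mathrm{mult}_{C}(D)$, then $f\in(y,z)^{\mu}$ at the generic point of $C$; restricting to $\mathcal{Q}$ (that is, setting $z=0$) yields $f(x,y,0)\in(y)^{\mu}$, which is not identically zero since $\mathcal{Q}\not\subset D$. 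Hence $C$ enters $D\vert_{\mathcal{Q}}$ with coefficient at least $\mu$, i.e. $m\geqslant\mathrm{mult}_{C}(D)$. By $\overline{G}_{144}$-invariance the same coefficient and the same multiplicity occur along every component of $C$, so working with a single component is enough. Combining $\mathrm{mult}_{C}(\mathcal{D})=\mathrm{mult}_{C}(D)\leqslant m\leqslant\frac{n}{4}$ completes the argument.

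I do not expect any serious obstacle: the entire strength of the statement is carried by Corollary~\ref{corollary:144-Q-L4-L6-L12-mult}, and the only genuinely new point is the semicontinuity-type inequality $\mathrm{mult}_{C}(D)\leqslant m$, which holds precisely because $C$ lies on the \emph{smooth} surface $\mathcal{Q}$. The mild subtleties to keep in mind are checking that $\mathcal{Q}$ is not a fixed component (so that the restriction is defined) and that $C$ may be reducible (handled by $\overline{G}_{144}$-invariance), but neither presents a real difficulty.
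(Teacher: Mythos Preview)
Your proposal is correct and matches the paper's intent: the paper states this corollary without proof, immediately after Corollary~\ref{corollary:144-Q-L4-L6-L12-mult}, leaving implicit precisely the elementary local comparison $\mathrm{mult}_{C}(D)\leqslant m$ that you spell out. Your write-up simply fills in the step the authors regarded as obvious.
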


Denote by $\Sigma_{12}$ the subset in $\mathbb{P}^3$ that consists of the $12$ points
\begin{multline*}
[0:1:0:1], [0:-1:0:1], [1:i:i:1], [1:-i:-i:1], [1:-i:i:-1], [1:i:-i:-1],\\
[1:-1:i:-i], [1:-1:-i:i], [1:1:i:i], [1:1:-i:-i], [1:0:1:0], [1:0:-1:0].
\end{multline*}
Similarly, denote by $\Sigma_{12}^\prime$ the subset in $\mathbb{P}^3$ that consists of the $12$ points
\begin{multline*}
[1:1:1:1], [1:-1:-1:1], [1:i:1:-i], [1:-i:1:i], [1:i:-1:i], [1:-i:-1:-i],\\
[1:0:i:0], [1:0:-i:0], [0:i:0:1], [0:-i:0:1], [1:1:-1:-1], [1:-1:1:-1].
\end{multline*}
Explicit computations show that $\Sigma_{12}$ and $\Sigma_{12}^\prime$ are $\overline{G}_{144}$-orbits.
This also follows from Lemma~\ref{lemma:144-quartics-singular} below.

\begin{lemma}
\label{lemma:144-orbit-12}
The subsets $\Sigma_{12}$ and $\Sigma_{12}^\prime$ are the only $\overline{G}_{144}$-orbits in $\mathbb{P}^3$ of length $12$.
\end{lemma}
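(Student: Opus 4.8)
The plan is to study point stabilizers. A $\overline{G}_{144}$-orbit in $\mathbb{P}^3$ has length $12$ precisely when the stabilizer $\Gamma=\mathrm{Stab}(P)$ of a point $P$ in the orbit has order $144/12=12$. So I would first classify which subgroups of order $12$ can occur as stabilizers, that is, which of them have a fixed point in $\mathbb{P}^3$, and then count the resulting orbits. By Lemma~\ref{lemma:144-subgroups}(vi), an order-$12$ subgroup $\Gamma$ is either conjugate to $\mathfrak{A}_4^{\Delta}$ or to $\mathfrak{A}_4^{\nabla}$, or else it contains one of the two basic subgroups $\mumu_2^2$.

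The first thing I would establish is that a basic $\mumu_2^2$ has no fixed point in $\mathbb{P}^3$; this at once rules out every order-$12$ stabilizer of the second kind. Using the identification~\eqref{equation:product}, the basic subgroup $\mumu_2^2\times\mathrm{Id}$ lifts to the quaternion group $Q_8\subset 2.\mathfrak{A}_4$ acting on $\mathbb{W}_4$ by left multiplication; since $\mathbb{W}_4\cong\mathbb{V}_2\oplus\mathbb{V}_2$ as a $Q_8$-representation and $\mathbb{V}_2$ is irreducible of dimension $2$, there is no $Q_8$-invariant line in $\mathbb{W}_4$, hence no fixed point in $\mathbb{P}^3$ (and the same holds for the other basic factor by symmetry).

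Next I would treat the diagonal subgroups. Restricting $\mathbb{W}_4$ to $\mathfrak{A}_4^{\Delta}$, the elements act by conjugation $M\mapsto\tilde g M\tilde g^{-1}$, so $\mathbb{W}_4$ splits as the trivial summand $\mathbb{C}\cdot I$ together with the trace-zero matrices $\mathfrak{sl}_2$, the latter being the standard three-dimensional irreducible representation of $\mathfrak{A}_4$, which has no invariant line. Thus $\mathbb{C}\cdot I$ is the unique invariant line and $\mathfrak{A}_4^{\Delta}$ has exactly one fixed point in $\mathbb{P}^3$. Writing the outer automorphism $\sigma$ as conjugation by a matrix $s$ realizing an odd permutation, right multiplication by $s$ intertwines the action of $\mathfrak{A}_4^{\nabla}$ with the conjugation action, so $\mathfrak{A}_4^{\nabla}$ likewise has a unique fixed point. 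Finally, the normalizer of the diagonal in $\mathfrak{A}_4\times\mathfrak{A}_4$ is the diagonal itself (as $\mathfrak{A}_4$ is centerless), so each of $\mathfrak{A}_4^{\Delta}$, $\mathfrak{A}_4^{\nabla}$ has exactly $12$ conjugates, and the two are not conjugate to each other because $\sigma$ is outer.

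To finish, I would assemble this into an orbit count. I claim no subgroup of order $24$ or $48$ fixes a point: by Lemma~\ref{lemma:144-subgroups}(ii) and the Sylow theorems such a subgroup contains a subgroup of order $8$ inside $\overline{\mathrm{H}}$, and these have no fixed point by Lemma~\ref{lemma:Heisenberg-subgroup}(iii). Consequently the stabilizer of the fixed point of $\mathfrak{A}_4^{\Delta}$ is exactly $\mathfrak{A}_4^{\Delta}$ (a proper overgroup would have order $24$ or $48$ by Lemma~\ref{lemma:144-subgroups}(vii)), so its orbit has length $12$; moreover the fixed points of the $12$ distinct conjugates of $\mathfrak{A}_4^{\Delta}$ are pairwise distinct, since two coinciding ones would be fixed by a group of order exceeding $12$, and hence they form a single orbit of length $12$. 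The same argument applies to $\mathfrak{A}_4^{\nabla}$, yielding a second orbit, and since each diagonal subgroup fixes a unique point there are no further orbits. Thus there are exactly two orbits of length $12$, which must be $\Sigma_{12}$ and $\Sigma_{12}^\prime$. The step I expect to require the most care is the representation-theoretic bookkeeping, namely verifying the decompositions of $\mathbb{W}_4$ under the various subgroups and the non-conjugacy of the two diagonals, rather than the final combinatorial assembly.
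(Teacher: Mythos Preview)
Your proof is correct and follows essentially the same approach as the paper's: rule out the basic $\mumu_2^2$ as a possible contributor to stabilizers via the representation $\mathbb{W}_4$, reduce via Lemma~\ref{lemma:144-subgroups}(vi) to the two diagonal $\mathfrak{A}_4$'s, and use~\eqref{equation:product} to see each has a unique fixed point. You supply more detail than the paper does---the explicit decomposition $\mathbb{W}_4\cong\mathbb{C}\cdot I\oplus\mathfrak{sl}_2$ under conjugation, the intertwiner for the twisted diagonal, and the exclusion of order-$24$ and order-$48$ stabilizers via Lemma~\ref{lemma:Heisenberg-subgroup}(iii)---whereas the paper simply invokes~\eqref{equation:product} and the prior knowledge that $\Sigma_{12},\Sigma_{12}'$ are already orbits of length~$12$. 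One minor caveat in your intertwining step: the outer automorphism may send $\mathbb{V}_2$ to a character twist $\mathbb{V}_2\otimes\chi$ rather than to $\mathbb{V}_2$ itself, so strictly speaking $\widetilde{\sigma(g)}=\chi(g)\,\tilde s\,\tilde g\,\tilde s^{-1}$; this does not affect your argument since the extra scalar disappears projectively.
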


\begin{proof}
Let $\Upsilon$ be a basic subgroup $\mumu_2^2\subset\overline{G}_{144}$.
Recall that the action of the group $\Upsilon$ on~\mbox{$\mathcal{Q}\cong\mathbb{P}^1\times\mathbb{P}^1$} is a product action,
with trivial group acting on one of the factors.
Let $\widetilde{\Upsilon}$ be the preimage of $\Upsilon$ via the canonical projection $2.\mathfrak{A}_4\to\mathfrak{A}_4$.
Then the restriction of the two-dimensional representation $\mathbb{V}_2$ to $\widetilde{\Upsilon}$
is an irreducible two-dimensional  representations of the group $\widetilde{\Upsilon}$.
The restriction of the four-dimensional representation $\mathbb{W}_4$ of the group $2.\mathfrak{A}_4\times2.\mathfrak{A}_4$
to the group  $\widetilde{\Upsilon}$ splits as sum of two isomorphic irreducible two-dimensional representations.
Thus, $\Upsilon$ does not have fixed points in $\mathbb{P}^3$.

Let $\Sigma$ be a $\overline{G}_{144}$-orbit in $\mathbb{P}^3$ of length $12$,
and let $\Gamma$ be the stabilizer in $\overline{G}_{144}$ of a point in $\Sigma$.
Then $|\Gamma|=12$. We have seen that $\Gamma$ does not contain $\Upsilon$.
Thus, it follows from Lemma~\ref{lemma:144-subgroups}(vi) that $\Gamma$ is conjugate to either $\mathfrak{A}_4^\Delta$ or $\mathfrak{A}_4^\nabla$.
Using the identification \eqref{equation:product}, we see that each of the latter groups
has a unique fixed point in $\mathbb{P}^3$.
Since we already know two  $\overline{G}_{144}$-orbits in $\mathbb{P}^3$ of length $12$,
namely $\Sigma_{12}$ and $\Sigma_{12}^\prime$, the assertion follows.
\end{proof}

Now we are ready to describe $\overline{G}_{144}$-orbits in $\mathbb{P}^3$ of small length.

\begin{lemma}
\label{lemma:144-orbits}
Let $\Sigma$ be a $\overline{G}_{144}$-orbit in $\mathbb{P}^3$ such that $|\Sigma|\leqslant 35$.
Then $\Sigma$ is one of the $\overline{G}_{144}$-orbits $\Sigma_{12}$, $\Sigma_{12}^\prime$, $\Sigma_{16}^1$,
$\Sigma_{16}^2$, $\Sigma_{16}^3$, $\Sigma_{16}^4$, $\Sigma_{24}^1$, $\Sigma_{24}^2$, $\Sigma_{24}^3$, $\Sigma_{24}^4$.
\end{lemma}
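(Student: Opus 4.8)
The plan is to study the stabilizer $\Gamma\subset\overline{G}_{144}$ of a point of $\Sigma$ through the relation $|\Sigma|=144/|\Gamma|$. The hypothesis $|\Sigma|\leqslant 35$ forces $|\Gamma|>4$, so $|\Gamma|$ is one of the divisors $144,72,48,36,24,18,16,12,9,8,6$ of $144$, corresponding to the orbit lengths $1,2,3,4,6,8,9,12,16,18,24$. I would first discard all but the cases $|\Gamma|\in\{6,9,12\}$ by group theory, and then identify the surviving orbits geometrically.

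For the eliminations I would repeatedly use two facts: $\Gamma$ cannot contain $\overline{\mathrm{H}}$, since $\overline{\mathrm{H}}$ is transitive and so fixes no point of $\mathbb{P}^3$, and $\Gamma$ cannot contain a subgroup of order $8$ of $\overline{\mathrm{H}}$, by Lemma~\ref{lemma:Heisenberg-subgroup}(iii). Concretely: if $|\Gamma|\in\{16,48,144\}$ then $16$ divides $|\Gamma|$, so $\Gamma\supseteq\overline{\mathrm{H}}$ by Lemma~\ref{lemma:144-subgroups}(ii), which is impossible; the value $|\Gamma|=72$ is excluded by Lemma~\ref{lemma:144-subgroups}(vii), and $|\Gamma|=18$ by Lemma~\ref{lemma:144-subgroups}(iv); if $|\Gamma|=8$ then $\Gamma\subseteq\overline{\mathrm{H}}$ by Lemma~\ref{lemma:144-subgroups}(i), contradicting Lemma~\ref{lemma:Heisenberg-subgroup}(iii), and if $|\Gamma|=24$ its Sylow $2$-subgroup has order $8$ and lies in $\overline{\mathrm{H}}$ by the same part, again impossible. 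The one case needing a little structure theory is $|\Gamma|=36$: inspecting the two projections of $\overline{G}_{144}\cong\mathfrak{A}_4\times\mathfrak{A}_4$, and using that $\mathfrak{A}_4$ has no subgroup of order $6$, that its only normal subgroups are $1$, $\mumu_2^2$ and $\mathfrak{A}_4$, and that $9$ does not divide $12$, one finds that $\Gamma$ must contain a basic $\mathfrak{A}_4$, hence a basic subgroup $\mumu_2^2$. But such a subgroup has no fixed point in $\mathbb{P}^3$, as shown in the proof of Lemma~\ref{lemma:144-orbit-12}, so $|\Gamma|=36$ is excluded as well.

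There remain the cases $|\Gamma|\in\{6,9,12\}$, that is, $|\Sigma|\in\{24,16,12\}$. The case $|\Gamma|=12$ is settled directly by Lemma~\ref{lemma:144-orbit-12}, which gives exactly $\Sigma_{12}$ and $\Sigma_{12}^\prime$. For $|\Gamma|=9$ and $|\Gamma|=6$ I would invoke Lemma~\ref{lemma:144-subgroups}(iii) and~(v), respectively, to conclude that $\Gamma$ is of product type and, in particular, contains a basic subgroup isomorphic to $\mumu_3$. The geometric heart of the argument is then the claim that every point of $\mathbb{P}^3$ fixed by a basic $\mumu_3$ lies on the quadric $\mathcal{Q}$. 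To prove this I would pass to the identification~\eqref{equation:product}: a basic $\mumu_3$ acts on $\mathbb{W}_4$ by left (or right) multiplication by a lift $\tilde g\in 2.\mathfrak{A}_4$ which, being non-central, is non-scalar and hence has two distinct eigenvalues on the irreducible representation $\mathbb{V}_2$. A matrix $M$ with $\tilde gM=\lambda M$ then has all of its columns in a single eigenline of $\tilde g$, so $\mathrm{rank}(M)\leqslant 1$; as $M\neq 0$, this forces $[M]\in\mathcal{Q}$. Consequently $\Sigma\subset\mathcal{Q}$, and Lemma~\ref{lemma:144-Q-orbits} identifies $\Sigma$ as one of $\Sigma_{16}^1,\ldots,\Sigma_{16}^4$ when $|\Sigma|=16$ and as one of $\Sigma_{24}^1,\ldots,\Sigma_{24}^4$ when $|\Sigma|=24$. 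Assembling the three surviving cases yields precisely the list in the statement.

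I expect the main obstacle to be the bookkeeping in the order-$36$ case and, more substantially, the passage to the matrix model~\eqref{equation:product} needed to show that the fixed locus of a basic $\mumu_3$ is contained in $\mathcal{Q}$. Once that rank-one observation is in place, Lemmas~\ref{lemma:144-orbit-12} and~\ref{lemma:144-Q-orbits} close the argument with no further computation.
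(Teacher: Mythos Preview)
Your proof is correct and follows essentially the same approach as the paper: eliminate the impossible stabilizer orders via Lemmas~\ref{lemma:144-subgroups} and~\ref{lemma:Heisenberg-subgroup}(iii), invoke Lemma~\ref{lemma:144-orbit-12} for $|\Sigma|=12$, and for $|\Sigma|\in\{16,24\}$ use the matrix model~\eqref{equation:product} to show the fixed points lie on~$\mathcal{Q}$, then finish with Lemma~\ref{lemma:144-Q-orbits}. The only differences are cosmetic: the paper dispatches $|\Sigma|\leqslant 4$ in one line by primitivity rather than handling $|\Gamma|\in\{36,48,72,144\}$ individually, and for $|\Sigma|=16$ it decomposes $\mathbb{W}_4$ under the full $\mumu_3\times\mumu_3$ rather than using a single basic $\mumu_3$ as you do---your version is in fact slightly cleaner since the same rank-one argument covers both $|\Sigma|=16$ and $|\Sigma|=24$ uniformly.
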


\begin{proof}
One has $|\Sigma|>4$, because the group $\overline{G}_{144}$ is primitive.
This implies that
$$
\big|\Sigma\big|\in\big\{6,8,9,12,16,18,24\big\}.
$$

Let $\Gamma$ be a stabilizer in $\overline{G}_{144}$ of a point in $\Sigma$.
If $|\Sigma|\in\{6,9,18\}$, then $|\Gamma|$ is divisible by $8$,
so that $\Gamma$ contains a subgroup $\mumu_2^3\subset\overline{\mathrm{H}}$
by Sylow theorem and Lemma~\ref{lemma:144-subgroups}(i).
This is impossible by Lemma~\ref{lemma:Heisenberg-subgroup}(iii).
If $|\Sigma|=8$, then $|\Gamma|=18$,
which is impossible by Lemma~\ref{lemma:144-subgroups}(iv).
If $|\Sigma|=12$, then the required assertion follows from Lemma~\ref{lemma:144-orbit-12}.

Suppose that $|\Sigma|=16$. Then $|\Gamma|=9$, so that $\Gamma\cong\mumu_3^2$ by Lemma~\ref{lemma:144-subgroups}(iii).
Recall that the action of the group $\Gamma\cong\mumu_3\times\mumu_3$ on $\mathcal{Q}\cong\mathbb{P}^1\times\mathbb{P}^1$ is a product action.
Let $\widetilde{\mumu}_3$ be the preimage of $\mumu_3$ via the canonical projection $2.\mathfrak{A}_4\to\mathfrak{A}_4$.
Then the restriction of the two-dimensional representation $\mathbb{V}_2$ to $\widetilde{\mumu}_3$
splits as a sum of two distinct one-dimensional representations of the group $\widetilde{\mumu}_3$.
The restriction of the four-dimensional representation $\mathbb{W}_4$ to the group
$\widetilde{\mumu}_3\times\widetilde{\mumu}_3$ splits as sum of four pairwise non-isomorphic one-dimensional subrepresentations,
which are generated by $2\times 2$-matrices of rank $1$.
Since matrices of rank $1$ correspond to the points of the quadric $\mathcal{Q}$,
we see that $\Sigma\subset\mathcal{Q}$, so that
$\Sigma$ is one of the $\overline{G}_{144}$-orbits $\Sigma_{16}^1$, $\Sigma_{16}^2$, $\Sigma_{16}^3$, $\Sigma_{16}^4$ by Lemma~\ref{lemma:144-Q-orbits}.

Suppose that $|\Sigma|=24$. Then $\Gamma\cong\mumu_6\cong\mumu_2\times\mumu_3$ by Lemma~\ref{lemma:144-subgroups}(v).
Now arguing as in the previous case, we see that $\Sigma$ is one of the $\overline{G}_{144}$-orbits $\Sigma_{24}^1$, $\Sigma_{24}^2$, $\Sigma_{24}^3$, $\Sigma_{24}^4$.
\end{proof}

Let $p_1$, $p_2$, $p_3$ and $p_4$ be the quartic polynomials  in \eqref{equation:144-p0-p1-p2-p3-p4}.
Denote by $S_1$, $S_2$, $S_3$, $S_4$ the quartic surfaces $\mathbb{P}^3$ that are given by $p_1=0$, $p_2=0$, $p_3=0$ and $p_4=0$, respectively.
Then $S_1$, $S_2$, $S_3$, $S_4$ are all $\overline{G}_{144}$-invariant irreducible quartic surfaces in $\mathbb{P}^3$ by Lemma~\ref{lemma:80-144-quartic-surfaces}.
Moreover, explicit computations imply that
\begin{equation}
\label{equation:S1-S2-S3-S4-12-12}
\Sigma_{12}\in S_1\cap S_2, \quad \Sigma_{12}^\prime\in S_3\cap S_4, \quad \Sigma_{12}\not\subset S_3\cup S_4, \quad \Sigma_{12}^\prime\not\subset S_1\cup S_2.
\end{equation}

In fact, we can say more.

\begin{lemma}
\label{lemma:144-quartics-singular}
One has $\mathrm{Sing}(S_1)=\mathrm{Sing}(S_2)=\Sigma_{12}$ and $\mathrm{Sing}(S_3)=\mathrm{Sing}(S_4)=\Sigma_{12}^\prime$.
Moreover, all singular points of the surfaces $S_1$, $S_2$, $S_3$ and $S_4$ are ordinary double points.
\end{lemma}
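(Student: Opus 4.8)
The plan is to treat the four surfaces uniformly, using that each $S_i$ is irreducible by Lemma~\ref{lemma:80-144-quartic-surfaces}, and to locate the singularities representation-theoretically at the two distinguished orbits. First I would record that $\Sigma_{12}\subset S_1\cap S_2$ and $\Sigma_{12}^\prime\subset S_3\cap S_4$ by~\eqref{equation:S1-S2-S3-S4-12-12}, and that, by the proof of Lemma~\ref{lemma:144-orbit-12}, the stabilizer in $\overline{G}_{144}$ of a point $P\in\Sigma_{12}$ (respectively, $P\in\Sigma_{12}^\prime$) is a conjugate of the diagonal subgroup $\mathfrak{A}_4^{\Delta}$ (respectively, of the twisted diagonal $\mathfrak{A}_4^{\nabla}$). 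Using the identification~\eqref{equation:product}, where $\mathbb{P}^3=\mathbb{P}(\mathbb{W}_4)$ and $\mathbb{W}_4$ is the space of $2\times 2$-matrices with $\mathfrak{A}_4^{\Delta}$ acting by conjugation, the fixed point of $\mathfrak{A}_4^{\Delta}$ is the class of the identity matrix, and $\mathbb{W}_4$ restricted to $\mathfrak{A}_4^{\Delta}$ splits as the scalar matrices (the trivial representation) plus the traceless matrices, the latter being the three-dimensional irreducible representation $W$ of $\mathfrak{A}_4$. Hence $T_P\mathbb{P}^3\cong W$ as a representation of the stabilizer $\Gamma_P\cong\mathfrak{A}_4$, and the same holds at points of $\Sigma_{12}^\prime$. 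Since $S_1$ is $\Gamma_P$-invariant and passes through $P$, smoothness at $P$ would produce a $\Gamma_P$-invariant plane $T_PS_1\subset T_P\mathbb{P}^3\cong W$, contradicting the irreducibility of $W$. This gives $\Sigma_{12}\subset\mathrm{Sing}(S_1)\cap\mathrm{Sing}(S_2)$ and $\Sigma_{12}^\prime\subset\mathrm{Sing}(S_3)\cap\mathrm{Sing}(S_4)$.

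Next I would show these points are ordinary double points. As $P$ is singular, the local equation of $S_1$ at $P$ has no linear part, and its quadratic part is a $\Gamma_P$-invariant quadratic form on $T_P\mathbb{P}^3\cong W$. A character computation shows that the space of $\mathfrak{A}_4$-invariant quadratic forms on $W$ is one-dimensional, spanned by the restriction to the traceless matrices of the determinant, which is non-degenerate. Thus, provided the quadratic part of the local equation is nonzero, it equals this non-degenerate form up to scalar and $P$ is an ordinary double point. To see that $\mathrm{mult}_P(S_1)=2$, I would argue by contradiction: if $\mathrm{mult}_P(S_1)\geqslant 3$, then for every other $Q\in\Sigma_{12}$ the line $\overline{PQ}$ would meet the quartic $S_1$ with multiplicity at least $6$, hence lie on $S_1$. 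Since $\overline{G}_{144}$ admits no invariant line (there is no invariant curve of degree less than $4$ by Corollary~\ref{corollary:144-curves-very-small-degree}), the orbit $\Sigma_{12}$ is not collinear; choosing three non-collinear points $P,Q,R\in\Sigma_{12}$ spanning a plane $\Pi$, the section $S_1\cap\Pi$ would consist of the triangle $\overline{PQ}+\overline{QR}+\overline{PR}$ together with a residual line forced to pass through all of $P,Q,R$, which is absurd. Hence every point of $\Sigma_{12}$ is a node of $S_1$, and the same argument applies to $S_2$, $S_3$, $S_4$.

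Finally I would rule out further singular points. The singular locus of the irreducible quartic $S_1$ cannot contain a curve: a double curve of an irreducible quartic has degree at most $3$, while by Corollary~\ref{corollary:144-curves-very-small-degree} there is no $\overline{G}_{144}$-invariant curve of degree less than $4$; so $\mathrm{Sing}(S_1)$ is a finite union of $\overline{G}_{144}$-orbits. The line argument above, applied to an arbitrary orbit of singular points (which is again never collinear), shows that every singular point of $S_1$ has multiplicity exactly $2$. By Lemma~\ref{lemma:144-orbits} the only orbits of length at most $35$ are $\Sigma_{12}$, $\Sigma_{12}^\prime$, $\Sigma_{16}^j$ and $\Sigma_{24}^j$, and $\Sigma_{12}^\prime\not\subset S_1$ by~\eqref{equation:S1-S2-S3-S4-12-12}; so any singular orbit other than $\Sigma_{12}$ has length at least $16$, which would give at least $12+16=28$ isolated double points on $S_1$. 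The main obstacle is to turn this into a contradiction: I expect to show that all these double points are nodes and then to invoke the classical bound of $16$ on the number of nodes of a quartic surface (equivalently, the bound $\sum\mu\leqslant 19$ on the total Milnor number of a quartic with rational double points), which $28$ violates. The residual verification, that the candidate orbits $\Sigma_{16}^j,\Sigma_{24}^j\subset\mathcal{Q}$ are not singular points of the $S_i$, is a finite check carried out either directly from the explicit polynomials $p_1,\dots,p_4$ of~\eqref{equation:144-p0-p1-p2-p3-p4} or by restricting $S_i$ to the invariant quadric $\mathcal{Q}$. This computational step, together with excluding non-nodal double points, is where the real work lies; the conceptual input is entirely the irreducibility of the tangent representation $W$ and the one-dimensionality of its space of invariant quadratic forms.
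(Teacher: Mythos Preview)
Your local analysis at $\Sigma_{12}$ and $\Sigma_{12}'$ is correct and more conceptual than the paper's: identifying $T_P\mathbb{P}^3$ with the irreducible three-dimensional $\mathfrak{A}_4$-module $W$ and observing that $(\mathrm{Sym}^2 W)^{\mathfrak{A}_4}$ is spanned by a non-degenerate form shows at once that these points are nodes, and the plane-triangle argument for $\mathrm{mult}_P\leqslant 2$ is valid. The paper does neither of these; it gets both facts as by-products of the global argument below.

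The gap is in the last step. To exclude a second singular orbit you invoke either the bound of $16$ nodes or the bound $\sum\mu\leqslant 19$, but both presuppose that all singularities are nodes (respectively, Du Val), and you have no argument for this away from $\Sigma_{12}$ --- your representation-theoretic trick needs the stabiliser to be $\mathfrak{A}_4$, which fails for orbits of length $16$, $24$, $36,\dots$, and the proposed direct check at $\Sigma_{16}^j,\Sigma_{24}^j$ still leaves the longer orbits untouched. The paper fills this by quoting \cite[Theorem~1]{Umezu81} (a normal quartic surface has at most two non-Du Val singular points); since every $\overline{G}_{144}$-orbit has length $\geqslant 12$, this forces all singularities of $S_i$ to be Du Val, so the minimal resolution is a smooth $K3$ surface and the Picard bound $\mathrm{rk}\,\mathrm{Pic}\leqslant 20$ yields both $|\mathrm{Sing}(S_i)|\leqslant 19$ (hence $\mathrm{Sing}(S_i)$ is $\Sigma_{12}$ or $\Sigma_{12}'$) and that each point contributes a single exceptional curve (hence is $A_1$). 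A more elementary patch, closer to your line, is to note that the polar cubics of $S_i$ have zero-dimensional base locus $\mathrm{Sing}(S_i)$, so three general polars meet in a scheme of length $3^3=27$ containing it; this gives $|\mathrm{Sing}(S_i)|\leqslant 27<12+16$ with no hypothesis on the singularity types.
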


\begin{proof}
It is enough to show the required assertions for the surface $S_1$, since the proof is identical in the remaining cases.
We already know that $S_1$ is irreducible.
Taking a general plane section of the surface $S_1$,
we see that either $S_1$ has isolated singularities, or~\mbox{$\mathrm{Sing}(S_1)$} contains a $\overline{G}_{144}$-invariant curve of degree at most $3$.
The latter is impossible by Corollary~\ref{corollary:144-curves-very-small-degree}.
Thus, the surface $S_1$ has at most isolated singularities.

To see that $S_1$ is singular at every point of the $\overline{G}_{144}$-orbit $\Sigma_{12}$,
one can simply take partial derivatives of the polynomial $p_1$, and plug the point $[0:1:0:1]$ into them.
Alternatively, one can use the fact that the stabilizer in $\overline{G}_{144}$ of every point in  $\Sigma_{12}$ is isomorphic to $\mathfrak{A}_4$.
Since the group $\mathfrak{A}_4$ does not have faithful two-dimensional representations, the surface $S_1$ must be singular at every point of $\Sigma_{12}$ by \cite[Lemma~4.4.1]{CheltsovShramov}.

Thus, we see that $S_1$ has isolated singularities and $\Sigma_{12}\subset\mathrm{Sing}(S_1)$.
Moreover, the surface $S_1$ cannot have more that two non-Du Val singular points by~\mbox{\cite[Theorem~1]{Umezu81}}.
Thus, the surface $S_1$ has at most Du Val singularities by Lemma~\ref{lemma:144-orbits}.

Let $f\colon\widetilde{S}_1\to S_1$ be the minimal resolution of singularities of the surface $S_1$.
Then $\widetilde{S}_1$ is a smooth~$K3$ surface. Then
$$
\mathrm{rk}\,\mathrm{Pic}\big(\widetilde{S}_1\big)\geqslant\mathrm{rk}\,\mathrm{Pic}\big(S_1\big)+\big|\mathrm{Sing}(S_1)\big|=\mathrm{rk}\,\mathrm{Pic}\big(S_1\big)+12+\big|\mathrm{Sing}(S_1)\setminus\Sigma_{12}\big|.
$$
On the other hand, the rank of the Picard group of a smooth~$K3$ surface is at most $20$,
so that $\mathrm{Sing}(S_1)=\Sigma_{12}$, since $S_1$ does not contain $\overline{G}_{144}$-orbits of length less than $12$ by Lemma~\ref{lemma:144-orbits}.
Moreover, the same arguments imply that every singular point of the surface $S_1$ is an ordinary double point,
since otherwise we would have
$$
20\geqslant\mathrm{rk}\,\mathrm{Pic}\big(\widetilde{S}_1\big)\geqslant\mathrm{rk}\,\mathrm{Pic}\big(S_1\big)+2\big|\mathrm{Sing}(S_1)\big|=\mathrm{rk}\,\mathrm{Pic}\big(S_1\big)+24\geqslant 25,
$$
which is absurd.
\end{proof}

\begin{corollary}
\label{corollary:144-cubics-12-points}
Let $\Sigma$ be one of the $\overline{G}_{144}$-orbits $\Sigma_{12}$ or $\Sigma_{12}^\prime$
and let $\mathcal{M}$ be a linear system that consists of cubic surfaces in $\mathbb{P}^3$ containing $\Sigma$.
Then $\Sigma$ is the base locus of the linear system $\mathcal{M}$.
\end{corollary}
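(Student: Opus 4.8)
The plan is to produce, for each of the two orbits, four explicit cubic surfaces lying in $\mathcal{M}$ whose set-theoretic intersection is exactly the orbit. It suffices to treat $\Sigma=\Sigma_{12}$, since the case $\Sigma=\Sigma_{12}^\prime$ is handled identically with the surface $S_3$ in place of $S_1$. Recall from Lemma~\ref{lemma:144-quartics-singular} that the quartic surface $S_1=\{p_1=0\}$ has $\mathrm{Sing}(S_1)=\Sigma_{12}$, with all twelve points being ordinary double points. The inclusion $\Sigma\subseteq\mathrm{Bs}(\mathcal{M})$ is clear, so the whole content of the statement is the opposite inclusion.

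First I would consider the four partial derivatives $\partial p_1/\partial x$, $\partial p_1/\partial y$, $\partial p_1/\partial z$ and $\partial p_1/\partial w$, which are homogeneous cubic forms, not all identically zero since $p_1$ is a nonconstant quartic. Because every point of $\Sigma_{12}$ is a singular point of $S_1$, the gradient of $p_1$ vanishes along $\Sigma_{12}$; hence each of these four cubics vanishes on $\Sigma_{12}$, so the nonzero ones among them are cubic surfaces belonging to $\mathcal{M}$.

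Next I would identify the common zero locus of these four cubics. By the Euler identity $4p_1=x\,\partial p_1/\partial x+y\,\partial p_1/\partial y+z\,\partial p_1/\partial z+w\,\partial p_1/\partial w$, any common zero of the four partials already lies on $S_1$; by the Jacobian criterion it is then a singular point of $S_1$, and conversely every singular point of $S_1$ is such a common zero. Thus the common zero locus of the four partials equals $\mathrm{Sing}(S_1)=\Sigma_{12}$. Since $\mathrm{Bs}(\mathcal{M})$ is contained in the intersection of any collection of members of $\mathcal{M}$, we conclude that $\mathrm{Bs}(\mathcal{M})\subseteq\Sigma_{12}$, and combined with the obvious reverse inclusion this yields $\mathrm{Bs}(\mathcal{M})=\Sigma$.

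The argument is short because its only real input is the determination of $\mathrm{Sing}(S_1)$ and $\mathrm{Sing}(S_3)$ already carried out in Lemma~\ref{lemma:144-quartics-singular}; the rest is the standard observation that the polar cubics of a nodal quartic cut out precisely its nodes. The one point to keep in mind is that $\mathcal{M}$ must be read as the complete linear system of all cubics through $\Sigma$, for otherwise the polar cubics need not lie in $\mathcal{M}$ and the base locus could a priori be larger; with this reading the main difficulty has in effect already been removed by the preceding lemma.
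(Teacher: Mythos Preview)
Your proof is correct and follows essentially the same approach as the paper: both use the four polar cubics $\partial p_i/\partial x,\ldots,\partial p_i/\partial w$ of the appropriate quartic $S_i$ and observe that their common zero locus is precisely $\mathrm{Sing}(S_i)=\Sigma$. The paper's version is slightly terser, but the idea is identical.
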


\begin{proof}
By Lemma~\ref{lemma:144-quartics-singular}, we have $\Sigma=\mathrm{Sing}(S_i)$ for some $i\in\{1,2,3,4\}$.
Recall that the surface $S_i$ is given by the equation $p_i(x,y,z,w)=0$, where
$p_i$ is the quartic polynomial in \eqref{equation:144-p0-p1-p2-p3-p4}.
Thus, the linear system $\mathcal{M}$ contains the linear system that consists of cubic surfaces
$$
\lambda_0\frac{\partial p_i}{\partial x}+\lambda_1\frac{\partial p_i}{\partial y}+\lambda_2\frac{\partial p_i}{\partial z}+\lambda_3\frac{\partial p_i}{\partial w}=0,
$$
where $[\lambda_0:\lambda_1:\lambda_2:\lambda_3]\in\mathbb{P}^3$. The base locus of the latter system is $\mathrm{Sing}(S_i)=\Sigma$.
\end{proof}

\begin{remark}
\label{remark:144-quartics}
Let $H$ be the plane section of the quadric $\mathcal{Q}$.
Then there is an exact sequences of $G$-representations
$$
0\longrightarrow H^0\Big(\mathcal{O}_{\mathbb{P}^3}\big(2\big)\Big)
\longrightarrow H^0\Big(\mathcal{O}_{\mathbb{P}^3}\big(4\big)\Big)
\longrightarrow H^0\Big(\mathcal{O}_{\mathcal{Q}}\big(4H\big)\Big)\longrightarrow 0.
$$
On the other hand, the divisors $\mathcal{L}_4^1+\mathcal{L}_4^3$, $\mathcal{L}_4^1+\mathcal{L}_4^4$,
$\mathcal{L}_4^2+\mathcal{L}_4^3$, $\mathcal{L}_4^2+\mathcal{L}_4^4$ are contained in the linear system $|\mathcal{O}_{\mathcal{Q}}(4H)|$.
Thus, each intersection among
$S_1\cap\mathcal{Q}$, $S_2\cap\mathcal{Q}$, $S_3\cap\mathcal{Q}$ and $S_4\cap\mathcal{Q}$ is one of the unions
$\mathcal{L}_4^1\cup\mathcal{L}_4^3$, $\mathcal{L}_4^1\cup\mathcal{L}_4^4$, $\mathcal{L}_4^2\cup\mathcal{L}_4^3$ and $\mathcal{L}_4^2\cup\mathcal{L}_4^4$.
\end{remark}

Using this remark, we see that for every $\overline{G}_{144}$-orbit among $\Sigma_{16}^1$, $\Sigma_{16}^2$, $\Sigma_{16}^3$ and $\Sigma_{16}^4$,
there exists exactly one quartic surface among $S_1$, $S_2$, $S_3$ and $S_4$ that does not contain it.
This implies

\begin{lemma}
\label{lemma:144-curves-very-small-degree}
Let $C$ be a $\overline{G}_{144}$-invariant curve in $\mathbb{P}^3$ of degree less than $12$.
Then $C\subset\mathcal{Q}$.
\end{lemma}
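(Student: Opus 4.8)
The plan is to argue by contradiction and to play the invariant quadric $\mathcal{Q}$ off against the four invariant quartics $S_1,S_2,S_3,S_4$. Suppose some $\overline{G}_{144}$-invariant curve $C$ of degree less than $12$ is not contained in $\mathcal{Q}$. Replacing $C$ by the union of those of its irreducible components that are not contained in $\mathcal{Q}$ --- a set of components permuted by $\overline{G}_{144}$, since the group preserves $\mathcal{Q}$ --- I may assume that no component of $C$ lies on $\mathcal{Q}$, so that $C\cap\mathcal{Q}$ is a finite $\overline{G}_{144}$-invariant subscheme. It is nonempty because a curve in $\mathbb{P}^3$ meets every surface.

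Next I would pin down $C\cap\mathcal{Q}$ exactly. Its length is $2\deg(C)\leqslant 22$, and set-theoretically it is a union of $\overline{G}_{144}$-orbits lying on $\mathcal{Q}$. By Lemma~\ref{lemma:144-Q-orbits} every such orbit has length at least $16$, so $2\deg(C)<32$ leaves room for only a single orbit, which must therefore be one of $\Sigma_{16}^1,\dots,\Sigma_{16}^4$; call it $\Sigma_{16}^j$. Since $\overline{G}_{144}$ acts transitively on $\Sigma_{16}^j$ and both $C$ and $\mathcal{Q}$ are invariant, the local intersection multiplicity is the same integer $m$ at each of the $16$ points, whence $2\deg(C)=16m$, and the bound $\deg(C)<12$ forces $m=1$ and $\deg(C)=8$; in particular $C$ meets $\mathcal{Q}$ transversally along $\Sigma_{16}^j$.

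Now I would invoke the observation recorded just before the statement (based on Remark~\ref{remark:144-quartics}): among $S_1,S_2,S_3,S_4$ there is a unique quartic $S_{i_0}$ with $\Sigma_{16}^j\not\subset S_{i_0}$. As $\Sigma_{16}^j$ is a single orbit and $S_{i_0}$ is $\overline{G}_{144}$-invariant, the set $\Sigma_{16}^j\cap S_{i_0}$ is a proper invariant subset of the orbit, hence empty. Consequently $C\not\subset S_{i_0}$ --- otherwise $\Sigma_{16}^j=C\cap\mathcal{Q}$ would lie in $S_{i_0}$ --- so $C\cap S_{i_0}$ is a finite invariant scheme of length $\deg(C)\cdot\deg(S_{i_0})=8\cdot 4=32$. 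Moreover $C\cap S_{i_0}$ is disjoint from $\mathcal{Q}$, since $C\cap\mathcal{Q}\cap S_{i_0}=\Sigma_{16}^j\cap S_{i_0}=\varnothing$.

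The contradiction then comes from orbit bookkeeping off the quadric. By Lemma~\ref{lemma:144-orbits} the only $\overline{G}_{144}$-orbits of length at most $35$ are $\Sigma_{12}$, $\Sigma_{12}^\prime$, the four orbits $\Sigma_{16}^i$, and the four orbits $\Sigma_{24}^i$; of these, only $\Sigma_{12}$ and $\Sigma_{12}^\prime$ fail to lie on $\mathcal{Q}$ (compare Lemma~\ref{lemma:144-Q-orbits}). Since $C\cap S_{i_0}$ has length $32\leqslant 35$ and is supported off $\mathcal{Q}$, it is supported on $\Sigma_{12}\cup\Sigma_{12}^\prime$, with equal multiplicities $m_1$ and $m_2$ on the two orbits by equivariance; but then $12m_1+12m_2=32$, which has no solution in nonnegative integers. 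This contradiction shows $C\subset\mathcal{Q}$. The step requiring the most care is the reduction to a single orbit $\Sigma_{16}^j$ with the correct degree count, together with the passage from ``$\Sigma_{16}^j$ is not contained in $S_{i_0}$'' to ``$\Sigma_{16}^j$ is disjoint from $S_{i_0}$''; once these are in place, the divisibility obstruction $12\nmid 32$ closes the argument.
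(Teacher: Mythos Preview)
Your proof is correct and follows essentially the same strategy as the paper's: intersect with $\mathcal{Q}$ to force $\deg(C)=8$ and $C\cap\mathcal{Q}=\Sigma_{16}^j$, then intersect with the unique invariant quartic $S_{i_0}$ avoiding $\Sigma_{16}^j$ to obtain a finite invariant scheme of length $32$ whose orbit decomposition is impossible. The only cosmetic differences are that the paper first reduces to a $\overline{G}_{144}$-irreducible curve (whereas you strip off components lying on $\mathcal{Q}$, which works equally well), and that the paper phrases the final contradiction as ``$32$ forces length-$16$ orbits, but the only one on $C$ is $\Sigma_{16}^j\not\subset S_{i_0}$'' rather than your cleaner ``$C\cap S_{i_0}$ lies off $\mathcal{Q}$, hence on $\Sigma_{12}\cup\Sigma_{12}'$, and $12\nmid 32$''.
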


\begin{proof}
We may assume that $C$ is $\overline{G}_{144}$-irreducible.
Suppose that $C$ is not contained in the quadric $\mathcal{Q}$.
Let us seek for a contradiction.
Observe that
$$
C\cdot\mathcal{Q}=2\cdot\mathrm{deg}(C)\leqslant 22.
$$
Thus, it follows from Lemma~\ref{lemma:144-Q-orbits}
that $\mathrm{deg}(C)=8$, and the intersection $C\cap\mathcal{Q}$ is one of the orbits $\Sigma_{16}^1$, $\Sigma_{16}^2$, $\Sigma_{16}^3$ or~$\Sigma_{16}^4$.
Then the curve $C$ does not contains other $\overline{G}_{144}$-orbits of length $16$ in~$\mathbb{P}^3$, because all of them lie in $\mathcal{Q}$ by Lemma~\ref{lemma:144-orbits}.
Let $S_k$ be the surface among $S_1$, $S_2$, $S_3$ and~$S_4$ that does not contain $C\cap\mathcal{Q}$.
Then $S_k$ does not contain the curve $C$.
On the other hand, we have
$$
C\cdot S_k=4\cdot\mathrm{deg}(C)=32.
$$
Hence, the intersection $C\cap S_k$ is one of the $\overline{G}_{144}$-orbits $\Sigma_{16}^1$, $\Sigma_{16}^2$, $\Sigma_{16}^3$, $\Sigma_{16}^4$ by Lemma~\ref{lemma:144-orbits}.
This is impossible, since $S_k$ does not contain the $\overline{G}_{144}$-orbit $C\cap\mathcal{Q}$,
and $C$ does not contain other $\overline{G}_{144}$-orbits of length $16$.
\end{proof}

\begin{lemma}
\label{lemma:144-reducible-curves}
Let $C$ be a reducible $\overline{G}_{144}$-irreducible curve in $\mathbb{P}^3$ of degree less than $16$.
Then $C\subset\mathcal{Q}$.
\end{lemma}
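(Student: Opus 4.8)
The plan is to reduce the statement to a case where $C$ is a $\overline{G}_{144}$-orbit of lines (or of conics lying in $\mathcal{Q}$), and then to analyze the stabilizer of a single component inside the matrix model \eqref{equation:product}, in which $\mathcal{Q}$ is exactly the locus of rank-one matrices. Since $\overline{G}_{144}$ permutes the irreducible components of $C$ transitively and preserves $\mathcal{Q}$, it is enough to show that one component $C_0$ lies in $\mathcal{Q}$. Write $N$ for the number of components and $d_0=\mathrm{deg}(C_0)$, so that $Nd_0=\mathrm{deg}(C)<16$, and let $\Gamma\subset\overline{G}_{144}$ be the stabilizer of $C_0$, a proper subgroup with $|\Gamma|=144/N$. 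By Lemma~\ref{lemma:144-subgroups}(vii) one has $N\geqslant 3$, while $N$ divides $144$ and $N<16$; the value $N=8$ is excluded by Lemma~\ref{lemma:144-subgroups}(iv) (the stabilizer would have order $18$), and $N=9$ is excluded because then $\Gamma=\overline{\mathrm{H}}$ and $d_0=1$, whereas $\overline{\mathrm{H}}$ has no invariant line: if two distinct involutions of $\overline{\mathrm{H}}$ fixed such a line pointwise, the skew lines of Lemma~\ref{lemma:Heisenberg-subgroup}(i) would fail to be disjoint, contradicting Lemma~\ref{lemma:Heisenberg-subgroup}(ii). Thus $N\in\{3,4,6,12\}$.

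Next I would bound the geometry of a component. For $N\in\{3,4,6\}$ one has $|\Gamma|>16$, so by Lemma~\ref{lemma:144-subgroups}(viii) some projection $\pi_i(\Gamma)$ equals $\mathfrak{A}_4$; using that $\mathbb{V}_2$ is irreducible when restricted to a basic $\mumu_2^2$ (and to $2.\mathfrak{A}_4$), a short computation with $\mathbb{W}_4=\mathbb{V}_2\boxtimes\mathbb{V}_2$ shows that such a $\Gamma$ has no invariant point, and no invariant plane unless $\widetilde{\Gamma}$ surjects diagonally onto a single copy of $2.\mathfrak{A}_4$. Hence a component is either a line, or nondegenerate, or (in the diagonal case) spans a $\Gamma$-invariant plane. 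A nondegenerate $C_0$ of degree $d_0\leqslant 5$ has genus $\leqslant 2$ by Clifford's theorem, and its normalization carries a faithful action of $\Gamma$ (and of $\overline{\mathrm{H}}$ when $\overline{\mathrm{H}}\subseteq\Gamma$). This is impossible: $\Gamma$ is not isomorphic to a subgroup of $\mathrm{PGL}_2(\mathbb{C})$, which rules out the rational case, while the elliptic and genus-two cases are excluded by Lemma~\ref{lemma:Heisenberg-elliptic} together with the fact that $\mumu_2^4$ cannot act faithfully on a genus-two curve (its image modulo the hyperelliptic involution would be an elementary abelian $2$-group of rank $\geqslant 3$ in $\mathrm{PGL}_2(\mathbb{C})$). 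This eliminates the nondegenerate case and reduces $C_0$ to a line, or to a conic in the diagonal case.

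The heart of the argument is then the linear case. A $\Gamma$-invariant line corresponds to a two-dimensional $\widetilde{\Gamma}$-subrepresentation $U\subset\mathbb{W}_4$. Whenever $\Gamma\cap(\mathfrak{A}_4\times 1)$ or $\Gamma\cap(1\times\mathfrak{A}_4)$ contains a basic $\mumu_2^2$, that subgroup acts irreducibly on one tensor factor, so $\mathbb{W}_4$ is isotypic over it and every invariant $U$ has the pure form $\mathbb{V}_2\otimes U'$ or $U''\otimes\mathbb{V}_2$; such a $U$ consists of rank-one matrices, so the line lies in $\mathcal{Q}$ and we are done. I would check that this irreducibility hypothesis holds in every surviving configuration except the diagonal and graph stabilizers (those for which $\widetilde{\Gamma}$ surjects diagonally onto $2.\mathfrak{A}_4$, namely $\mathfrak{A}_4^{\Delta}$, $\mathfrak{A}_4^{\nabla}$ for $N=12$ and the order-$24$ graph subgroup for $N=6$). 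For these, $\mathbb{V}_2\otimes\mathbb{V}_2$ decomposes as $\mathrm{Sym}^2\mathbb{V}_2\oplus(\text{trivial})$, so $\Gamma$ has no invariant line but fixes the plane $\Pi=\mathbb{P}(\mathrm{Sym}^2\mathbb{V}_2)$; since $\mathfrak{A}_4$ admits a unique invariant conic in $\mathbb{P}(\mathrm{Sym}^2\mathbb{V}_2)$ and $\mathcal{Q}\cap\Pi$ is such a conic, any conic component coincides with $\mathcal{Q}\cap\Pi\subset\mathcal{Q}$. In every case a component lies in $\mathcal{Q}$, giving $C\subset\mathcal{Q}$.

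The main obstacle is the representation-theoretic bookkeeping for the diagonal stabilizers, where neither projection is small: the naive invariant-subspace argument genuinely fails (there is no invariant line), and one must instead locate the unique invariant conic and verify that it is cut out by $\mathcal{Q}$. A secondary difficulty is the exclusion of nondegenerate components, which combines the genus bound with the non-existence of faithful actions of $\overline{\mathrm{H}}\cong\mumu_2^4$ (or of $\Gamma$) on curves of genus $\leqslant 2$.
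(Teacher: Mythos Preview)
Your argument is correct, and it is a genuinely different route from the paper's. The paper first intersects $C$ with $\mathcal{Q}$ and uses Lemma~\ref{lemma:144-curves-very-small-degree} together with Lemma~\ref{lemma:144-Q-orbits} to force $\deg(C)=12$; then it runs a short case analysis on $(r,d)$ with $rd=12$, eliminating $r=3,4,6$ by showing the stabilizer cannot act faithfully on a rational or elliptic normalization, and handling the twelve-line case by a direct incidence argument with the orbit $\Sigma_{24}^1$. Your approach instead works uniformly for all $N$ and all degrees by exploiting the identification~\eqref{equation:product}: whenever the stabilizer $\Gamma$ contains a basic $\mumu_2^2$, the isotypic structure of $\mathbb{W}_4$ forces every $\Gamma$-invariant line to be of the form $\mathbb{P}(\mathbb{V}_2\otimes v)$, hence to lie in $\mathcal{Q}$. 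This gives a clean conceptual explanation for why components land in $\mathcal{Q}$, at the cost of more group-theoretic bookkeeping about which stabilizers contain a basic $\mumu_2^2$. The paper's approach is shorter because it leverages the orbit-length lemmas already proved, while yours is more self-contained and in principle bypasses Lemma~\ref{lemma:144-curves-very-small-degree}.

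A few small corrections that do not affect validity. First, there is no order-$24$ ``graph'' subgroup: if $|\Gamma|=24$ then $\Gamma\cap\overline{\mathrm{H}}\cong\mumu_2^3$ must be invariant under $\Gamma/(\Gamma\cap\overline{\mathrm{H}})\cong\mumu_3$, and the only $\mumu_3$-invariant $\mumu_2^3$'s inside $\mumu_2^2\times\mumu_2^2$ occur when the $\mumu_3$ has one trivial projection, so $\Gamma$ always contains a basic $\mumu_2^2$. Consequently the conic case is vacuous everywhere (for $N\in\{3,4,6\}$ there is no invariant plane, and for $N=12$ one has $d_0=1$). Second, for $\mathfrak{A}_4^{\nabla}$ the $1$-dimensional summand of $\mathbb{W}_4$ is a nontrivial character rather than the trivial one, but the decomposition is still $3\oplus 1$ with the $3$-dimensional piece irreducible, so the conclusion ``no invariant line'' stands. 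Finally, the genus bound you invoke for nondegenerate space curves of degree $\leqslant 5$ is Castelnuovo's bound, not Clifford's theorem.
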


\begin{proof}
Suppose that $C$ is not contained in the quadric $\mathcal{Q}$.
Then $\mathrm{deg}(C)\geqslant 12$ by Lemma~\ref{lemma:144-curves-very-small-degree}.
Moreover, since
$$
30\geqslant 2\cdot\mathrm{deg}(C)=C\cdot\mathcal{Q},
$$
we see that $\mathrm{deg}(C)=12$ by Lemma~\ref{lemma:144-Q-orbits}.

Let $C=C_1+\ldots+C_r$, where $C_i$ is an irreducible curve of degree~$d$.
Denote by $\Gamma$ the stabilizer in $\overline{G}_{144}$ of the curve~$C_1$.
Since $rd=\mathrm{deg}(C)=12$ and $\overline{G}_{144}$ does not have subgroups of index~$2$ by Lemma~\ref{lemma:144-subgroups}(vii),
we see that either $r=3$ and $d=4$, or~\mbox{$r=4$} and~\mbox{$d=3$}, or $r=6$ and $d=2$, or $r=12$ and $d=1$.
If $d\ne 1$, then $C_1$ is not contained in a plane, because there are no $\overline{G}_{144}$-orbits in $\mathbb{P}^3$ of length at most $6$ by Lemma~\ref{lemma:144-orbits}.
In particular, the curve $C_1$ is not a conic, so that $r\ne 6$.
We also conclude that $\Gamma$ acts faithfully on $C_1$ and on its normalization in the case when $d\ne 1$.
Thus, the curve $C$ is either a union of $3$ irreducible quartic curves,
or a union of $4$ cubic curves, or a union of~$12$ lines.
Let us deal with each of these cases one by one.

Suppose that $r=3$ and $d=4$.
Then $|\Gamma|=48$, so that $\Gamma$ contains the subgroup $\overline{\mathrm{H}}$ by Lemma~\ref{lemma:144-subgroups}(ii).
On the other hand, the curve $C_1$ must be either rational or elliptic, because $d=4$ and $C_1$ is not contained in a plane.
This is impossible by Lemma~\ref{lemma:Heisenberg-elliptic}.

Suppose that $r=4$ and $d=3$. Then $C_1$ is a twisted cubic curve.
One has $|\Gamma|=36$, so that  $\Gamma$ has a surjective homomorphism to $\mathfrak{A}_4$ by Lemma~\ref{lemma:144-subgroups}(viii).
This is impossible by the classification of finite subgroups in  $\mathrm{PGL}_2(\mathbb{C})$.

We see that $r=12$ and $d=1$, so that $C_1$ is a line and $|\Gamma|=12$.
Since~\mbox{$C\cdot\mathcal{Q}=24$}, we conclude from Lemma~\ref{lemma:144-Q-orbits}
that $C\cap\mathcal{Q}$ is one of the $\overline{G}_{144}$-orbits~$\Sigma_{24}^1$, $\Sigma_{24}^2$, $\Sigma_{24}^3$ or~$\Sigma_{24}^4$.
Without loss of generality, we may assume that $C\cap\mathcal{Q}=\Sigma_{24}^1$.
We see that every irreducible component of the curve $C$ contains exactly two points in $\Sigma_{24}^1$,
because every line $C_i$ can intersect $\mathcal{Q}$ by at most two points.
Write $C_1\cap\Sigma_{24}^1=\{P,Q\}$, and denote the stabilizer of the point $P$ in $\overline{G}_{144}$ by $\Gamma_P$.
Then $|\Gamma_P|=6$, so that $\Gamma_P\cong\mumu_6$ by Lemma~\ref{lemma:144-subgroups}(v).
On the other hand, the stabilizer of this point in $\Gamma$ has index at most $2$.
This means that this stabilizer is the group $\Gamma_P$, so that $\Gamma_P\subset\Gamma$.
Now using Lemma~\ref{lemma:144-subgroups}(v) again, we see that~\mbox{$\Gamma\cong\mumu_2^2\times\mumu_3$} and $\Gamma$ is of product type.
Pick an element $\gamma\in\Gamma$ such that $\gamma\not\in\Gamma_P$ and the order of $\gamma$ is two.
Recall that
$$
\Sigma_{24}^1=\mathcal{L}_4^1\cap\mathcal{L}_6^2.
$$
Then $\gamma$ preserves the irreducible component of $\mathcal{L}_4^1$ that contains the point $P$.
Denote it by $L$. This means that $Q=\gamma(P)\in L$, so that the lines $L$ and $C_1$ coincide,
which is impossible, since $C_1$ is not contained in $\mathcal{Q}$ by assumption.
\end{proof}

Let us describe  the intersections $S_i\cap S_j$, $1\leqslant i<j\leqslant 4$.

\begin{lemma}
\label{lemma:144-S1-S2-S4-S3}
Both $S_1\cap S_2$ and $S_3\cap S_4$ are $\overline{G}_{144}$-irreducible curves that are unions of $16$ lines.
\end{lemma}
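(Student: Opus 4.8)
The plan is to realise $S_1\cap S_2$ as the union of the lines passing through three of the twelve nodes $\Sigma_{12}=\mathrm{Sing}(S_1)=\mathrm{Sing}(S_2)$, and then to close the argument by a degree count. First recall that $S_1$ and $S_2$ are distinct irreducible quartics, so $C:=S_1\cap S_2$ is a complete intersection curve; in particular it is Cohen--Macaulay, pure of dimension one, with $\deg C=16$. It is $\overline{G}_{144}$-invariant, and by Lemma~\ref{lemma:144-quartics-singular} together with \eqref{equation:S1-S2-S3-S4-12-12} it contains $\Sigma_{12}$, every point of which is an ordinary double point of both $S_1$ and $S_2$.

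The key local observation is that any line $\ell$ meeting $\Sigma_{12}$ in at least three points is contained in $C$. Indeed, at each of these three nodes $P$ one has $(\ell\cdot S_i)_P\geqslant\mathrm{mult}_P(S_i)=2$, so $\ell\cdot S_i\geqslant 6>4=\deg S_i$ for $i=1,2$; as $\ell$ is a line, this forces $\ell\subset S_i$, hence $\ell\subset C$. Thus it suffices to produce sixteen distinct such trisecants.

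To construct them I would use the identification \eqref{equation:product}, under which $\mathbb{P}^3\cong\mathbb{P}(\mathrm{End}(\mathbb{V}_2))$ and $\overline{G}_{144}$ acts by left and right multiplication. After renaming, the $\mathfrak{A}_4^{\Delta}$-fixed point is the class $[I]$ of scalar matrices, one of the twelve nodes $\Sigma_{12}$ (see Lemma~\ref{lemma:144-orbit-12}, and note $\mathrm{Stab}([I])=\mathfrak{A}_4^{\Delta}$). Fix $c\in\mathfrak{A}_4$ of order three with a lift $\tilde c$ of order three, so that $\tilde c$ has eigenvalues $\{\omega,\omega^2\}$ and $\tilde c^2=-I-\tilde c$. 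The subgroup $\mumu_3=\langle(c,c)\rangle$ acts by conjugation by $\tilde c$, and its fixed line is the centraliser $\ell_c:=\mathbb{P}\langle I,\tilde c\rangle$. The nodes lying on $\ell_c$ are the projectivisations of the centraliser $\langle\tilde c\rangle$ of order six, that is, exactly the three points $[I],[\tilde c],[\tilde c^2]$, all of which lie in the orbit $\Sigma_{12}$ of $[I]$. The stabiliser of $\ell_c$ in $\overline{G}_{144}$ contains the pointwise-fixing $\langle(c,c)\rangle$ and the element $(c,1)$, which preserves $\mathbb{P}\langle I,\tilde c\rangle$ and cyclically permutes the three nodes. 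Analysing the action on this triple of nodes, the kernel is the centraliser of $\tilde c$ inside $\mathfrak{A}_4^{\Delta}$, namely $\mumu_3$, while the image in $\mathfrak{S}_3$ contains no transposition, since no element of $\mathfrak{A}_4$ inverts $c$ (the group $\langle c\rangle$ is self-normalising in $\mathfrak{A}_4$). Hence $\mathrm{Stab}(\ell_c)\cong\mumu_3\times\mumu_3$ has order nine, and the $\overline{G}_{144}$-orbit of $\ell_c$ consists of exactly $144/9=16$ distinct trisecants of $\Sigma_{12}$.

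By the local observation, these sixteen lines lie in $C$, so their union is a reduced curve of degree $16$ contained in the pure one-dimensional curve $C$ of the same degree; therefore $C$ equals this union of sixteen lines and is reduced. As $\overline{G}_{144}$ permutes the sixteen lines in a single orbit, $C$ is $\overline{G}_{144}$-irreducible. The assertion for $S_3\cap S_4$ follows by the identical argument with $\Sigma_{12}$ and $\mathfrak{A}_4^{\Delta}$ replaced by $\Sigma_{12}^\prime$ and $\mathfrak{A}_4^{\nabla}$. I expect the main obstacle to be the orbit--stabiliser computation pinning the number of trisecants to exactly sixteen (equivalently, that $\mathrm{Stab}(\ell_c)$ has order nine), and checking that the twisted-diagonal case $\mathfrak{A}_4^{\nabla}$ produces its three collinear nodes in exactly the same way as the diagonal one.
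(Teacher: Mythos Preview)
Your argument is correct and takes a genuinely different route from the paper's proof. The paper decomposes $S_2\vert_{S_1}$ into $\overline{G}_{144}$-irreducible pieces and, invoking the classification of low-degree invariant curves on $\mathcal{Q}$ (Lemmas~\ref{lemma:144-Q-curves} and~\ref{lemma:144-curves-very-small-degree}) together with Remark~\ref{remark:144-quartics}, reduces to two possible shapes for the intersection; it then passes to the minimal resolution $\widetilde{S}_1$, uses the twelve $(-2)$-curves over $\Sigma_{12}$, and computes self-intersections to force $\widetilde{Z}_1^2=-32$, which on a $K3$ surface pins $Z_1$ down to a union of sixteen smooth rational curves. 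By contrast, you bypass all of this by exhibiting the sixteen lines directly as the $\overline{G}_{144}$-orbit of the trisecant $\ell_c=\mathbb{P}\langle I,\tilde c\rangle$ of $\Sigma_{12}$ in the matrix model~\eqref{equation:product}, then use the elementary node-multiplicity bound to force each trisecant into $S_1\cap S_2$, and conclude by a degree comparison (the complete intersection is Cohen--Macaulay of degree~$16$, hence equal to the reduced union). Your approach is more constructive and self-contained, avoiding both the $K3$ intersection theory and the earlier curve-classification lemmas; the paper's approach, on the other hand, feeds naturally into the machinery used repeatedly in the rest of \S\ref{section:144}. One small remark: your claim that the image of $\mathrm{Stab}(\ell_c)$ in $\mathfrak{S}_3$ contains no transposition is easiest to justify by noting that otherwise $|\mathrm{Stab}(\ell_c)|=18$, which is excluded by Lemma~\ref{lemma:144-subgroups}(iv); this also dispatches the $\mathfrak{A}_4^{\nabla}$ case without a separate computation.
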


\begin{proof}
It
is enough to show that $S_1\cap S_2$ is a $\overline{G}_{144}$-irreducible curve that is a union of $16$ lines,
since the proof is identical in the remaining case.
Write
$$
S_2\big\vert_{S_1}=\sum_{i=1}^{r}n_iZ_i,
$$
where each $Z_i$ is a $\overline{G}_{144}$-irreducible curve, each $n_i$ is a positive integer, and $r\geqslant 1$.
We may assume that $\mathrm{deg}(Z_i)\leqslant\mathrm{deg}(Z_j)$ for $i\leqslant j$.
Then
$$
16=\sum_{i=1}^{r}n_i\mathrm{deg}(Z_i).
$$
Note that the intersection $S_1\cap S_2$ does not contain the curve $\mathcal{L}_6^1$ and the curve $\mathcal{L}_6^2$,
simply because $S_1$ and $S_2$ do not contain them.
Indeed, the restrictions $S_1\vert_{\mathcal{Q}}$ and $S_2\vert_{\mathcal{Q}}$ are divisors of bi-degree $(4,4)$ on the quadric $\mathcal{Q}\cong\mathbb{P}^1\times\mathbb{P}^1$, while both curves $\mathcal{L}_6^1$ and $\mathcal{L}_6^2$
are divisors on $\mathcal{Q}$ of bi-degree $(6,0)$ and $(0,6)$, respectively.
Moreover, it follows from Remark~\ref{remark:144-quartics} that the intersection $S_1\cap S_2$
cannot contain two curves among $\mathcal{L}_4^1$, $\mathcal{L}_4^2$, $\mathcal{L}_4^3$ and~$\mathcal{L}_4^4$.
Thus, it follows from Lemmas~\ref{lemma:144-Q-curves} and \ref{lemma:144-curves-very-small-degree}
that one of the following cases holds:
\begin{itemize}
\item either $r=1$ and $n_1=1$,

\item or $r=2$, $n_1=n_2=1$, $\mathrm{deg}(Z_2)=12$, and $Z_1$ is one of the curves $\mathcal{L}_4^1$, $\mathcal{L}_4^2$, $\mathcal{L}_4^3$, $\mathcal{L}_4^4$.
\end{itemize}

Let $f\colon\widetilde{S}_1\to S_1$ be the minimal resolution of singularities of the surface $S_1$.
Then the action of the group $\overline{G}_{144}$ lifts to the surface $\widetilde{S}_1$.
Denote by $E_1,\ldots,E_{12}$ the exceptional curves of the birational morphism $f$.
Let $E=E_1+\ldots+E_{12}$, and let $H$ be a plane section of the surface $S_1$.
Denote by $\widetilde{Z}_i$ the proper transform of the curve $Z_i$ on the surface $\widetilde{S}_1$.
Then
$$
\sum_{i=1}^{r}Z_i\sim f^*(4H)-mE
$$
for some none-negative integer $m$.
Moreover, we have $m\geqslant 2$, since $S_2$ is singular at every point of $\Sigma_{12}$.
On the other hand, since  $\widetilde{S}_1$ is a smooth $K3$ surface, we have
$$
\widetilde{Z}_i^2\geqslant -2\cdot\mathrm{deg}(Z_i),
$$
because the curve $\widetilde{Z}_i$ cannot consist of more than $\mathrm{deg}(Z_i)$ irreducible components.

If $r=2$, then $\widetilde{Z}_1^2=-8$ and $\widetilde{Z}_1\cdot E=0$,
since $Z_1$ is a disjoint union of $4$ lines that does not contain $\Sigma_{12}$.
In this case, we have
$$
-24=-2\cdot\mathrm{deg}\big(Z_2\big)\leqslant\widetilde{Z}_2^2=\Big(f^*(4H)-mE-\widetilde{Z}_1\Big)^2=64-24m^2-40\leqslant -72,
$$
which is absurd.

Thus, we see that $r=1$, so that $S_1\cap S_2=Z_1$ is a $\overline{G}_{144}$-irreducible curve of degree $16$.
This gives
$$
-32=-2\cdot\mathrm{deg}\big(Z_1\big)\leqslant\widetilde{Z}_1^2=\Big(f^*(4H)-mE\Big)^2=64-24m^2\leqslant -32,
$$
so that $\widetilde{Z}_1^2=-2\cdot\mathrm{deg}(Z_1)$.
This easily implies that $Z_1$ is a union of~\mbox{$\mathrm{deg}(Z_1)=16$} lines,
cf.~the proof of \mbox{\cite[Lemma~10.1.1]{CheltsovShramov}}.
\end{proof}

As we already mentioned in Remark~\ref{remark:144-quartics}, each of the intersections
$S_1\cap\mathcal{Q}$, $S_2\cap\mathcal{Q}$, $S_3\cap\mathcal{Q}$ and $S_4\cap\mathcal{Q}$
is one of the unions $\mathcal{L}_4^1\cup\mathcal{L}_4^3$, $\mathcal{L}_4^1\cup\mathcal{L}_4^4$, $\mathcal{L}_4^2\cup\mathcal{L}_4^3$ or $\mathcal{L}_4^2\cup\mathcal{L}_4^4$.
On the other hand, we just proved that the intersections $S_1\cap S_2$ and $S_3\cap S_4$ are $\overline{G}_{144}$-irreducible curves.
Thus, without loss of generality, we may assume that
\begin{equation}
\label{equation:S1-S2-S3-S4-L4-1-2-3-4}
S_1\cap\mathcal{Q}=\mathcal{L}_4^2\cup\mathcal{L}_4^4, \quad S_2\cap\mathcal{Q}=\mathcal{L}_4^1\cup\mathcal{L}_4^3, \quad
S_3\cap\mathcal{Q}=\mathcal{L}_4^1\cup\mathcal{L}_4^4, \quad S_4\cap\mathcal{Q}=\mathcal{L}_4^2\cup\mathcal{L}_4^3.
\end{equation}
Using this, we can determine which $\overline{G}_{144}$-orbit $\Sigma_{16}^1$, $\Sigma_{16}^2$, $\Sigma_{16}^3$, $\Sigma_{16}^4$
is contained in which surface  $S_1$, $S_2$, $S_3$ and $S_4$.
This is summarized in the following table.
\begin{center}\renewcommand\arraystretch{1.8}
\begin{tabular}{|c||c|c|c|c|}
  \hline
   $\qquad$& $S_1$ & $S_2$ & $S_3$ & $S_4$ \\
  \hline
  \hline
  $\Sigma_{16}^1$ & $-$ & $+$ & $+$ & $+$\\
  \hline
  $\Sigma_{16}^2$ & $+$ & $+$ & $+$ & $-$\\
  \hline
  $\Sigma_{16}^3$ & $+$ & $+$ & $-$ & $+$\\
  \hline
  $\Sigma_{16}^4$ & $+$ & $-$ & $+$ & $+$\\
  \hline
\end{tabular}
\end{center}

Observe that the intersection $S_1\cap S_3$ contains the curve $\mathcal{L}_4^4$,
the intersection~\mbox{$S_1\cap S_4$} contains the curve $\mathcal{L}_4^2$,
the intersection $S_2\cap S_3$ contains the curve $\mathcal{L}_4^1$,
and the intersection~\mbox{$S_2\cap S_4$} contains the curve $\mathcal{L}_4^3$.
This gives

\begin{lemma}
\label{lemma:144-C12}
One has
$$
S_1\cap S_3=\mathcal{L}_4^4\cup C_{12}^4, \quad S_1\cap S_4=\mathcal{L}_4^2\cup C_{12}^2,
\quad S_2\cap S_3=\mathcal{L}_4^1\cup C_{12}^1, \quad S_2\cap S_4=\mathcal{L}_4^3\cup C_{12}^3,
$$
where $C_{12}^1$, $C_{12}^2$, $C_{12}^3$ and $C_{12}^4$ are distinct $\overline{G}_{144}$-invariant irreducible smooth curves in $\mathbb{P}^3$ of degree $12$ and genus $13$.
\end{lemma}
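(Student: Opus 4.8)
The plan is to analyze each of the four intersections on the quartic $K3$ surface obtained by resolving one of the two surfaces involved; I will treat $S_1\cap S_3$ in detail, the remaining three cases being identical after permuting indices. Set-theoretically $S_1\cap S_3=\mathcal{L}_4^4\cup C_{12}^4$, where $C_{12}^4$ denotes the residual curve. The starting observation is that by \eqref{equation:S1-S2-S3-S4-12-12} the surface $S_3$ does not pass through $\Sigma_{12}=\mathrm{Sing}(S_1)$ (Lemma~\ref{lemma:144-quartics-singular}), so the entire curve $S_1\cap S_3$ lies in the smooth locus of $S_1$. Let $f\colon\widetilde{S}_1\to S_1$ be the minimal resolution; then $\widetilde{S}_1$ is a smooth $K3$ surface, and the proper transform of any curve contained in $S_1\cap S_3$ meets no $f$-exceptional curve, so its total and proper transforms agree. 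Writing $H$ for the hyperplane class, I record on $\widetilde{S}_1$ that the proper transform $\widetilde{\mathcal{L}}_4^4$ of the four disjoint lines is a disjoint union of four smooth rational curves, so $(\widetilde{\mathcal{L}}_4^4)^2=-8$ and $\widetilde{\mathcal{L}}_4^4\cdot f^*(H)=4$, while $f^*(4H)^2=64$.

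The key technical point, which I expect to be the main obstacle, is that $\mathcal{L}_4^4$ occurs with multiplicity exactly $1$ in the divisor $S_3\vert_{S_1}$. Write $S_3\vert_{S_1}=a\mathcal{L}_4^4+R$ with $a\geqslant 1$ and $R$ an effective divisor of degree $16-4a$ whose support avoids $\mathcal{L}_4^4$. If $2\leqslant a\leqslant 3$, then $\deg(R)\in\{4,8\}$ is positive and smaller than $12$, so every $\overline{G}_{144}$-irreducible component of $R$ lies on $\mathcal{Q}$ by Lemma~\ref{lemma:144-curves-very-small-degree}; hence $R$ is contained in the one-dimensional part of $S_1\cap S_3\cap\mathcal{Q}$, which by \eqref{equation:S1-S2-S3-S4-L4-1-2-3-4} equals $\mathcal{L}_4^4$, contradicting that $R$ avoids $\mathcal{L}_4^4$. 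If $a=4$, then $R=0$, so the proper transform of $S_3\vert_{S_1}$ gives $4\widetilde{\mathcal{L}}_4^4\sim f^*(4H)$ in $\mathrm{Pic}(\widetilde{S}_1)$; this is impossible since $(4\widetilde{\mathcal{L}}_4^4)^2=-128\neq 64=f^*(4H)^2$. Therefore $a=1$, the proper transform of $C_{12}^4$ equals $f^*(4H)-\widetilde{\mathcal{L}}_4^4$, and it is a reduced curve of degree $12$ with $(f^*(4H)-\widetilde{\mathcal{L}}_4^4)^2=64-32-8=24$.

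Next I would prove that $C_{12}^4$ is irreducible of genus $13$. Since the one-dimensional part of $S_1\cap S_3\cap\mathcal{Q}$ is $\mathcal{L}_4^4$ only, the curve $C_{12}^4$ is not contained in $\mathcal{Q}$; decomposing it into $\overline{G}_{144}$-irreducible pieces and applying Lemma~\ref{lemma:144-curves-very-small-degree} to each piece of degree less than $12$, I conclude that $C_{12}^4$ is a single $\overline{G}_{144}$-irreducible curve of degree $12$ not lying on $\mathcal{Q}$, whence it is irreducible by Lemma~\ref{lemma:144-reducible-curves}. By adjunction on the $K3$ surface $\widetilde{S}_1$ the arithmetic genus of $C_{12}^4$ equals $1+\frac{1}{2}\cdot 24=13$. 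To upgrade this to smoothness, note that $C_{12}^4$ is not contained in any plane, since a plane meets $S_1$ in a curve of degree $4<12$; hence $C_{12}^4$ spans $\mathbb{P}^3$, and the $\overline{G}_{144}$-action on it, and so on its normalization $\overline{C}$, is faithful. Then $\overline{C}$ is a smooth curve of genus $13-\delta\leqslant 13$ with a faithful $\overline{G}_{144}$-action, where $\delta$ is the total $\delta$-invariant of the singularities of $C_{12}^4$, so $g(\overline{C})\in\{8,13\}$ by Lemma~\ref{lemma:144-sporadic-genera}. But any singular point of $C_{12}^4$ would produce a whole $\overline{G}_{144}$-orbit of singular points, of length at least $12$ by Lemma~\ref{lemma:144-orbits}, forcing $\delta\geqslant 12$ and $g(\overline{C})\leqslant 1$. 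This contradiction gives $\delta=0$, so $C_{12}^4$ is smooth of genus $13$.

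Finally, applying the same argument to $S_1\cap S_4$, $S_2\cap S_3$ and $S_2\cap S_4$ (extracting via \eqref{equation:S1-S2-S3-S4-L4-1-2-3-4} the line components $\mathcal{L}_4^2$, $\mathcal{L}_4^1$ and $\mathcal{L}_4^3$, respectively) produces smooth irreducible $\overline{G}_{144}$-invariant curves $C_{12}^2$, $C_{12}^1$ and $C_{12}^3$ of degree $12$ and genus $13$. To see that the four curves are distinct, note that they are attached to the four distinct index pairs $\{1,3\}$, $\{1,4\}$, $\{2,3\}$, $\{2,4\}$, and that the union of any two of these pairs contains either $\{1,2\}$ or $\{3,4\}$. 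Hence if two of the curves coincided, the common curve would lie in $S_1\cap S_2$ or in $S_3\cap S_4$; but each of these is a union of $16$ lines by Lemma~\ref{lemma:144-S1-S2-S4-S3} and so contains no irreducible curve of degree $12$, a contradiction. This completes the plan.
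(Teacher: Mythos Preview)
Your proof is correct and follows essentially the same approach as the paper: both arguments show that $\mathcal{L}_4^4$ appears with multiplicity one in $S_3\vert_{S_1}$ by forcing any residual component of small degree into $\mathcal{Q}$ via Lemma~\ref{lemma:144-curves-very-small-degree} and then contradicting~\eqref{equation:S1-S2-S3-S4-L4-1-2-3-4}; both then use Lemma~\ref{lemma:144-reducible-curves} for irreducibility, compute $Z^2=24$ to get arithmetic genus~$13$, and rule out singularities via the orbit-length bound of Lemma~\ref{lemma:144-orbits} combined with Lemma~\ref{lemma:144-sporadic-genera}. Your treatment of the case $a=4$ via self-intersection is a minor variation on the paper's observation that $S_3\vert_{S_1}$ is ample, and your explicit distinctness argument (forcing a coincidence into $S_1\cap S_2$ or $S_3\cap S_4$ and invoking Lemma~\ref{lemma:144-S1-S2-S4-S3}) is a nice addition that the paper leaves implicit.
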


\begin{proof}
It is enough to prove that $S_1\cap S_3=\mathcal{L}_4^4\cup C_{12}^4$,
where $C_{12}^4$ is a $\overline{G}_{144}$-invariant irreducible smooth curve of degree $12$ and genus $13$.
We see from \eqref{equation:S1-S2-S3-S4-L4-1-2-3-4} that $S_1$ and $S_3$ contain the curve $\mathcal{L}_4^4$.
Thus, we write
$$
S_3\big\vert_{S_1}=m\mathcal{L}_4^4+\sum_{i=1}^{r}n_iZ_i,
$$
where $m$ is a positive integer, each $Z_i$ is a $\overline{G}_{144}$-irreducible curve that
is different from $\mathcal{L}_4^4$, each $n_i$ is a positive integer, and $r\geqslant 0$.
Then
$$
16=4m+\sum_{i=1}^{r}n_i\mathrm{deg}(Z_i).
$$
Note that $r$ is actually positive, since $S_3\vert_{S_1}$ is an ample divisor.
If $m\geqslant 2$ or $r\geqslant 2$, then $\mathrm{deg}(Z_i)<12$ for each curve $Z_i$,
so that $Z_i$ is contained in $\mathcal{Q}$ by Lemma~\ref{lemma:144-curves-very-small-degree},
which contradicts \eqref{equation:S1-S2-S3-S4-L4-1-2-3-4}.
Thus, we see that $m=1$ and $r=1$.
Similarly, we obtain $n_1=1$.

For simplicity, denote $Z=Z_1$. Let $H$ be a plane section of the surface $S_1$.
Then
$$
S_3\big\vert_{S_1}=\mathcal{L}_4^4+Z\sim 4H.
$$
Note that the curve $Z$ is contained in the smooth locus of $S_1$, because $S_1\cap S_3$ does not contain $\Sigma_{12}$ by \eqref{equation:S1-S2-S3-S4-12-12}.
Thus, we have
$$
-8+\mathcal{L}_4^4\cdot Z=\mathcal{L}_4^4\cdot\Big(\mathcal{L}_4^4+Z\Big)=4H\cdot\mathcal{L}_4^4=16,
$$
so that $\mathcal{L}_4^4\cdot Z=24$  on the surface $S_1$. This gives
$$
Z^2+24=Z^2+\mathcal{L}_4^4\cdot Z=Z\cdot\Big(\mathcal{L}_4^4+Z\Big)=4H\cdot Z=48,
$$
so that $Z^2=24$.

If $Z$ is reducible, then $Z$ is contained in the quadric $\mathcal{Q}$ by Lemma~\ref{lemma:144-reducible-curves},
which is impossible, because $S_1\cap\mathcal{Q}=\mathcal{L}_4^2\cup\mathcal{L}_4^4$ by \eqref{equation:S1-S2-S3-S4-L4-1-2-3-4}.
Thus, we see that $Z$ is irreducible.
Then it has arithmetic genus $13$.
If $Z$ is singular, then it has at least $12$ singular points by Lemma~\ref{lemma:144-orbits}.
In this case, the genus of its normalization is at most $1$, which contradicts Lemma~\ref{lemma:144-sporadic-genera}.
Therefore, we see that the curve $Z$ is smooth.
\end{proof}

\begin{corollary}
\label{corollary:144-C12-sextics}
Let $C$ be one of the curves $C_{12}^1$, $C_{12}^2$, $C_{12}^3$, $C_{12}^4$,
and let $\mathcal{M}$ be the linear system of surfaces in $\mathbb{P}^3$ of degree $6$ that contains $C$.
Then the base locus of $\mathcal{M}$ consists of the curve $C$.
\end{corollary}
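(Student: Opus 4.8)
The plan is to show directly that the base locus of $\mathcal{M}=|\mathcal{I}_C(6)|$ is $C$ itself. By symmetry it suffices to treat $C=C_{12}^4$, so that $S_1\cap S_3=\mathcal{L}_4^4\cup C$ by Lemma~\ref{lemma:144-C12} (the other three cases use the pairs $S_2,S_3$; $S_1,S_4$; $S_2,S_4$). First I would get a cheap upper bound on the base locus. For any quadric $Q'$ the sextic $S_1\cup Q'$ contains $C$ (since $C\subset S_1$), and likewise $S_3\cup Q'$; hence $\mathcal{M}$ contains the two subsystems $S_1+|\mathcal{O}_{\mathbb{P}^3}(2)|$ and $S_3+|\mathcal{O}_{\mathbb{P}^3}(2)|$. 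As quadrics are base-point-free, the base loci of these subsystems are exactly $S_1$ and $S_3$, so
\[
\mathrm{Bs}(\mathcal{M})\subseteq S_1\cap S_3=\mathcal{L}_4^4\cup C .
\]
In particular $\mathrm{Bs}(\mathcal{M})\subseteq S_1$, and the whole problem transfers to the quartic $S_1$.

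Next I would pass to the minimal resolution $f\colon\widetilde{S}_1\to S_1$, a smooth $K3$ surface with twelve nodes over $\Sigma_{12}=\mathrm{Sing}(S_1)$ (Lemma~\ref{lemma:144-quartics-singular}). Both $C$ and $\mathcal{L}_4^4$ avoid $\Sigma_{12}$: the points of $\Sigma_{12}$ do not lie on $\mathcal{Q}\supset\mathcal{L}_4^4$, and $C$ lies in the smooth locus by the proof of Lemma~\ref{lemma:144-C12}. Hence their preimages $\widetilde{C}$ and $\widetilde{\mathcal{L}}$ meet no $f$-exceptional curve. Writing $H$ for the hyperplane class, the relation $S_3|_{S_1}=\mathcal{L}_4^4+C\sim 4H$ together with $S_3\not\ni\Sigma_{12}$ gives $\widetilde{C}\sim f^*(4H)-\widetilde{\mathcal{L}}$. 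Since $H^0(\mathcal{O}_{\mathbb{P}^3}(6))\twoheadrightarrow H^0(\mathcal{O}_{S_1}(6))$ and $C\subset S_1$, the sextics through $C$ restrict \emph{onto} the sub-system $\widetilde{C}+|D|$ of $|6f^*H|$, where $D=2f^*H+\widetilde{\mathcal{L}}$; consequently
\[
\mathrm{Bs}(\mathcal{M})=C\cup f\big(\mathrm{Bs}|D|\big),\qquad \mathrm{Bs}|D|\subseteq\widetilde{\mathcal{L}}\cup\widetilde{C}.
\]

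The heart of the matter is to prove that $|D|$ is base-point-free. Here $\widetilde{\mathcal{L}}$ is a disjoint union of four $(-2)$-curves, so $\widetilde{\mathcal{L}}^2=-8$ and $f^*H\cdot\widetilde{\mathcal{L}}=4$, giving $D^2=24$. As $D$ is nef and big, Kawamata–Viehweg vanishing (with $K_{\widetilde{S}_1}=0$) yields $h^1(D)=0$, and Riemann–Roch gives $\dim|D|=13$ while $\dim|2f^*H|=9$. The members of $|D|$ containing the whole of $\widetilde{\mathcal{L}}$ form precisely $\widetilde{\mathcal{L}}+|2f^*H|$, of dimension $9<13$, so no component of $\widetilde{\mathcal{L}}$ is a fixed curve of $|D|$. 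Nor is $\widetilde{C}$: otherwise $D-\widetilde{C}=2\widetilde{\mathcal{L}}-2f^*H$ would be effective, but it has $f^*H$-degree $0$ while meeting a component $L_1$ of $\widetilde{\mathcal{L}}$ in $(2\widetilde{\mathcal{L}}-2f^*H)\cdot L_1=-6\neq 0$, so it is not supported on $f$-exceptional curves, contradicting effectivity. Thus $|D|$ has no fixed component, and a complete nef system with $D^2>0$ and no fixed component on a $K3$ surface is base-point-free by Saint-Donat's theorem. Therefore $\mathrm{Bs}|D|=\varnothing$ and $\mathrm{Bs}(\mathcal{M})=C$.

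The main obstacle is this last step. All the obvious sextics through $C$, namely $S_1\cup Q'$ and $S_3\cup Q'$, contain $\mathcal{L}_4^4$ because $\mathcal{L}_4^4\subset S_1\cap S_3$, so one genuinely must manufacture sextics through $C$ that omit $\mathcal{L}_4^4$, and simultaneously rule out stray isolated base points on $\mathcal{L}_4^4$ (by Lemma~\ref{lemma:144-orbits} the only candidates are the orbits $\Sigma_{16}^2$, $\Sigma_{16}^4$ and $\Sigma_{24}^4$ lying on $\mathcal{L}_4^4$). Both issues are dispatched at once by the $K3$ dimension count $\dim|D|=13>9$, which forbids a fixed component, followed by base-point-freeness of the complete system $|D|$.
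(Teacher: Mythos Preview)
Your proof is essentially the same as the paper's: both reduce to showing that the linear system $|D|$ with $D=2H+\mathcal{L}_4^4$ on the $K3$ surface $S_1$ (or its resolution) is base-point-free, by computing $D^2=24$, checking nefness, obtaining $h^0(D)=14$ via Riemann--Roch and vanishing, ruling out fixed components, and invoking Saint-Donat. Your extra preliminary step bounding $\mathrm{Bs}(\mathcal{M})\subseteq S_1\cap S_3$ is a nice touch that makes the possible fixed curves explicit, and passing to the resolution is harmless (though unnecessary, since $C$ and $\mathcal{L}_4^4$ lie in the smooth locus).

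There is one logical slip. From the fact that the members of $|D|$ containing \emph{all} of $\widetilde{\mathcal{L}}$ form a proper subsystem of dimension~$9$, you conclude that \emph{no single component} $L_i$ is a fixed curve. This does not follow as stated: the dimension count only excludes the whole of $\widetilde{\mathcal{L}}$ from the base locus. To finish, either observe that $|D|$ is $\overline{G}_{144}$-invariant and the four lines $L_i$ form a single $\overline{G}_{144}$-orbit (so one is fixed iff all are), or compute directly that $D-L_i$ is nef with $(D-L_i)^2=22$, whence $h^0(D-L_i)=13<14=h^0(D)$, so $L_i$ cannot be fixed. The paper's proof makes the same move (``its fixed part must be the curve $\mathcal{L}_4^4$'') and is implicitly using the $\overline{G}_{144}$-invariance.
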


\begin{proof}
Without loss of generality, we may assume that $C=C_{12}^4$.
Then $C$ is contained in~$S_1$ and
$$
S_3\big\vert_{S_1}=\mathcal{L}_4^4+C.
$$
by Lemma~\ref{lemma:144-C12}.
Note that both curves $\mathcal{L}_4^4$ and $C$ are contained in the smooth locus of the surface $S_1$ by \eqref{equation:S1-S2-S3-S4-12-12} and Lemma~\ref{lemma:144-quartics-singular}.
Let $H$ be a plane section of $S_1$, and let $D=2H+\mathcal{L}_4^4$.
Then $D^2=24$ and $D$ is nef, since $D\cdot\mathcal{L}_4^4=0$.
Using the Riemann--Roch formula and Nadel vanishing theorem (see \cite[Theorem~9.4.8]{La04}), we see that $h^0(\mathcal{O}_{S_1}(D))=14$.
This implies that the linear system $|D|$ does not have fixed curves.
Indeed, if it does, then its fixed part must be the curve $\mathcal{L}_4^4$,
so that
$$
14=h^0\Big(\mathcal{O}_{S_1}\big(D\big)\Big)=h^0\Big(\mathcal{O}_{S_1}\big(2H\big)\Big)=10,
$$
which is absurd.
Thus $|D|$ does not have base points by \cite[Corollary~3.2]{SAINTDONAT}.
Therefore, the linear system $|6H-C|$ is base point free.
Since $S_1$ is projectively normal, we immediately obtain the required assertions.
\end{proof}

\begin{corollary}
\label{corollary:144-C12-mult}
Let $\mathcal{D}$ be a (non-empty) mobile $\overline{G}_{144}$-invariant linear system on $\mathbb{P}^3$,
let~$n$ be a positive integer such that $\mathcal{D}\sim\mathcal{O}_{\mathbb{P}^3}(n)$,
and let $C$ be one of the curves $C_{12}^1$, $C_{12}^2$, $C_{12}^3$ or~$C_{12}^4$.
Then $\mathrm{mult}_{C}(\mathcal{D})\leqslant\frac{n}{4}$.
\end{corollary}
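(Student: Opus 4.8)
The plan is to run the same argument used for Corollary~\ref{corollary:144-L4-L6-L12-mult}, with the quadric $\mathcal{Q}$ replaced by one of the quartic $K3$ surfaces $S_i$ that contains $C$. By symmetry it is enough to treat $C=C_{12}^4$. By Lemma~\ref{lemma:144-C12} the curve $C$ is contained in $S_1$, and
$$
S_3\big\vert_{S_1}=\mathcal{L}_4^4+C\sim 4H,
$$
where $H$ denotes the plane section of $S_1$. Moreover, by \eqref{equation:S1-S2-S3-S4-12-12} and Lemma~\ref{lemma:144-quartics-singular} the orbit $\Sigma_{12}=\mathrm{Sing}(S_1)$ is disjoint from $S_3$, so that $C$ lies in the smooth locus of $S_1$. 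Thus all curves occurring below meet the exceptional locus of the minimal resolution $f\colon\widetilde{S}_1\to S_1$ trivially, and I will freely compute intersection numbers on the $K3$ surface $\widetilde{S}_1$, identifying $C$ and $\mathcal{L}_4^4$ with their (total $=$ proper) transforms and writing $H$ for the pullback of the plane section.

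The key input is the nef divisor produced in the proof of Corollary~\ref{corollary:144-C12-sextics}: put $N=2H+\mathcal{L}_4^4$. There it is shown that $N$ is nef and $N\cdot\mathcal{L}_4^4=0$. Using this together with the linear equivalence above, I compute
$$
N\cdot C=N\cdot\big(\mathcal{L}_4^4+C\big)=N\cdot 4H=4\,\big(N\cdot H\big),
$$
so that $C$ has exactly four times the ``$N$-degree'' of a plane section. Since $H$ is ample on $S_1$ and $N$ is a nonzero nef divisor, one has $N\cdot H>0$ (in fact $N\cdot H=2H^2+\mathcal{L}_4^4\cdot H=12$).

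To finish, let $m=\mathrm{mult}_C(\mathcal{D})$ and let $D$ be a general member of $\mathcal{D}$. As $\mathcal{D}$ is mobile, $D$ does not contain $S_1$, so $D\vert_{S_1}$ is an effective divisor on $S_1$ with $D\vert_{S_1}\sim nH$ whose multiplicity along $C$ is at least $m$; passing to $\widetilde{S}_1$ we may therefore write $D\vert_{S_1}=mC+\Omega$ for some effective divisor $\Omega$. Pairing with the nef divisor $N$ and using $N\cdot\Omega\geqslant 0$ gives
$$
n\,\big(N\cdot H\big)=N\cdot\big(D\big\vert_{S_1}\big)\geqslant m\,\big(N\cdot C\big)=4m\,\big(N\cdot H\big).
$$
Dividing by $N\cdot H>0$ yields $m\leqslant\frac{n}{4}$, as required. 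The only delicate points are the reduction to the smooth locus of $S_1$ (so that the resolution does not interfere with the intersection numbers) and the nefness of $N$; both are already supplied by \eqref{equation:S1-S2-S3-S4-12-12}, Lemma~\ref{lemma:144-quartics-singular}, and Corollary~\ref{corollary:144-C12-sextics}, so I expect no serious obstacle beyond bookkeeping.
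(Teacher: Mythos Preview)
Your proof is correct. Both you and the paper restrict to the quartic $S_1$ and exploit the linear equivalence $C+\mathcal{L}_4^4\sim 4H$, but you extract the bound differently. The paper writes $D\vert_{S_1}=aC+b\mathcal{L}_4^4+\Delta$ (separating out the $\mathcal{L}_4^4$ part) and, assuming $a>\frac{n}{4}$, intersects the relation $(\frac{n}{4}-b)\mathcal{L}_4^4\sim_{\mathbb{Q}}(a-\frac{n}{4})C+\Delta$ with the negative curve $\mathcal{L}_4^4$ (using $\mathcal{L}_4^4^2=-8$) to reach a contradiction. You instead intersect with the nef class $N=2H+\mathcal{L}_4^4$ from the proof of Corollary~\ref{corollary:144-C12-sextics}; since $N\cdot\mathcal{L}_4^4=0$ forces $N\cdot C=4\,N\cdot H$, one line of arithmetic gives the bound directly without any case split. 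These are dual viewpoints on the same numerics: the paper uses the negative curve, you use the nef class orthogonal to it. Your version is slightly cleaner but depends on having already established the nefness of $N$ (which, as you note, is elementary and done in Corollary~\ref{corollary:144-C12-sextics}); the paper's version is more self-contained.
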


\begin{proof}
Without loss of generality, we may assume that $C=C_{12}^4$.
Then $C$ is contained in the surface~$S_1$ by Lemma~\ref{lemma:144-C12}, and $S_3\vert_{S_1}=\mathcal{L}_4^4+C$.
Let $D$ be a general surface in $\mathcal{D}$. Then
$$
D\vert_{S_1}=aC+b\mathcal{L}_4^4+\Delta,
$$
where $a$ and $b$ are non-negative integers such that $a\geqslant\mathrm{mult}_{C}(\mathcal{D})$,
and $\Delta$ is an effective divisor on $S_1$ whose support contains neither $C$ nor any irreducible component of the curve $\mathcal{L}_4^4$.
Then
$$
aC+b\mathcal{L}_4^4+\Delta\sim_{\mathbb{Q}}\frac{n}{4}\Big(\mathcal{L}_4^4+C\Big).
$$
If $a>\frac{n}{4}$, then $b<\frac{n}{4}$, since we have
$$
\Big(\frac{n}{4}-b\Big)\mathcal{L}_4^4\sim_{\mathbb{Q}}\Big(a-\frac{n}{4}\Big)C+\Delta.
$$
Therefore, if $a>\frac{n}{4}$, then
$$
0>-8\Big(\frac{n}{4}-b\Big)=\Big(\frac{n}{4}-b\Big)\mathcal{L}_4^4\cdot\mathcal{L}_4^4=\Big(a-\frac{n}{4}\Big)\mathcal{L}_4^4\cdot C+\mathcal{L}_4^4\cdot\Delta\geqslant 0,
$$
which is absurd. This shows that $\mathrm{mult}_{C}(\mathcal{D})\leqslant a\leqslant\frac{n}{4}$ as required.
\end{proof}

Now we are ready to prove

\begin{theorem}
\label{theorem:144-curves}
Let $C$ be $\overline{G}_{144}$-irreducible curve in $\mathbb{P}^3$ such that $\mathrm{deg}(C)\leqslant 15$.
Then one of the following cases holds:
\begin{itemize}
\item $\mathrm{deg}(C)=4$, and $C$ is one of the curves $\mathcal{L}_4^1$, $\mathcal{L}_4^2$, $\mathcal{L}_4^3$, $\mathcal{L}_4^4$;

\item $\mathrm{deg}(C)=6$, and $C$ is one of the curves $\mathcal{L}_6^1$ or $\mathcal{L}_6^2$;

\item $\mathrm{deg}(C)=12$, and $C$ is one of the smooth irreducible curves $C_{12}^1$, $C_{12}^2$, $C_{12}^3$, $C_{12}^4$;

\item $\mathrm{deg}(C)=12$, and $C$ is a union of $12$ disjoint lines contained in the quadric $\mathcal{Q}$.
\end{itemize}
\end{theorem}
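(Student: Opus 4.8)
The plan is to split into two cases according to whether $C$ is contained in the invariant quadric $\mathcal{Q}$. If $C\subset\mathcal{Q}$, then, since $\deg(C)\leqslant 15\leqslant 23$, Lemma~\ref{lemma:144-Q-curves} applies directly and shows that $C$ is either a disjoint union of $12$ lines in $\mathcal{Q}$ (the last case of the theorem) or one of $\mathcal{L}_4^1,\ldots,\mathcal{L}_4^4,\mathcal{L}_6^1,\mathcal{L}_6^2$ (the first two cases). So all the content lies in the case $C\not\subset\mathcal{Q}$, and here I would first reduce to irreducible $C$ of degree $\geqslant 12$. Indeed, a reducible $\overline{G}_{144}$-irreducible curve of degree $<16$ lies in $\mathcal{Q}$ by Lemma~\ref{lemma:144-reducible-curves}, and any $\overline{G}_{144}$-invariant curve of degree $<12$ lies in $\mathcal{Q}$ by Lemma~\ref{lemma:144-curves-very-small-degree}; both contradict the assumption $C\not\subset\mathcal{Q}$. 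Thus it remains to treat an irreducible $C$ with $C\not\subset\mathcal{Q}$ and $\deg(C)\in\{12,13,14,15\}$.

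The main tool in this range is the $0$-cycle $C\cap\mathcal{Q}$. Since $C$ is irreducible and not contained in $\mathcal{Q}$, this intersection is finite of total degree $C\cdot\mathcal{Q}=2\deg(C)$, and it is $\overline{G}_{144}$-invariant; hence its support is a union of $\overline{G}_{144}$-orbits lying on $\mathcal{Q}$, each occurring with a single multiplicity by invariance. By Lemma~\ref{lemma:144-Q-orbits} every such orbit of length at most $36$ has length $16$, $24$ or $36$, so one can write $2\deg(C)=16a+24b$ for non-negative integers $a,b$ (orbits of length $36$ cannot occur, as $2\deg(C)\leqslant 30<36$). Checking the values $26$, $28$, $30$ one sees that none is of the form $16a+24b$, which rules out $\deg(C)\in\{13,14,15\}$ outright; combined with Lemma~\ref{lemma:144-Q-curves} this shows no $\overline{G}_{144}$-irreducible curves of these degrees exist at all. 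For $\deg(C)=12$ the only solution of $24=16a+24b$ is $a=0$, $b=1$, so $C\cap\mathcal{Q}=\Sigma_{24}^{j}$ is a single reduced orbit for some $j\in\{1,2,3,4\}$, and $C$ meets $\mathcal{Q}$ transversally.

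It remains to identify such a $C$ with one of $C_{12}^1,\ldots,C_{12}^4$, and this is the main obstacle. The strategy is residuation inside the invariant quartics. The orbit $\Sigma_{24}^{j}$ lies on exactly two of the surfaces $S_1,\ldots,S_4$, namely the two whose trace on $\mathcal{Q}$ contains the $(4,0)$- or $(0,4)$-curve $\mathcal{L}_4$ through $\Sigma_{24}^{j}$; this is read off from \eqref{equation:S1-S2-S3-S4-L4-1-2-3-4} and gives in each case one of the pairs $(S_2,S_3)$, $(S_1,S_4)$, $(S_2,S_4)$, $(S_1,S_3)$. The crux is to prove that $C$ itself lies on \emph{both} of these quartics, and I would expect to establish this by analysing $H^0(\mathbb{P}^3,\mathcal{I}_C(4))$ as a $\overline{G}_{144}$-representation: recall from Lemma~\ref{lemma:80-144-quartic-surfaces} and Remark~\ref{remark:S6-quartics} that the invariant quartic forms are eigenlines of a multiplicity-free action, and the aim is to show that the $24$ transverse points $\Sigma_{24}^{j}$ force the two relevant semi-invariant eigenforms into the homogeneous ideal of $C$. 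Granting this, $C$ is a component of $S_i\cap S_{i'}$ for a mixed pair $(i,i')\in\{(1,3),(1,4),(2,3),(2,4)\}$ — the pairs $(S_1,S_2)$ and $(S_3,S_4)$ being excluded since their intersections are unions of $16$ lines by Lemma~\ref{lemma:144-S1-S2-S4-S3} and so contain no irreducible curve of degree $12$. For the mixed pairs Lemma~\ref{lemma:144-C12} gives $S_i\cap S_{i'}=\mathcal{L}_4\cup C_{12}^{k}$, and as $C$ is irreducible of degree $12$ and not contained in $\mathcal{Q}$ it must coincide with the residual component $C_{12}^{k}$.

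As a final consistency check supporting the identification, I would note that an irreducible degree-$12$ curve with a faithful $\overline{G}_{144}$-action cannot acquire singularities without contradiction: its singular locus would be a $\overline{G}_{144}$-orbit of length at least $12$ by Lemma~\ref{lemma:144-orbits}, forcing the genus of the normalisation into the narrow list permitted by Lemma~\ref{lemma:144-sporadic-genera}, which is compatible only with the smooth curves $C_{12}^{k}$ of genus $13$ constructed in Lemma~\ref{lemma:144-C12}.
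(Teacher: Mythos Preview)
Your reduction to the case of an irreducible curve $C\not\subset\mathcal{Q}$ with $\deg(C)=12$ and $C\cap\mathcal{Q}=\Sigma_{24}^{j}$ transversal is correct and is exactly how the paper begins. The gap is in the identification step. Your proposal is to argue that the containment $\Sigma_{24}^{j}\subset C$ forces the two relevant semi-invariant quartic forms into $H^0(\mathbb{P}^3,\mathcal{I}_C(4))$, but you do not carry this out, and in fact there is no direct representation-theoretic mechanism for it: the incidence $\Sigma_{24}^{j}\subset S_i$ only says that $S_i\cdot C\geqslant 24$, not that $C\subset S_i$. A priori $S_i$ could meet $C$ in $\Sigma_{24}^{j}$ together with, say, $\Sigma_{12}$ or $\Sigma_{12}'$, or could be tangent to $C$ along $\Sigma_{24}^{j}$, and then $C\not\subset S_i$. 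So the step you flag as ``the crux'' and then grant is genuinely missing.

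The paper fills this in two stages. First, it runs a case analysis on the $0$-cycles $S_2\cdot C$ and $S_3\cdot C$ (taking $j=1$ without loss of generality): the case $S_2\cap C=\Sigma_{24}^1\cup\Sigma_{12}$ is excluded by a multiplicity count using $\mathrm{mult}_{\Sigma_{12}}(S_2)=2$, and the case where both $S_2$ and $S_3$ are tangent to $C$ along $\Sigma_{24}^1$ is handled by looking at the pencil they generate and showing that some member of the pencil contains $C$, hence (since the only $\overline{G}_{144}$-invariant members are $S_2$ and $S_3$ themselves) $C$ lies on one of $S_2$, $S_3$. Second --- and this is the bulk of the work --- once $C\subset S_2$, the paper passes to the minimal resolution $\widetilde{S}_2$, computes the intersection numbers $\widetilde{C}\cdot\widetilde{\mathcal{L}}_4^1$, $\widetilde{C}\cdot\widetilde{C}_{12}^1$, and uses the Hodge index theorem together with Lemma~\ref{lemma:144-sporadic-genera} to conclude that $\widetilde{C}\sim\widetilde{C}_{12}^1$ on $\widetilde{S}_2$; projective normality of $S_2$ then gives $C=C_{12}^1$. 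Your final ``consistency check'' about singularities captures only a fragment of this: it would at best show $C$ is smooth of genus $\leqslant 13$, but not that it coincides with one of the $C_{12}^{k}$.
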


\begin{proof}
By Lemma~\ref{lemma:144-Q-curves}, we may assume that $C$ is not contained in the quadric $\mathcal{Q}$.
Then~\mbox{$\mathrm{deg}(C)\geqslant 12$} by Lemma~\ref{lemma:144-curves-very-small-degree},
and $C$ is irreducible by Lemma~\ref{lemma:144-reducible-curves}.
Moreover, since
$$
30\geqslant 2\cdot\mathrm{deg}(C)=C\cdot\mathcal{Q},
$$
we see that $\mathrm{deg}(C)=12$ by Lemma~\ref{lemma:144-Q-orbits}.
Let us show that $C$ is one of the smooth irreducible curves $C_{12}^1$, $C_{12}^2$, $C_{12}^3$ or~$C_{12}^4$.

Since $C\cdot\mathcal{Q}=24$, it follows from Lemma~\ref{lemma:144-Q-orbits} that
$C\cap\mathcal{Q}$ is a $\overline{G}_{144}$-orbit of length $24$,
and $C$ intersects $\mathcal{Q}$ transversally at the points of $C\cap\mathcal{Q}$.
In particular, the curve $C$ is smooth at these points.
Recall from Lemma~\ref{lemma:144-orbits} that
all $\overline{G}_{144}$-orbits of length $24$ in $\mathbb{P}^3$ and
all $\overline{G}_{144}$-orbits of length $16$ in $\mathbb{P}^3$ are contained in $\mathcal{Q}$.
Thus, the curve $C$ does not contain  $\overline{G}_{144}$-orbits of length $24$ that are different from $C\cap\mathcal{Q}$,
and it does not contain  $\overline{G}_{144}$-orbits of length $16$.
Without loss of generality, we may assume that
$$
C\cap\mathcal{Q}=\Sigma_{24}^1=\mathcal{L}_4^1\cap\mathcal{L}_6^2
$$
Recall that $\Sigma_{24}^1$ is contained in $S_2\cap S_3$, because $S_2\cap\mathcal{Q}=\mathcal{L}_4^1\cup\mathcal{L}_4^3$
and $S_3\cap\mathcal{Q}=\mathcal{L}_4^1\cup\mathcal{L}_4^4$, see \eqref{equation:S1-S2-S3-S4-L4-1-2-3-4}.
Since $S_2\cdot C=S_3\cdot C=48$,
it follows from Lemma~\ref{lemma:144-orbits}, \eqref{equation:S1-S2-S3-S4-12-12} and Lemma~\ref{lemma:144-quartics-singular}
that one of the following cases holds:
\begin{itemize}
\item the curve $C$ is contained in one of the surfaces $S_2$ and $S_3$,
\item both $S_2$ and $S_3$ are tangent to $C$ at the points of $\Sigma_{24}^1$, and $S_2\cap C=S_3\cap C=\Sigma_{24}^1$,
\item $S_2\cap C=\Sigma_{24}^1\cup\Sigma_{12}$ or $S_3\cap C=\Sigma_{24}^1\cup\Sigma_{12}^\prime$.
\end{itemize}

Suppose that $S_2\cap C=\Sigma_{24}^1\cup\Sigma_{12}$.
Since $C$ is irreducible, it must be singular at every point of $\Sigma_{12}$ by Lemma~\ref{lemma:144-sporadic-genera}.
Then
$$
48=S_2\cdot C\geqslant |\Sigma_{24}^1|\mathrm{mult}_{\Sigma_{24}^1}(S_2)+|\Sigma_{12}|\mathrm{mult}_{\Sigma_{12}}(S_2)\mathrm{mult}_{\Sigma_{12}}(C)\geqslant 72,
$$
which is absurd. Thus, we have $S_2\cap C\ne\Sigma_{24}^1\cup\Sigma_{12}$.
Similarly, we see that the intersection~\mbox{$S_3\cap C$} is not the union $\Sigma_{24}^1\cup\Sigma_{12}^\prime$.

Suppose that the surfaces $S_2$ and $S_3$ are tangent to the curve $C$ at the points of $\Sigma_{24}^1$, and
$$
S_2\cap C=S_3\cap C=\Sigma_{24}^1.
$$
Let $\mathcal{P}$ be the pencil generated by~$S_2$~and~$S_3$. Then~$\mathcal{P}$ is $\overline{G}_{144}$-invariant.
Moreover, every surface in this pencil either contains $C$ or is tangent to the curve $C$ at the points of~$\Sigma_{24}^1$
(here we include the case when the surface is singular at some points of~$\Sigma_{24}^1$).
This implies that there exists a surface $S$ in $\mathcal{P}$ that contains $C$.
Indeed, let $P$ be a point in $C$ that is not contained in~$\Sigma_{24}^1$,
and let $S$ be a surface in $\mathcal{P}$ that passes through~$P$.
If $C$ is not contained in $S$, then we obtain contradictory inequalities
$$
48=S\cdot C\geqslant\sum_{Q\in\Sigma_{24}^1}\mathrm{mult}_{Q}\Big(S\cdot C\Big)+\mathrm{mult}_{P}\Big(S\cdot C\Big)\geqslant 2|\Sigma_{24}^1|+1=49,
$$
because $S$ is tangent to $C$ at the points of $\Sigma_{24}^1$. Thus, we see that the curve $C$ is contained in $S$.
Moreover, if $S$ is not $\overline{G}_{144}$-invariant, then there exists $\bar{g}\in\overline{G}_{144}$
such that $\bar{g}(S)\ne S$, so that $\bar{g}(S)\in\mathcal{P}$,
and the surfaces $S$ and $\bar{g}(S)$ generate the whole pencil $\mathcal{P}$.
This implies that $C$ is contained in $S_2\cap S_3$, which is not the case by assumption.

We are left with the case when $C$ is contained in one of the surfaces $S_2$ and~$S_3$.
Without loss of generality, we may assume that $C\subset S_2$.
Let us show that the curve $C$ is also contained in $S_3$.
This would imply that $C=C_{12}^1$ by Lemma~\ref{lemma:144-C12}.

Let $f\colon\widetilde{S}_2\to S_2$ be the minimal resolution of singularities of the surface $S_2$.
Then the action of the group $\overline{G}_{144}$ lifts to the surface $\widetilde{S}_2$.
Denote by $E_1,\ldots,E_{12}$ the $f$-exceptional curves.
Let $E=E_1+\ldots+E_{12}$, and let $H$ be a plane section of the surface $S_2$.
Denote by~$\widetilde{C}$ the proper transform of the curve $C$ on the surface $\widetilde{S}_2$.
Recall from Lemma~\ref{lemma:144-C12} that~\mbox{$S_2\cap S_3=\mathcal{L}_4^1\cup C_{12}^1$}
and~\mbox{$S_2\cap S_4=\mathcal{L}_4^3\cup C_{12}^3$}.
Denote by $\widetilde{C}_{12}^1$ and~$\widetilde{C}_{12}^3$ the proper transforms on the surface~$\widetilde{S}_{2}$ of the curves
$C_{12}^1$ and $C_{12}^3$, respectively.
Similarly, denote by $\widetilde{\mathcal{L}}_4^1$ and $\widetilde{\mathcal{L}}_4^3$
the proper transforms on the surface $\widetilde{S}_{2}$ of the curves
$\mathcal{L}_4^1$ and $\mathcal{L}_4^3$, respectively.
Since
$$
\mathrm{Sing}(S_2)=\Sigma_{12}\not\subset S_3\cup S_4
$$
by \eqref{equation:S1-S2-S3-S4-12-12} and Lemma~\ref{lemma:144-quartics-singular}, we have
$$
\widetilde{\mathcal{L}}_4^1+\widetilde{C}_{12}^1\sim \widetilde{\mathcal{L}}_4^3+\widetilde{C}_{12}^3\sim f^*(4H).
$$
Moreover, we have $\widetilde{\mathcal{L}}_4^1+\widetilde{\mathcal{L}}_4^3\sim f^*(2H)$,
so that $\widetilde{C}\cdot(\widetilde{\mathcal{L}}_4^1+\widetilde{\mathcal{L}}_4^3)=24$.
Using Lemma~\ref{lemma:144-Q-orbits} and keeping in mind that the curve $C$ contains
the $\overline{G}_{144}$-orbit $\Sigma_{24}^1=\mathcal{L}_4^1\cap\mathcal{L}_6^2$,
we conclude that $\widetilde{C}\cdot\widetilde{\mathcal{L}}_4^1=24$ and $\widetilde{C}\cdot\widetilde{\mathcal{L}}_4^3=0$.
Thus
$$
48=\widetilde{C}\cdot\Big(\widetilde{\mathcal{L}}_4^1+\widetilde{C}_{12}^1\Big)=24+\widetilde{C}\cdot\widetilde{C}_{12}^1,
$$
so that $\widetilde{C}\cdot\widetilde{C}_{12}^1=24$. Therefore, it follows from Hodge index theorem that
$$
\left|\begin{matrix} %
\widetilde{C}^2& \widetilde{C}\cdot\widetilde{C}_{12}^1\cr%
\widetilde{C}\cdot\widetilde{C}_{12}^1& \widetilde{C}_{12}^1\cdot\widetilde{C}_{12}^1\cr%
\end{matrix}\right|=
\left|\begin{matrix} %
\widetilde{C}^2 & 24\cr%
24& 24\cr%
\end{matrix}\right|\leqslant 0.
$$
This implies that either $\widetilde{C}^2\leqslant 22$ or $\widetilde{C}^2=24$.
Therefore, the arithmetic genus of the curve $\widetilde{C}$ is at most
$$
\frac{\widetilde{C}^2}{2}+1\leqslant 13.
$$
If $\widetilde{C}$ is singular, then it has at least $12$ singular points by Lemma~\ref{lemma:144-orbits}.
In this case, the genus of its normalization is at most $1$, which contradicts Lemma~\ref{lemma:144-sporadic-genera}.
Thus, we see that the curve $\widetilde{C}$ is a smooth curve of genus $\frac{\widetilde{C}^2}{2}+1$. Therefore, it follows from Lemma~\ref{lemma:144-sporadic-genera} that~\mbox{$\widetilde{C}^2=24$},
because $\widetilde{C}$ contains $\overline{G}_{144}$-orbits of length $24$,
for instance, the $\overline{G}_{144}$-orbit  $\widetilde{C}\cap\widetilde{\mathcal{L}}_4^1$.
Thus, it follows from Hodge index theorem that the curves $\widetilde{C}$ and $\widetilde{C}_{12}^1$ are numerically equivalent on~$\widetilde{S}_2$,
so that~\mbox{$\widetilde{C}\sim\widetilde{C}_{12}^1$}.
Hence $\widetilde{C}\cdot E=0$, so that $C$ does not contain the singular locus of the surface $S_2$,
which implies that
$$
C+\mathcal{L}_4^1\sim 4H.
$$
Since $S_2$ is projectively normal, we see that $C=C_{12}^1$ by Lemma~\ref{lemma:144-C12}.
\end{proof}

\section{The group of order $80$}
\label{section:80-160-320}

Let us use notation and assumptions of~\S\ref{section:Heisenberg}.
In this section, we present basic facts about $\overline{G}_{80}$-orbits,
$\overline{G}_{80}$-invariant curves and $\overline{G}_{80}$-invariant surfaces in $\mathbb{P}^3$.
Let us start with the following very easy group-theoretic result.

\begin{lemma}
\label{lemma:80-subgroups}
Let $\Gamma$ be a proper non-trivial subgroup of the group $\overline{G}_{80}$.
Then $\Gamma$ is isomorphic to one of the following groups: $\mumu_2$, $\mumu_2^2$, $\mumu_2^3$, $\mumu_2^4$, or $\mumu_5$.
Moreover, a subgroup of $\overline{G}_{80}$ isomorphic to $\mumu_2^4$ is unique,
and a subgroup isomorphic to $\mumu_5$ is unique up to conjugation.
Furthermore, the only non-trivial proper normal subgroup of $\overline{G}_{80}$ is $\overline{\mathrm{H}}\cong\mumu_2^4$.
\end{lemma}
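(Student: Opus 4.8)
The plan is to run the whole argument through Sylow theory, with a single representation-theoretic input: the conjugation action of a Sylow $5$-subgroup on $\overline{\mathrm{H}}$. Recall from \S\ref{section:Heisenberg} that $\overline{\mathrm{H}}\cong\mumu_2^4$ is a normal subgroup of $\overline{G}_{80}$ of order $16$; since $|\overline{G}_{80}|=80=2^4\cdot 5$, the subgroup $\overline{\mathrm{H}}$ is a normal, and hence unique, Sylow $2$-subgroup. First I would dispose of the case when $|\Gamma|$ is a power of $2$: then $\Gamma$ is contained in the unique Sylow $2$-subgroup $\overline{\mathrm{H}}$, and, being a subgroup of an elementary abelian group, is isomorphic to $\mumu_2^k$ for some $0\leqslant k\leqslant 4$. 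This already accounts for the types $\mumu_2$, $\mumu_2^2$, $\mumu_2^3$, $\mumu_2^4$.

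The heart of the proof is to show that $\langle\overline{T}\rangle\cong\mumu_5$ acts irreducibly on $\overline{\mathrm{H}}\cong\mathbf{F}_2^4$ by conjugation; equivalently, that this action has no nonzero fixed vector. Since $\overline{G}_{80}$ is a primitive, and in particular nonabelian, subgroup of $\mathrm{PGL}_4(\mathbb{C})$, the element $\overline{T}$ does not centralize $\overline{\mathrm{H}}$, so the induced $\mathbf{F}_2$-linear automorphism $M$ of $\overline{\mathrm{H}}$ is nontrivial of order $5$. Its minimal polynomial over $\mathbf{F}_2$ divides $x^5-1=(x-1)\Phi_5(x)$ and is not equal to $x-1$; as the multiplicative order of $2$ modulo $5$ equals $4$, the cyclotomic polynomial $\Phi_5(x)=x^4+x^3+x^2+x+1$ is irreducible over $\mathbf{F}_2$. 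Comparing degrees with the characteristic polynomial of $M$, which has degree $4$, I would conclude that both the minimal and the characteristic polynomials of $M$ equal $\Phi_5$. Since $\Phi_5(1)=1\neq 0$ in $\mathbf{F}_2$, the scalar $1$ is not an eigenvalue, so $M$ is fixed-point-free and $\overline{\mathrm{H}}\cong\mathbf{F}_2[x]/(\Phi_5)$ is a simple $\langle\overline{T}\rangle$-module. Conjugation is an automorphism, so the same holds for every conjugate of $\langle\overline{T}\rangle$.

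With this in hand the remaining cases follow formally. If $5$ divides $|\Gamma|$, then $\Gamma\cap\overline{\mathrm{H}}$ is normalized by a Sylow $5$-subgroup $P\leqslant\Gamma$, hence is a $P$-invariant subspace of $\overline{\mathrm{H}}$, so by irreducibility it equals either $0$ or $\overline{\mathrm{H}}$. In the first case $\Gamma$ embeds into $\overline{G}_{80}/\overline{\mathrm{H}}\cong\mumu_5$, forcing $\Gamma\cong\mumu_5$; in the second case $\Gamma$ contains $\overline{\mathrm{H}}$ together with an element of order $5$, so $\Gamma=\overline{G}_{80}$, which is excluded as $\Gamma$ is proper. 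This establishes the list of isomorphism types. Uniqueness of the subgroup $\mumu_2^4$ is then immediate: it has order $16$, hence is a Sylow $2$-subgroup, and the normal Sylow $2$-subgroup $\overline{\mathrm{H}}$ is the only one. Uniqueness of $\mumu_5$ up to conjugation is Sylow's theorem for the prime $5$. Finally, for the normal subgroups, a nontrivial normal subgroup $N\neq\overline{G}_{80}$ cannot have order divisible by $5$: by the classification it would be isomorphic to $\mumu_5$, hence a normal Sylow $5$-subgroup, making $\overline{G}_{80}\cong\overline{\mathrm{H}}\times\mumu_5$ abelian, a contradiction; therefore $N\subseteq\overline{\mathrm{H}}$, and being $\langle\overline{T}\rangle$-invariant and nonzero it equals $\overline{\mathrm{H}}$ by irreducibility. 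The one genuinely nonformal step, and the main obstacle, is the irreducibility of the $\mumu_5$-action in the second paragraph; everything else is bookkeeping with Sylow's theorems.
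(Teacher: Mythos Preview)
Your proof is correct and follows essentially the same route as the paper's: Sylow theory reduces everything to the irreducibility of the conjugation action of a Sylow $5$-subgroup on $\overline{\mathrm{H}}\cong\mathbf{F}_2^4$, and from there the classification of subgroups and normal subgroups follows formally. The only notable difference is in how that irreducibility is established: the paper observes that $5\nmid|\mathrm{GL}_k(\mathbf{F}_2)|$ for $k\leqslant 3$, so no nontrivial proper invariant subspace can exist, whereas you argue via the minimal polynomial, using that $\Phi_5$ is irreducible over $\mathbf{F}_2$; both are standard and essentially interchangeable.
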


\begin{proof}
Sylow theorems imply that the order of every non-trivial element of $\overline{G}_{80}$
equals either $2$ or $5$. If every non-trivial element of $\Gamma$ has order~$2$,
then $\Gamma$ is contained in the Sylow $2$-subgroup of $\overline{G}_{80}$, which is exactly $\overline{\mathrm{H}}$.
If every non-trivial element of $\Gamma$ has order~$5$, then $\Gamma\cong\mumu_5$.
The uniqueness assertions for subgroups isomorphic to $\mumu_2^4$ and $\mumu_5$ also follow from Sylow theorems.

Suppose that $\Gamma$ contains both elements of order $2$ and elements of order $5$.
Then it is isomorphic to $\Gamma^\prime\rtimes\mumu_5$, where $\Gamma^\prime$ is a non-trivial proper subgroup of the group $\overline{\mathrm{H}}$
that is invariant under the action of
$$
\overline{G}_{80}/\overline{\mathrm{H}}\cong\mumu_5\subset\mathrm{GL}_4\big(\mathbf{F}_2\big)
$$
on $\overline{\mathrm{H}}$.
However, the group $\mumu_5$ is not a subgroup of $\mathrm{GL}_k(\mathbf{F}_2)$ for $k\leqslant 3$. This implies that
the action of $\mumu_5$ on $\mathbf{F}_2^4$ is irreducible, so that $\overline{\mathrm{H}}$ has no non-trivial
proper $\mumu_5$-invariant subgroups. Therefore, we see that $\Gamma=\overline{G}_{80}$.

Now we suppose that $\Gamma$ is a non-trivial normal subgroup of $\overline{G}_{80}$.
Then $\Gamma$ is not isomorphic to $\mumu_5$.
Also we know from the above arguments that it cannot be a proper subgroup of $\mumu_2^4$.
Thus, if $\Gamma\not\cong\mumu_2^4$, then $\Gamma$ must contain both elements of order $2$ and elements of order $5$.
This implies that $\Gamma=\overline{G}_{80}$.
\end{proof}

\begin{corollary}
\label{corollary:G80}
Let $\overline{G}$ be a finite subgroup in $\mathrm{PGL}_{4}(\mathbb{C})$ that is isomorphic to~$\overline{G}_{80}$.
Then $\overline{G}$ is conjugate to $\overline{G}_{80}$.
\end{corollary}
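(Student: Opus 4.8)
The plan is to reduce everything to the unique normal subgroup $\overline{\mathrm H}$ and to its normalizer. Write $\overline{G}\cong\overline{G}_{80}\cong\mumu_2^4\rtimes\mumu_5$. By Lemma~\ref{lemma:80-subgroups} the abstract group $\overline{G}_{80}$ has a unique non-trivial proper normal subgroup, isomorphic to $\mumu_2^4$; denote by $N\subset\overline{G}$ the corresponding normal subgroup and by $\Theta\cong\mumu_5$ a complement, so that $\overline{G}=N\rtimes\Theta$ and, exactly as in the proof of Lemma~\ref{lemma:80-subgroups} (since $\mumu_5\not\subset\mathrm{GL}_k(\mathbf{F}_2)$ for $k\leqslant 3$), the group $\Theta$ acts irreducibly on $N\cong\mathbf{F}_2^4$.

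First I would show that $N$ is conjugate to $\overline{\mathrm H}$. Let $\widetilde{N}=\varphi^{-1}(N)\subset\mathrm{SL}_4(\mathbb{C})$. For $g,h\in N$ with lifts $\tilde g,\tilde h$, the commutator $[\tilde g,\tilde h]$ is a scalar independent of the lifts, and as $\tilde g^2$ is scalar it is a sign; this defines an alternating form $\omega\colon N\times N\to\{\pm 1\}$, preserved by $\Theta$ because conjugation preserves commutators. Its radical is a $\Theta$-invariant subspace, hence $0$ or all of $N$. If $\omega\equiv 0$ then $\widetilde{N}$ is abelian, so $\mathbb{C}^4$ splits into at most four weight spaces $V_{\chi_i}$ permuted by a lift of a generator of $\Theta$; an order-$5$ group permuting four objects fixes each of them, so every weight is $\Theta$-invariant, which forces the differences $\chi_i\chi_1^{-1}$, viewed in the character group $\hat N$, to be $\Theta$-invariant and hence trivial because $(\hat N)^{\Theta}=0$. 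Then $\widetilde{N}$ acts by scalars and $N$ is trivial in $\mathrm{PGL}_4(\mathbb{C})$, a contradiction. Therefore $\omega$ is nondegenerate, $[\widetilde{N},\widetilde{N}]=\{\pm I_4\}$, and $\widetilde{N}$ contains an extraspecial subgroup of order $32$ with centre $\{\pm I_4\}$ acting faithfully and irreducibly on $\mathbb{C}^4$ with $-I_4$ acting as $-\mathrm{id}$.

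The delicate point is to match this with $\mathrm H$ itself. The squares of lifts give a $\Theta$-invariant quadratic refinement of $\omega$; since $\gcd(5,16)=1$ we have $H^1(\Theta,\hat N)=0$, so such a refinement is \emph{unique}, and it coincides with the one carried by $\mathrm H$ together with $\overline{T}$. Thus $\widetilde{N}$ has the same extraspecial type as $\mathbb{H}$, and since an extraspecial $2$-group possesses a unique faithful irreducible complex representation, realized in dimension $4$, the embedding $\widetilde{N}\hookrightarrow\mathrm{GL}_4(\mathbb{C})$ is conjugate to $\mathbb{H}\hookrightarrow\mathrm{GL}_4(\mathbb{C})$. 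Hence $N$ is conjugate to $\overline{\mathrm H}$, and after conjugating I may assume $N=\overline{\mathrm H}$.

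Finally I would locate $\overline{G}$ inside the normalizer. As $N$ is normal, $\overline{G}\subseteq\mathrm{Norm}_{\mathrm{PGL}_4(\mathbb{C})}(\overline{\mathrm H})$, and every automorphism of $\overline{\mathrm H}\cong\mumu_2^4$ induced by conjugation preserves $\omega$, so $\mathrm{Norm}_{\mathrm{PGL}_4(\mathbb{C})}(\overline{\mathrm H})/\overline{\mathrm H}$ embeds into $\mathrm{Sp}_4(\mathbf{F}_2)\cong\mathfrak{S}_6$. By \eqref{equation:exact-sequence} the subgroup $\overline{\mathbf N}=\varphi(\mathbf N)$ already surjects onto this $\mathfrak{S}_6$, whence $\mathrm{Norm}_{\mathrm{PGL}_4(\mathbb{C})}(\overline{\mathrm H})=\overline{\mathbf N}$ and $\overline{\mathbf N}/\overline{\mathrm H}\cong\mathfrak{S}_6$. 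The image of $\overline{G}$ in $\mathfrak{S}_6$ is a subgroup of order $5$, hence a Sylow $5$-subgroup, hence conjugate to the image of $\langle\overline{T}\rangle$, which is a $5$-cycle by Remark~\ref{remark:S6-quartics}. Conjugating $\overline{G}$ by a suitable element $\bar{n}\in\overline{\mathbf N}$ (which normalizes $\overline{\mathrm H}$) I may assume that $\overline{G}/\overline{\mathrm H}$ is generated by the image of $\overline{T}$; then $\overline{G}=\langle\overline{\mathrm H},\bar{h}\,\overline{T}\rangle$ for some $\bar{h}\in\overline{\mathrm H}$, so $\overline{G}=\langle\overline{\mathrm H},\overline{T}\rangle=\overline{G}_{80}$. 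This shows that $\overline{G}$ is conjugate to $\overline{G}_{80}$. The main obstacle is the middle step: excluding the abelian lift of $N$ and pinning down the extraspecial type of $\widetilde{N}$, so that it is conjugate to $\mathbb{H}$ rather than to the other extraspecial group of order $32$.
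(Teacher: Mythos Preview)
Your argument is correct, but it takes a substantially different route from the paper. The paper's proof is much shorter: it simply invokes Blichfeldt's classification \cite[Chapter~VII]{Blichfeldt1917} for the primitive case, and then rules out the intransitive and imprimitive cases by elementary obstructions drawn from Lemma~\ref{lemma:80-subgroups}. Namely, a $\overline{G}$-orbit of length~$4$ or a $\overline{G}$-invariant pair of skew lines would force a non-trivial homomorphism $\overline{G}\to\mathfrak{S}_4$ or $\overline{G}\to\mathfrak{S}_2$; a fixed point would give a faithful action of $\mumu_2^4$ on a three-dimensional tangent space; an invariant line would force $\mumu_2^4$ into the kernel of a map to $\mathrm{PGL}_2(\mathbb{C})$ and hence to fix the line pointwise. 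None of these is possible, so $\overline{G}$ is primitive and Blichfeldt's list finishes the job.

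Your approach, by contrast, avoids Blichfeldt entirely: you conjugate the normal $\mumu_2^4$ directly to $\overline{\mathrm H}$ via the commutator pairing and extraspecial group theory, and then finish inside the normalizer by Sylow in $\overline{\mathbf N}/\overline{\mathrm H}\cong\mathfrak{S}_6$. This is self-contained and gives more structural insight, at the cost of being longer. Two small points deserve tightening. First, the phrase ``the squares of lifts give a $\Theta$-invariant quadratic refinement of $\omega$'' is imprecise: the square $\tilde g^2$ depends on the choice of lift in $\widetilde N$ (multiplying by $iI_4$ flips the sign). What actually works is that the index-two subgroups of $\widetilde N$ meeting the centre in $\{\pm I_4\}$ are in bijection with quadratic refinements of $\omega$; then $H^1(\Theta,\hat N)=0$ gives existence and $(\hat N)^\Theta=0$ gives uniqueness of a $\Theta$-invariant such subgroup $E_0$, whose squaring map is the refinement you want. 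Alternatively one can bypass the type question using the standard isomorphism $2_+^{1+4}\circ\mumu_4\cong 2_-^{1+4}\circ\mumu_4$, which already forces $\widetilde N\cong\mathbb H$. Second, the assertion that $\mathrm{Norm}_{\mathrm{PGL}_4(\mathbb{C})}(\overline{\mathrm H})/\overline{\mathrm H}$ embeds in $\mathrm{Sp}_4(\mathbf{F}_2)$ implicitly uses that the centraliser of $\overline{\mathrm H}$ in $\mathrm{PGL}_4(\mathbb{C})$ is $\overline{\mathrm H}$ itself; this follows from Schur's lemma together with the nondegeneracy of $\omega$, but should be said.
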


\begin{proof}
It follows from \cite[Chapter~VII]{Blichfeldt1917} that $\overline{G}$ is conjugate to $\overline{G}_{80}$ provided that $\overline{G}$ is primitive.
Suppose $\overline{G}$ is not primitive. Let us seek for a contradiction.
To start with, suppose that $\overline{G}$ is transitive.
Then either there exists a $\overline{G}$-orbit of length $4$,
or there exists a $\overline{G}$-invariant pair of skew lines.
In the former case, we have a non-trivial homomorphism~\mbox{$\overline{G}\to\mathfrak{S}_4$}.
In the latter case, we have a non-trivial homomorphism~\mbox{$\overline{G}\to\mathfrak{S}_2$}.
None of this is possible by Lemma~\ref{lemma:80-subgroups}.

Thus, we see that $\overline{G}$ is intransitive.
Then either there exists a $\overline{G}$-invariant point $P$, or there exist a $\overline{G}$-invariant line $L$.
In the former case, the group $\overline{G}$ acts faithfully on the tangent space $T_P(\mathbb{P}^3)\cong\mathbb{C}^3$ by \cite[Lemma~4.4.1]{CheltsovShramov},
which is impossible, because $\mathrm{GL}_3(\mathbb{C})$ does not contain subgroups isomorphic to $\mumu_2^4$.
In the latter case, we obtain a homomorphism
$$
\overline{G}\to\mathrm{Aut}\big(L\big)\cong\mathrm{PGL}_2\big(\mathbb{C}\big).
$$
We conclude from Lemma~\ref{lemma:80-subgroups} and the classification of finite subgroups in $\mathrm{PGL}_2(\mathbb{C})$
that the kernel of this homomorphism contains the subgroup in $\overline{G}$ isomorphic to $\mumu_2^4$,
so that this subgroup must fix every point in $L$, which is again impossible by \cite[Lemma~4.4.1]{CheltsovShramov}.
\end{proof}

To study $\overline{G}_{80}$-invariant curves in $\mathbb{P}^3$, we need the following result,
which follows from the Riemann--Hurwitz formula.

\begin{lemma}
\label{lemma:80-sporadic-genera}
Let $C$ be a smooth irreducible curve of genus $g\leqslant 19$
with a faithful action of the group $\overline{G}_{80}$.
Then $g\in\{5,13,17\}$, and $C$ is not hyperelliptic.
Furthermore, if~\mbox{$\Omega\subset C$} is a $\overline{G}_{80}$-orbit, then $|\Omega|\in\{16,40,80\}$.
Finally, the possible numbers $a_i$ of $\overline{G}_{80}$-orbits of length $i\in\{16,40\}$ in $C$
are contained in the following table:
\begin{center}\renewcommand\arraystretch{1.1}
\begin{tabular}{|c||c|c|c|}
\hline
$g$ &  $5$ & $13$ & $17$ \\
\hline\hline
$a_{16}$ & $2$ & $1$ & $3$ \\
\hline
$a_{40}$ & $1$ & $3$ & $0$\\
\hline
\end{tabular}
\end{center}
\end{lemma}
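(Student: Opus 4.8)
The plan is to run the same argument as in the proof of Lemma~\ref{lemma:144-sporadic-genera}, substituting the subgroup data of $\overline{G}_{80}$ from Lemma~\ref{lemma:80-subgroups} and then solving a single Riemann--Hurwitz Diophantine equation. First I would pin down the admissible orbit lengths. The stabilizer in $\overline{G}_{80}$ of a point $P\in C$ acts faithfully on the tangent line $T_P(C)$, hence embeds into $\mathrm{GL}_1(\mathbb{C})$ and is cyclic (see \cite[Lemma~5.1.4]{CheltsovShramov}). By Lemma~\ref{lemma:80-subgroups} the only cyclic subgroups of $\overline{G}_{80}$ are the trivial group, $\mumu_2$ and $\mumu_5$, so every point stabilizer has order $1$, $2$ or $5$, and therefore $|\Omega|=80/|\mathrm{Stab}|\in\{16,40,80\}$ for every $\overline{G}_{80}$-orbit $\Omega\subset C$. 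To see that $g\geqslant 2$, I note that the subgroup $\overline{\mathrm{H}}\subset\overline{G}_{80}$ would then act faithfully on $C$, which is impossible for rational or elliptic $C$ by Lemma~\ref{lemma:Heisenberg-elliptic}.

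Next I would exclude the hyperelliptic case. If $C$ were hyperelliptic, its hyperelliptic involution $\iota$ is central in $\mathrm{Aut}(C)$, so $\langle\iota\rangle$ is a normal subgroup of order $2$ in any subgroup of $\mathrm{Aut}(C)$ containing it. Since the only non-trivial proper normal subgroup of $\overline{G}_{80}$ is $\overline{\mathrm{H}}\cong\mumu_2^4$ by Lemma~\ref{lemma:80-subgroups}, we have $\iota\notin\overline{G}_{80}$; hence the quotient $C\to C/\langle\iota\rangle\cong\mathbb{P}^1$ induces a faithful action of $\overline{G}_{80}$ on $\mathbb{P}^1$, again contradicting Lemma~\ref{lemma:Heisenberg-elliptic}.

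Finally I would apply Riemann--Hurwitz to the smooth quotient $\widehat{C}=C/\overline{G}_{80}$ of genus $\hat{g}$. A length-$16$ orbit has stabilizer $\mumu_5$ (ramification index $5$) and contributes $16\cdot 4=64$, while a length-$40$ orbit has stabilizer $\mumu_2$ (ramification index $2$) and contributes $40\cdot 1=40$; the length-$80$ orbits are unramified. Writing $a_{16}$ and $a_{40}$ for the numbers of such orbits, this gives
$$
2g-2=80\big(2\hat{g}-2\big)+64a_{16}+40a_{40}.
$$
Since $g\leqslant 19$ and $a_{16},a_{40}\geqslant 0$, the term $80(2\hat{g}-2)$ is at most $36$, which forces $\hat{g}\leqslant 1$; and $\hat{g}=1$ is impossible, because then $2g-2=64a_{16}+40a_{40}$ would be either $0$ (giving $g=1$) or at least $40$ (giving $g\geqslant 21$), neither compatible with $2\leqslant g\leqslant 19$. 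Thus $\hat{g}=0$ and $64a_{16}+40a_{40}=2g+158$.

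The remaining step, which I expect to be the only genuinely computational part, is the elementary case analysis over $a_{16}\in\{0,1,2,3\}$ (the range forced by $64a_{16}\leqslant 196$). Reducing modulo $8$ shows $g\equiv 1\pmod 4$, leaving $g\in\{5,9,13,17\}$; then solving for $a_{40}$ in each case yields non-negative integers only for the triples $(g,a_{16},a_{40})=(5,2,1)$, $(13,1,3)$ and $(17,3,0)$ recorded in the table, while $g=9$ admits no solution and is excluded. This gives $g\in\{5,13,17\}$ together with the stated values of $a_{16}$ and $a_{40}$, completing the proof; all the conceptual input is carried by Lemmas~\ref{lemma:80-subgroups} and~\ref{lemma:Heisenberg-elliptic}.
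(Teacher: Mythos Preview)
Your proof is correct and follows essentially the same route as the paper: cyclic point-stabilizers via Lemma~\ref{lemma:80-subgroups}, the exclusion of $g\leqslant 1$ and of the hyperelliptic case via Lemma~\ref{lemma:Heisenberg-elliptic}, and then Riemann--Hurwitz with $\hat g=0$. Your final step is a bit more explicit than the paper's (which simply says ``going through the possible values of $g$''), but the argument is the same.
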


\begin{proof}
The assertion about the lengths of $\overline{G}_{80}$-orbits follows from Lemma~\ref{lemma:80-subgroups},
since the stabilizers in $\overline{G}_{80}$ of points in $C$ must be cyclic (see \cite[Lemma~5.1.4]{CheltsovShramov}).
Moreover, the group~$\overline{G}_{80}$ cannot act faithfully on $\mathbb{P}^1$ and on a smooth elliptic curve by Lemma~\ref{lemma:Heisenberg-elliptic}.
Hence, one has~\mbox{$g\geqslant 2$}.

Suppose that the curve $C$ is hyperelliptic.
Since the group $\overline{G}_{80}$ does not contain normal subgroups of order $2$ by Lemma~\ref{lemma:80-subgroups},
it does not contain the hyperelliptic involution of $C$, and we obtain
a faithful action of the group $\overline{G}_{80}$ on $\mathbb{P}^1$.
We already proved that this is impossible by  Lemma~\ref{lemma:Heisenberg-elliptic}.

Let $\widehat{C}=C\slash\overline{G}_{80}$. Then $\widehat{C}$ is a smooth curve.
Let~$\hat{g}$ be the genus of the curve~$\widehat{C}$. Then the Riemann--Hurwitz formula gives
$$
2g-2=80\big(2\hat{g}-2\big)+40a_{40}+64a_{16}.
$$
Since $a_k\geqslant 0$ and $g\leqslant 19$, one has $\hat{g}=0$, so that
$$
2g-2=-160+40a_{40}+64a_{16},
$$
Going through the possible values of $g$, and solving this equation
case by case we obtain the required result.
\end{proof}

\begin{lemma}
\label{lemma:80-Sigma-16}
There are exactly four $\overline{G}_{80}$-orbits of length $16$ in $\mathbb{P}^3$.
\end{lemma}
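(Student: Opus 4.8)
The plan is to count length-$16$ orbits by identifying them with the fixed loci of Sylow $5$-subgroups. First I would observe that a $\overline{G}_{80}$-orbit of length $16$ has point-stabilizer of order $80/16=5$, hence by Lemma~\ref{lemma:80-subgroups} together with the Sylow theorems this stabilizer is a Sylow $5$-subgroup $\langle g\rangle\cong\mumu_5$. Conversely, if a point $P\in\mathbb{P}^3$ is fixed by some subgroup isomorphic to $\mumu_5$, then its stabilizer $\Gamma_P$ contains this $\mumu_5$; by Lemma~\ref{lemma:80-subgroups} the only subgroups of $\overline{G}_{80}$ containing a given $\mumu_5$ are that $\mumu_5$ itself and the whole group $\overline{G}_{80}$. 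The latter is impossible, because $\overline{G}_{80}$ is a primitive (in particular transitive) subgroup of $\mathrm{PGL}_4(\mathbb{C})$ and so has no fixed point in $\mathbb{P}^3$. Thus $\Gamma_P\cong\mumu_5$, so $P$ lies in an orbit of length $16$. In other words, the union of all length-$16$ orbits is exactly the set of points of $\mathbb{P}^3$ fixed by some Sylow $5$-subgroup.

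Next I would set up a double count. The number $n_5$ of Sylow $5$-subgroups satisfies $n_5\equiv 1\pmod 5$ and $n_5\mid 16$, so $n_5\in\{1,16\}$; since the only non-trivial proper normal subgroup of $\overline{G}_{80}$ is $\overline{\mathrm{H}}\cong\mumu_2^4$ by Lemma~\ref{lemma:80-subgroups}, no $\mumu_5$ is normal, and hence $n_5=16$. Moreover, no point $P$ in a length-$16$ orbit can be fixed by two distinct Sylow $5$-subgroups: together these would generate a subgroup fixing $P$ and strictly larger than $\mumu_5$, which by Lemma~\ref{lemma:80-subgroups} must be all of $\overline{G}_{80}$, again contradicting the absence of a global fixed point. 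Therefore each such $P$ is fixed by a unique Sylow $5$-subgroup, and counting incidences $(P,\langle g\rangle)$ with $g(P)=P$ gives
\[
16\cdot\#\{\text{length-}16\text{ orbits}\}=\sum_{\langle g\rangle}\big|\mathrm{Fix}(\langle g\rangle)\big|=16\cdot f,
\]
where $f$ is the number of fixed points in $\mathbb{P}^3$ of a single Sylow $5$-subgroup $\langle\overline{T}\rangle$.

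Finally I would compute $f$. Here $\overline{T}$ is the image of the matrix $T$ of \eqref{equation:S-T}, which lies in $\mathrm{SL}_4(\mathbb{C})$ and satisfies $T^5=-I_4$; in particular $T$ is diagonalizable with eigenvalues among the five fifth roots of $-1$. A direct computation of the characteristic polynomial shows that it equals $x^4-x^3+x^2-x+1$, whose four roots are precisely the fifth roots of $-1$ other than $-1$ and are pairwise distinct (this is consistent with $\mathrm{tr}(T)=1$ and $\det(T)=1$, which equal respectively the sum and the product of these four roots). Consequently $\overline{T}$ has exactly four isolated fixed points in $\mathbb{P}^3$, namely the four eigenlines of $T$, so $f=4$. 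Substituting into the displayed identity yields $\#\{\text{length-}16\text{ orbits}\}=4$, as required.

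I expect the only genuine obstacle to be this last step: one must verify that $T$ is regular semisimple, i.e.\ that its four eigenvalues are pairwise distinct, so that the fixed locus of $\overline{T}$ consists of four isolated points rather than containing a positive-dimensional linear subspace. This is precisely where the explicit form of $T$ is used; the remainder is a clean Sylow-theoretic orbit count resting on Lemma~\ref{lemma:80-subgroups} and the primitivity of $\overline{G}_{80}$.
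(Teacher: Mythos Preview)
Your proof is correct and follows essentially the same approach as the paper: both identify the stabilizer of a point in a length-$16$ orbit as a Sylow $5$-subgroup $\mumu_5$, use that all such subgroups are conjugate and self-normalizing (equivalently, $n_5=16$), and then count by observing that $T$ has four distinct eigenvalues and hence $\overline{T}$ has exactly four fixed points in $\mathbb{P}^3$. Your double-counting formulation is just a slightly more explicit rephrasing of the paper's normalizer argument.
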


\begin{proof}
A subgroup $\mumu_5\subset\overline{G}_{80}$ has a fixed point in $\mathbb{P}^3$,
so that the length of its $\overline{G}_{80}$-orbit is~$16$ by Lemma~\ref{lemma:80-subgroups}.
This shows that there is at least one  $\overline{G}_{80}$-orbit of length $16$ in $\mathbb{P}^3$.

Now let $\Gamma$ be a stabilizer of a point in a $\overline{G}_{80}$-orbit of length $16$.
Then $\Gamma\cong\mumu_5$.
By Lemma~\ref{lemma:80-subgroups}, the subgroup $\Gamma$ is conjugate to the subgroup generated by the image $T$ in~\mbox{$\mathrm{PGL}_4(\mathbb{C})$}.
On the other hand, the eigenvalues of the matrix $T$ are four distinct primitive fifth roots of unity.
Since the normalizer of $\Gamma$ in $\overline{G}_{80}$ coincides with $\Gamma$ by Lemma~\ref{lemma:80-subgroups},
we conclude that there are exactly four $\overline{G}_{80}$-orbits of length $16$ in $\mathbb{P}^3$.
\end{proof}

Let us denote the $\overline{G}_{80}$-orbits of length $16$ in $\mathbb{P}^3$ by $\Sigma_{16}^1$, $\Sigma_{16}^2$, $\Sigma_{16}^3$ and $\Sigma_{16}^4$.
Computing the null-spaces of the matrices $T-\xi_5^iI_4$, $1\leqslant i\leqslant 4$, one can find these orbits explicitly.
In particular, we may assume that
$$
\big[(-1+i)\xi_5^3+(-1+i)\xi_5^2-\xi_5-1:1+(1+i)\xi_5^2+\xi_5:1+(1+i)\xi_5^3+\xi_5:\xi_5-1\big]\in\Sigma_{16}^1,
$$
$$
\big[(1+i)\xi_5^3+(1-i)\xi_5^2+i\xi_5+1:-2\xi_5^3+(-1-i)\xi_5^2-\xi_5-2-i:(-1+i)\xi_5^3+i\xi_5+i:\xi_5-1\big]\in\Sigma_{16}^2,
$$
$$
\big[-2i\xi_5^3+(1-i)\xi_5^2-i\xi_5+1-2i:(1-i)\xi_5^3-(1+i)\xi_5^2+\xi_5-i:(1-i)\xi_5^3-i\xi_5-i: \xi_5-1\big]\in\Sigma_{16}^3,
$$
and
$$
\big[(-1+i)\xi_5^2+i\xi_5+i:(1+i)\xi_5^3+(1+i)\xi_5^2+i\xi_5+i:-1+(-1-i)\xi_5^3-\xi_5:\xi_5-1\big]\in\Sigma_{16}^4.
$$

Recall from~\S\ref{section:Heisenberg}, that there are three $\overline{G}_{80}$-orbits
$\Sigma_{20}$, $\Sigma_{20}^\prime$ and $\Sigma_{20}^{\prime\prime}$ of length $20$ in~$\mathbb{P}^3$
described by \eqref{equation:Sigma-20}, \eqref{equation:Sigma-20-prime} and \eqref{equation:Sigma-20-prime-prime},
respectively.

\begin{lemma}
\label{lemma:80-Sigma-20}
The orbits $\Sigma_{20}$, $\Sigma_{20}^\prime$ and $\Sigma_{20}^{\prime\prime}$ are the only $\overline{G}_{80}$-orbits of length $20$ in $\mathbb{P}^3$.
\end{lemma}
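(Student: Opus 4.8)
The plan is to reduce the statement to the combinatorics of the thirty lines $\mathbb{L}_{30}$ and their intersection points, which has already been pinned down in Lemma~\ref{lemma:Heseinebrg-lines-points}. First I would observe that a $\overline{G}_{80}$-orbit of length $20$ has point stabilizer of order $80/20=4$. By Lemma~\ref{lemma:80-subgroups}, the only subgroups of $\overline{G}_{80}$ of order $4$ are isomorphic to $\mumu_2^2$, and since all their non-trivial elements have order $2$, such a subgroup is contained in the Sylow $2$-subgroup $\overline{\mathrm{H}}$. Thus every point $P$ in an orbit of length $20$ is fixed by some subgroup $\Gamma=\{1,g_1,g_2,g_3\}\cong\mumu_2^2$ of $\overline{\mathrm{H}}$, where $g_3=g_1g_2$.

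Next I would locate $P$ geometrically. By Lemma~\ref{lemma:Heisenberg-subgroup}(i), being fixed by $g_i$ forces $P$ to lie on one of the two skew lines $L_{g_i}$, $L_{g_i}^\prime$, each of which belongs to $\mathbb{L}_{30}$. Picking for each $i$ the member of $\{L_{g_i},L_{g_i}^\prime\}$ that passes through $P$, I obtain three lines $M_1,M_2,M_3\in\mathbb{L}_{30}$ with $P\in M_1\cap M_2\cap M_3$; these are pairwise distinct because $\{L_{g},L_{g}^\prime\}\cap\{L_{h},L_{h}^\prime\}=\varnothing$ for distinct non-trivial $g,h$ by Lemma~\ref{lemma:Heisenberg-subgroup}(ii). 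Hence $P$ is an intersection point of lines from $\mathbb{L}_{30}$, so by Lemma~\ref{lemma:Heseinebrg-lines-points} it lies in $\Sigma_{20}\cup\Sigma_{20}^\prime\cup\Sigma_{20}^{\prime\prime}$.

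Finally, since each of $\Sigma_{20}$, $\Sigma_{20}^\prime$, $\Sigma_{20}^{\prime\prime}$ is itself a single $\overline{G}_{80}$-orbit (as recorded just before Lemma~\ref{lemma:Heseinebrg-lines-points}), the orbit of $P$ coincides with whichever of the three sets contains $P$. This shows that there are no other $\overline{G}_{80}$-orbits of length $20$.

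I do not expect a genuine obstacle here: the group-theoretic input is entirely supplied by Lemma~\ref{lemma:80-subgroups}, and the one delicate point---that an order-$4$ stabilizer forces $P$ onto three of the thirty fixed lines, whose pattern of intersections is already tabulated---is handled by Lemma~\ref{lemma:Heisenberg-subgroup} and Lemma~\ref{lemma:Heseinebrg-lines-points}. Note that I never need to decide in advance which of the $\mumu_2^2$-subgroups of $\overline{\mathrm{H}}$ actually possess fixed points, since the existence of a fixing $\Gamma$ is handed to me by the length of the orbit. The only thing worth checking carefully is that the three lines through $P$ are \emph{distinct} (so that $P$ is genuinely one of the listed intersection points rather than an interior point of a single line), which is exactly what the disjointness in Lemma~\ref{lemma:Heisenberg-subgroup}(ii) guarantees.
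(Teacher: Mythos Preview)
Your proof is correct and follows essentially the same approach as the paper: identify the stabilizer as $\mumu_2^2\subset\overline{\mathrm{H}}$ via Lemma~\ref{lemma:80-subgroups}, place $P$ on fixed lines from $\mathbb{L}_{30}$ via Lemma~\ref{lemma:Heisenberg-subgroup}, and conclude using Lemma~\ref{lemma:Heseinebrg-lines-points}. You are slightly more explicit than the paper in invoking Lemma~\ref{lemma:Heisenberg-subgroup}(ii) to ensure the three lines through $P$ are distinct, but this is the same argument.
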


\begin{proof}
Let $\Sigma$ be a $\overline{G}_{80}$-orbit of length $20$,
let $P$ be a point in $\Sigma$, and let $\Gamma$ be the stabilizer in $\overline{G}_{80}$ of the point $P$.
Then $\Gamma\cong\mumu_2^2$ by Lemma~\ref{lemma:80-subgroups}.
Let $\gamma_1$, $\gamma_2$ and $\gamma_3$ be three non-trivial elements in $\Gamma$.
Then the locus of fixed points of each of them consists of two skew lines in $\mathbb{P}^3$ by Lemma~\ref{lemma:Heisenberg-subgroup}(i).
One of the lines from each pair must pass through the point $P$,
so that $P\in\Sigma_{20}\cup\Sigma_{20}^\prime\cup\Sigma_{20}^{\prime\prime}$ by Lemma~\ref{lemma:Heseinebrg-lines-points}.
\end{proof}

To describe $\overline{G}_{80}$-orbits in $\mathbb{P}^3$ of length $40$,
recall from Lemma~\ref{lemma:Heisenberg-subgroup}(i) that there are $30$ lines
in $\mathbb{P}^3$ such that each of them is pointwise fixed by some non-trivial element in $\overline{\mathrm{H}}$.
These lines have been described in \eqref{equation:ell-1-2-3-4-5}, \eqref{equation:ell-1-2-3-4-5-prime} and \eqref{equation:ell-1-2-3-4-5-prime-prime}.
Let
$$
\mathcal{L}_{10}=\sum_{i=1}^{5}\ell_i+\sum_{i=1}^{5}\check{\ell}_i.
$$
Similarly, let
$$
\mathcal{L}_{10}^\prime=\sum_{i=1}^{5}\ell_i^\prime+\sum_{i=1}^{5}\check{\ell}_i^\prime.
$$
Finally, we let
$$
\mathcal{L}_{10}^{\prime\prime}=\sum_{i=1}^{5}\ell_i^{\prime\prime}+\sum_{i=1}^{5}\check{\ell}_i^{\prime\prime}.
$$
Then $\mathcal{L}_{10}$,  $\mathcal{L}_{10}^\prime$, $\mathcal{L}_{10}^{\prime\prime}$ are $\overline{G}_{80}$-irreducible curves by construction.

\begin{lemma}
\label{lemma:80-L10}
The curve $\mathcal{L}_{10}$ is a disjoint union of ten lines,
$\mathcal{L}_{10}^\prime$ and $\mathcal{L}_{10}^{\prime\prime}$ are nodal unions of ten lines,
$\mathrm{Sing}(\mathcal{L}_{10}^\prime)=\Sigma_{20}^\prime\not\subset\mathcal{L}_{10}^{\prime\prime}$ and $\mathrm{Sing}(\mathcal{L}_{10}^{\prime\prime})=\Sigma_{20}^{\prime\prime}\not\subset\mathcal{L}_{10}^\prime$.
Moreover, one has
$$
\mathcal{L}_{10}\cap\mathcal{L}_{10}^\prime\cap\mathcal{L}_{10}^{\prime\prime}=
\mathcal{L}_{10}^\prime\cap\mathcal{L}_{10}^{\prime\prime}=\Sigma_{20},\quad
\mathcal{L}_{10}\cap\mathcal{L}_{10}^\prime=\Sigma_{20}\cup\Sigma_{20}^\prime,\quad
\mathcal{L}_{10}\cap\mathcal{L}_{10}^{\prime\prime}=\Sigma_{20}\cup\Sigma_{20}^{\prime\prime},
$$
and $\Sigma_{20}\cup\Sigma_{20}^\prime\cup\Sigma_{20}^{\prime\prime}\subset\mathcal{L}_{10}$.
Furthermore, the $\overline{G}_{80}$-orbit of every point in
$$
\Big(\mathcal{L}_{10}\cup \mathcal{L}_{10}^\prime\cup\mathcal{L}_{10}^{\prime\prime}\Big)\setminus\Big(\Sigma_{20}\cup\Sigma_{20}^\prime\cup\Sigma_{20}^{\prime\prime}\Big)
$$
consists of $40$ points, and every $\overline{G}_{80}$-orbit in $\mathbb{P}^3$ of length $40$ is contained in $\mathcal{L}_{10}\cup \mathcal{L}_{10}^\prime\cup \mathcal{L}_{10}^{\prime\prime}$.
\end{lemma}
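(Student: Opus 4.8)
The plan is to reduce every assertion to a single incidence statement about the thirty lines of $\mathbb{L}_{30}$ and the sixty points of $\Sigma_{20}\cup\Sigma_{20}^\prime\cup\Sigma_{20}^{\prime\prime}$, and then read off the lemma from it. Recall from Lemma~\ref{lemma:Heseinebrg-lines-points} that every intersection point of two lines of $\mathbb{L}_{30}$ lies in $\Sigma_{20}\cup\Sigma_{20}^\prime\cup\Sigma_{20}^{\prime\prime}$, and that exactly three lines of $\mathbb{L}_{30}$ pass through each such point. The thirty lines of $\mathbb{L}_{30}$ are partitioned into the three $\overline{G}_{80}$-invariant families $\mathcal{L}_{10}$, $\mathcal{L}_{10}^\prime$, $\mathcal{L}_{10}^{\prime\prime}$, and $\overline{G}_{80}$ acts transitively on each of $\Sigma_{20}$, $\Sigma_{20}^\prime$, $\Sigma_{20}^{\prime\prime}$ as well as on the lines inside each family. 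Hence the number of lines of a given family through a point $P$ is constant along the $\overline{G}_{80}$-orbit of $P$, and it suffices to compute, for one representative of each orbit of length $20$, how the three lines through it distribute among the families. Carrying out this direct computation with the explicit coordinates in \eqref{equation:ell-1-2-3-4-5}--\eqref{equation:ell-1-2-3-4-5-prime-prime} and \eqref{equation:Sigma-20}--\eqref{equation:Sigma-20-prime-prime} yields the pattern
\begin{itemize}
\item each point of $\Sigma_{20}$ lies on exactly one line from each of $\mathcal{L}_{10}$, $\mathcal{L}_{10}^\prime$, $\mathcal{L}_{10}^{\prime\prime}$;
\item each point of $\Sigma_{20}^\prime$ lies on exactly one line of $\mathcal{L}_{10}$ and on exactly two lines of $\mathcal{L}_{10}^\prime$;
\item each point of $\Sigma_{20}^{\prime\prime}$ lies on exactly one line of $\mathcal{L}_{10}$ and on exactly two lines of $\mathcal{L}_{10}^{\prime\prime}$.
\end{itemize}
In particular every point of $\Sigma_{20}\cup\Sigma_{20}^\prime\cup\Sigma_{20}^{\prime\prime}$ lies on a unique line of $\mathcal{L}_{10}$, which gives $\Sigma_{20}\cup\Sigma_{20}^\prime\cup\Sigma_{20}^{\prime\prime}\subset\mathcal{L}_{10}$.

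All geometric assertions now follow formally. Since through each point of $\Sigma_{20}\cup\Sigma_{20}^\prime\cup\Sigma_{20}^{\prime\prime}$ there passes exactly one line of $\mathcal{L}_{10}$, no two lines of $\mathcal{L}_{10}$ can meet (a common point would be an intersection point of $\mathbb{L}_{30}$ carrying two lines of $\mathcal{L}_{10}$), so $\mathcal{L}_{10}$ is a disjoint union of ten lines. For $\mathcal{L}_{10}^\prime$ the only points where two of its lines meet are those of $\Sigma_{20}^\prime$, where precisely two lines cross transversally; thus $\mathcal{L}_{10}^\prime$ is a nodal union of ten lines with $\mathrm{Sing}(\mathcal{L}_{10}^\prime)=\Sigma_{20}^\prime$, and the same argument gives $\mathrm{Sing}(\mathcal{L}_{10}^{\prime\prime})=\Sigma_{20}^{\prime\prime}$. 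The pattern shows that the points lying on a line of $\mathcal{L}_{10}^{\prime\prime}$ are exactly those of $\Sigma_{20}\cup\Sigma_{20}^{\prime\prime}$, so $\Sigma_{20}^\prime\not\subset\mathcal{L}_{10}^{\prime\prime}$, and symmetrically $\Sigma_{20}^{\prime\prime}\not\subset\mathcal{L}_{10}^\prime$. Reading off which families meet at each type of point yields
$$
\mathcal{L}_{10}^\prime\cap\mathcal{L}_{10}^{\prime\prime}=\Sigma_{20},\quad \mathcal{L}_{10}\cap\mathcal{L}_{10}^\prime=\Sigma_{20}\cup\Sigma_{20}^\prime,\quad \mathcal{L}_{10}\cap\mathcal{L}_{10}^{\prime\prime}=\Sigma_{20}\cup\Sigma_{20}^{\prime\prime},
$$
and hence $\mathcal{L}_{10}\cap\mathcal{L}_{10}^\prime\cap\mathcal{L}_{10}^{\prime\prime}=\Sigma_{20}$.

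It remains to treat the orbits of length $40$. Let $P\in(\mathcal{L}_{10}\cup\mathcal{L}_{10}^\prime\cup\mathcal{L}_{10}^{\prime\prime})\setminus(\Sigma_{20}\cup\Sigma_{20}^\prime\cup\Sigma_{20}^{\prime\prime})$, so $P$ lies on a unique line $L\in\mathbb{L}_{30}$, pointwise fixed by a non-trivial involution $g\in\overline{\mathrm{H}}$ by Lemma~\ref{lemma:Heisenberg-subgroup}(i); thus $\langle g\rangle\subset\mathrm{Stab}(P)$. The stabilizer cannot contain an element of order $5$, since a subgroup containing both an involution and an order-$5$ element is all of $\overline{G}_{80}$ by Lemma~\ref{lemma:80-subgroups}, and $\overline{G}_{80}$ has no fixed point; hence $\mathrm{Stab}(P)$ is a $2$-group. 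It has order less than $8$ by Lemma~\ref{lemma:Heisenberg-subgroup}(iii), and if it had order $4$ then $P$ would lie in a length-$20$ orbit, i.e.\ in $\Sigma_{20}\cup\Sigma_{20}^\prime\cup\Sigma_{20}^{\prime\prime}$ by Lemma~\ref{lemma:80-Sigma-20}, contrary to the choice of $P$. Therefore $\mathrm{Stab}(P)=\langle g\rangle\cong\mumu_2$ and the orbit of $P$ has length $40$. Conversely, if $\Omega$ is any $\overline{G}_{80}$-orbit of length $40$, the stabilizer of $P\in\Omega$ has order $2$ and is generated by an involution $g\in\overline{\mathrm{H}}$, since $\overline{\mathrm{H}}$ is the Sylow $2$-subgroup of $\overline{G}_{80}$; then $P$ lies in the fixed locus of $g$, which by Lemma~\ref{lemma:Heisenberg-subgroup}(i) is a pair of lines of $\mathbb{L}_{30}=\mathcal{L}_{10}\cup\mathcal{L}_{10}^\prime\cup\mathcal{L}_{10}^{\prime\prime}$. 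This gives the final assertion.

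The main obstacle is the incidence computation producing the $(1,1,1)$/$(1,2,0)$/$(1,0,2)$ distribution pattern; everything else is formal. I would organize this computation group-theoretically to keep it transparent: the fifteen non-trivial involutions of $\overline{\mathrm{H}}\cong\mumu_2^4$ split into three $\langle\overline{T}\rangle$-orbits of length $5$, one for each family, the stabilizer in $\overline{\mathrm{H}}$ of a point of a length-$20$ orbit is a subgroup $\mumu_2^2=\{1,g_1,g_2,g_3\}$ whose three involutions index the three lines through the point, and the task reduces to recording, for one such $\mumu_2^2$ per orbit, how $g_1$, $g_2$, $g_3$ are distributed among the three $\langle\overline{T}\rangle$-orbits of involutions.
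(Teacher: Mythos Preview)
Your argument is correct and rests on the same explicit computations as the paper's proof: both ultimately verify, by direct inspection of the coordinates in \eqref{equation:ell-1-2-3-4-5}--\eqref{equation:ell-1-2-3-4-5-prime-prime} and \eqref{equation:Sigma-20}--\eqref{equation:Sigma-20-prime-prime}, which lines pass through which points, and both handle the length-$40$ orbit statement via Lemmas~\ref{lemma:80-subgroups} and~\ref{lemma:80-Sigma-20}.

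The organization is a bit different, and yours is cleaner. The paper checks the individual assertions one by one (disjointness of the components of $\mathcal{L}_{10}$, containments $\Sigma_{20}^\star\subset\mathcal{L}_{10}^\star$, pairwise intersections, nodality) by separate explicit computations. You instead funnel everything through Lemma~\ref{lemma:Heseinebrg-lines-points}: once you know each of the sixty points carries exactly three lines of $\mathbb{L}_{30}$, the whole lemma is encoded in the single $(1,1,1)/(1,2,0)/(1,0,2)$ distribution pattern, which by $\overline{G}_{80}$-transitivity you only need to verify at three points. This is a tidy repackaging rather than a new idea, but it does make transparent why the explicit checks in the paper are not independent. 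Your final paragraph, recasting the computation as the question of how the three nonidentity elements of each stabilizer $\mumu_2^2\subset\overline{\mathrm{H}}$ fall into the three $\langle\overline{T}\rangle$-orbits of involutions, is a nice group-theoretic reformulation that the paper does not make explicit.
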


\begin{proof}
It is straightforward to check using explicit equations of the lines of $\mathcal{L}_{10}$ that they are pairwise disjoint.
Similarly, we see that $\mathcal{L}_{10}$ contains $\Sigma_{20}$, $\Sigma_{20}^\prime$ and $\Sigma_{20}^{\prime\prime}$.
Likewise, the curve $\mathcal{L}_{10}^\prime$ contains $\Sigma_{20}$ and $\Sigma_{20}^\prime$, and it does not contain $\Sigma_{20}^{\prime\prime}$.
Moreover, for every point in~$\Sigma_{20}^\prime$,
there are exactly two among the ten lines of $\mathcal{L}_{10}^{\prime}$ that pass through this point,
and all these ten lines are pairwise disjoint away from $\Sigma_{20}^\prime$.
This means that the curve $\mathcal{L}_{10}^\prime$ is nodal and $\mathrm{Sing}(\mathcal{L}_{10}^\prime)=\Sigma_{20}^\prime$.
In fact, one can also check that
$$
\ell_i^\prime\cap\ell_j^\prime\ne\varnothing\iff\check{\ell}_i^\prime\cap\check{\ell}_j^\prime\ne\varnothing\iff\ell_i^\prime\cap\check{\ell}_j^\prime\ne\varnothing\iff j-i=\pm 1\ \mathrm{mod}\ 5.
$$
Similarly, we see that $\mathcal{L}_{10}^{\prime\prime}$ contains $\Sigma_{20}\cup\Sigma_{20}^{\prime\prime}$, and it does not contain $\Sigma_{20}^{\prime}$.
Furthermore, for every point in $\Sigma_{20}^{\prime\prime}$,
there are exactly two lines among the ten lines of $\mathcal{L}_{10}^{\prime\prime}$
that pass through this point, the lines of $\mathcal{L}_{10}^{\prime\prime}$
do not intersect each other in other points, and
$$
\ell_i^{\prime\prime}\cap\ell_j^{\prime\prime}\ne\varnothing\iff\check{\ell}_i^{\prime\prime}\cap\check{\ell}_j^{\prime\prime}\ne\varnothing\iff\ell_i^{\prime\prime}\cap\check{\ell}_j^{\prime\prime}\ne\varnothing\iff j-i=\pm 2\ \mathrm{mod}\ 5.
$$
In particular, the curve $\mathcal{L}_{10}^{\prime\prime}$ is nodal, and $\mathrm{Sing}(\mathcal{L}_{10}^{\prime\prime})=\Sigma_{20}^{\prime\prime}$.

Explicit computations show that
$$
\mathcal{L}_{10}^\prime\cap\mathcal{L}_{10}^{\prime\prime}=\Sigma_{20},
\quad
\mathcal{L}_{10}\cap\mathcal{L}_{10}^\prime=\Sigma_{20}\cup\Sigma_{20}^\prime,
\quad
\mathcal{L}_{10}\cap\mathcal{L}_{10}^{\prime\prime}=\Sigma_{20}\cup\Sigma_{20}^{\prime\prime}.
$$

By Lemma~\ref{lemma:Heisenberg-subgroup}(i), the curves $\mathcal{L}_{10}$,  $\mathcal{L}_{10}^\prime$ and $\mathcal{L}_{10}^{\prime\prime}$
contain all $\overline{G}_{80}$-orbits in $\mathbb{P}^3$ of length~$40$.
Vice versa, let $P$ be a point in
$\mathcal{L}_{10}\cup \mathcal{L}_{10}^\prime\cup\mathcal{L}_{10}^{\prime\prime}$ such that
$P\not\in\Sigma_{20}\cup\Sigma_{20}^\prime\cup\Sigma_{20}^{\prime\prime}$, and let $\Sigma$ be its $\overline{G}_{80}$-orbit.
Then $|\Sigma|\ne 80$ by construction.
Thus, we have $|\Sigma|=40$ by Lemmas~\ref{lemma:80-subgroups} and \ref{lemma:80-Sigma-20}.
\end{proof}

\begin{corollary}
\label{corollary:80-small-orbits}
Let $\Sigma$ be a $\overline{G}_{80}$-orbit in $\mathbb{P}^3$ such that $|\Sigma|<|\overline{G}_{80}|=80$.
Then one of the following possibilities holds:
\begin{itemize}
\item either $|\Sigma|=40$ and $\Sigma\subset \mathcal{L}_{10}\cup \mathcal{L}_{10}^\prime\cup \mathcal{L}_{10}^{\prime\prime}$,
\item or $|\Sigma|=20$ and $\Sigma$ is one of the $\overline{G}_{80}$-orbits $\Sigma_{20}$, $\Sigma_{20}^\prime$, $\Sigma_{20}^{\prime\prime}$,
\item or $|\Sigma|=16$ and $\Sigma$ is one of the $\overline{G}_{80}$-orbits $\Sigma_{16}^1$, $\Sigma_{16}^2$, $\Sigma_{16}^3$, $\Sigma_{16}^4$.
\end{itemize}
\end{corollary}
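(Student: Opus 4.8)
The plan is to run a stabilizer analysis on the orbit. Given a $\overline{G}_{80}$-orbit $\Sigma$ with $|\Sigma|<80$, I would fix a point $P\in\Sigma$ and set $\Gamma=\mathrm{Stab}_{\overline{G}_{80}}(P)$, so that $\Gamma$ is a non-trivial proper subgroup of $\overline{G}_{80}$ and $|\Sigma|=80/|\Gamma|$. First I would invoke Lemma~\ref{lemma:80-subgroups}, which tells us that $\Gamma$ is isomorphic to one of $\mumu_2$, $\mumu_2^2$, $\mumu_2^3$, $\mumu_2^4$ or $\mumu_5$, of orders $2$, $4$, $8$, $16$ and $5$ respectively. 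Consequently the only a priori possibilities are $|\Sigma|\in\{40,20,16,10,5\}$.

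The key step is to eliminate the lengths $10$ and $5$. These force $\Gamma\cong\mumu_2^3$ and $\Gamma\cong\mumu_2^4$, respectively, so in either case every non-trivial element of $\Gamma$ has order $2$; by Lemma~\ref{lemma:80-subgroups} such a $\Gamma$ is contained in the Sylow $2$-subgroup $\overline{\mathrm{H}}$. Since $|\Gamma|\geqslant 8$, the group $\Gamma$ contains a subgroup of order $8$ of $\overline{\mathrm{H}}$, which has no fixed point in $\mathbb{P}^3$ by Lemma~\ref{lemma:Heisenberg-subgroup}(iii). This contradicts the fact that $P$ is fixed by $\Gamma$. Hence $|\Sigma|\in\{16,20,40\}$.

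It then remains only to appeal to the three orbit-classification lemmas already established. If $|\Sigma|=16$, then Lemma~\ref{lemma:80-Sigma-16} shows that $\Sigma$ is one of $\Sigma_{16}^1$, $\Sigma_{16}^2$, $\Sigma_{16}^3$, $\Sigma_{16}^4$. If $|\Sigma|=20$, then Lemma~\ref{lemma:80-Sigma-20} shows that $\Sigma$ is one of $\Sigma_{20}$, $\Sigma_{20}^\prime$, $\Sigma_{20}^{\prime\prime}$. Finally, if $|\Sigma|=40$, then the last assertion of Lemma~\ref{lemma:80-L10} gives $\Sigma\subset\mathcal{L}_{10}\cup\mathcal{L}_{10}^\prime\cup\mathcal{L}_{10}^{\prime\prime}$. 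This exhausts all cases and yields the three listed possibilities.

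The only genuine content lies in ruling out the lengths $5$ and $10$; everything else is immediate bookkeeping from the subgroup structure combined with the three orbit lemmas. The mild point to be careful about is that this elimination should be phrased entirely in terms of the fixed-point-freeness of order-$8$ subgroups of the Heisenberg group $\overline{\mathrm{H}}$, so that no explicit coordinate computation is needed and the argument applies uniformly to both the $\mumu_2^3$ and $\mumu_2^4$ cases.
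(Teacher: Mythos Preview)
Your proposal is correct and follows essentially the same argument as the paper: both proofs classify the stabilizer $\Gamma$ via Lemma~\ref{lemma:80-subgroups}, eliminate the cases $\Gamma\supset\mumu_2^3$ using Lemma~\ref{lemma:Heisenberg-subgroup}(iii), and then invoke Lemmas~\ref{lemma:80-Sigma-16}, \ref{lemma:80-Sigma-20}, and~\ref{lemma:80-L10} for the three surviving orbit lengths. The only cosmetic difference is that the paper proceeds stabilizer-by-stabilizer while you list the possible orbit lengths first; the content is the same.
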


\begin{proof}
Let $\Gamma$ be a stabilizer of a point in $\Sigma$.
If $\Gamma\cong\mumu_2$, then $|\Sigma|=40$, and the assertion follows from Lemma~\ref{lemma:80-L10}.
If $\Gamma\cong\mumu_2^2$, then $|\Sigma|=20$, and the assertion follows from Lemma~\ref{lemma:80-Sigma-20}.
If $\Gamma\cong\mumu_5$, then $|\Sigma|=16$, and the assertion follows from Lemma~\ref{lemma:80-Sigma-16}.
Otherwise, the group $\Gamma$ contains a subgroup $\mumu_2^3\subset\overline{\mathrm{H}}$ by Lemma~\ref{lemma:80-subgroups},
which is impossible by Lemma~\ref{lemma:Heisenberg-subgroup}(iii).
\end{proof}

Now we are ready to prove

\begin{lemma}
\label{lemma:80-curves-reducible}
Let $C$ be a reducible $\overline{G}_{80}$-invariant  curve in $\mathbb{P}^3$ such that $\mathrm{deg}(C)\leqslant 15$.
Then~$C$ is one of the curves $\mathcal{L}_{10}$, $\mathcal{L}_{10}^\prime$ or $\mathcal{L}_{10}^{\prime\prime}$.
\end{lemma}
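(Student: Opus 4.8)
The plan is to analyse $C$ orbit-piece by orbit-piece. Write $C$ as a union of $\overline{G}_{80}$-irreducible subcurves $Z^{(1)},\dots,Z^{(s)}$, where each $Z^{(j)}$ is a single $\overline{G}_{80}$-orbit of irreducible components, so that $\sum_j\deg(Z^{(j)})=\deg(C)\leqslant 15$. Fixing an irreducible component $Z$ of some $Z^{(j)}$, of degree $d$ and with stabilizer $\Gamma\subset\overline{G}_{80}$, the orbit $\overline{G}_{80}\cdot Z$ has $[\overline{G}_{80}:\Gamma]$ components and degree $[\overline{G}_{80}:\Gamma]\,d\leqslant 15$. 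Since the proper subgroups of $\overline{G}_{80}$ are exhausted by Lemma~\ref{lemma:80-subgroups}, the only indices with $[\overline{G}_{80}:\Gamma]\leqslant 15$ are $1$, $5$ and $10$, realised respectively by $\Gamma=\overline{G}_{80}$, by $\Gamma=\overline{\mathrm{H}}\cong\mumu_2^4$ (forcing $d\leqslant 3$), and by $\Gamma\cong\mumu_2^3$ (forcing $d=1$).

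The case $\Gamma=\overline{\mathrm{H}}$ I would rule out directly: such a $Z$ is an irreducible $\overline{\mathrm{H}}$-invariant curve of degree at most $3$, hence rational or elliptic, so the action of $\overline{\mathrm{H}}$ on its normalization is faithful unless the kernel of the action on $Z$ is non-trivial. A faithful action is impossible by Lemma~\ref{lemma:Heisenberg-elliptic}; and if the kernel is non-trivial, then $Z$ lies in the fixed locus $L_g\cup L_g^\prime$ of some involution $g$ by Lemma~\ref{lemma:Heisenberg-subgroup}(i), forcing $Z$ to be a line of $\mathbb{L}_{30}$, whose $\overline{\mathrm{H}}$-stabilizer is only $\mumu_2^3$ — a contradiction. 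In the case $\Gamma\cong\mumu_2^3$ the curve $Z$ is a line; since $\mumu_2^3$ has no faithful representation in $\mathrm{PGL}_2(\mathbb{C})$, some involution fixes $Z$ pointwise, so $Z\in\mathbb{L}_{30}$ by Lemma~\ref{lemma:Heisenberg-subgroup}(i), and its orbit is one of the three $\overline{G}_{80}$-irreducible curves $\mathcal{L}_{10}$, $\mathcal{L}_{10}^\prime$, $\mathcal{L}_{10}^{\prime\prime}$ of Lemma~\ref{lemma:80-L10}, of degree $10$.

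The main obstacle is the case $\Gamma=\overline{G}_{80}$: an irreducible $\overline{G}_{80}$-invariant curve $Z$ of degree $d$, which I must show is impossible for $d\leqslant 7$. Such a $Z$ is non-degenerate, since otherwise the unique plane spanned by it would be a $\overline{G}_{80}$-invariant surface of degree $1$, excluded by Lemma~\ref{lemma:80-144-quadrics}. The $\overline{G}_{80}$-action lifts to the smooth normalization $\widetilde{Z}$, and faithfully so: a non-trivial kernel would be a non-trivial normal subgroup, hence would contain $\overline{\mathrm{H}}$ by Lemma~\ref{lemma:80-subgroups}, and in particular an involution fixing $Z$ pointwise, impossible for $d\geqslant 2$ by Lemma~\ref{lemma:Heisenberg-subgroup}(i). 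Castelnuovo's genus bound for non-degenerate space curves gives $g(\widetilde{Z})\leqslant 4$ when $2\leqslant d\leqslant 6$ and $g(\widetilde{Z})\leqslant 6$ when $d=7$; as $g(\widetilde{Z})\leqslant 19$, Lemma~\ref{lemma:80-sporadic-genera} forces $g(\widetilde{Z})\in\{5,13,17\}$. This is already contradictory for $2\leqslant d\leqslant 6$, and leaves only $d=7$ with $g(\widetilde{Z})=5$. To eliminate this last possibility I would intersect $Z$ with the five $\overline{G}_{80}$-invariant quartic surfaces $\{q_i=0\}$ defined in~\eqref{equation:80-q0-q1-q2-q3-q4}: if $Z\not\subset\{q_i=0\}$, then $Z\cdot\{q_i=0\}=28$ would be a union of $\overline{G}_{80}$-orbits, but by Corollary~\ref{corollary:80-small-orbits} these have lengths in $\{16,20,40,80\}$, and $28$ is not a non-negative integral combination of these numbers. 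Hence $Z$ lies on all five quartics, and since $q_0,\dots,q_4$ form a basis of the space $\mathcal{U}_4$ of $\mathbb{H}$-invariant quartics, which contains the monomial $xyzw$, we obtain $xyzw\vert_Z\equiv 0$; thus $Z$ lies in a coordinate plane, contradicting non-degeneracy. Therefore no irreducible $\overline{G}_{80}$-invariant curve of degree $\leqslant 7$ exists.

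Finally I would assemble the pieces by a degree count. Every $\overline{G}_{80}$-irreducible subcurve $Z^{(j)}$ of $C$ is now seen to be either one of $\mathcal{L}_{10}$, $\mathcal{L}_{10}^\prime$, $\mathcal{L}_{10}^{\prime\prime}$ (degree $10$) or an irreducible $\overline{G}_{80}$-invariant curve of degree $\geqslant 8$; in either case $\deg(Z^{(j)})\geqslant 8$. Since $\sum_j\deg(Z^{(j)})\leqslant 15$, there is exactly one piece, so $C=Z^{(1)}$. As $C$ is reducible it cannot be an irreducible invariant curve, so $C$ must be one of $\mathcal{L}_{10}$, $\mathcal{L}_{10}^\prime$, $\mathcal{L}_{10}^{\prime\prime}$, as claimed.
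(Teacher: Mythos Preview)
Your proof is correct and in fact more self-contained than the paper's. Both arguments begin with the same index computation: the stabilizer $\Gamma$ of an irreducible component must have index $1$, $5$, or $10$, and both dispose of the index-$10$ case in the same way. For index $5$ ($\Gamma=\overline{\mathrm{H}}$) the paper argues differently: rather than your line-stabilizer contradiction, it notes that $|\Gamma_{C_1}|<4$ forces $\mumu_2^3$ to act faithfully on $C_1$, so $C_1$ is a plane cubic, and the five planes containing the five components would give a $\overline{G}_{80}$-orbit of length~$5$, contradicting Corollary~\ref{corollary:80-small-orbits}.

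The main difference is your treatment of the case $\Gamma=\overline{G}_{80}$. The paper simply writes ``we may assume that $C$ is $\overline{G}_{80}$-irreducible'' and then takes $r\geqslant 2$, so it never addresses what happens if $C$ has an irreducible $\overline{G}_{80}$-invariant piece of small degree; that is dealt with only later, inside the proof of Theorem~\ref{theorem:80-curves}, by a similar quartic-intersection argument. Your Castelnuovo-plus-quartics step (showing directly that no irreducible $\overline{G}_{80}$-invariant curve of degree $\leqslant 7$ exists, by forcing $Z\subset\{q_i=0\}$ for all $i$ and hence $Z\subset\{xyzw=0\}$) makes the reduction to a single $\overline{G}_{80}$-irreducible piece genuinely rigorous at this stage and anticipates part of Theorem~\ref{theorem:80-curves}. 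The only cosmetic gap is that your non-degeneracy argument does not literally cover $d=1$, but an invariant line is excluded immediately by the primitivity of $\overline{G}_{80}$.
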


\begin{proof}
We may assume that the curve $C$ is $\overline{G}_{80}$-irreducible.
Write $C=C_1+\ldots+C_r$, where each $C_i$ is an irreducible curve. Then $r\geqslant 2$ by assumption.
Let $d$ be the degree of the curve $C_1$. Then
$$
15\geqslant \mathrm{deg}(C)=dr.
$$
Let $\Gamma$ be the stabilizer in $\overline{G}_{80}$ of the curve $C_1$.
Then there exists an exact sequence of groups
$$
1\to\Gamma_{C_1}\to\Gamma\to\mathrm{Aut}(C_1),
$$
where $\Gamma_{C_1}$ fixes the curve $C_1$ pointwise.
By Corollary~\ref{corollary:80-small-orbits}, there are only finitely many  $\overline{G}_{80}$-orbits in $\mathbb{P}^3$
of length at most $20$.
Thus, we see that $|\Gamma_{C_1}|<4$, so that either $\Gamma_{C_1}$ is trivial or $\Gamma_{C_1}\cong\mumu_2$.

By Lemma~\ref{lemma:80-subgroups}, the group $\Gamma$ is one of the groups $\mumu_2$, $\mumu_2^2$, $\mumu_2^3$, $\mumu_2^4$ or $\mumu_5$.
Since
$$
r|\Gamma|=|\overline{G}_{80}|=80
$$
and $15>dr$, we see that either $r=5$ or $10$.

Suppose that $r=5$. Then $\Gamma\cong\mumu_2^4$ and $\Gamma/\Gamma_{C_1}\supset\mumu_2^3$.
Hence, the curve $C_1$ is not rational, because the group $\mumu_2^3$ does not act faithfully on $\mathbb{P}^1$.
Since $d\leqslant 3$, the curve $C_1$ is a smooth plane cubic curve.
Hence there is a $\overline{G}_{80}$-invariant set of $5$ planes in $\mathbb{P}^3$, which implies  that there exists a $\overline{G}_{80}$-orbit in $\mathbb{P}^3$ of length $5$.
The latter is impossible by Corollary~\ref{corollary:80-small-orbits}.

Thus, we see that $r=10$, which implies that $d=1$, so that $C_1$ is a line.
In this case $\Gamma\cong\mumu_2^3$ and $\Gamma_{C_1}\cong\mumu_2$,
because $\mumu_2^3$ cannot act faithfully on $\mathbb{P}^1$.
Then $C_1$ is one of two lines that is pointwise fixed by $\Gamma_{C_1}$, cf. Lemma~\ref{lemma:Heisenberg-subgroup}(i).
Now it follows from Lemma~\ref{lemma:80-L10} that $C_1$ is an irreducible component of one of the curves
$\mathcal{L}_{10}$, $\mathcal{L}_{10}^\prime$, $\mathcal{L}_{10}^{\prime\prime}$.
\end{proof}

\begin{corollary}
\label{corollary:80-small-degree-curves}
There are no $\overline{G}_{80}$-invariant curves in $\mathbb{P}^3$ of degree less than $5$.
\end{corollary}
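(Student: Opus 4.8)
The plan is to reduce to the case of a $\overline{G}_{80}$-irreducible curve and then run a short case analysis that is already prepared by the preceding lemmas. Suppose $C$ is a $\overline{G}_{80}$-invariant curve with $\mathrm{deg}(C)<5$. Decomposing $C$ into its $\overline{G}_{80}$-irreducible components, each of which has degree at most $\mathrm{deg}(C)<5$, it suffices to derive a contradiction assuming that $C$ itself is $\overline{G}_{80}$-irreducible. First I would dispose of the reducible case: by Lemma~\ref{lemma:80-curves-reducible} every reducible $\overline{G}_{80}$-invariant curve of degree at most $15$ is one of the curves $\mathcal{L}_{10}$, $\mathcal{L}_{10}^\prime$, $\mathcal{L}_{10}^{\prime\prime}$, all of which have degree $10$; since $\mathrm{deg}(C)<5$, the curve $C$ must be irreducible.

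Next I would treat the degenerate subcases using the absence of small invariant surfaces. If $C$ is irreducible but degenerate, its linear span is a proper $\overline{G}_{80}$-invariant subspace. If the span is a line, then $C$ is itself a $\overline{G}_{80}$-invariant line, contradicting the transitivity of the primitive group $\overline{G}_{80}$. If the span is a plane, then that plane is a $\overline{G}_{80}$-invariant surface of degree $1$, which is excluded by Lemma~\ref{lemma:80-144-quadrics}. This already rules out all degrees $\leqslant 2$, since an irreducible conic is a plane curve and a line is degenerate.

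It remains to handle a nondegenerate irreducible curve $C$, which necessarily has degree $3$ or $4$. Let $\nu\colon\widetilde{C}\to C$ be the normalization; the $\overline{G}_{80}$-action lifts to the smooth curve $\widetilde{C}$. The key point is that this lifted action is faithful: any element fixing $C$ pointwise fixes the whole linear span of $C$, which is $\mathbb{P}^3$ because $C$ is nondegenerate, and hence is trivial in $\mathrm{PGL}_4(\mathbb{C})$. A nondegenerate irreducible curve of degree $3$ is a twisted cubic with $\widetilde{C}\cong\mathbb{P}^1$, while one of degree $4$ has geometric genus at most $1$; in either case $\widetilde{C}$ is a smooth curve of genus $\leqslant 1$ carrying a faithful action of $\overline{G}_{80}$. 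This contradicts Lemma~\ref{lemma:80-sporadic-genera} (equivalently, it already contradicts the fact that $\overline{\mathrm{H}}\subset\overline{G}_{80}$ cannot act faithfully on a rational or elliptic curve, Lemma~\ref{lemma:Heisenberg-elliptic}). Since every case leads to a contradiction, no such curve $C$ exists. I do not expect any genuine obstacle here: the corollary is essentially bookkeeping on top of Lemmas~\ref{lemma:80-curves-reducible}, \ref{lemma:80-144-quadrics} and \ref{lemma:Heisenberg-elliptic}, and the only point requiring a little care is the faithfulness of the action on the normalization in the nondegenerate case.
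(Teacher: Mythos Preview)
Your proof is correct and follows essentially the same route as the paper: reduce to an irreducible curve via Lemma~\ref{lemma:80-curves-reducible}, then observe that the genus of the normalization is too small to carry a faithful $\overline{G}_{80}$-action, contradicting Lemma~\ref{lemma:80-sporadic-genera}. The paper's version is more compressed: it simply notes that an irreducible curve of degree $<5$ has normalization of genus at most~$3$ and invokes Lemma~\ref{lemma:80-sporadic-genera} directly, without splitting into degenerate and nondegenerate cases or discussing faithfulness. Your extra case split (line, plane curve, nondegenerate) and your explicit verification of faithfulness via the linear span are not needed if one observes that the kernel of the action on $C$ is normal in $\overline{G}_{80}$, hence trivial, $\overline{\mathrm{H}}$, or $\overline{G}_{80}$ by Lemma~\ref{lemma:80-subgroups}, and the latter two are ruled out by Lemma~\ref{lemma:Heisenberg-subgroup}; but your argument is a perfectly valid alternative and makes this point transparent.
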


\begin{proof}
Let $C$ be a $\overline{G}_{80}$-invariant curve in $\mathbb{P}^3$ of degree less $5$.
Then $C$ is irreducible by Lemma~\ref{lemma:80-curves-reducible}.
Thus the genus of its normalization is at most $3$, which is impossible by Lemma~\ref{lemma:80-sporadic-genera}.
\end{proof}

Using Lemma~\ref{lemma:Heseinebrg-lines-quadrics}, we obtain the following two technical results.

\begin{lemma}
\label{lemma:80-L10-sextics}
Let $\mathcal{M}$ (respectively, $\mathcal{M}^\prime$, $\mathcal{M}^{\prime\prime}$)
be the linear system of surfaces in $\mathbb{P}^3$ of degree~$6$ (respectively, $4$, $4$)
that contains $\mathcal{L}_{10}$ (respectively, $\mathcal{L}_{10}^\prime$, $\mathcal{L}_{10}^{\prime\prime}$).
Then the base locus of~$\mathcal{M}$ (respectively, $\mathcal{M}^\prime$, $\mathcal{M}^{\prime\prime}$)
does not contain curves except for $\mathcal{L}_{10}$ (respectively,~$\mathcal{L}_{10}^\prime$,~$\mathcal{L}_{10}^{\prime\prime}$).
\end{lemma}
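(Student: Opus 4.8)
The plan is to realize enough members of each of the three linear systems as products of the ten $\overline{\mathrm{H}}$-invariant quadrics $\mathcal{Q}_1,\dots,\mathcal{Q}_{10}$, and then to exploit the rigid incidence pattern between these quadrics and the thirty lines of $\mathbb{L}_{30}$ recorded in Lemma~\ref{lemma:Heseinebrg-lines-quadrics} and Table~\ref{table:quadric-lines}. I will describe $\mathcal{M}$ in detail; the two quartic systems $\mathcal{M}^\prime$ and $\mathcal{M}^{\prime\prime}$ are handled by the same scheme. Recall from Lemma~\ref{lemma:80-L10} that $\mathcal{L}_{10}$, $\mathcal{L}_{10}^\prime$, $\mathcal{L}_{10}^{\prime\prime}$ partition $\mathbb{L}_{30}$ and that each of them is $\overline{G}_{80}$-irreducible, and from Table~\ref{table:quadric-lines} that each line of $\mathcal{L}_{10}$ lies on exactly four of the $\mathcal{Q}_i$.

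First I would exhibit two sextic surfaces in $\mathcal{M}$, each a product of three quadrics with disjoint factor sets and each containing all of $\mathcal{L}_{10}$. Reading off Table~\ref{table:quadric-lines}, one checks that $D_1=\mathcal{Q}_1\mathcal{Q}_3\mathcal{Q}_4$ and $D_2=\mathcal{Q}_5\mathcal{Q}_6\mathcal{Q}_9$ have this property: the three factors of each product jointly contain all ten lines of $\mathcal{L}_{10}$, so $D_1,D_2\in\mathcal{M}$. Since $D_1$ and $D_2$ share no quadric factor, their set-theoretic intersection is the union of the loci $\mathcal{Q}_i\cap\mathcal{Q}_j$ over the factors $\mathcal{Q}_i$ of $D_1$ and $\mathcal{Q}_j$ of $D_2$, and by the last assertion of Lemma~\ref{lemma:Heseinebrg-lines-quadrics} each such $\mathcal{Q}_i\cap\mathcal{Q}_j$ is a quadruple of lines of $\mathbb{L}_{30}$. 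Hence $D_1\cap D_2$ is a finite union of lines of $\mathbb{L}_{30}$; as the base locus of $\mathcal{M}$ is contained in $D_1\cap D_2$, every curve in it is one of the thirty lines of $\mathbb{L}_{30}$.

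It then remains to exclude the lines of $\mathbb{L}_{30}\setminus\mathcal{L}_{10}=\mathcal{L}_{10}^\prime\cup\mathcal{L}_{10}^{\prime\prime}$. Here I would invoke the $\overline{G}_{80}$-invariance of $\mathcal{M}$: since $\mathcal{L}_{10}^\prime$ and $\mathcal{L}_{10}^{\prime\prime}$ are $\overline{G}_{80}$-irreducible, if a single line of $\mathcal{L}_{10}^\prime$ (respectively, of $\mathcal{L}_{10}^{\prime\prime}$) lay in the base locus, then so would the whole curve. But the explicit members $D_1$ and $D_2$ already omit some of these lines. Indeed, inspection of Table~\ref{table:quadric-lines} shows that $D_1=\mathcal{Q}_1\mathcal{Q}_3\mathcal{Q}_4$ omits the lines $\ell_2^\prime$ and $\ell_5^\prime$ of $\mathcal{L}_{10}^\prime$, while $D_2=\mathcal{Q}_5\mathcal{Q}_6\mathcal{Q}_9$ omits the line $\ell_4^{\prime\prime}$ of $\mathcal{L}_{10}^{\prime\prime}$. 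Consequently neither $\mathcal{L}_{10}^\prime$ nor $\mathcal{L}_{10}^{\prime\prime}$ is contained in the base locus, and the only curve in the base locus of $\mathcal{M}$ is $\mathcal{L}_{10}$.

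For the quartic systems the identical argument applies with products of \emph{two} quadrics. For $\mathcal{M}^\prime$ one may take $\mathcal{Q}_6\mathcal{Q}_7$ and $\mathcal{Q}_8\mathcal{Q}_{10}$, which contain $\mathcal{L}_{10}^\prime$ and have disjoint factor sets; for $\mathcal{M}^{\prime\prime}$ one takes $\mathcal{Q}_1\mathcal{Q}_2$ and $\mathcal{Q}_3\mathcal{Q}_4$. Note that here the two $\overline{G}_{80}$-orbits of quadrics meet $\mathcal{L}_{10}^\prime$ (and $\mathcal{L}_{10}^{\prime\prime}$) in different numbers of lines, so one uses the orbit each of whose quadrics contains six of the relevant lines, which is exactly what makes a product of two quadrics large enough to contain all ten. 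The main obstacle is purely bookkeeping rather than conceptual: one must verify from Table~\ref{table:quadric-lines} that the chosen products really contain the full curve, that the chosen pair of products share no common quadric factor so that Lemma~\ref{lemma:Heseinebrg-lines-quadrics} applies, and that at least one line of each of the two \emph{other} families is missed by some member, so that $\overline{G}_{80}$-irreducibility forces those families out of the base locus.
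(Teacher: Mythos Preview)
Your proposal is correct and follows the same underlying strategy as the paper: exhibit members of each linear system as products of the ten $\overline{\mathrm{H}}$-invariant quadrics, and then read off from Table~\ref{table:quadric-lines} and Lemma~\ref{lemma:Heseinebrg-lines-quadrics} which lines of $\mathbb{L}_{30}$ survive in the intersection.

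The difference is in how the stray lines are eliminated. For $\mathcal{M}$ the paper intersects two sextic products $T_{1,2,4}$ and $T_{3,5,6}$, obtaining $\mathcal{L}_{10}$ together with twelve explicitly named extra lines from $\mathcal{L}_{10}^\prime\cup\mathcal{L}_{10}^{\prime\prime}$, and then invokes five further sextic products to rule each of these out individually; the quartic cases are handled analogously with three quartic products each. You instead observe that the linear systems are $\overline{G}_{80}$-invariant and that $\mathcal{L}_{10}$, $\mathcal{L}_{10}^\prime$, $\mathcal{L}_{10}^{\prime\prime}$ are $\overline{G}_{80}$-irreducible, so once the intersection of two disjoint-factor products is known to lie in $\mathbb{L}_{30}$, it suffices that \emph{some} member miss \emph{one} line of each foreign family. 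This lets you get away with two members per system rather than seven (resp.\ three, three). The paper's argument is purely incidence-theoretic and never invokes the group action, while yours trades extra table lookups for a single appeal to $\overline{G}_{80}$-irreducibility; both are sound, and the amount of bookkeeping is comparable.
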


\begin{proof}
Recall from~\S\ref{section:Heisenberg}, that $\mathbb{P}^3$ contains ten $\overline{\mathrm{H}}$-invariant quadrics $\mathcal{Q}_i$, where~\mbox{$1\leqslant i\leqslant 10$}, which are defined by equations \eqref{equation:quadrics-1-2-3-4-5} and \eqref{equation:quadrics-6-7-8-9-10}.
For every $i<j<k$, define the reducible sextic surface
$$
T_{i,j,k}=\mathcal{Q}_i+\mathcal{Q}_j+\mathcal{Q}_k.
$$
By Lemma~\ref{lemma:Heseinebrg-lines-quadrics}, the curve $\mathcal{L}_{10}$ is contained in
$T_{1,2,4}$, $T_{3,5,6}$,  $T_{2,4,5}$, $T_{2,3,5}$, $T_{7,8,9}$, $T_{1,6,10}$ and~$T_{6,9,10}$,
so that these surfaces are contained in the linear system $\mathcal{M}$.
Using Lemma~\ref{lemma:Heseinebrg-lines-quadrics}, one can check that
$$
T_{1,2,4}\cap T_{3,5,6}=\mathcal{L}_{10}\cup\ell_1^\prime\cup\check{\ell}_1^\prime\cup\ell_3^\prime\cup\check{\ell}_3^\prime\cup\ell_2^{\prime\prime}\cup\check{\ell}_2^{\prime\prime}\cup\ell_3^{\prime\prime}\cup\check{\ell}_3^{\prime\prime}\cup\ell_4^{\prime\prime}\cup\check{\ell}_4^{\prime\prime}\cup\ell_5^{\prime\prime}\cup\check{\ell}_5^{\prime\prime}.
$$
On the other hand, the sextic  surface $T_{2,4,5}$ does not contain the lines $\ell_1^{\prime}$ and $\check{\ell}_1^{\prime}$,
and the sextic  surface $T_{2,3,5}$ does not contain the lines $\ell_3^{\prime}$ and $\check{\ell}_3^{\prime}$.
This also follows from Lemma~\ref{lemma:Heseinebrg-lines-quadrics}.
Similarly, the sextic  surface $T_{7,8,9}$ does not contain
the lines $\ell_4^{\prime\prime}$, $\check{\ell}_4^{\prime\prime}$, $\ell_5^{\prime\prime}$ and~$\check{\ell}_5^{\prime\prime}$,
the sextic  surface $T_{1,6,10}$ does not contain the lines $\ell_3^{\prime\prime}$ and $\check{\ell}_3^{\prime\prime}$,
and the sextic surface~$T_{6,9,10}$ does not contain the lines $\ell_2^{\prime\prime}$ and $\check{\ell}_2^{\prime\prime}$.
Thus, the intersection of the above seven sextic surfaces does not contain other curves except for $\mathcal{L}_{10}$,
so that the same holds for the base locus of $\mathcal{M}$.

For every $i<j$, define the reducible quartic surface
$$
R_{i,j}=\mathcal{Q}_i+\mathcal{Q}_j.
$$
By Lemma~\ref{lemma:Heseinebrg-lines-quadrics}, the curve $\mathcal{L}_{10}^\prime$ is contained in $R_{6,7}$, $R_{8,10}$ and  $R_{9,10}$,
so that these surfaces are contained in the linear system $\mathcal{M}^\prime$.
Using Lemma~\ref{lemma:Heseinebrg-lines-quadrics} one more time, we see that
$$
R_{6,7}\cap R_{8,10}=\mathcal{L}_{10}^\prime\cup\ell_5\cup\check{\ell}_5.
$$
However, the surface  $R_{9,10}$ does not contain the lines $\ell_5$ and $\check{\ell}_5$.
This shows that the base locus of $\mathcal{M}^\prime$ does not contain curves except for $\mathcal{L}_{10}^\prime$.

Similarly, the quartics $R_{1,2}$, $R_{3,4}$ and $R_{4,5}$ contain the curve $\mathcal{L}_{10}^{\prime\prime}$.
Using Lemma~\ref{lemma:Heseinebrg-lines-quadrics}, one can check that
$$
R_{1,2}\cap R_{3,4}=\mathcal{L}_{10}^{\prime\prime}\cup\ell_4\cup\check{\ell}_4.
$$
However, the surface  $R_{4,5}$ does not contain the lines $\ell_4$ and $\check{\ell}_5$,
which implies that the base locus of $\mathcal{M}^{\prime\prime}$ does not contain curves except for $\mathcal{L}_{10}^{\prime\prime}$.
\end{proof}

\begin{lemma}
\label{lemma:80-L10-mult}
Let $\mathcal{D}$ be a (non-empty) mobile $\overline{G}_{80}$-invariant linear system on $\mathbb{P}^3$,
and let~$n$ be a positive integer such that $\mathcal{D}\sim\mathcal{O}_{\mathbb{P}^3}(n)$.
Then
$$
\mathrm{max}\Big\{\mathrm{mult}_{\mathcal{L}_{10}}\big(\mathcal{D}\big), \mathrm{mult}_{\mathcal{L}_{10}^\prime}\big(\mathcal{D}\big), \mathrm{mult}_{\mathcal{L}_{10}^{\prime\prime}}\big(\mathcal{D}\big)\Big\}\leqslant\frac{n}{4}.
$$
\end{lemma}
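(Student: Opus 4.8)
The plan is to bound each of the three multiplicities separately by restricting a general member of $\mathcal{D}$ to a single well-chosen smooth quadric among $\mathcal{Q}_1,\ldots,\mathcal{Q}_{10}$ and exploiting the two rulings on it; the only inputs are Lemma~\ref{lemma:Heseinebrg-lines-quadrics} (which of the thirty lines lies on which quadric, Table~\ref{table:quadric-lines}) and Lemma~\ref{lemma:80-L10} (the disjointness pattern of the components). Since $\mathcal{L}_{10}$, $\mathcal{L}_{10}^\prime$ and $\mathcal{L}_{10}^{\prime\prime}$ are each $\overline{G}_{80}$-irreducible, the linear system $\mathcal{D}$ has one and the same multiplicity along every line-component of a given curve. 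Hence it suffices, for a single component $\ell$ of each curve, to prove $\mathrm{mult}_\ell(\mathcal{D})\leqslant\frac{n}{4}$.

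The mechanism I would isolate first is the following restriction estimate. Suppose a smooth quadric $\mathcal{Q}$ among the ten contains four \emph{pairwise disjoint} components $\ell_1,\ell_2,\ell_3,\ell_4$ of the curve in question. On a smooth quadric two distinct lines meet unless they lie in the same ruling, so four pairwise disjoint lines automatically lie in one ruling; thus $\ell_1+\ell_2+\ell_3+\ell_4$ is a divisor of bi-degree $(4,0)$ on $\mathcal{Q}\cong\mathbb{P}^1\times\mathbb{P}^1$. Because $\mathcal{D}$ is mobile it has no fixed component, so a general surface $D\in\mathcal{D}$ does not contain $\mathcal{Q}$, and $D\vert_{\mathcal{Q}}$ is an effective divisor of bi-degree $(n,n)$. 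As $D$ has multiplicity at least $m=\mathrm{mult}_\ell(\mathcal{D})$ along each $\ell_i$, we obtain $D\vert_{\mathcal{Q}}\geqslant m(\ell_1+\ell_2+\ell_3+\ell_4)$, so that $(n-4m,\,n)$ is the class of an effective divisor and $m\leqslant\frac{n}{4}$.

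What remains is to produce, for each of the three curves, such a quadric together with four pairwise disjoint components on it. For $\mathcal{L}_{10}$ this is immediate: by Lemma~\ref{lemma:Heseinebrg-lines-quadrics} the quadric $\mathcal{Q}_1$ contains $\ell_1,\ell_5,\check{\ell}_1,\check{\ell}_5$, and these are pairwise disjoint because $\mathcal{L}_{10}$ is a disjoint union of ten lines by Lemma~\ref{lemma:80-L10}. For the nodal curves I would use the explicit intersection pattern recorded in Lemma~\ref{lemma:80-L10}. For $\mathcal{L}_{10}^\prime$ the quadric $\mathcal{Q}_6$ contains the six components $\ell_1^\prime,\ell_2^\prime,\ell_3^\prime,\check{\ell}_1^\prime,\check{\ell}_2^\prime,\check{\ell}_3^\prime$; since two of its components meet precisely when their indices differ by $\pm1\bmod5$, the four components $\ell_1^\prime,\check{\ell}_1^\prime,\ell_3^\prime,\check{\ell}_3^\prime$ are pairwise disjoint, hence co-ruled. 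Likewise, for $\mathcal{L}_{10}^{\prime\prime}$ the quadric $\mathcal{Q}_1$ contains $\ell_1^{\prime\prime},\ell_2^{\prime\prime},\ell_4^{\prime\prime},\check{\ell}_1^{\prime\prime},\check{\ell}_2^{\prime\prime},\check{\ell}_4^{\prime\prime}$, and since here components meet exactly when their indices differ by $\pm2\bmod5$, the four components $\ell_1^{\prime\prime},\check{\ell}_1^{\prime\prime},\ell_2^{\prime\prime},\check{\ell}_2^{\prime\prime}$ are pairwise disjoint and co-ruled. In each case the restriction estimate applies and yields the desired bound.

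The crux of the argument — and the reason the sharp constant $\frac14$ appears — is the structural input of Lemma~\ref{lemma:80-L10}: it is the pairwise disjointness of the components of $\mathcal{L}_{10}$, and the explicit meeting pattern of the components of $\mathcal{L}_{10}^\prime$ and $\mathcal{L}_{10}^{\prime\prime}$, that guarantee \emph{four} (rather than merely two) components lying in a common ruling of a single quadric. Producing only two co-ruled components would give the weaker bound $\frac{n}{2}$; the auxiliary linear systems of Lemma~\ref{lemma:80-L10-sextics} alone, via a restriction-to-a-general-member computation on a sextic surface, likewise fall just short of $\frac{n}{4}$. Thus the main point to get right is the combinatorics of lines, rulings and quadrics, which is exactly what Lemmas~\ref{lemma:Heseinebrg-lines-quadrics} and~\ref{lemma:80-L10} supply.
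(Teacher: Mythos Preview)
Your proof is correct and follows essentially the same route as the paper's own argument: for each of the three curves you restrict a general member of $\mathcal{D}$ to the very same quadric ($\mathcal{Q}_1$ for $\mathcal{L}_{10}$ and $\mathcal{L}_{10}^{\prime\prime}$, $\mathcal{Q}_6$ for $\mathcal{L}_{10}^\prime$), single out the same four pairwise disjoint line-components lying in one ruling, and read off $4m\leqslant n$. The only cosmetic difference is that the paper phrases the final step as intersecting with a general line of the opposite ruling rather than comparing bi-degrees, which amounts to the same computation.
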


\begin{proof}
Let us use the notation of the proof of Lemma~\ref{lemma:80-L10}.
Let $D$ be a general surface in~$\mathcal{D}$.
Since the lines $\ell_1$, $\ell_5$, $\check{\ell}_1$ and~$\check{\ell}_5$
are contained in the quadric surface $\mathcal{Q}_1$ by Lemma~\ref{lemma:Heseinebrg-lines-quadrics},
we have
$$
D\vert_{\mathcal{Q}_1}=m\big(\ell_1+\ell_5+\check{\ell}_1+\check{\ell}_5\big)+\Delta,
$$
where $m$ is a non-negative integer such that $m\geqslant\mathrm{mult}_{\mathcal{L}_{10}}(\mathcal{D})$,
and $\Delta$ is an effective divisor on $\mathcal{Q}_1$ whose support does not contain the lines $\ell_1$, $\ell_5$, $\check{\ell}_1$ and~$\check{\ell}_5$.
By Lemma~\ref{lemma:80-L10}, the lines  $\ell_1$, $\ell_5$, $\check{\ell}_1$ and~$\check{\ell}_5$ are disjoint.
Thus, we may assume that these lines are divisors of bi-degree~\mbox{$(1,0)$} on $\mathcal{Q}_1\cong\mathbb{P}^1\times\mathbb{P}^1$.
Let $\ell$ be a general line in $\mathcal{Q}_1$ that is a divisor of bi-degree~\mbox{$(0,1)$}. Then
$$
n=D\cdot\ell=m\big(\ell_1+\ell_5+\check{\ell}_1+\check{\ell}_5\big)\cdot\ell+\Delta\cdot\ell\geqslant m\big(\ell_1+\ell_5+\check{\ell}_1+\check{\ell}_5\big)\cdot\ell=4m,
$$
so that $\mathrm{mult}_{\mathcal{L}_{10}}(\mathcal{D})\leqslant m\leqslant\frac{n}{4}$ as required.
Similarly, it follows from Lemma~\ref{lemma:Heseinebrg-lines-quadrics} that the quadric $\mathcal{Q}_1$ also contains the lines $\ell_1^{\prime\prime}$, $\ell_2^{\prime\prime}$, $\ell_4^{\prime\prime}$,
$\check{\ell}_1^{\prime\prime}$, $\check{\ell}_2^{\prime\prime}$ and~$\check{\ell}_4^{\prime\prime}$.
Moreover, it follows from the proof of Lemma~\ref{lemma:80-L10} that
the four lines $\ell_1^{\prime\prime}$, $\ell_2^{\prime\prime}$, $\check{\ell}_1^{\prime\prime}$ and~$\check{\ell}_2^{\prime\prime}$ are disjoint.
Arguing as in the previous case, we see that $\mathrm{mult}_{\mathcal{L}_{10}^{\prime\prime}}(\mathcal{D})\leqslant\frac{n}{4}$.
Finally, observe that it also follows from Lemma~\ref{lemma:Heseinebrg-lines-quadrics} that
the quadric $\mathcal{Q}_6$ contains the lines $\ell_1^{\prime}$, $\ell_2^{\prime}$, $\ell_3^{\prime}$,
$\check{\ell}_1^{\prime}$, $\check{\ell}_2^{\prime}$ and~$\check{\ell}_3^{\prime}$.
Furthermore, in the proof of Lemma~\ref{lemma:80-L10} we established that
the four lines $\ell_1^{\prime}$, $\ell_3^{\prime}$, $\check{\ell}_1^{\prime}$ and~$\check{\ell}_3^{\prime}$ are disjoint,
which implies that~\mbox{$\mathrm{mult}_{\mathcal{L}_{10}^{\prime}}(\mathcal{D})\leqslant\frac{n}{4}$}.
\end{proof}

\begin{remark}
\label{remrak:Todd}
As we mentioned in the proof of Lemma~\ref{lemma:80-L10},
the lines $\ell_1^\prime$, $\ell_2^\prime$, $\ell_3^\prime$, $\ell_4^\prime$ and~$\ell_5^\prime$ form a ``wheel'' in $\mathbb{P}^3$,
the lines $\check{\ell}_1^\prime$, $\check{\ell}_2^\prime$, $\check{\ell}_3^\prime$, $\check{\ell}_4^\prime$ and~$\check{\ell}_5^\prime$ also form a ``wheel'',
and these two wheels intersect by ten points in $\Sigma_{20}^\prime$.
Using Lemma~\ref{lemma:80-L10-sextics}, one can check that there exists a $\overline{G}_{80}$-commutative diagram
$$
\xymatrix{
&&W\ar@{->}[lld]_\pi\ar@{->}[drr]^\alpha&&\\
\mathbb{P}^3\ar@{-->}[rrrr]_\rho &&&& X_4}
$$
Here $\pi$ is the blow up of the curve $\mathcal{L}_{10}^\prime$,
the morphism $\alpha$ is given by the linear system~\mbox{$|-K_W|$},
the map $\rho$ is given by be the linear system of quartic surfaces in $\mathbb{P}^3$
that contain $\mathcal{L}_{10}^\prime$, and $X_4$ is a quartic threefold in $\mathbb{P}^4$.
This configuration of ten lines in $\mathbb{P}^3$ has been studied by Todd in \cite{Todd35}
under generality assumption (and without group action).
He constructed the same commutative diagram with $\alpha$ being a small birational morphism.
However, the position of our $10$ lines is not general from this point of view.
Namely, it follows from the proof of Lemma~\ref{lemma:80-L10-mult}
that the map $\alpha$ contracts the proper transforms of the quadrics $\mathcal{Q}_6$,
$\mathcal{Q}_7$, $\mathcal{Q}_8$, $\mathcal{Q}_9$ and $\mathcal{Q}_{10}$
(so that our  $\overline{G}_{80}$-commutative diagram can be called a \emph{bad $\overline{G}_{80}$-Sarkisov link}
similarly to the terminology of \cite{CPR}).
One can consider this as a geometrical meaning of Lemma~\ref{lemma:80-L10-mult}.
Similarly, the lines of $\mathcal{L}_{10}^{\prime\prime}$ form the same configuration in $\mathbb{P}^3$,
which results in a similar $\overline{G}_{80}$-commutative diagram;
the only difference in this case is that $\alpha$ contracts
the proper transforms of the quadrics $\mathcal{Q}_1$,
$\mathcal{Q}_2$, $\mathcal{Q}_3$, $\mathcal{Q}_4$ and $\mathcal{Q}_{5}$.
\end{remark}

Denote by $S_0$, $S_1$, $S_2$, $S_3$ and $S_4$ the quartic surfaces in $\mathbb{P}^3$
that are given by the equations $q_0=0$, $q_1=0$, $q_2=0$, $q_3=0$ and $q_4=0$, respectively.
Here $q_0$, $q_1$, $q_2$, $q_3$ and $q_4$ are quartic polynomials in \eqref{equation:80-q0-q1-q2-q3-q4}.
Then $S_0$, $S_1$, $S_2$, $S_3$ and $S_4$ are the only $\overline{G}_{80}$-invariant quartic surfaces in $\mathbb{P}^3$ by Lemma~\ref{lemma:80-144-quadrics}.

\begin{lemma}
\label{lemma:80-quartic-surfaces}
The polynomials $q_0$, $q_1$, $q_2$, $q_3$ and $q_4$ are irreducible.
Moreover, the surface~$S_0$ is smooth,
and the surfaces $S_1$, $S_2$, $S_3$ and $S_4$ have isolated ordinary double points.
Furthermore, we have
$$
\mathrm{Sing}(S_1)=\Sigma_{16}^1,\quad
\mathrm{Sing}(S_2)=\Sigma_{16}^2,\quad
\mathrm{Sing}(S_3)=\Sigma_{16}^3,\quad
\mathrm{Sing}(S_4)=\Sigma_{16}^4.
$$
\end{lemma}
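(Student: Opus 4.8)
The plan is to follow the strategy of Lemma~\ref{lemma:144-quartics-singular}, adapting each step to the group $\overline{G}_{80}$ and feeding in the orbit and subgroup data collected earlier in this section. First I would check that each $q_i$ is irreducible. Since $S_i=\{q_i=0\}$ is $\overline{G}_{80}$-invariant, its irreducible components are permuted by $\overline{G}_{80}$; if $Z$ is a component of degree $d$ whose orbit has $m$ elements, then the union of the orbit of $Z$ is a $\overline{G}_{80}$-invariant surface of degree $md\leqslant 4$, while the stabilizer of $Z$ has order $80/m$. By Lemma~\ref{lemma:80-144-quadrics} there are no $\overline{G}_{80}$-invariant surfaces of degree $1$, $2$ or $3$, which (passing to the reduced locus to discard any non-reduced structure) rules out $md\in\{1,2,3\}$. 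The remaining cases have $md=4$ with $m\in\{2,4\}$, forcing a subgroup of $\overline{G}_{80}$ of order $40$ or $20$; but the proper subgroups of $\overline{G}_{80}$ have orders $2$, $4$, $5$, $8$, $16$ by Lemma~\ref{lemma:80-subgroups}, so this is impossible. Hence each $S_i$ is a reduced irreducible quartic and each $q_i$ is irreducible.

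Next I would show that the singularities of $S_i$ are isolated and Du Val. Taking a general plane section, one obtains an irreducible plane quartic of arithmetic genus $3$, which has at most $3$ singular points; were $\mathrm{Sing}(S_i)$ one-dimensional, its $\overline{G}_{80}$-invariant one-dimensional part would be a curve meeting the general plane in at most $3$ points, hence of degree at most $3$, contradicting Corollary~\ref{corollary:80-small-degree-curves}. Thus $\mathrm{Sing}(S_i)$ is a finite union of $\overline{G}_{80}$-orbits, each of length at least $16$ by Corollary~\ref{corollary:80-small-orbits}. Since by \cite[Theorem~1]{Umezu81} an irreducible quartic surface has at most two non-Du Val points and $16>2$, every singular point of $S_i$ is Du Val.

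Then the minimal resolution $\widetilde{S}_i$ is a smooth $K3$ surface, so $\mathrm{rk}\,\mathrm{Pic}(\widetilde{S}_i)\leqslant 20$. As in the proof of Lemma~\ref{lemma:144-quartics-singular}, each Du Val point contributes at least one exceptional $(-2)$-curve and the hyperplane class contributes one more, so $1+|\mathrm{Sing}(S_i)|\leqslant 20$. Combined with Corollary~\ref{corollary:80-small-orbits}, this forces $\mathrm{Sing}(S_i)$ to be either empty or a single orbit of length $16$ (an orbit of length $20$ or $40$ would already violate the bound); moreover, if any point were worse than an ordinary double point, then the whole length-$16$ orbit would contribute at least $2$ each, giving $1+32>20$. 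Hence every $S_i$ is either smooth or has exactly $16$ ordinary double points forming one of the orbits $\Sigma_{16}^1,\dots,\Sigma_{16}^4$.

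It remains to identify which orbit, and this is the only place where a computation is needed. For $S_0$ I would argue without coordinates: $q_0$ spans the unique $\overline{G}_{80}$-invariant line in $\mathcal{U}_4$, so $S_0$ is invariant under the normalizer $\overline{G}_{320}$, while conjugation by $\overline{R}$ sends $\overline{T}$ to a power of itself and thus permutes $\Sigma_{16}^1,\dots,\Sigma_{16}^4$ as a single $4$-cycle (the multiplication action of $\mumu_4$ on the eigenvalues $\xi_5,\dots,\xi_5^4$); since $\mathrm{Sing}(S_0)$ is $\overline{G}_{320}$-invariant it cannot be one of these orbits, so it is empty and $S_0$ is smooth. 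For $S_1$ I would verify, by substituting the explicit point of $\Sigma_{16}^1$ listed before the lemma into the partial derivatives of $q_1$, that $S_1$ is singular there; by the bound above this yields $\mathrm{Sing}(S_1)=\Sigma_{16}^1$, and applying powers of $\overline{R}$ (which carries $q_i$ to a scalar multiple of $q_{ci}$ and $\Sigma_{16}^j$ to $\Sigma_{16}^{cj}$ for one and the same primitive residue $c$ modulo $5$) propagates this to $\mathrm{Sing}(S_i)=\Sigma_{16}^i$ for every $i$. The main obstacle is exactly this index-matching: a weight computation at the $\overline{T}$-fixed points determines, for each pair $(i,j)$, whether $\Sigma_{16}^j\subset S_i$, but it does not by itself decide whether the relevant linear part of $q_i$ vanishes, so the singularity of $S_1$ along $\Sigma_{16}^1$ has to be checked explicitly, after which the $\overline{R}$-symmetry accounts for the remaining cases.
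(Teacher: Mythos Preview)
Your proof follows the paper's approach step for step: irreducibility from Lemma~\ref{lemma:80-144-quadrics} (you spell out the subgroup obstruction via Lemma~\ref{lemma:80-subgroups}, which the paper leaves implicit), isolated singularities via Corollary~\ref{corollary:80-small-degree-curves}, Du Val via \cite{Umezu81} and Corollary~\ref{corollary:80-small-orbits}, and the $K3$ Picard bound to force either smoothness or a single orbit of $16$ ordinary double points. The only genuine deviation is in the final identification, where the paper simply evaluates the partial derivatives of each $q_i$ at the listed points, while you use the $\overline{G}_{320}$-symmetry to argue $S_0$ is smooth and to reduce the remaining four checks to one. This shortcut is valid, with two small caveats worth making explicit: it is a $\overline{G}_{80}$-translate of $\overline{R}$ (rather than $\overline{R}$ itself) that literally normalises $\langle\overline{T}\rangle$, though at the level of $\overline{G}_{80}$-orbits this makes no difference and $\overline{R}$ still induces a $4$-cycle on $\{\Sigma_{16}^j\}$; and your claim that the two $4$-cycles on $\{S_i\}$ and $\{\Sigma_{16}^j\}$ coincide rests on both families being indexed by the $T$-eigenvalue $\xi_5^i$, which is indeed the paper's convention but should be stated.
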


\begin{proof}
We can deduce all required assertions using the explicit formulas
of the polynomials $q_0$, $q_1$, $q_2$, $q_3$ and $q_4$.
However, most of these assertions follow easily from general facts.
Namely, the irreducibility of the polynomials $q_0$, $q_1$, $q_2$, $q_3$ and $q_4$ follows from Lemma~\ref{lemma:80-144-quadrics}.
Now taking the general plane section of the surface $S_i$,
we see that either it has isolated singularities, or its singular locus contains a $\overline{G}_{80}$-curve of degree less than $4$.
The latter is impossible by Corollary~\ref{corollary:80-small-degree-curves}.
Thus, the surfaces $S_0$, $S_1$, $S_2$, $S_3$ and $S_4$ have isolated singularities.

By \cite[Theorem~1]{Umezu81}, the surface $S_i$ cannot have more that two non-Du Val singular points.
Thus, the surface $S_i$ has at most Du Val singularities by Corollary~\ref{corollary:80-small-orbits}.
Then its minimal resolution of singularities is a smooth $K3$ surface.
Now using Corollary~\ref{corollary:80-small-orbits} and the fact that the rank of the Picard group of a smooth $K3$ surface is at most $20$,
we see that either $S_i$ is smooth, or $S_i$ has
isolated ordinary double points, and its singular locus is one of the $\overline{G}_{80}$-orbits $\Sigma_{16}^1$,
$\Sigma_{16}^2$, $\Sigma_{16}^3$ or~$\Sigma_{16}^4$.
Taking partial derivatives of the polynomial~$q_0$
at the points of the set $\Sigma_{16}^1\cup\Sigma_{16}^2\cup\Sigma_{16}^3\cup\Sigma_{16}^4$,
we see that $S_0$ is smooth.
Similarly, we see that $\mathrm{Sing}(S_i)=\Sigma_{16}^i$ for $1\leqslant i\leqslant 4$.
\end{proof}

\begin{corollary}
\label{corollary:80-cubics-16-points}
Let $\Sigma$ be one of the $\overline{G}_{80}$-orbits $\Sigma_{16}^1$, $\Sigma_{16}^2$, $\Sigma_{16}^3$, $\Sigma_{16}^4$,
and let $\mathcal{M}$ be the linear system that consists of cubic surfaces in $\mathbb{P}^3$ containing $\Sigma$.
Then $\Sigma$ is the base locus of the linear system $\mathcal{M}$.
\end{corollary}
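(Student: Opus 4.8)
The plan is to carry over verbatim the strategy used in the proof of Corollary~\ref{corollary:144-cubics-12-points}, the only input being the description of the singular loci of the invariant quartics supplied by Lemma~\ref{lemma:80-quartic-surfaces}. Fix $i\in\{1,2,3,4\}$ with $\Sigma=\Sigma_{16}^i$. By Lemma~\ref{lemma:80-quartic-surfaces} the surface $S_i$, given by $q_i=0$, has isolated ordinary double points and $\mathrm{Sing}(S_i)=\Sigma_{16}^i=\Sigma$. The idea is to produce an explicit subsystem of $\mathcal{M}$ whose base locus is already exactly $\Sigma$, and then sandwich $\mathrm{Bs}(\mathcal{M})$ between $\Sigma$ and that base locus.

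First I would consider the four cubic surfaces cut out by the partial derivatives $\partial q_i/\partial x$, $\partial q_i/\partial y$, $\partial q_i/\partial z$, $\partial q_i/\partial w$, and let $\mathcal{M}'$ be the linear system they span. Every point of $\Sigma=\mathrm{Sing}(S_i)$ is a common zero of the gradient of $q_i$, so each of these cubics passes through $\Sigma$; hence $\mathcal{M}'\subseteq\mathcal{M}$. Next I would compute $\mathrm{Bs}(\mathcal{M}')$. By the Euler identity $x\,\partial_x q_i+y\,\partial_y q_i+z\,\partial_z q_i+w\,\partial_w q_i=4q_i$ (valid in characteristic zero), any common zero of the four partials is automatically a zero of $q_i$; therefore $\mathrm{Bs}(\mathcal{M}')=\{q_i=0,\ \nabla q_i=0\}=\mathrm{Sing}(S_i)=\Sigma$, where the last two equalities use that $S_i$ has isolated singularities equal to $\Sigma$ by Lemma~\ref{lemma:80-quartic-surfaces}.

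Finally I would combine the two inclusions. Since $\mathcal{M}'\subseteq\mathcal{M}$, one has $\mathrm{Bs}(\mathcal{M})\subseteq\mathrm{Bs}(\mathcal{M}')=\Sigma$; and since by definition every surface in $\mathcal{M}$ contains $\Sigma$, one has $\Sigma\subseteq\mathrm{Bs}(\mathcal{M})$. Hence $\mathrm{Bs}(\mathcal{M})=\Sigma$, as required. I do not anticipate any genuine obstacle here: the argument is purely formal once Lemma~\ref{lemma:80-quartic-surfaces} is available, since the real work—identifying $\Sigma$ with the isolated singular locus of an invariant quartic—has already been done there, exactly as in the parallel $\overline{G}_{144}$ situation.
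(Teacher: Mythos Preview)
Your argument is correct and is exactly the approach the paper takes: it refers back to the proof of Corollary~\ref{corollary:144-cubics-12-points}, which likewise identifies $\Sigma$ with the singular locus of the appropriate invariant quartic and then uses the linear system spanned by its partial derivatives to pin down the base locus.
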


\begin{proof}
See the proof of Corollary~\ref{corollary:144-cubics-12-points}.
\end{proof}

The $\overline{G}_{80}$-orbits $\Sigma_{20}$, $\Sigma_{20}^\prime$ and $\Sigma_{20}^{\prime\prime}$ are not contained in any of the quartic surfaces $S_0$, $S_1$, $S_2$, $S_3$ and $S_4$.
By Lemma~\ref{lemma:80-L10}  and Corollary~\ref{corollary:80-small-orbits}, this shows that each curve among $\mathcal{L}_{10}$, $\mathcal{L}_{10}^\prime$, $\mathcal{L}_{10}^{\prime\prime}$ intersects each surface
among $S_0$, $S_1$, $S_2$, $S_3$ and $S_4$ transversally in a $\overline{G}_{80}$-orbit of length~$40$.
Furthermore, by Lemma~\ref{lemma:80-L10},
the curves $\mathcal{L}_{10}$, $\mathcal{L}_{10}^\prime$ and $\mathcal{L}_{10}^{\prime\prime}$
contain all $\overline{G}_{80}$-orbits of length~$40$, and they are disjoint away from the $\overline{G}_{80}$-orbits of length~$20$.
Thus, we conclude
that the surfaces $S_0$, $S_1$, $S_2$, $S_3$ and $S_4$ contain exactly three $\overline{G}_{80}$-orbits of length~$40$.
Observe also that $S_0$ contains all $\overline{G}_{80}$-orbits $\Sigma_{16}^1$, $\Sigma_{16}^2$, $\Sigma_{16}^3$, $\Sigma_{16}^4$.
This gives the following

\begin{corollary}
\label{corollary:80-Mukai}
The group $\mathrm{Pic}(S_0)^{\overline{G}_{80}}$ is generated by the plane section of~$S_0$.
\end{corollary}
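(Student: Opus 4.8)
The plan is to pin down the rank of $\mathrm{Pic}(S_0)^{\overline{G}_{80}}$ by a topological Lefschetz computation, using that $S_0$ is a $K3$ surface on which $\overline{G}_{80}$ acts symplectically. Recall from Lemma~\ref{lemma:80-quartic-surfaces} that $S_0$ is a smooth quartic, hence a $K3$ surface; write $H$ for its plane section, so that $H^2=4$ and $H\in\mathrm{Pic}(S_0)^{\overline{G}_{80}}$, giving $\mathrm{rk}\,\mathrm{Pic}(S_0)^{\overline{G}_{80}}\geqslant 1$. First I would note that the action is symplectic. Since $G_{80}\subset\mathrm{SL}_4(\mathbb{C})$ and $q_0$ is a genuine $G_{80}$-invariant (not merely a semi-invariant), the residue $\omega=\mathrm{Res}_{S_0}(\Theta/q_0)$ of the $\mathrm{SL}_4(\mathbb{C})$-invariant generator $\Theta$ of $H^0(\mathbb{P}^3,\Omega^3_{\mathbb{P}^3}(4))$ generates $H^0(S_0,K_{S_0})$ and is $\overline{G}_{80}$-invariant; scalars act trivially, so the whole $\overline{G}_{80}$ acts symplectically on $S_0$.

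Next I would count fixed points on $S_0$ using the orbit structure established above. By Lemma~\ref{lemma:80-subgroups} every non-trivial element of $\overline{G}_{80}$ has order $2$ or $5$; the $15$ involutions all lie in $\overline{\mathrm{H}}$, and the remaining $64$ elements have order $5$. An element of order $5$ is conjugate to $\overline{T}$, whose fixed points in $\mathbb{P}^3$ are its four eigenpoints, one in each orbit $\Sigma_{16}^1,\Sigma_{16}^2,\Sigma_{16}^3,\Sigma_{16}^4$; since all four of these orbits lie on $S_0$, such an element fixes exactly $4$ points of $S_0$. An involution $h$ lies in $\overline{\mathrm{H}}$ and fixes two skew lines $L_h,L_h^\prime\in\mathbb{L}_{30}$ by Lemma~\ref{lemma:Heisenberg-subgroup}(i). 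Neither line can lie on $S_0$, for otherwise one of the $\overline{G}_{80}$-irreducible curves $\mathcal{L}_{10},\mathcal{L}_{10}^\prime,\mathcal{L}_{10}^{\prime\prime}$ would be contained in $S_0$, contradicting the fact established above that each of them meets $S_0$ transversally in a $\overline{G}_{80}$-orbit of length $40$. As the three length-$40$ orbits on $S_0$ account for $120=4\cdot 30$ points, each of the $30$ lines of $\mathbb{L}_{30}$ meets $S_0$ in exactly $4$ points, so $h$ fixes exactly $8$ points of $S_0$.

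Since $S_0$ is a $K3$ surface, $H^1(S_0)=H^3(S_0)=0$, and the topological Lefschetz fixed point formula gives $\mathrm{tr}\big(g^*\mid H^2(S_0)\big)=|\mathrm{Fix}_{S_0}(g)|-2$ for every non-trivial $g$. Averaging over the group I would then obtain
\[
\mathrm{rk}\,H^2\big(S_0,\mathbb{Q}\big)^{\overline{G}_{80}}=\frac{1}{80}\Big(22+15\cdot 6+64\cdot 2\Big)=3.
\]
Because the action is symplectic, by Nikulin's theorem $\overline{G}_{80}$ acts trivially on the transcendental lattice $T(S_0)$, so $T(S_0)\otimes\mathbb{Q}$ is contained in the invariant subspace. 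Combining this with the orthogonal, $\overline{G}_{80}$-stable Hodge decomposition $H^2(S_0,\mathbb{Q})=\mathrm{Pic}(S_0)_{\mathbb{Q}}\perp T(S_0)_{\mathbb{Q}}$ yields $3=\mathrm{rk}\,\mathrm{Pic}(S_0)^{\overline{G}_{80}}+(22-\rho)$, where $\rho=\mathrm{rk}\,\mathrm{Pic}(S_0)\leqslant 20$. Since $\mathrm{rk}\,\mathrm{Pic}(S_0)^{\overline{G}_{80}}\geqslant 1$, this forces $\rho=20$ and $\mathrm{rk}\,\mathrm{Pic}(S_0)^{\overline{G}_{80}}=1$. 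Finally, writing $H=mD$ for a primitive generator $D$ of this rank-$1$ group, the evenness of the $K3$ lattice makes $D^2$ even, and $m^2D^2=4$ then gives $m=1$; hence $\mathrm{Pic}(S_0)^{\overline{G}_{80}}=\mathbb{Z}H$.

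The fixed-point bookkeeping and the Lefschetz arithmetic are routine once the preceding orbit analysis is in place. The one genuinely delicate step, which I would state with care, is the passage from the invariant cohomology rank to the invariant Picard rank: it relies both on the symplecticity of the action and on Nikulin's observation that a symplectic group acts trivially on the transcendental lattice. Consequently the crux is the verification that $q_0$ is a true invariant and that $G_{80}\subset\mathrm{SL}_4(\mathbb{C})$, which together guarantee $g^*\omega=\omega$ for all $g\in\overline{G}_{80}$.
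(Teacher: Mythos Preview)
Your proof is correct and reaches the same conclusion, but it takes a genuinely different route from the paper. The paper argues via the quotient: since the $\overline{G}_{80}$-action on $S_0$ is symplectic (citing \cite[Lemma~2.1]{Mukai}), the quotient $S_0/\overline{G}_{80}$ is a $K3$ surface with Du~Val singularities; the orbit data force four $\mathbb{A}_4$ points (from the four orbits of length~$16$) and three $\mathbb{A}_1$ points (from the three orbits of length~$40$), contributing $4\cdot 4+3\cdot 1=19$ exceptional curves to the minimal resolution, so the Picard rank of the quotient is at most $20-19=1$, whence $\mathrm{rk}\,\mathrm{Pic}(S_0)^{\overline{G}_{80}}=1$. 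You instead compute $\dim H^2(S_0,\mathbb{Q})^{\overline{G}_{80}}=3$ directly by Lefschetz, then invoke Nikulin's observation that a symplectic group acts trivially on the transcendental lattice to split off $22-\rho$ and conclude $\rho=20$ and $\mathrm{rk}\,\mathrm{Pic}(S_0)^{\overline{G}_{80}}=1$. Both approaches feed off the same orbit bookkeeping established just before the corollary; yours has the pleasant side effect of establishing $\rho(S_0)=20$, while the paper's is shorter and avoids the explicit trace sum. The final passage from rank~$1$ to the generator being~$H$ via evenness of the $K3$ lattice is identical in both.
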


\begin{proof}
By \cite[Lemma~2.1]{Mukai}, the action of the group $\overline{G}_{80}$ on the $K3$ surface $S_0$ is symplectic.
Thus, the surface $S_0/\overline{G}_{80}$ is a singular $K3$ surface.
By construction, it has four Du Val singular points of type~$\mathbb{A}_4$ and three Du Val singular points of
type~$\mathbb{A}_1$.
The minimal resolution of singularities of this surface is a smooth $K3$ surface.
Since the rank of the Picard group of a smooth $K3$ surfaces cannot exceed $20$,
we see that the rank of the Picard group of the surface $S_0/\overline{G}_{80}$ must be~$1$.
This shows that the rank of the group~\mbox{$\mathrm{Pic}(S_0)^{\overline{G}_{80}}$} is also~$1$.
Since $\mathrm{Pic}(S_0)$ has no torsion and the intersection form of curves on the surface~$S_0$ is even,
we see that the group $\mathrm{Pic}(S_0)^{\overline{G}_{80}}$ is generated by the plane section of~$S_0$.
\end{proof}

Table~\ref{table:Sigma-S} shows which $\overline{G}_{80}$-orbit $\Sigma_{16}^1$, $\Sigma_{16}^2$, $\Sigma_{16}^3$
and~$\Sigma_{16}^4$
is contained in which surface  $S_1$, $S_2$, $S_3$ and~$S_4$.
\begin{table}\renewcommand\arraystretch{1.8}
\caption{Incidence between $S_i$ and $\Sigma_{16}^j$.\label{table:Sigma-S}}
\begin{center}\renewcommand\arraystretch{1.8}
\begin{tabular}{|c||c|c|c|c|}
  \hline
   $\qquad$& $S_1$ & $S_2$ & $S_3$ & $S_4$ \\
  \hline
  \hline
  $\Sigma_{16}^1$ & $\mathrm{Sing}(S_1)$ & $+$ & $-$ & $+$\\
  \hline
  $\Sigma_{16}^2$ & $-$ & $\mathrm{Sing}(S_2)$ & $+$ & $+$\\
  \hline
  $\Sigma_{16}^3$ & $+$ & $+$ & $\mathrm{Sing}(S_3)$ & $-$\\
  \hline
  $\Sigma_{16}^4$ & $+$ & $-$ & $+$ & $\mathrm{Sing}(S_4)$\\
  \hline
\end{tabular}
\end{center}
\end{table}
In particular, we see that $\Sigma_{16}^j\subset S_i$ if and only if $j\ne 2i\ \mathrm{mod}\ 5$.

Now we are ready to prove

\begin{theorem}
\label{theorem:80-curves}
Let $C$ be a  $\overline{G}_{80}$-irreducible curve in $\mathbb{P}^3$ such that $\mathrm{deg}(C)\leqslant 15$.
Then~\mbox{$C\not\subset S_0$} and one of the following cases holds:
\begin{itemize}
\item $\mathrm{deg}(C)=10$, and $C$ is one of the curves $\mathcal{L}_{10}$, $\mathcal{L}_{10}^\prime$, $\mathcal{L}_{10}^{\prime\prime}$,

\item $\mathrm{deg}(C)=12$, and $C$ is an irreducible smooth curve of genus $5$ that is contained in exactly one surface among $S_1$, $S_2$, $S_3$, $S_4$,

\item $\mathrm{deg}(C)=8$, and $C$ is an irreducible smooth curve of genus $5$ such that either $C=S_1\cap S_4$
and $C$ contains $\mathrm{Sing}(S_1)\cup\mathrm{Sing}(S_1)$,
or $C=S_2\cap S_3$ and $C$ contains $\mathrm{Sing}(S_2)\cup\mathrm{Sing}(S_3)$.
\end{itemize}
\end{theorem}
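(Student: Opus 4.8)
The plan is to dispose of the reducible case first, then study irreducible curves degree by degree, using the smooth invariant quartic $S_0$ as the principal probe. If $C$ is reducible, Lemma~\ref{lemma:80-curves-reducible} identifies it with one of $\mathcal{L}_{10}$, $\mathcal{L}_{10}^\prime$, $\mathcal{L}_{10}^{\prime\prime}$, which gives the degree-$10$ case. So assume $C$ is irreducible; then $\deg(C)\geqslant 5$ by Corollary~\ref{corollary:80-small-degree-curves}. Since $\overline{G}_{80}$ is primitive there is no invariant line, and there is no invariant plane because there is no invariant linear form by Lemma~\ref{lemma:80-144-quadrics}; hence $C$ is non-degenerate and $\overline{G}_{80}$ acts faithfully on its normalization $\widetilde{C}$. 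By Lemma~\ref{lemma:80-sporadic-genera} the genus $g$ of $\widetilde{C}$ lies in $\{5,13,17\}$, every orbit on $\widetilde{C}$ has length $16$, $40$ or $80$, and the numbers $a_{16},a_{40}$ of such orbits are constrained by the table there.

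I then rule out $C\subset S_0$. By Corollary~\ref{corollary:80-Mukai} the group $\mathrm{Pic}(S_0)^{\overline{G}_{80}}$ is generated by the plane section $H$, so $C\sim nH$ with $\deg(C)=4n\leqslant 15$, i.e. $n\leqslant 3$; adjunction on the $K3$ surface $S_0$ gives arithmetic genus $1+2n^2$, namely $3,9,19$ for $n=1,2,3$ (and $n=1$ is excluded since $\deg(C)=4<5$). Every singular point of $C$ lies in a $\overline{G}_{80}$-orbit on $S_0$ of length at least $16$, since the only orbits of length smaller than $80$ are of length $16$, $20$ or $40$ by Corollary~\ref{corollary:80-small-orbits}, and the length-$20$ orbits avoid all the surfaces $S_i$. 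Thus a singular $C\subset S_0$ would have geometric genus at most $19-16<5$, while a smooth one would have $g\in\{9,19\}$; both contradict $g\in\{5,13,17\}$, so $C\not\subset S_0$.

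With $C\not\subset S_0$ the intersection $C\cap S_0$ has length $C\cdot S_0=4\deg(C)$ and is a union of $\overline{G}_{80}$-orbits lying in $S_0$, each of length $16$, $40$ or $80$. Each of these is divisible by $8$, so $8\mid 4\deg(C)$ and $\deg(C)$ is even; solving $4\deg(C)=16a+40b+80c$ in non-negative integers also kills $\deg(C)=6$. This leaves $\deg(C)\in\{8,10,12,14\}$, and determines the shape of $C\cap S_0$: two $16$-orbits (degree $8$), one $40$-orbit (degree $10$), three $16$-orbits (degree $12$), one $16$-orbit and one $40$-orbit (degree $14$). The irreducible degree-$10$ case is excluded immediately: genus $17$ is impossible by Castelnuovo's bound, and for $g\in\{5,13\}$ the curve carries at least one orbit $\Sigma_{16}^i$, which lies on $S_0$ and hence inside $C\cap S_0$; but a single $40$-orbit contains no point with $\mumu_5$-stabilizer, a contradiction.

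It remains to handle $\deg(C)\in\{8,12,14\}$, which is the technical heart and runs parallel to Theorem~\ref{theorem:144-curves} and Lemma~\ref{lemma:144-C12}. For degree $8$, Castelnuovo forces $g\leqslant 9$, hence $g=5$ and $a_{16}=2$; examining $C\cdot S_i=32$ at the nodes $\mathrm{Sing}(S_i)=\Sigma_{16}^i$ (where any curve meets $S_i$ with multiplicity at least $2$, see Lemma~\ref{lemma:80-quartic-surfaces}) together with Table~\ref{table:Sigma-S} forces the two $16$-orbits on $C$ to be $\{\Sigma_{16}^1,\Sigma_{16}^4\}$ or $\{\Sigma_{16}^2,\Sigma_{16}^3\}$ and identifies $C$ with the locus along which $S_1,S_4$ (respectively $S_2,S_3$) are tangent, so that $C=S_1\cap S_4$ or $C=S_2\cap S_3$; the curve is smooth since the only alternative, at least $16$ nodes, is incompatible with $g=5$. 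For degrees $12$ and $14$ the same node-multiplicity bookkeeping on an $S_i$ with $\Sigma_{16}^i\subset C$ shows that the residual intersection number is realizable by orbit contributions only when $C$ actually lies on that quartic; I would then pass to the minimal $K3$ resolution $\widetilde{S}_i$ and compute $\mathrm{Pic}(\widetilde{S}_i)^{\overline{G}_{80}}$ by a Mukai-type argument, applying the Hodge index theorem to fix the class of $C$. This makes the degree of any invariant curve on $S_i$ divisible by $4$, which excludes degree $14$ (there $g=5$ and $g=17$ are eliminated by the same $16$-orbit count on $S_0$, and the remaining $g=13$ forces $C\subset S_i$, contradicting $4\mid\deg(C)$) and pins down the degree-$12$ curves as smooth genus-$5$ curves contained in exactly one $S_i$. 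The main obstacle is precisely this last step: Castelnuovo leaves $g=13$ and $g=17$ open for degree $12$, and the orbit count on $S_0$ leaves $g=13$ open for degree $14$, so the delicate work is the intersection analysis on the nodal quartics $S_i$ and the control of $\mathrm{Pic}(\widetilde{S}_i)^{\overline{G}_{80}}$ through the Hodge index theorem, exactly as in the $\mathfrak{A}_4\times\mathfrak{A}_4$ case.
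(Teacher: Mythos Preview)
Your outline and the early steps are sound: the reducible case, the argument that $C\not\subset S_0$ (your adjunction-and-genus version is a valid alternative to the paper's exact-sequence argument), the restriction $\deg(C)\in\{8,10,12,14\}$, and your exclusion of irreducible degree~$10$ via Castelnuovo plus the $\Sigma_{16}$-count are all correct. One minor slip: you invoke $g\in\{5,13,17\}$ before you have bounded $g\leqslant 19$, so Lemma~\ref{lemma:80-sporadic-genera} does not yet apply; this is harmless since each of your later case arguments supplies the genus bound.

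The genuine gap is in the endgame for $d\in\{8,12,14\}$. Your plan hinges on a ``Mukai-type'' computation of $\mathrm{Pic}(\widetilde{S}_i)^{\overline{G}_{80}}$ to conclude that $\overline{G}_{80}$-invariant curves on $S_i$ have degree divisible by~$4$. This fails as stated: for $i\neq 0$ the polynomial $q_i$ is only a semi-invariant (eigenvalue $\xi_5^i$ under $T$), so $\overline{T}$ acts on $H^{2,0}(\widetilde{S}_i)$ by a primitive fifth root of unity and the $\overline{G}_{80}$-action on $\widetilde{S}_i$ is \emph{not} symplectic; the quotient is not a $K3$ surface and the rank bound of Corollary~\ref{corollary:80-Mukai} does not transfer. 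Without this, your exclusion of $d=14$ collapses, and your degree-$12$ and degree-$8$ analyses remain unfinished. The paper avoids computing this invariant Picard group altogether. First, a dimension count with four (respectively three) general points of $C$ in the pencil of $\mathrm{H}$-invariant quartics forces $C$ into one (respectively two) of the $S_t$ when $d\leqslant 12$; this replaces your node-multiplicity manoeuvre and immediately kills $d=10$. Second, for $d=14$ the paper never puts $C$ on a quartic at all: since $C$ carries a unique $\Sigma_{16}^j$, one picks the $S_r$ with $\Sigma_{16}^j\not\subset S_r$ and observes that $S_r\cdot C=56$ must then be divisible by~$40$. Third, once $C\subset S_t$, the paper applies the Hodge index theorem on $\widetilde{S}_t$ directly to get $g\leqslant 19-4m^2$, then eliminates $g=17$ (forces $m\geqslant 1$, contradiction) and $g=13$ by the same $S_r$-trick. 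For $d=8$ the paper carries out a careful residual analysis of $S_k\vert_{S_t}=C+Z$ on $\widetilde{S}_t$, eventually proving $C=Z$ and $k+t=5$; your one-line ``tangent locus'' description does not substitute for this.
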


\begin{proof}
Recall from Corollary~\ref{corollary:80-Mukai} that the group $\mathrm{Pic}(S_0)^{\overline{G}_{80}}$ is generated by the plane section $H_{S_0}$ of the surface~$S_0$.
Since there is an exact sequence of $\overline{G}_{80}$-representations
$$
0\longrightarrow H^0\Big(\mathcal{O}_{\mathbb{P}^3}\big(n-4\big)\Big)
\longrightarrow H^0\Big(\mathcal{O}_{\mathbb{P}^3}\big(n\big)\Big)
\longrightarrow H^0\Big(\mathcal{O}_{S_0}\big(nH_{S_0}\big)\Big)\longrightarrow 0
$$
for every $n\geqslant 1$, we see that $S_0$ does not contain $C$ by Lemma~\ref{lemma:80-144-quadrics}.

By Lemma~\ref{lemma:80-curves-reducible}, we may assume that $C$ is irreducible.
Let $d=\mathrm{deg}(C)$.
Then
$$
4d=S_0\cdot C=16a+40b
$$
for some non-negative integers $a$ and $b$, because $\Sigma_{20}$, $\Sigma_{20}^\prime$ and $\Sigma_{20}^{\prime\prime}$ are not contained in $S_0$.
Keeping in mind that $d\geqslant 5$ by Corollary~\ref{corollary:80-small-degree-curves}, we have the following possibilities:
\begin{center}\renewcommand\arraystretch{1.1}
\begin{tabular}{|c|c|c|}
\hline
$\quad$ $d$ $\quad$& $\quad$ $a$ $\quad$ & $\quad$ $b$ $\quad$ \\
\hline\hline
$8$ & $2$ & $0$ \\
\hline
$10$ & $0$ & $1$ \\
\hline
$12$ & $3$ & $0$ \\
\hline
$14$ & $1$ & $1$ \\
\hline
\end{tabular}
\end{center}

Suppose that $d=14$.
Since $a=1$ and $S_0$ contains all $\overline{G}_{80}$-orbits of length $16$ in $\mathbb{P}^3$,
we see that the curve $C$ contains a unique $\overline{G}_{80}$-orbit of length $16$.
Without loss of generality, we may assume that $C$ contains $\Sigma_{16}^1$.
Then $C\not\subset S_3$, because $S_3$ does not contain $\Sigma_{16}^1$, see Table~\ref{table:Sigma-S}.
By~Corollary~\ref{corollary:80-small-orbits}, this implies that
$$
56=4d=S_3\cdot C
$$
is divisible by $40$, which is absurd.
This shows that~\mbox{$d\ne 14$}, so that $d\leqslant 12$.

Pick four general points $O_1$, $O_2$, $O_3$ and $O_4$ in the curve $C$.
Since $C$ is irreducible, their stabilizers in $\overline{\mathrm{H}}$ are trivial.
In particular, the $\overline{\mathrm{H}}$-orbits of these points consist of $16$ different points.
On the other hand, the vector space generated by the quartic polynomials
$$
q_0(x,y,z,w),\ q_1(x,y,z,w),\ q_2(x,y,z,w),\ q_3(x,y,z,w),\ q_4(x,y,z,w),
$$
contains all $\mathrm{H}$-invariant polynomials of degree $4$.
There exists $[\lambda_0:\lambda_1:\lambda_2:\lambda_3:\lambda_4]\in\mathbb{P}^4$ such that the polynomial
$$
\lambda_0q_0(x,y,z,w)+\lambda_1q_1(x,y,z,w)+\lambda_2q_2(x,y,z,w)+\lambda_3q_3(x,y,z,w)+\lambda_4q_4(x,y,z,w)
$$
vanishes at the four points $O_1$, $O_2$, $O_3$ and $O_4$.
The latter polynomial defines a (possibly reducible or non-reduced) $\overline{\mathrm{H}}$-invariant quartic surface $S$ in $\mathbb{P}^3$.
Then $S$ contains $O_1$, $O_2$, $O_3$ and $O_4$ together with their $\overline{\mathrm{H}}$-orbits.
This implies that
$$
4d=S\cdot C\geqslant 64
$$
provided that the curve $C$ is not contained in $S$.
Since $d\leqslant 12$, we see that $C\subset S$.
Thus, the vector space of all $\mathrm{H}$-invariant quartic polynomials in $\mathbb{C}[x,y,z,w]$ that vanish along~$C$ is at least one-dimensional.
Since $\mathrm{H}$ is a normal subgroup in $G_{80}$, the group
$$
G_{80}/\mathrm{H}\cong\mumu_5
$$
acts on this space.
Keeping in mind that $\mumu_5$ is abelian, we see that there exists at least one $G_{80}$-invariant quartic polynomial in $\mathbb{C}[x,y,z,w]$
that vanishes along the curve~$C$.
Moreover, if $d\leqslant 10$, then the rank-nullity theorem implies that
the vector space consisting of all \mbox{$\mathrm{H}$-invariant} quartic polynomials in $\mathbb{C}[x,y,z,w]$ that vanish along $C$ is at least two-dimensional,
because we can repeat the same arguments with three general points in $C$ instead of four.
Therefore, if $d\leqslant 10$, then there exists at least two linearly independent $G_{80}$-invariant quartic polynomials in $\mathbb{C}[x,y,z,w]$
that vanish at the curve $C$.
Now using Lemma~\ref{lemma:80-quartic-surfaces},
we see that $C$ is contained in one of the surfaces $S_1$, $S_2$, $S_3$ or $S_4$.
Moreover, if $d\leqslant 10$, then $C$ is contained in at least two surfaces among  $S_1$, $S_2$, $S_3$, $S_4$.

Let $t\in\{1,2,3,4\}$ be such that $C$ is contained in $S_t$.
Denote by $H$ the plane section of the surface $S_t$.
If $d\ne 12$, then there is $k\in\{1,2,3,4\}$ such that $C$ is contained in the surfaces $S_k$ and $k\ne t$.
In this case, we have
$$
S_k\vert_{S_t}=C+Z,
$$
where $Z$ is a $\overline{G}_{80}$-invariant effective divisor such that $H\cdot Z=16-d\leqslant 8$.
On the other hand, we already proved earlier that $\mathbb{P}^3$ does not contain $\overline{G}_{80}$-invariant curves of degree less than $8$.
This shows that $d\ne 10$, so that either $d=8$ or $d=12$.

Recall that $\Sigma_{16}^t$ is the singular locus of the surface $S_t$.
Let $f\colon\widetilde{S}_t\to S_t$ be the minimal resolution of singularities of the surface $S_t$.
Then the action of the group $\overline{G}_{80}$ lifts to the surface $\widetilde{S}_t$, because $f$ is the minimal resolution of singularities.
Denote by $E_1,\ldots,E_{16}$ the $f$-exceptional curves, and denote by $\widetilde{C}$ the proper transform of the curve $C$ on the surface $\widetilde{S}_t$.
Let $E=E_1+\ldots+E_{16}$ and $\widetilde{H}=f^*(H)$. Then
$$
\widetilde{C}\sim_{\mathbb{Q}} f^{*}(C)-\frac{m}{2}E
$$
for some non-negative integer number $m$.
By Hodge index theorem, one has
$$
0\geqslant\left|\begin{matrix} %
\widetilde{H}^2& \widetilde{H}\cdot(\widetilde{C}+\frac{m}{2}E)\cr%
\widetilde{H}\cdot(\widetilde{C}+\frac{m}{2}E)& (\widetilde{C}+\frac{m}{2}E)^2\cr%
\end{matrix}\right|=
\left|\begin{matrix} %
4 & d\cr%
d & (\widetilde{C}+\frac{m}{2}E)^2\cr%
\end{matrix}\right|=4(\widetilde{C}^2+8m^2)-d^2.
$$
This implies
$$
\widetilde{C}^2\leqslant\frac{d^2}{4}-8m^2.
$$
Let $g$ be the genus of the normalization of the curve $\widetilde{C}$,
and let $p_a(\widetilde{C})$ be its arithmetic genus.
Then
\begin{multline*}
g\leqslant p_a(\widetilde{C})-\big|\mathrm{Sing}(\widetilde{C})\big|=\frac{\widetilde{C}^2}{2}+1-\big|\mathrm{Sing}(\widetilde{C})\big|\leqslant\\
\leqslant\frac{d^2}{8}-4m^2+1-\big|\mathrm{Sing}(\widetilde{C})\big|\leqslant\\
\leqslant\frac{144}{8}-4m^2+1-\big|\mathrm{Sing}(\widetilde{C})\big|=19-4m^2-\big|\mathrm{Sing}(\widetilde{C})\big|.
\end{multline*}
On the other hand, it follows from  Lemma~\ref{lemma:80-sporadic-genera} that $g\geqslant 5$.
Thus, as $\mathbb{P}^3$ does not have $\overline{G}_{80}$-orbits of length less than $16$ by Corollary~\ref{corollary:80-small-orbits}, we see that the curve $\widetilde{C}$ is smooth, one has
$$
g=p_a(\widetilde{C})\leqslant 19-4m^2,
$$
and $m\leqslant 1$.
In particular, since $m\leqslant 1$, we see that the curve $C$ is also smooth.
Moreover, since $g\leqslant 19$, we have $g\in\{5,13,17\}$ by Lemma~\ref{lemma:80-sporadic-genera}.

Suppose that $g=17$.
Then $C$ contains three $\overline{G}_{80}$-orbits of length $16$ by Lemma~\ref{lemma:80-sporadic-genera}.
Thus, the curve $C$ contains every $\overline{G}_{80}$-orbit $\Sigma_{16}^i$ except for $\Sigma_{16}^j$ such
that~\mbox{$j=2t\ \mathrm{mod}\ 5$},
because these are the only $\overline{G}_{80}$-orbits of length $16$ contained in the surface $S_t$.
Since $C$ contains~\mbox{$\Sigma_{16}^t=\mathrm{Sing}(S_t)$}, we have $m\geqslant 1$, which implies that $m=1$, because we proved already that $m\leqslant 1$.
Since we also proved that $g\leqslant 19-4m^2$, we have
$$
17=g\leqslant 19-4m^2=15,
$$
which is absurd. This shows that $g\ne 17$.

Suppose that $g=13$.
Then $C$ contains one $\overline{G}_{80}$-orbit of length $16$ by Lemma~\ref{lemma:80-sporadic-genera}.
Recall that there exists $r\in\{1,2,3,4\}$ such that the surface $S_r$ that does not contain this $\overline{G}_{80}$-orbit.
Then $C\not\subset S_r$, so that $S_r\cdot C$ must be a multiple of $40$ by Lemma~\ref{lemma:80-sporadic-genera}. Since
$$
S_r\cdot C=4d,
$$
we see that $d$ is divisible by $10$.
This is impossible, because we already proved that either~\mbox{$d=8$} or $d=12$.
This shows that $g\ne 13$.

We have seen that $C$ is a smooth curve of degree $d\in\{8,12\}$ and genus $5$ that is contained in $S_t$ such that $t\ne 0$.
In particular, we have seen that there are no $\overline{G}_{80}$-invariant curves in~$\mathbb{P}^3$ of degree less than $8$.
Thus, if $d=12$, then $S_t$ is the only surface among $S_1$, $S_2$, $S_3$ and~$S_4$ that contains $C$.
Therefore, to complete the proof of the theorem, we may assume that~\mbox{$d=8$}.
We have to show that
either $C=S_1\cap S_4$ and $C$ contains $\mathrm{Sing}(S_1)\cup\mathrm{Sing}(S_1)$,
or $C=S_2\cap S_3$ and $C$ contains $\mathrm{Sing}(S_2)\cup\mathrm{Sing}(S_3)$.

Recall that $C$ is also contained in a quartic surface $S_k$ such that $k\ne t$. Then
\begin{equation}
\label{equation:80-C-and-Z}
4H\sim S_k\vert_{S_t}=C+Z,
\end{equation}
where $Z$ is a $\overline{G}_{80}$-invariant effective divisor such that $H\cdot Z=8$.
Since we already proved that $\mathbb{P}^3$ does not contain $\overline{G}_{80}$-invariant curves of degree less than $8$,
we see that $Z$ is an irreducible curve of degree $8$.
Moreover, we already proved that every $\overline{G}_{80}$-invariant curve in $\mathbb{P}^3$ of degree $8$ is a smooth curve of genus $5$.
In particular, we see that $Z$ is a smooth curve of genus $5$.
Note that, a priori, we may have the case when $Z=C$ (and it will turn out that this is exactly the case).

Suppose that both curves $C$ and $Z$ do not contain the singular locus of the surface $S_t$.
Then $C^2=Z^2=8$ by the adjunction formula, which gives
$$
32=4H\cdot C=C\cdot (C+Z)=C^2+C\cdot Z=8+C\cdot Z,
$$
which implies that $C\cdot Z=24$. This is impossible.
Indeed, if we have $C=Z$, then~\mbox{$C\cdot Z=C^2=8$}.
Similarly, if $C\ne Z$, then
$$
C\cdot Z=16\alpha+40\beta
$$
for some non-negative integers $\alpha$ and $\beta$, because $C$ only contains $\overline{G}_{80}$-orbits of length $16$, $40$ and $80$ by Lemma~\ref{lemma:80-sporadic-genera}.
Thus, we see that $C\cdot Z\ne 24$.
This shows that either $C$ or $Z$ contains $\Sigma_{16}^t=\mathrm{Sing}(S_t)$.

If $\Sigma_{16}^t\subset C$, then $\Sigma_{16}^t\subset Z$ by \eqref{equation:80-C-and-Z}, because both $C$ and $Z$ are smooth.
Similarly, if~\mbox{$\Sigma_{16}^t\subset Z$}, then $\Sigma_{16}^t\subset C$.
Thus, we see that $\Sigma_{16}^t\subset C\cap Z$.
Applying the same arguments to the surface $S_k$, we see that
$$
\mathrm{Sing}(S_k)=\Sigma_{16}^k\subset C\cap Z,
$$
which implies that $k+t=5$ (see Table~\ref{table:Sigma-S}).

Denote by $\widetilde{Z}$ the proper transform of the curve $Z$ on the surface $\widetilde{S}_t$.
Then
$$
\widetilde{C}+\widetilde{Z}\sim f^{*}(4H)-E.
$$
This follows from the fact that $C+Z$ is a Cartier divisor at every point of $\Sigma_{16}^t$,
because both $C$ and $Z$ are smooth and $\Sigma_{16}^t\subset C\cap Z$.
As above, the adjunction formula gives~\mbox{$\widetilde{C}^2=\widetilde{Z}^2=8$}.
Then
$$
8+\widetilde{C}\cdot\widetilde{Z}=\widetilde{C}^2+\widetilde{C}\cdot\widetilde{Z}=\widetilde{C}\cdot\big(\widetilde{C}+\widetilde{Z}\big)=\widetilde{C}\cdot\big(f^{*}(4H)-E\big)=32-\widetilde{C}\cdot E=16,
$$
so that $\widetilde{C}\cdot\widetilde{Z}=8$.
Keeping in mind that $\widetilde{C}$ only contains $\overline{G}_{80}$-orbits of length $16$, $40$,~$80$,
we see that $\widetilde{C}=\widetilde{Z}$.
Then $C=Z$, so that $S_k\cap S_t=C$ and $k+t=5$ as required. This completes the proof of Theorem~\ref{theorem:80-curves}.
\end{proof}

It should be pointed out that Theorem~\ref{theorem:80-curves} does not assert that
the projective space~$\mathbb{P}^3$ contains $\overline{G}_{80}$-irreducible curves of degree $8$ and $12$.

\begin{remark}
\label{remark:McKelvey}
The $\overline{G}_{80}$-invariant curves of degree~$8$ in $\mathbb{P}^3$ do exist.
In fact, there are exactly two of them.
Indeed, one can show that the ideal in $\mathbb{C}[x,y,z,w]$ generated by the polynomials
\begin{multline*}
q_1(x,y,z,w),\quad q_4(x,y,z,w),\quad \frac{\partial q_1}{\partial x}\frac{\partial q_4}{\partial y}-\frac{\partial q_4}{\partial x}\frac{\partial q_1}{\partial y},\quad \frac{\partial q_1}{\partial x}\frac{\partial q_4}{\partial z}-\frac{\partial q_4}{\partial x}\frac{\partial q_1}{\partial z},\\
\frac{\partial q_1}{\partial x}\frac{\partial q_4}{\partial w}-\frac{\partial q_4}{\partial x}\frac{\partial q_1}{\partial w},\quad \frac{\partial q_1}{\partial y}\frac{\partial q_4}{\partial z}-\frac{\partial q_4}{\partial y}\frac{\partial q_1}{\partial z},\quad \frac{\partial q_1}{\partial y}\frac{\partial q_4}{\partial w}-\frac{\partial q_4}{\partial y}\frac{\partial q_1}{\partial w},\quad \frac{\partial q_1}{\partial z}\frac{\partial q_4}{\partial w}-\frac{\partial q_4}{\partial z}\frac{\partial q_1}{\partial w}
\end{multline*}
defines a one-dimensional subscheme in $\mathbb{P}^3$.
This shows that the scheme-theoretic intersection $S_1\cdot S_4$ is not reduced.
By Theorem~\ref{theorem:80-curves}, this implies that
the set-theoretic intersection $S_1\cap S_4$ is a $\overline{G}_{80}$-invariant irreducible smooth curve of genus $5$.
Similarly, we see that $S_2\cap S_3$ is another $\overline{G}_{80}$-invariant irreducible smooth curve of genus $5$.
Note that the curves $S_1\cap S_4$ and $S_2\cap S_3$ are projectively equivalent, since
$$
\overline{R}(S_1)=S_2,\quad
\overline{R}(S_2)=S_4,\quad
\overline{R}(S_4)=S_3,\quad
\overline{R}(S_3)=S_1.
$$
This also implies that both of these curves are $\overline{G}_{160}$-invariant,
and they are swapped by the group $\overline{G}_{320}$.
In particular, there exists a smooth irreducible curve of genus $5$ with an action of the group
$\overline{G}_{160}\cong\mumu_2^4\rtimes\mathrm{D}_{10}$.
This curve is well-known: it is a complete intersection in $\mathbb{P}^4$ that is given by
\begin{equation}
\label{equation:80-Humbert-curve}
\left\{\aligned%
&x_0^2+x_1^2+x_3^2+x_4^2+x_5^2=0,\\
&x_0^2+\xi_5 x_1^2+\xi_5^2x_3^2+\xi_5^3 x_4^2+\xi_5^4 x_5^2=0,\\
&\xi_5^4 x_0^2+\xi_5^3 x_1^2+\xi_5^2 x_3^2+\xi_5 x_4^2+x_5^2=0,\\
\endaligned
\right.
\end{equation}
where $\xi_5$ is a primitive fifth root of unity.
This was proved by Joseph McKelvey in \cite{McKelvey}, who also proved that
$\mumu_2^4\rtimes\mathrm{D}_{10}$ is the full automorphism group of the curve~\eqref{equation:80-Humbert-curve},
see also~\mbox{\cite{Kuribayashi,Ruben}}.
The curve given by~\eqref{equation:80-Humbert-curve} is a special case of the so-called Humbert curves,
which have been discovered by Humbert in \cite{Humbert}, and have been studied by Edge in \cite{Edge}.
\end{remark}

\begin{remark}
\label{remark:80-L10-curve-degree-8}
Let $C$ be a $\overline{G}_{80}$-invariant curve of degree~$8$ in $\mathbb{P}^3$.
Then $C$ is a smooth curve of genus $5$ by Theorem~\ref{theorem:80-curves}.
Thus, it contains unique $\overline{G}_{80}$-orbit of length $40$ by Lemma~\ref{lemma:80-sporadic-genera}.
We claim that this orbit is the intersection $\mathcal{L}_{10}\cap C$.
To show this, let us describe the intersections $\ell_1\cap S_1$, $\ell_1\cap S_2$, $\ell_1\cap S_3$ and $\ell_1\cap S_4$,
where $\ell_1$ is the irreducible component of the curve $\mathcal{L}_{10}$ defined in \eqref{equation:ell-1-2-3-4-5}.
Note that $\ell_1$ is the line in $\mathbb{P}^3$ that passes through the points $[0:i:1:0]$ and $[1:0:0:-i]$,
so that is parameterized by
$$
\big[\mu:\lambda i:\lambda:-\mu i\big],
$$
where $[\lambda:\mu]\in\mathbb{P}^1$.
To find the intersections $\ell_1\cap S_1$, $\ell_1\cap S_2$, $\ell_1\cap S_3$ and $\ell_1\cap S_4$,
we may assume that $\lambda=1$, because $[0:i:1:0]$ is not contained in $S_1\cup S_2\cup S_3\cup S_4$.
Substituting the point $[\mu:i:1:-\mu i]$ into the polynomials $q_1$, $q_2$, $q_3$, $q_4$ defined in \eqref{equation:80-q0-q1-q2-q3-q4},
we see that both intersections $\ell_1\cap S_1$ and $\ell_1\cap S_4$ are given by
$$
\mu^4-(4\xi_5^3+4\xi_5^2+2)\mu^2+1=0,
$$
and both intersections $\ell_1\cap S_2$ and $\ell_1\cap S_3$ are given by
$$
\mu^4+(4\xi_5^3+4\xi_5^2+2)\mu^2+1=0.
$$
Thus, we see that $\ell_1\cap S_1=\ell_1\cap S_4$ and $\ell_1\cap S_2=\ell_1\cap S_3$.
Since $C$ is one of the curves $S_1\cap S_4$ or $S_2\cap S_3$ by Theorem~\ref{theorem:80-curves},
the intersection $\mathcal{L}_{10}\cap C$ is the unique $\overline{G}_{80}$-orbit in $C$ of length $40$.
\end{remark}

We will show later that $\mathbb{P}^3$ contains $\overline{G}_{80}$-invariant curves of degree $12$ (see Lemma~\ref{lemma:80-degree-12}).

\begin{remark}
\label{remark:80-degree-12}
Let $C$ be a $\overline{G}_{80}$-invariant curve in $\mathbb{P}^3$ of degree $12$.
By Theorem~\ref{theorem:80-curves}, the curve $C$ is smooth, its genus is $5$, and
$C\subset S_t$, where $S_t$ is one of the surfaces $S_1$, $S_2$, $S_3$ and~$S_4$.
By Remark~\ref{remark:McKelvey}, the surface $S_t$ contains an irreducible $\overline{G}_{80}$-invariant curve $Z$
such that $2Z=S_k\vert_{S_t}$, where $k+t=5$.
If $C$ does not contain $\mathrm{Sing}(S_t)$, then
$Z\cdot C=24$, which is impossible, because the only $\overline{G}_{80}$-orbits contained in $C$ are of lengths
$16$, $40$ and $80$ by Lemma~\ref{lemma:80-sporadic-genera}.
Thus, we see that $C$ contains the singular locus of the surface $S_t$.
\end{remark}

The first consequence of Theorem~\ref{theorem:80-curves} is

\begin{corollary}
\label{corollary:80-C12-mult}
Let $C$ be an irreducible $\overline{G}_{80}$-invariant curve in $\mathbb{P}^3$ of degree $12$,
let $\mathcal{D}$ be a mobile $\overline{G}_{80}$-invariant linear system on $\mathbb{P}^3$,
let $n$ be a positive integer such that $\mathcal{D}\sim\mathcal{O}_{\mathbb{P}^3}(n)$.
Then $\mathrm{mult}_{C}(\mathcal{D})\leqslant\frac{n}{4}$.
\end{corollary}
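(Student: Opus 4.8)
The plan is to follow the strategy of Corollary~\ref{corollary:144-C12-mult}: restrict a general member of $\mathcal{D}$ to the unique quartic surface containing $C$, pass to its minimal resolution, and extract the bound from an intersection-theoretic computation on a $K3$ surface. By Theorem~\ref{theorem:80-curves} the curve $C$ is smooth of genus $5$ and is contained in exactly one of the surfaces $S_1,S_2,S_3,S_4$; call it $S_t$, recall from Lemma~\ref{lemma:80-quartic-surfaces} that $\mathrm{Sing}(S_t)=\Sigma_{16}^t$, and note that Remark~\ref{remark:80-degree-12} gives $\Sigma_{16}^t\subset C$. Since $\mathcal{D}$ is mobile, a general surface $D\in\mathcal{D}$ does not contain $S_t$, so $D|_{S_t}$ is a well-defined effective divisor with $D|_{S_t}\sim nH$, where $H$ is the hyperplane section of $S_t$; moreover $\mathrm{mult}_C(D|_{S_t})\geqslant\mathrm{mult}_C(\mathcal{D})=:m$, and I would write $D|_{S_t}=mC+\Delta$ with $\Delta$ effective and not containing $C$.

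Next I would pass to the minimal resolution $f\colon\widetilde{S}_t\to S_t$, a smooth $K3$ surface carrying a lifted $\overline{G}_{80}$-action, with $(-2)$-curves $E_1,\dots,E_{16}$ over $\Sigma_{16}^t$; set $E=\sum_i E_i$ and $\widetilde{H}=f^*H$. The proper transform $\widetilde{C}$ is a smooth genus-$5$ curve through all sixteen nodes, so $\widetilde{C}^2=8$, $\widetilde{C}\cdot\widetilde{H}=12$ and $\widetilde{C}\cdot E=16$. The key auxiliary curve is $Z=S_t\cap S_k$ with $k+t=5$: by Remark~\ref{remark:McKelvey} one has $S_k|_{S_t}=2Z$, and its proper transform satisfies $\widetilde{Z}\equiv 2\widetilde{H}-\tfrac12 E$ together with $\widetilde{C}\cdot\widetilde{Z}=16$. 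I would then test the effective class $f^*\Delta\equiv n\widetilde{H}-m\bigl(\widetilde{C}+\tfrac12 E\bigr)$ against $N:=8\widetilde{H}-\widetilde{C}-\tfrac12 E=f^*(8H-C)$, which is orthogonal to every $E_i$. A direct computation gives $N\cdot\widetilde{H}=20$ and $N\cdot\bigl(\widetilde{C}+\tfrac12 E\bigr)=80$, so that $N\cdot f^*\Delta=20n-80m$, and the inequality $N\cdot f^*\Delta\geqslant 0$ yields exactly $m\leqslant n/4$.

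The hard part is this last inequality $N\cdot f^*\Delta=(8H-C)\cdot\Delta\geqslant 0$: in contrast with Corollary~\ref{corollary:144-C12-mult}, here there is no decomposition of $4H$ on $S_t$ into $C$ plus a negative curve, so the naive nef classes $\widetilde{H}$ and $\widetilde{Z}$ only give the weaker bound $m\leqslant n/3$, and one is forced to use the non-nef pullback class $8H-C$. It therefore suffices to prove $C\cdot B\leqslant 8\deg B$ for every $\overline{G}_{80}$-irreducible component $B$ of $\Delta$. Applying the Hodge index theorem on $\widetilde{H}^{\perp}$, where the intersection form is negative definite, in the form of a Cauchy--Schwarz estimate for the classes $4\widetilde{C}-12\widetilde{H}$ and $4\widetilde{B}-(\deg B)\widetilde{H}$ settles every $B$ with $\widetilde{B}^2\geqslant 0$ (it forces $C\cdot B<8\deg B$), so the only possible obstructions are $(-2)$-curves, that is, $\overline{G}_{80}$-orbits of lines lying on $S_t$ and meeting $C$ in at least nine points. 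I would rule these out using Lemma~\ref{lemma:80-L10} together with Corollary~\ref{corollary:80-small-orbits}: the only $\overline{G}_{80}$-orbits of lines are the components of $\mathcal{L}_{10}$, $\mathcal{L}_{10}^\prime$ and $\mathcal{L}_{10}^{\prime\prime}$, none of which is contained in $S_t$, while Corollary~\ref{corollary:80-small-degree-curves} forbids any other low-degree invariant component; combined with Lemma~\ref{lemma:80-sporadic-genera}, which restricts the orbits a line could cut out on $C$, this excludes a line meeting $C$ too positively and completes the argument.
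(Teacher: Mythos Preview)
Your overall setup (restriction to the unique quartic $S_t\supset C$, passage to the minimal resolution, and testing the residual divisor against a suitable class) is sound, but your nefness argument for $N=8\widetilde{H}-\widetilde{C}-\tfrac12E$ has a real gap. The Cauchy--Schwarz estimate you propose, with the classes $4\widetilde{C}-12\widetilde{H}$ and $4\widetilde{B}-(\deg B)\widetilde{H}$, bounds $\widetilde{C}\cdot\widetilde{B}$, \emph{not} $C\cdot B$. Since $C\cdot B=\widetilde{C}\cdot\widetilde{B}+\tfrac12 E\cdot\widetilde{B}$ and nothing in your estimate controls $E\cdot\widetilde{B}$, the conclusion ``$C\cdot B<8\deg B$'' does not follow. (If you replace $\widetilde{C}$ by $f^*C=\widetilde{C}+\tfrac12E$ in the Hodge argument, the estimate \emph{does} go through: with $u=f^*C-3\widetilde{H}$ one gets $u^2=-20$ and $|C\cdot B-3\deg B|\le\sqrt{20\bigl(\tfrac{(\deg B)^2}{4}-(f^*B)^2\bigr)}$, which is enough since $(f^*B)^2\ge\widetilde{B}^2\ge-2\deg B$ and $\deg B\ge 8$ for any $\overline{G}_{80}$-irreducible curve.) Your subsequent reduction ``the only possible obstructions are $(-2)$-curves, that is, $\overline{G}_{80}$-orbits of lines'' is also unjustified: a $\overline{G}_{80}$-irreducible curve with negative self-intersection on a $K3$ need not be a union of lines, and Lemma~\ref{lemma:80-curves-reducible} only classifies reducible invariant curves of degree $\le 15$.

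The paper takes a different and cleaner route. Instead of your class $N$, it uses
\[
T=f^*(6H)-\widetilde{C}-E,
\]
and shows directly that $|T|$ has no base curves (hence $T$ is nef). This is done by Riemann--Roch ($T^2=8$, so $h^0(T)\ge 6$) together with the classification of low-degree $\overline{G}_{80}$-invariant curves on $S_t$ from Theorem~\ref{theorem:80-curves}: any fixed part $F$ of $|T|$ would satisfy $F\cdot f^*(H)\le 9$, forcing either $F=rE$ (impossible since then the mobile part has negative square) or $f(F)$ equal to the degree-$8$ curve (impossible since that curve already moves in a big system). Once $T$ is nef, one intersects a general $Z\in|T|$ with the proper transform $\widetilde{\Delta}$ and reads off
\[
0\le Z\cdot\widetilde{\Delta}\le 12n-48m,
\]
giving $m\le n/4$. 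So the paper avoids your delicate component-by-component Hodge analysis by producing an explicit base-point-free linear system, at the cost of invoking the full strength of Theorem~\ref{theorem:80-curves} to control the fixed locus.
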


\begin{proof}
By Theorem~\ref{theorem:80-curves},
the curve $C$ is a smooth curve of genus $5$ that is contained in a surface among $S_1$, $S_2$, $S_3$, $S_4$.
Denote this surface by $S$, and denote by $H$ its general plane section.
Let $D$ be a general surface in $\mathcal{D}$.
Then
$$
D\big\vert_{S}=mC+\Delta\sim nH,
$$
where $m$ is a non-negative integer such that $m\geqslant\mathrm{mult}_{C}(\mathcal{D})$,
and $\Delta$ is an effective divisor on $S$ such that $C\not\subset\mathrm{Supp}(\Delta)$.

Recall from Lemma~\ref{lemma:80-quartic-surfaces}, that $\mathrm{Sing}(S)$ is one of the $\overline{G}_{80}$-orbits $\Sigma_{16}^1$, $\Sigma_{16}^2$, $\Sigma_{16}^3$, $\Sigma_{16}^4$,
and $S$ has isolated ordinary double points.
Let $f\colon\widetilde{S}\to S$ be the minimal resolution of singularities of the surface~$S$.
Then the action of the group $\overline{G}_{80}$ lifts to the surface $\widetilde{S}$.
Denote by $E_1,\ldots,E_{16}$ the $f$-exceptional curves,
denote by $\widetilde{C}$ the proper transform of the curve $C$ on the surface $\widetilde{S}$,
and denote by $\widetilde{\Delta}$ the proper transform of the divisor $\Delta$ on the surface $\widetilde{S}$.
Let $E=E_1+\ldots+E_{16}$. Then $E^2=-32$ and
$$
m\widetilde{C}+\widetilde{\Delta}\sim_{\mathbb{Q}} f^*(nH)-\epsilon E
$$
for some non-negative rational number $\epsilon$.
Moreover, the curve $\widetilde{C}$ is a smooth irreducible curve of genus $5$, so that $\widetilde{C}^2=8$ by the adjunction formula.

By Remark~\ref{remark:80-degree-12}, the curve $C$ contains $\mathrm{Sing}(S)$.
Then $\widetilde{C}\cdot E=16$. Let $T$ be the divisor~\mbox{$f^*(6H)-\widetilde{C}-E$}.
Then $T\cdot E=16$ and $T^2=8$, so that
\begin{multline*}
h^0\big(\mathcal{O}_{\widetilde{S}}(T)\big)=6+h^1\big(\mathcal{O}_{\widetilde{S}}(T)\big)-h^2\big(\mathcal{O}_{\widetilde{S}}(T)\big)=\\
=6+h^1\big(\mathcal{O}_{\widetilde{S}}(T)\big)+h^0\big(\mathcal{O}_{\widetilde{S}}(-T)\big)=6+h^1\big(\mathcal{O}_{\widetilde{S}}(T)\big)\geqslant 6
\end{multline*}
by the Riemann--Roch formula and Serre duality, since $T\cdot f^{*}(H)=12>0$.

We claim that the linear system $|T|$ does not have base curves.
Indeed, suppose it does. Then $T\sim F+M$,
where $F$ is the fixed part of the linear system $|T|$,
and $M$ is its mobile part.
Then $M\cdot f^*(H)\geqslant 3$, because $S$ is not uniruled.
This gives
$$
0\leqslant f(F)\cdot H=F\cdot f^*(H)=12-M\cdot f^*(H)\leqslant 9.
$$
Since $F$ is $\overline{G}_{80}$-invariant, it follows from Theorem~\ref{theorem:80-curves} that
either $F$ is contracted by $f$, or $f(F)$ is an irreducible smooth curve of degree $8$ and genus $5$.
In the former case we have $F=rE$ for some integer $r\geqslant 1$, so that
$$
M^2=\Big(T-rE\Big)^2=6-2rT\cdot E-32r^2\leqslant 6-32r^2<0,
$$
which is absurd. Thus, we see that $f(F)$ is an irreducible smooth curve of degree $8$ and genus $5$.
Denote its proper transform on $\widetilde{S}$ by $\widetilde{C}_8$.
Then $\widetilde{C}_8\subset\mathrm{Supp}(F)$ and $\widetilde{C}_8^2=8$.
Hence, it follows from the Riemann--Roch formula and Serre duality that
\begin{multline*}
h^0\big(\mathcal{O}_{\widetilde{S}}(F)\big)\geqslant h^0\big(\mathcal{O}_{\widetilde{S}}(\widetilde{C}_8)\big)=6+h^1\big(\mathcal{O}_{\widetilde{S}}(\widetilde{C}_8)\big)-h^2\big(\mathcal{O}_{\widetilde{S}}(\widetilde{C}_8)\big)=\\
=6+h^1\big(\mathcal{O}_{\widetilde{S}}(\widetilde{C}_8)\big)+h^0\big(\mathcal{O}_{\widetilde{S}}(-\widetilde{C}_8)\big)=6+h^1\big(\mathcal{O}_{\widetilde{S}}(\widetilde{C}_8)\big)\geqslant 6,
\end{multline*}
which is also absurd, since $F$ is the fixed part of the linear system $|T|$.
Thus, we see that the linear system $|T|$ does not have base curves.

Let $Z$ be a general curve in $|T|$. Then $Z\cdot E=16$ and
$$
Z\cdot\widetilde{C}=\widetilde{C}\cdot\big(6f^*(H)-\widetilde{C}-E\big)=48.
$$
On the other hand, the divisor $Z$ is nef, so that $Z\cdot\widetilde{\Delta}\geqslant 0$.
Therefore, we have
$$
0\leqslant Z\cdot\widetilde{\Delta}=Z\cdot\Big(f^*(nH)-m\widetilde{C}-\epsilon E\Big)=12n-mZ\cdot\widetilde{C}-\epsilon Z\cdot E\leqslant 12n-48m,
$$
which implies that $\mathrm{mult}_{C}(\mathcal{D})\leqslant m\leqslant\frac{n}{4}$.
\end{proof}

\begin{corollary}
\label{corollary:80-C12-sextics}
Let $C$ be a $\overline{G}_{80}$-invariant curve of degree $12$ in $\mathbb{P}^3$,
and let $\mathcal{M}$ be the linear system of surfaces in $\mathbb{P}^3$ of degree $6$ that contains $C$.
Then the base locus of $\mathcal{M}$ does not have curves that are different from $C$.
\end{corollary}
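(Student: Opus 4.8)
The plan is to follow the strategy of Corollary~\ref{corollary:144-C12-sextics}, transplanting the technical heart of the argument from the proof of Corollary~\ref{corollary:80-C12-mult}. First, Theorem~\ref{theorem:80-curves} tells us that $C$ is a smooth irreducible curve of genus $5$ contained in exactly one of the quartics $S_1,S_2,S_3,S_4$; call it $S_t$, and let $H$ be its plane section. By Remark~\ref{remark:80-degree-12} the curve $C$ contains $\mathrm{Sing}(S_t)=\Sigma_{16}^t$, and by Lemma~\ref{lemma:80-quartic-surfaces} the surface $S_t$ has only ordinary double points there. The first reduction is to note that $\mathcal{M}$ contains every surface of the form $S_t\cup Q$ with $Q$ a quadric; since the quadrics in $\mathbb{P}^3$ have empty base locus, the base locus of this subsystem is $S_t$, whence $\mathrm{Bs}(\mathcal{M})\subseteq S_t$. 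Thus it suffices to rule out a curve $\Gamma\subset\mathrm{Bs}(\mathcal{M})$ lying on $S_t$ with $\Gamma\neq C$.

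Second, I would pass to the minimal resolution $f\colon\widetilde{S}_t\to S_t$, with exceptional curves $E_1,\dots,E_{16}$, set $E=E_1+\dots+E_{16}$, and denote by $\widetilde{C}$ the proper transform of $C$. A general member of $\mathcal{M}$ restricts to $S_t$ as $C+R$ with $R$ effective and $C+R\sim 6H$. Exactly as in the bookkeeping for Cartier divisors through the nodes carried out in the proof of Theorem~\ref{theorem:80-curves} (where one finds $\widetilde{C}+\widetilde{Z}\sim f^{*}(4H)-E$), the fact that $C$ is smooth and passes through each point of $\Sigma_{16}^t$ forces the proper transform $\widetilde{R}$ to lie in the linear system $|T|$, where
$$
T=f^{*}(6H)-\widetilde{C}-E.
$$
Since $S_t$ is a normal quartic with only ordinary double points, it is projectively normal, so the residuals $\widetilde{R}$ obtained from sextics through $C$ sweep out the \emph{complete} linear system $|T|$.

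Finally, in the proof of Corollary~\ref{corollary:80-C12-mult} it was already shown that $|T|$ has no base curves (this is where the Riemann--Roch and Serre-duality estimates, together with Theorem~\ref{theorem:80-curves}, rule out any fixed component being either a chain of exceptional curves or a degree-$8$ genus-$5$ curve). Consequently the restriction of $\mathcal{M}$ to $S_t$ has no base curve other than $C$: a curve $\Gamma\subset\mathrm{Bs}(\mathcal{M})$ with $\Gamma\neq C$ cannot be a component of the irreducible curve $C$, so for a general member of $\mathcal{M}$ it lies in the residual $R$, and hence its proper transform $\widetilde{\Gamma}$ would be a base curve of $|T|$, which is impossible. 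Therefore $\mathrm{Bs}(\mathcal{M})$ contains no curve different from $C$. The main obstacle is not in this corollary itself but in the already-established no-base-curves property of $|T|$; the only new care needed here is the node-by-node Cartier computation identifying the residual system with $|T|$ and the appeal to projective normality of $S_t$.
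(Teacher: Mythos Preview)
Your proposal is correct and follows essentially the same approach as the paper: both arguments reduce to the fact, established in the proof of Corollary~\ref{corollary:80-C12-mult}, that the linear system $|T|=|f^*(6H)-\widetilde{C}-E|$ on the minimal resolution $\widetilde{S}_t$ has no base curves, and then use projective normality of $S_t$ to transfer this back to the linear system of sextics in $\mathbb{P}^3$. Your explicit first reduction via the reducible surfaces $S_t+Q$ is a nice touch that the paper leaves implicit; conversely, the paper notes (using Saint-Donat on the K3 surface $\widetilde{S}_t$, since $T^2=8>0$) that $|T|$ is actually base-point-free, which is stronger than what is needed here.
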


\begin{proof}
The required assertion easily follows from the proof of Corollary~\ref{corollary:80-C12-mult}.
Namely, let us use notation of the proof of Corollary~\ref{corollary:80-C12-mult}.
Then the linear system $|f^*(6H)-\widetilde{C}-E|$ is free from base curves.
This was shown in the proof of Corollary~\ref{corollary:80-C12-mult}.
In particular, the linear system $|6H-C|$ does not have base points.
Since the quartic surface $S$ is projectively normal, we immediately obtain the required assertions.
\end{proof}

Now let us prove that $\overline{G}_{80}$-invariant curves of degree $12$ in $\mathbb{P}^3$ do exist.

\begin{lemma}
\label{lemma:80-degree-12-exists}
Each surface among $S_1$, $S_2$, $S_3$, $S_4$ contains
at least one $\overline{G}_{80}$-invariant curve of degree $12$.
\end{lemma}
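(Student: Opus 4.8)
The plan is to pass to the minimal resolution $f\colon\widetilde{S}\to S$ of one of the surfaces $S=S_t$, $t\in\{1,2,3,4\}$, which by Lemma~\ref{lemma:80-quartic-surfaces} is a smooth $K3$ surface carrying a lifted action of $\overline{G}_{80}$, with $16$ disjoint exceptional $(-2)$-curves $E_1,\ldots,E_{16}$ over $\mathrm{Sing}(S_t)=\Sigma_{16}^t$. By Theorem~\ref{theorem:80-curves} and Remark~\ref{remark:80-degree-12}, \emph{any} $\overline{G}_{80}$-invariant curve of degree $12$ in $\mathbb{P}^3$ is automatically irreducible, smooth of genus $5$, contained in a single $S_t$, and passes through all of $\Sigma_{16}^t$; conversely it suffices to exhibit one reduced $\overline{G}_{80}$-invariant curve of degree $12$ inside one of the $S_t$, since the identities $\overline{R}(S_1)=S_2$, $\overline{R}(S_2)=S_4$, $\overline{R}(S_4)=S_3$, $\overline{R}(S_3)=S_1$ of Remark~\ref{remark:McKelvey} (together with the normality of $\overline{G}_{80}$ in $\overline{G}_{320}$) transport such a curve to all four surfaces. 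Thus the task reduces to producing, on $\widetilde{S}$, an effective irreducible $\overline{G}_{80}$-invariant divisor class $\widetilde{C}$ with $\widetilde{C}\cdot f^{*}H=12$, $\widetilde{C}\cdot E_i=1$ for each $i$, and $\widetilde{C}^2=8$, where $H$ is the plane section.

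The first point to record is that such a class cannot lie in the rational span of $f^{*}H$ and $E=E_1+\cdots+E_{16}$. Indeed, the only class there with $\widetilde{C}\cdot f^{*}H=12$ meeting each $E_i$ once is $3f^{*}H-\tfrac12 E$, and it has self-intersection $36-8=28$, hence arithmetic genus $15$ rather than $5$. So the desired curve is genuinely not a complete intersection of $S_t$ with a cubic — consistent with the absence of $\overline{G}_{80}$-invariant cubics from Lemma~\ref{lemma:80-144-quadrics} — and its construction must use extra invariant Picard classes. Since $S_t$ is a quartic surface with exactly $16$ nodes, it is a Kummer surface, and the needed classes come from the $16_6$ configuration of its tropes: each trope conic has proper transform $\widetilde{\theta}_j=\tfrac12(f^{*}H-\sum_{i\in\sigma_j}E_i)$, a $(-2)$-curve on which $\overline{G}_{80}$ acts, where $\sigma_j$ is the six-element set of nodes it passes through. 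The plan is to form a suitable $\overline{G}_{80}$-invariant combination of $f^{*}H$, $E$ and the $\widetilde{\theta}_j$ realizing the numerics above, and then invoke Riemann--Roch on $\widetilde{S}$ to see the class is effective. In the computational spirit of Remark~\ref{remark:McKelvey}, an equivalent route is to exhibit the curve explicitly with \cite{GAP}: intersect $S_t$ with one of the four $\overline{G}_{80}$-invariant sextics furnished by Lemma~\ref{lemma:80-sextic-surfaces} and split off its degree-$12$ part, or describe the curve as the base locus of the invariant sextics through $\Sigma_{16}^t$.

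Once a reduced $\overline{G}_{80}$-invariant curve $C\subset S_t$ of degree $12$ is in hand, I would close the argument purely formally: Theorem~\ref{theorem:80-curves} forces $C$ to be irreducible, smooth of genus $5$ and contained in exactly one of $S_1,S_2,S_3,S_4$, and then $\overline{R}$ propagates it to each of the four surfaces, yielding the statement. The main obstacle is precisely the existence of the requisite invariant class: because the numerical data rule out every class built only from the hyperplane and the nodes, one must genuinely exploit the Kummer geometry of $S_t$ — equivalently, verify that the $\overline{G}_{80}$-invariant Picard lattice of $\widetilde{S}$ has rank at least $3$ (in contrast with the rank-one invariant Picard group of the smooth surface $S_0$ given by Corollary~\ref{corollary:80-Mukai} and \cite{Mukai}) — and then confirm that the resulting effective class is represented by an irreducible curve rather than a configuration of tropes and exceptional curves. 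This last irreducibility is exactly what Theorem~\ref{theorem:80-curves} guarantees a posteriori, which is why the construction needs only produce \emph{some} reduced invariant degree-$12$ curve.
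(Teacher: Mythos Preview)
Your proposal is a plan rather than a proof: you correctly identify that the entire content of the lemma is the \emph{existence} of the degree-$12$ curve, and you correctly observe that such a class cannot lie in the span of $f^*H$ and $E$, but you never actually produce the class. Invoking the Kummer structure of $S_t$ and its tropes is plausible, yet you do not verify that the $\overline{G}_{80}$-action on the $16_6$ configuration yields an invariant combination with the prescribed intersection numbers, nor that this combination is effective and represented by a reduced curve. Your alternative ``intersect $S_t$ with an invariant sextic and split off the degree-$12$ part'' is circular in the order the paper develops things: Lemma~\ref{lemma:80-degree-12} shows this splitting \emph{after} the present lemma supplies a degree-$12$ curve $C_{12}$ and then locates an invariant sextic through it; without knowing $C_{12}$ exists, you have no reason for the degree-$24$ intersection to decompose as $12+12$ rather than, say, being irreducible or containing one of the degree-$8$ curves with multiplicity. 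The sentence ``the main obstacle is precisely the existence of the requisite invariant class'' is accurate, but the obstacle is never overcome.

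The paper's argument is entirely different and sidesteps the Picard-lattice computation on $\widetilde{S}$. It starts from the \emph{abstract} genus-$5$ curve $C$ with faithful $\overline{G}_{80}$-action (whose existence is already secured by Remark~\ref{remark:McKelvey}), and analyses $\mathrm{Pic}(C)^{\overline{G}_{80}}$ via Dolgachev's exact sequence \cite[Proposition~2.2]{Dolgachev}. One computes $H^2(\overline{G}_{80},\mathbb{C}^*)\cong\mumu_2^2$, so a priori $\mathrm{Pic}(C)^{\overline{G}_{80}}$ is $\mathbb{Z}\oplus\mumu_2$ or $\mathbb{Z}\oplus\mumu_2^2$; by re-embedding $C$ via $|K_C+D|$ for each invariant $2$-torsion $D$ and using Corollary~\ref{corollary:G80} together with the fact that there are only two $\overline{G}_{80}$-invariant degree-$8$ curves in $\mathbb{P}^3$, one forces the first alternative, hence the existence of a $\overline{G}_{80}$-invariant theta-characteristic $\theta$. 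The degree-$12$ linear system $|K_C+\theta|$ then gives a $\overline{G}_{80}$-equivariant map $\varsigma\colon C\to\mathbb{P}^3$, and $\varsigma(C)$ is the sought curve by Theorem~\ref{theorem:80-curves}. The transport to all four $S_t$ via $\overline{R}$ is as you describe. If you want to salvage your surface-side approach, you would need to actually compute $\mathrm{rk}\,\mathrm{Pic}(\widetilde{S})^{\overline{G}_{80}}$ and exhibit the class explicitly; the paper's curve-side argument avoids this by trading the unknown invariant lattice of $\widetilde{S}$ for the computable invariant Picard group of $C$.
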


\begin{proof}
Let $C$ be a smooth curve of genus $5$ that admits a faithful action of the group $\overline{G}_{80}$.
Then it follows from \cite[Proposition~2.2]{Dolgachev} that there exists an exact sequence of groups
\begin{equation}
\label{equation:Dolgachev-sequence}
0\longrightarrow\mathrm{Hom}\big(\overline{G}_{80},\mathbb{C}^\ast\big)\longrightarrow\mathrm{Pic}\big(\overline{G}_{80},C\big)\longrightarrow
\mathrm{Pic}\big(C\big)^{\overline{G}_{80}}\longrightarrow H^2\big(\overline{G}_{80},\mathbb{C}^\ast\big)\longrightarrow 0,
\end{equation}
where $\mathrm{Pic}(\overline{G}_{80},C)$ stands for the group of $\overline{G}_{80}$-equivariant line bundles on $C$.
One can compute that $\mathrm{Hom}(\overline{G}_{80},\mathbb{C}^\ast)\cong\mumu_5$ and $H^2(\overline{G}_{80},\mathbb{C}^\ast)\cong\mumu_2^2$.
Moreover, one has $C/\overline{G}_{80}\cong\mathbb{P}^1$, so that it follows from \cite[(2.2)]{Dolgachev} that
$$
\mathrm{Pic}\big(\overline{G}_{80},C\big)\cong\mathbb{Z}\oplus\mumu_5.
$$
Furthermore, let $\kappa$ be a positive generator of the free part of $\mathrm{Pic}(\overline{G}_{80},C)$.
Then it follows from \cite[(2.1)]{Dolgachev} that $\mathrm{deg}(\kappa)=8$, so that we may assume that $\kappa=K_C$.
Therefore, there is an exact sequence
$$
0\longrightarrow\mathbb{Z}\longrightarrow\mathrm{Pic}(C)^{\overline{G}_{80}}\longrightarrow\mumu_2^2\longrightarrow 0.
$$
Thus, either $\mathrm{Pic}(C)^{\overline{G}_{80}}\cong\mathbb{Z}\oplus\mumu_2$ or $\mathrm{Pic}(C)^{\overline{G}_{80}}\cong\mathbb{Z}\oplus\mumu_2^2$.
In both cases, there exists a $\overline{G}_{80}$-invariant $2$-torsion line bundle on $C$.
In the former case, such a $2$-torsion is unique, and there exists a $\overline{G}_{80}$-invariant line bundle   $\theta\in\mathrm{Pic}(C)$ such that $2\theta\sim K_C$,
i.e. the divisor $\theta$ is a $\overline{G}_{80}$-invariant theta-characteristic.
In the latter case, there are exactly three non-trivial $\overline{G}_{80}$-invariant $2$-torsion line bundles on $C$,
and there are no $\overline{G}_{80}$-invariant theta-characteristics.

Let $D$ be a $2$-torsion divisor on $C$ such that its class in $\mathrm{Pic}(C)$ is $\overline{G}_{80}$-invariant.
Then
$$
h^0\big(\mathcal{O}_C(K_C+D)\big)=h^0\big(\mathcal{O}_C(-D)\big)+4=4
$$
by the Riemann--Roch formula.
Moreover, the linear system $|K_C+D|$ does not have base points,
because the degree of the divisor $K_C+D$ is $8$,
and $C$ does not contain $\overline{G}_{80}$-orbits of length less than $16$
by Lemma~\ref{lemma:80-sporadic-genera}.
So, the linear system $|K_C+D|$ gives a $\overline{G}_{80}$-equivariant morphism
$$
\sigma\colon C\to\mathbb{P}\Big(H^0(\mathcal{O}_C(K_C+D))^\vee\Big).
$$
By Corollary~\ref{corollary:G80}, we can identify the action of the group $\overline{G}_{80}$ on $\mathbb{P}(H^0(\mathcal{O}_C(K_C+D))^\vee)$
with the action of the group $\overline{G}_{80}$ on $\mathbb{P}^3$ introduced in~\S\ref{section:Heisenberg}.
Note that $\sigma(C)$ is a $\overline{G}_{80}$-invariant curve of degree at most $\deg(K_C+D)=8$.
Thus, the degree of the curve $\sigma(C)$ is $8$ by Theorem~\ref{theorem:80-curves}.
In particular, this gives another proof that $\mathbb{P}^3$ contains a $\overline{G}_{80}$-invariant curve of degree $8$ (see Remark~\ref{remark:McKelvey}).
Note that Theorem~\ref{theorem:80-curves} also implies that the curve $\sigma(C)$ is smooth,
and either $\sigma(C)=S_1\cap S_4$ or~\mbox{$\sigma(C)=S_2\cap S_3$}.
Hence the morphism $\sigma$ induces an isomorphism $C\cong\sigma(C)$.
Since the curves $S_1\cap S_4$ and~\mbox{$S_2\cap S_3$} are projectively equivalent by Remark~\ref{remark:McKelvey},
we see that the class of the divisor $D$ in~$\mathrm{Pic}(C)$ is uniquely determined.

We see that there exists a unique $\overline{G}_{80}$-invariant $2$-torsion line bundle on $C$,
which implies that there exists a $\overline{G}_{80}$-invariant theta-characteristic $\theta\in\mathrm{Pic}(C)$.
In fact, there are exactly two such theta-characteristics: one is $\theta$, and another one is $\theta+D$.
Arguing as above, we see that both linear systems $|K_C+\theta|$ and $|K_C+D+\theta|$ are free from base points and define
$\overline{G}_{80}$-equivariant morphisms $C\to\mathbb{P}^7$,
where $\mathbb{P}^7$ are the projectivizations of~\mbox{$H^0(\mathcal{O}_C(K_C+\theta))^\vee$} and
$H^0(\mathcal{O}_C(K_C+D+\theta))^\vee$,
which are eight-dimensional representations of central extensions of the group $\overline{G}_{80}$ given by the images
of~\mbox{$K_C+\theta$} and~\mbox{$K_C+D+\theta$} in
$$
H^2(\overline{G}_{80},\mathbb{C}^\ast)\cong\mumu_2^2
$$
in~\eqref{equation:Dolgachev-sequence},
respectively.
One can show that these eight-dimensional representations split as sums of two four-dimensional representations,
so that there exists a $\overline{G}_{80}$-equivariant map~$\varsigma$ from $C$ to a three-dimensional projective space
that is given by some $\overline{G}_{80}$-invariant three-dimensional linear subsystem $\mathcal{P}$ in~\mbox{$|K_C+\theta|$}
or~\mbox{$|K_C+D+\theta|$}.
By Corollary~\ref{corollary:G80}, we can identify the action of the group $\overline{G}_{80}$ on the target space of $\varsigma$
with the action of the group $\overline{G}_{80}$ on $\mathbb{P}^3$ introduced in~\S\ref{section:Heisenberg}.
Since the degree of both divisors $K_C+\theta$ and $K_C+D+\theta$ is $12$, the linear system $\mathcal{P}$ does not have base points.
Thus, the curve $\varsigma(C)$ is a $\overline{G}_{80}$-invariant curve whose degree divides
$$
\deg(K_C+\theta)=\deg(K_C+D+\theta)=12.
$$
Hence, $\varsigma(C)$ is a smooth curve of degree $12$ and genus $5$ by Theorem~\ref{theorem:80-curves}.

By Theorem~\ref{theorem:80-curves}, the curve $\varsigma(C)$ is contained in a unique quartic surface among $S_1$, $S_2$, $S_3$
and~$S_4$.
Let $\overline{R}$ be the element in $\mathrm{PGL}_4(\mathbb{C})$ defined in~\S\ref{section:Heisenberg}, cf. Remark~\ref{remark:McKelvey}.
Then
$$
\overline{R}(S_1)=S_2,\quad
\overline{R}(S_2)=S_4,\quad
\overline{R}(S_4)=S_3,\quad
\overline{R}(S_3)=S_1,
$$
so that the cyclic group generated by $\overline{R}$ acts transitively on the set $\{S_1,S_2,S_3,S_4\}$.
Since this group also acts transitively on the set
$$
\Big\{\varsigma(C),\overline{R}(\varsigma(C)),\overline{R}^2(\varsigma(C)),\overline{R}^3(\varsigma(C))\Big\},
$$
we conclude that each surface among $S_1$, $S_2$, $S_3$, $S_4$ contains a unique
curve
among $\varsigma(C)$, $\overline{R}(\varsigma(C))$, $\overline{R}^2(\varsigma(C))$ and $\overline{R}^3(\varsigma(C))$.
\end{proof}

Now we are going to describe all $\overline{G}_{80}$-invariant curves of degree $12$ in $\mathbb{P}^3$.

\begin{lemma}
\label{lemma:80-degree-12}
There are exactly eight $\overline{G}_{80}$-invariant curves of degree $12$ in $\mathbb{P}^3$.
Moreover, each quartic surface among $S_1$, $S_2$, $S_3$, $S_4$ contains exactly two such curves.
Furthermore, the union of these two curves is cut out on the quartic surface by a $\overline{G}_{80}$-invariant sextic surface in $\mathbb{P}^3$.
\end{lemma}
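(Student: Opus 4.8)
The plan is to reduce the whole statement to a count on a single quartic surface and then to combine the $K3$ geometry of its minimal resolution with the equivariant Picard-group computation already carried out in Lemma~\ref{lemma:80-degree-12-exists}. First I would reduce. By Theorem~\ref{theorem:80-curves} every $\overline{G}_{80}$-invariant curve of degree $12$ in $\mathbb{P}^3$ is a smooth irreducible curve of genus $5$ contained in exactly one of the surfaces $S_1,S_2,S_3,S_4$, and by Remark~\ref{remark:80-degree-12} it contains the node set $\mathrm{Sing}(S_t)=\Sigma_{16}^t$ of the surface $S_t$ in which it lies. Since each such curve determines its unique $S_t$, the asserted eight curves are automatically distinct once we show that each $S_t$ carries exactly two of them; so it suffices to prove the count two-per-surface together with the sextic clause, and Lemma~\ref{lemma:80-degree-12-exists} already provides at least one curve $C$ on each $S_t$.

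Next I would set up the residuation on the resolution. Fix $S=S_t$, let $f\colon\widetilde{S}\to S$ be the minimal resolution (a $K3$ surface by Lemma~\ref{lemma:80-quartic-surfaces}), let $E=E_1+\dots+E_{16}$ be the reduced exceptional divisor over $\Sigma_{16}^t$, and write $\widetilde{H}=f^{*}H$. For the invariant curve $C$ the strict transform $\widetilde{C}$ is smooth of genus $5$, so $\widetilde{C}^2=8$, and as $C$ passes simply through every node one has $\widetilde{C}\cdot E=16$ and $\widetilde{C}\cdot\widetilde{H}=12$. Put $T=f^{*}(6H)-\widetilde{C}-E$; this is precisely the divisor studied in the proof of Corollary~\ref{corollary:80-C12-mult}, where it is shown that $|T|$ has no fixed curves and $T^2=8$, whence $|T|$ is base-point free by \cite[Corollary~3.2]{SAINTDONAT}. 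If $Z_0\in|T|$ is a $\overline{G}_{80}$-invariant member, then $C'=f(Z_0)$ is a $\overline{G}_{80}$-invariant curve of degree $12$ passing through the nodes, hence smooth of genus $5$ by Theorem~\ref{theorem:80-curves}; moreover $\widetilde{C}+Z_0\sim f^{*}(6H)-E$, so pushing forward gives $C+C'\sim 6H$ on $S$. Since $S$ is projectively normal, this class is cut out by a sextic surface in $\mathbb{P}^3$ (cf. Corollary~\ref{corollary:80-C12-sextics}). Thus an invariant residual member of $|T|$ simultaneously produces the second curve and the required sextic, and invariant degree-$12$ curves on $S$ come in residual pairs.

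It remains to show there are exactly two such curves on each $S_t$, and this is where I would invoke the equivariant Picard-group analysis of Lemma~\ref{lemma:80-degree-12-exists}. The abstract curve underlying any such $C$ is the (essentially unique) genus-$5$ curve with a faithful $\overline{G}_{80}$-action, for which $\mathrm{Pic}(C)^{\overline{G}_{80}}\cong\mathbb{Z}\theta\oplus\mumu_2$ with $\theta$ an invariant degree-$4$ theta-characteristic, $2\theta\sim K_C$, and the $\mumu_2$ generated by the unique invariant $2$-torsion class $D$. Consequently the only invariant line bundles of degree $12$ are $3\theta=K_C+\theta$ and $3\theta+D=K_C+\theta+D$, exactly two in number, and they are distinct because they differ by the nonzero class $D$. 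Each of them yields, through the splitting of its eight-dimensional space of sections into two four-dimensional representations described in Lemma~\ref{lemma:80-degree-12-exists}, a $\overline{G}_{80}$-equivariant degree-$12$ embedding into the standard $\mathbb{P}^3$, hence a $\langle\overline{R}\rangle$-orbit of four curves; since by Remark~\ref{remark:McKelvey} the element $\overline{R}$ permutes $S_1,S_2,S_3,S_4$ in a single $4$-cycle, these two orbits place exactly two curves into each surface, and by the previous paragraph these two are the residual pair. This gives $4\times 2=8$ curves in all.

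The hard part will be this last step: establishing rigorously that there are precisely two invariant degree-$12$ line bundles (the structure $\mathrm{Pic}(C)^{\overline{G}_{80}}\cong\mathbb{Z}\oplus\mumu_2$ from Lemma~\ref{lemma:80-degree-12-exists}) and that the two four-dimensional constituents of $H^0(K_C+\theta)$ and $H^0(K_C+\theta+D)$ match the standard representation defining the action on $\mathbb{P}^3$, so that each produces a genuine curve in $S_t$ and so that an invariant member $Z_0\in|T|$ (equivalently, the relation $C+C'\sim 6H$) actually exists. The cleanest verification of this matching is representation-theoretic, via \cite{GAP}, after which the $\langle\overline{R}\rangle$-symmetry distributes the curves evenly as claimed.
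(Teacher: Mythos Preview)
Your setup of the residual divisor $T=f^{*}(6H)-\widetilde{C}-E$ and the observation that an invariant member of $|T|$ would give a second curve $C'$ with $C+C'\sim 6H$ (hence cut out by a sextic) is essentially the paper's approach, but the paper actually \emph{produces} the invariant sextic rather than assuming it. Concretely, the paper notes (Lemma~\ref{lemma:80-sextic-surfaces}) that $H^0(\mathcal{O}_{\mathbb{P}^3}(6))$ contains exactly four one-dimensional $G_{80}$-subrepresentations, while $H^0(\mathcal{O}_{C}(9K_C))$ contains exactly three; the exact sequence then forces a one-dimensional subrepresentation in $H^0(\mathcal{O}_{\mathbb{P}^3}(6)\otimes\mathcal{I}_C)$, i.e.\ an invariant sextic $R$ through $C$, and $R|_{S_t}=C+C'$ by Theorem~\ref{theorem:80-curves}. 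Your plan defers this existence to an unspecified GAP check; the paper's argument avoids that.

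Where your proposal genuinely diverges from the paper is the ``at most two'' step. The paper argues on the $K3$ resolution: assuming a third invariant curve $Z$ on $S_t$, one computes $\widetilde{Z}\cdot(\widetilde{C}+\widetilde{C}')=56$, forces $\{\widetilde{Z}\cdot\widetilde{C},\widetilde{Z}\cdot\widetilde{C}'\}=\{16,40\}$ by the orbit lengths in Lemma~\ref{lemma:80-sporadic-genera}, and then shows that the linear system $|\widetilde{C}+f^{*}(2H)-\widetilde{Z}|$ is a base-point-free pencil of square zero, hence an elliptic fibration $\widetilde{S}_t\to\mathbb{P}^1$. This contradicts Lemmas~\ref{lemma:80-subgroups} and~\ref{lemma:Heisenberg-elliptic}, since $\overline{\mathrm{H}}$ is the only nontrivial proper normal subgroup of $\overline{G}_{80}$ and cannot act faithfully on $\mathbb{P}^1$ or on an elliptic curve. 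This is a clean, self-contained $K3$ argument.

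Your alternative via $\mathrm{Pic}(C)^{\overline{G}_{80}}\cong\mathbb{Z}\oplus\mumu_2$ is attractive but, as written, incomplete. Knowing there are exactly two invariant degree-$12$ line bundles does not immediately bound the number of invariant degree-$12$ curves in $\mathbb{P}^3$: for each such line bundle the eight-dimensional section space may split into two four-dimensional subrepresentations, and you have not argued which of those pieces yield embeddings into the \emph{given} $\mathbb{P}^3$ (up to the normaliser), nor why the resulting $\langle\overline{R}\rangle$-orbits are disjoint, nor why two curves in the same $S_t$ cannot arise from the same line bundle via different four-dimensional summands. To make your route rigorous you would have to classify the four-dimensional constituents of $H^0(K_C+\theta)$ and $H^0(K_C+\theta+D)$ up to the $\overline{G}_{320}$-action and show exactly two land in each $S_t$; the paper's $K3$ argument sidesteps all of this.
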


\begin{proof}
By Theorem~\ref{theorem:80-curves}, it is enough to show that
$S_1$ contains exactly two  $\overline{G}_{80}$-invariant curves of degree $12$,
and the union of these two curves is cut out by a $\overline{G}_{80}$-invariant sextic surface in $\mathbb{P}^3$.
In the remaining cases, the proof is the same.

By Lemma~\ref{lemma:80-degree-12-exists}, the surface $S_1$ contains a $\overline{G}_{80}$-invariant curve of degree $12$.
Denote this curve by $C_{12}$, and consider the exact sequence of $G_{80}$-representations
\begin{multline*}
0\longrightarrow H^0\big(\mathcal{O}_{\mathbb{P}^3}(6)\otimes\mathcal{I}_{C_{12}}\big)\longrightarrow H^0\big(\mathcal{O}_{\mathbb{P}^3}(6)\big)\longrightarrow \\
\longrightarrow H^0\big(\mathcal{O}_{C_{12}}(9K_C)\big)\longrightarrow H^1\big(\mathcal{O}_{\mathbb{P}^3}(6)\otimes\mathcal{I}_{C_{12}}\big)\longrightarrow 0,
\end{multline*}
where $\mathcal{I}_{C_{12}}$ is the ideal sheaf of the curve $C_{12}$.
By Lemma~\ref{lemma:80-sextic-surfaces}, the vector space~\mbox{$H^0(\mathcal{O}_{\mathbb{P}^3}(6))$} contains exactly four one-dimensional subrepresentations of $G_{80}$,
and $H^0(\mathcal{O}_{C_{12}}(9K_C))$ contains exactly three one-dimensional subrepresentations of the group $G_{80}$,
which correspond to three $\overline{G}_{80}$-invariant effective divisors on $C$ of degree $72$.
Hence, there is a  $\overline{G}_{80}$-invariant sextic surface $R$ in $\mathbb{P}^3$ that contains the curve $C_{12}$.
By Lemma~\ref{lemma:80-sextic-surfaces}, the surface $R$ is irreducible and reduced, so that Theorem~\ref{theorem:80-curves} implies that
$$
R\big\vert_{S_1}=C_{12}+Z_{12},
$$
where $Z_{12}$ is an irreducible smooth $\overline{G}_{80}$-invariant curve of degree $12$ and genus $5$.

By Remark~\ref{remark:80-degree-12}, both curves $C_{12}$ and $Z_{12}$ contain the singular locus of the surface $S_1$.
Let $f\colon\widetilde{S}_1\to S_1$ be the minimal resolution of singularities of the surface $S_1$.
Then the action of the group $\overline{G}_{80}$ lifts to the surface $\widetilde{S}_1$.
Denote by $E_1,\ldots,E_{16}$ the exceptional curves of the birational morphism $f$.
Denote by $\widetilde{C}_{12}$ and $\widetilde{Z}_{12}$
the proper transforms of the curves $C_{12}$ and $Z_{12}$ on the surface $\widetilde{S}_1$, respectively.
Let $E=E_1+\ldots+E_{16}$ and let $H$ be a plane section of the surface~$S_1$.
Then
$$
\widetilde{C}_{12}+\widetilde{Z}_{12}\sim f^{*}(6H)-E,
$$
because both curves $C_{12}$ and $Z_{12}$ are smooth. Then
$\widetilde{C}_{12}\cdot\widetilde{Z}_{12}=48$,
which implies, in particular, that $\widetilde{C}_{12}\ne\widetilde{Z}_{12}$, because
$\widetilde{C}_{12}\cdot\widetilde{C}_{12}=\widetilde{Z}_{12}\cdot\widetilde{Z}_{12}=8$ by the adjunction formula.

To complete the proof, we have to show that $S_1$ does not contain $\overline{G}_{80}$-invariant curves of degree~$12$
that are distinct from $C_{12}$ and $Z_{12}$.
Suppose that this is not the case, i.e. the surface $S_1$ contains a $\overline{G}_{80}$-invariant curve $Z$ of degree $12$
such that $Z$ is distinct from the curves $C_{12}$ and $Z_{12}$.
Let us seek for a contradiction.

Let $\widetilde{Z}$ be the proper transform of the curve $Z$ on $\widetilde{S}_1$ via $f$.
Then $\widetilde{Z}\cdot(\widetilde{C}_{12}+\widetilde{Z}_{12})=56$.
Since the only $\overline{G}_{80}$-orbits in $\widetilde{Z}$ are of lengths $16$, $40$ and $80$ by Lemma~\ref{lemma:80-sporadic-genera},
we see that either
$\widetilde{Z}\cdot\widetilde{C}_{12}=16$ and $\widetilde{Z}\cdot\widetilde{Z}_{12}=40$,
or
$\widetilde{Z}\cdot\widetilde{C}_{12}=40$ and $\widetilde{Z}\cdot\widetilde{Z}_{12}=16$.
Without loss of generality, we may assume that
$\widetilde{Z}\cdot\widetilde{C}_{12}=16$ and $\widetilde{Z}\cdot\widetilde{Z}_{12}=40$.
Then
$$
\Big(\widetilde{C}_{12}+f^*(2H)-\widetilde{Z}\Big)^2=0.
$$
Thus, it follows from the Riemann--Roch formula and Serre duality that
\begin{multline*}
h^0\Big(\mathcal{O}_{\widetilde{S}}\big(\widetilde{C}_{12}+f^*(2H)-\widetilde{Z}\big)\Big)=\\
=2+h^1\Big(\mathcal{O}_{\widetilde{S}}\big(\widetilde{C}_{12}+f^*(2H)-\widetilde{Z}\big)\Big)-h^2\Big(\mathcal{O}_{\widetilde{S}}\big(\widetilde{C}_{12}+f^*(2H)-\widetilde{Z}\big)\Big)=\\
=2+h^1\Big(\mathcal{O}_{\widetilde{S}}\big(\widetilde{C}_{12}+f^*(2H)-\widetilde{Z}\big)\Big)-h^0\Big(\mathcal{O}_{\widetilde{S}}\big(\widetilde{Z}-\widetilde{C}_{12}-f^*(2H)\big)\Big)=\\
=2+h^1\Big(\mathcal{O}_{\widetilde{S}}\big(\widetilde{C}_{12}+f^*(2H)-\widetilde{Z}\big)\Big)\geqslant 2.
\end{multline*}
Hence, the linear system $|\widetilde{C}_{12}+f^*(2H)-\widetilde{Z}|$ is at least a pencil.
Moreover, it does not contain base curves.
Indeed, suppose it does. Then
$$
\widetilde{C}_{12}+f^*(2H)-\widetilde{Z}\sim F+M,
$$
where $F$ is the fixed part of the linear system $|\widetilde{C}_{12}+f^*(2H)-\widetilde{Z}|$,
and $M$ is its mobile part.
Then
$$
0\leqslant f(F)\cdot H=F\cdot f^*(H)=8-M\cdot f^*(H)<8.
$$
By Theorem~\ref{theorem:80-curves}, this implies that $F$ is contracted by $f$, so that $F=mE$ for some~\mbox{$m\geqslant 1$}. Then
$$
M^2=\Big(\widetilde{C}_{12}+f^*(2H)-\widetilde{Z}-mE\Big)^2=-32m^2,
$$
which is absurd. This shows that the linear system $|\widetilde{C}_{12}+f^*(2H)-\widetilde{Z}|$ is free from base curves.
Since
$$\big(\widetilde{C}_{12}+f^*(2H)-\widetilde{Z}\big)^2=0,
$$
we see that it has no base points,
so that it is composed of a base point free pencil.
This pencil gives a $\overline{G}_{80}$-equivariant morphism $\upsilon\colon\widetilde{S}\to\mathbb{P}^1$,
whose general fibers are smooth elliptic curves (by the adjunction formula).
This is impossible, because  $\overline{\mathrm{H}}$ is the only non-trivial proper normal
subgroup of the group $\overline{G}_{80}$ by Lemma~\ref{lemma:80-subgroups},
and the group~$\overline{\mathrm{H}}$ does not act faithfully on rational and elliptic curves by Lemma~\ref{lemma:Heisenberg-elliptic}.
This completes the proof of Lemma~\ref{lemma:80-degree-12}.
\end{proof}

Recall from Remark~\ref{remark:McKelvey} that there are two $\overline{G}_{80}$-invariant curves of degree $8$ in $\mathbb{P}^3$.
These curves are just the intersections $S_1\cap S_4$ or $S_2\cap S_3$ taken with the reduced structure.
Furthermore, these curves are smooth curves of genus $5$ by Theorem~\ref{theorem:80-curves}.

\begin{proposition}[{cf. \cite{BlancLamy,Cutrone,CutroneMarshburn}}]
\label{proposition:link-symmetric}
Let $C$ be a $\overline{G}_{80}$-invariant curve of degree $8$ in $\mathbb{P}^3$,
and let $\pi\colon X\to\mathbb{P}^3$ be a blow up of the curve $C$.
Then there exists a $\overline{G}_{80}$-commutative diagram
\begin{equation}
\label{equation:80-Sarkisov-link-symmetric}
\xymatrix{
X\ar@{->}[dd]_\pi\ar@{->}[dr]^\alpha\ar@{-->}[rrr]^\iota &&& X\ar@{->}[dd]^\pi\ar@{->}[ld]_\alpha \\
& V_8\ar@{->}[r]^\upsilon & V_8 &\\
\mathbb{P}^3\ar@{-->}[rrr]^\tau &&& \mathbb{P}^3}
\end{equation}
where $V_8$ is the Fano threefold with terminal Gorenstein singularities such that~\mbox{$-K_{V_8}^3=8$},
the map $\alpha$ is a flopping contraction,
the map $\tau$ is a birational map, $\upsilon$ is an isomorphism,
and $\iota$ is a composition of flops that acts on $\mathrm{Pic}(X)$ in the following way:
$$
\left\{\aligned%
&\iota^*\big(E\big)\sim 24\pi^*(H)-7E,\\
&\iota^*\big(\pi^*(H)\big)\sim 7\pi^*(H)-2E,
\endaligned
\right.
$$
where $H$ is a plane in $\mathbb{P}^3$, and $E$ is the $\pi$-exceptional divisor.
Moreover, the diagram~\eqref{equation:80-Sarkisov-link-symmetric} is also $\overline{G}_{160}$-commutative.
\end{proposition}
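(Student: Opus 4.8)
The plan is to realize the link as the standard symmetric Sarkisov self-link attached to blowing up a curve of genus $5$ and degree $8$, in the spirit of \cite{Takeuchi,Cutrone,CutroneMarshburn,BlancLamy}. First I would record the numerical data on $X$. Writing $h=\pi^*(H)$, the projection formula gives $h^3=1$, $h^2E=0$, $hE^2=-\deg(C)=-8$ and $E^3=-\deg N_{C/\mathbb{P}^3}=2-2g+K_{\mathbb{P}^3}\cdot C=-40$. Since $-K_X=4h-E$, this yields $(-K_X)^3=8$, and by Kawamata--Viehweg vanishing and Riemann--Roch $h^0(-K_X)=\frac12(-K_X)^3+3=7$. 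The curve $C$ is smooth of genus $5$ by Theorem~\ref{theorem:80-curves}, and by Remark~\ref{remark:80-L10-curve-degree-8} the ten pairwise disjoint lines of $\mathcal{L}_{10}$ meet $C$ in the unique $\overline{G}_{80}$-orbit of length $40$ contained in $C$; hence each such line is a $4$-secant of $C$, and its proper transform $\widetilde\ell$ satisfies $(-K_X)\cdot\widetilde\ell=4-4=0$.

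The next step is to show that $-K_X$ is nef and that the ten curves $\widetilde\ell$ are the only $-K_X$-trivial irreducible curves. Fibres and multisections of $E$ have positive anticanonical degree, while a curve $\Gamma\not\subset E$ with $\pi(\Gamma)$ of degree $e$ has $(-K_X)\cdot\Gamma=4e-(E\cdot\Gamma)$, so a negative value would force $C$ to carry a line of secancy $\geqslant 5$ or a similarly over-secant curve; these are excluded using Theorem~\ref{theorem:80-curves} together with the orbit classification of Corollary~\ref{corollary:80-small-orbits} (the singular points of the anticanonical model must form $\overline{G}_{80}$-orbits). The same input identifies the $-K_X$-trivial locus with the single orbit $\{\widetilde\ell\}$. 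I would then apply the base-point-free theorem to conclude that $-K_X$ is semiample and defines the anticanonical morphism $\alpha\colon X\to V_8\subset\mathbb{P}^6$, a small (flopping) contraction onto a Gorenstein Fano threefold of genus $5$ with $-K_{V_8}^3=8$ and exactly ten isolated terminal (nodal) singularities, the images of the $\widetilde\ell$; by the classification of Gorenstein Fano threefolds of genus $5$, $V_8$ is a complete intersection of three quadrics.

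Flopping the ten curves produces the second small resolution $X^+$ of $V_8$, which is again a smooth weak Fano with $\rho=2$. To close the link I would analyse its extremal rays: one is the flopping ray, and the other is $K_{X^+}$-negative. By the link analysis of \cite{CutroneMarshburn} it gives a divisorial contraction $\pi^+\colon X^+\to\mathbb{P}^3$ blowing down a divisor $E^+$ to a $\overline{G}_{80}$-invariant curve $C^+\subset\mathbb{P}^3$ of degree $8$, which by the computation of the first paragraph has genus $5$. By Theorem~\ref{theorem:80-curves} and Remark~\ref{remark:McKelvey} the only such curves are $S_1\cap S_4$ and $S_2\cap S_3$, which are projectively equivalent and $\overline{G}_{160}$-invariant; tracking the construction identifies $(X^+,\pi^+,C^+)$ with $(X,\pi,C)$, so that $X^+\cong X$ and $\iota$ descends to an isomorphism $\upsilon$ of $V_8$ exchanging its two small resolutions. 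This produces the diagram~\eqref{equation:80-Sarkisov-link-symmetric} and the birational self-map $\tau=\pi\circ\iota\circ\pi^{-1}$ of $\mathbb{P}^3$.

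Finally, I would compute $\iota^*$ on $\mathrm{Pic}(X)=\mathbb{Z}h\oplus\mathbb{Z}E$. Writing $\iota^*h^+=dh-mE$ and $\iota^*E^+=ah-bE$, the flop preserves the canonical class, so $\iota^*(4h^+-E^+)=4h-E$ forces $a=4d-4$ and $b=4m-1$. Because each flopped curve is $-K_X$-trivial, every triple intersection of the form $(-K_X)\cdot D_1\cdot D_2$ is flop-invariant (the flop correction to $D_1\cdot D_2\cdot D_3$ is a sum of terms $(D_1\cdot\Gamma_i)(D_2\cdot\Gamma_i)(D_3\cdot\Gamma_i)$, which vanish once one factor is $-K_X$). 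Using $(-K_X)h^2=4$, $(-K_X)hE=8$, $(-K_X)E^2=8$ on both $X$ and $X^+$ gives
\[
d^2-4dm+2m^2=1,\qquad a^2-4ab+2b^2=2 .
\]
Substituting $a=4d-4$, $b=4m-1$ into the second relation and combining with the first yields $d=3m+1$, hence $m=2$ and $d=7$ (the solution $m=0$ being trivial), so $a=24$, $b=7$. Under the identification $h^+=h$, $E^+=E$ this is precisely the asserted action of $\iota^*$. The $\overline{G}_{160}$-commutativity is then automatic, since $C$ is $\overline{G}_{160}$-invariant by Remark~\ref{remark:McKelvey} and $\pi$, $\alpha$ and the flop are canonical, hence equivariant for any subgroup of $\mathrm{PGL}_4(\mathbb{C})$ preserving $C$. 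I expect the main obstacle to be the middle step: proving that the flopped side is again a blow-up of $\mathbb{P}^3$ along a degree-$8$ curve and that this curve is $C$ itself, i.e. establishing the self-duality that produces the isomorphism $\upsilon$.
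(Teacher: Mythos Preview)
Your overall strategy matches the paper's: show $-K_X$ is nef and big, pass to the anticanonical model $V_8$, flop, identify the other contraction as a blow-up of $\mathbb{P}^3$ along a degree-$8$ genus-$5$ curve via \cite{Cutrone,CutroneMarshburn}, and then make the link symmetric. Your computation of $\iota^*$ via the flop-invariance of triple products $(-K_X)\cdot D_1\cdot D_2$ is a pleasant self-contained alternative to the paper, which simply quotes the class $24h-7E$ of the proper transform of the new exceptional divisor from the proof of \cite[Proposition~4.4]{Cutrone}.

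Two points in your execution are genuinely thinner than the paper's. First, your nefness argument is incomplete: bounding secancy works for lines, but to finish you would need to rule out, for every $e\geqslant 1$, irreducible curves of degree $e$ meeting $C$ in more than $4e$ points, and neither Theorem~\ref{theorem:80-curves} nor Corollary~\ref{corollary:80-small-orbits} supplies this directly (the same applies to your stronger claim that the ten lines are the \emph{only} $(-K_X)$-trivial curves). The paper bypasses this entirely by exploiting that $C=S_1\cap S_4$: the proper transforms $\widetilde S_1,\widetilde S_4\in|-K_X|$ are smooth and meet scheme-theoretically in a smooth curve $\widetilde C$ with $-K_X\cdot\widetilde C=8>0$, which gives nefness in one line; smallness of $\alpha$ then follows from ampleness of $\widetilde C$ on $\widetilde S_1$ (established via freeness of $|2\widetilde C-\pi^*(H)\vert_{\widetilde S_1}|$), so no contracted divisor can cut out a $\widetilde C$-trivial curve in $\widetilde S_1$. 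Second, to close the loop you need the $\overline{G}_{80}$-action on the \emph{target} $\mathbb{P}^3$ to be conjugate to the given one (so that Theorem~\ref{theorem:80-curves} and Remark~\ref{remark:McKelvey} apply there), and then to arrange $C^+=C$ rather than the other degree-$8$ curve. The paper handles the first point via Corollary~\ref{corollary:G80} and the second by composing $\phi$ with an element of $\overline{G}_{320}$ swapping the two curves (Remark~\ref{remark:McKelvey}); your phrase ``tracking the construction identifies\ldots'' hides both of these short but essential steps.
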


\begin{proof}
Without loss of generality, we may assume that~\mbox{$C=S_1\cap S_4$}.
Denote by $\widetilde{S}_1$ and~$\widetilde{S}_4$ the proper transforms of the surfaces $S_1$ and $S_4$ on the threefold~$X$, respectively.
Recall from Theorem~\ref{theorem:80-curves} that $C$ contains singular loci of the surfaces $S_1$ and $S_4$.
This implies that $\widetilde{S}_1$ and~$\widetilde{S}_4$ are smooth,
because the singularities of the surfaces $S_1$ and $S_4$ are isolated ordinary double points by Lemma~\ref{lemma:80-quartic-surfaces}. 
Furthermore, we have
$$
\widetilde{S}_1\sim\widetilde{S}_4\sim -K_X\sim\pi^*(4H)-E.
$$
Since $-K_{X}^3=8$, we see that the intersection $\widetilde{S}_1\cap\widetilde{S}_4$ is a $\overline{G}_{80}$-invariant smooth curve isomorphic to $C$.
Let us denote this curve by $\widetilde{C}$.
Note that $\widetilde{C}$ is the scheme-theoretic intersection of the surfaces~$\widetilde{S}_1$ and~$\widetilde{S}_4$,
and
$$
\widetilde{S}_1\cdot\widetilde{C}=\widetilde{S}_4\cdot\widetilde{C}=-K_{X}^3=8,
$$
so that $-K_X$ is nef and big.
By the Base Point Free Theorem (see \cite[Theorem~1.3.6]{IsPr99}),
the linear system $|-nK_X|$ gives a birational morphism $\alpha\colon X\to V_8$ for some $n\gg 0$,
where~$V_8$ is the Fano threefold with canonical Gorenstein singularities such that~\mbox{$-K_{V_8}^3=8$}.

Let us show that the birational morphism $\alpha$ is small (cf. \cite[Theorems~4.9~and~4.11]{JPR2005}).
Suppose that $\alpha$ is not small.
Then there exists a prime divisor $G$ that is contracted by~$\alpha$. One has $(-K_X)^2\cdot G=0$.
On the other hand, we have $G\sim\pi^*(aH)-bE$ for some positive integers $a$ and $b$.
Then $a=3b$, since
$$
0=(-K_X)^2\cdot G=\Big(\pi^*(4H)-E\Big)^2\cdot\Big(\pi^*(aH)-bE\Big)=8(a-3b).
$$
If $\alpha(G)$ is a point, then
$$
0=-K_X\cdot G^2=4(a^2-4ab+8b^2)=20b^2,
$$
which is absurd. Thus, we see that $\alpha(G)$ is a curve, so that the intersection $G\cap \widetilde{S}_1$ contains a curve.
Then
$$
0=(-K_X)^2\cdot G=G\vert_{\widetilde{S}_1}\cdot\widetilde{S}_2\vert_{\widetilde{S}_1}=G\vert_{\widetilde{S}_1}\cdot\widetilde{C},
$$
which implies that $\widetilde{C}$ is not an ample divisor on the surface $\widetilde{S}_1$.
On the other hand, arguing as in the proof of Corollary~\ref{corollary:80-C12-mult},
one can show that the linear system $|2\widetilde{C}-\pi^*(H)\vert_{\widetilde{S}_1}|$ is free from base points,
which implies the ampleness of the divisor
$$
\pi^*(H)\vert_{\widetilde{S}_1}+\Big(2\widetilde{C}-\pi^*(H)\vert_{\widetilde{S}_1}\Big)\sim 2\widetilde{C}.
$$
The obtained contradiction implies that the birational morphism $\alpha$ is small.

Since $\alpha$ is a small birational morphism,
the singularities of the threefold $V_8$ are terminal.
Moreover, there exists a $\overline{G}_{80}$-commutative  diagram
\begin{equation}
\label{equation:Sarkisov-link}
\xymatrix{
&&X\ar@{->}[lld]_\pi\ar@{->}[drr]^\alpha\ar@{-->}[rrrr]^\chi &&&& Y\ar@{->}[rrd]^\phi\ar@{->}[dll]_\beta &&\\
\mathbb{P}^3 &&&& V_8 &&&& Z}
\end{equation}
such that $\chi$ is a composition of flops,
$\beta$ is a small birational morphism, and $\phi$ is an extremal contraction such that $-K_Y$ is $\phi$-ample.
Observe that the threefold $Y$ is smooth, because the threefold $X$ is smooth and $\chi$ is a composition of flops.
Now arguing as in the proof of \cite[Proposition~4.4]{Cutrone},
we see that the morphism~$\phi$ is birational, one has $Z\cong\mathbb{P}^3$,
and $\phi$ is a blow up of a smooth curve of degree $8$ and genus~$5$.
Moreover, this proof also implies that
\begin{equation}
\label{equation:exceptional-divisor}
\widetilde{D}\sim 24\pi^{*}(H)-7E,
\end{equation}
where $\widetilde{D}$ is the proper transform on $X$ of the $\phi$-exceptional surface.

By Corollary~\ref{corollary:G80},
we may assume that the diagram \eqref{equation:Sarkisov-link} is $\overline{G}_{80}$-commutative.
Then~$\phi$ is a blow up of one of the two $\overline{G}_{80}$-invariant smooth curves of degree $8$ and genus~$5$.
By Remark~\ref{remark:McKelvey}, the group $\overline{G}_{320}$ swaps these curves.
Thus, composing the map $\phi$ with an appropriate element in~$\overline{G}_{320}$, we may assume that $\phi$ is a blow up of the curve $C$.
Therefore, we can identify $X$ with $Y$ and $\phi$ with $\pi$.
This gives the $\overline{G}_{80}$-commutative diagram~\eqref{equation:80-Sarkisov-link-symmetric}
with
$$
\iota=\chi,\quad \tau=\pi\circ\iota\circ\pi^{-1},\quad
\upsilon=\alpha\circ\iota\circ\alpha^{-1}.
$$
Since $V_8$ is an anticanonical model of the threefold $X$, the map $\upsilon$ must be biregular.
The action of $\iota$ on~\mbox{$\mathrm{Pic}(X)$} follows from~\eqref{equation:exceptional-divisor}, since $\iota^*(K_X)\sim K_X$.
The diagram~\eqref{equation:80-Sarkisov-link-symmetric} also $\overline{G}_{160}$-commutative, because $\mathrm{Aut}(C)\cong\overline{G}_{160}$ by Remark~\ref{remark:McKelvey}.
\end{proof}

Using Remark~\ref{remark:80-L10-curve-degree-8}, one can show that
the morphism $\alpha$ in Proposition~\ref{proposition:link-symmetric} contracts the proper transforms of the curve $\mathcal{L}_{10}$.
Using this, one can prove that $V_8$ in Proposition~\ref{proposition:link-symmetric} is a complete intersection of three quadrics in~$\mathbb{P}^6$ that has $10$ isolated ordinary double points.

\section{Birational rigidity}
\label{section:rigidity}

Let $G$ be a finite subgroup in $\mathrm{PGL}_4(\mathbb{C})$.
In this section we prove Theorems~\ref{theorem:main} and \ref{theorem:super-main}, which are the main results of our paper.
To do this, we will use the following well-known result, which goes back to the classical works of Noether, Fano, and Iskovskikh.

\begin{theorem}[{\cite[Theorem~3.3.1]{CheltsovShramov}}]
\label{theorem:NFI}
Let $\Gamma$ be a subset in $\mathrm{Bir}^{G}(\mathbb{P}^3)$.
Suppose that for every non-empty $G$-invariant mobile linear system $\mathcal{D}$ on $\mathbb{P}^3$,
there exists $\gamma\in\Gamma$ such that the log pair $(\mathbb{P}^3,\frac{4}{n}\gamma(\mathcal{D}))$ has canonical singularities,
where $n$ be a positive integer that is defined by $\gamma(\mathcal{D})\sim\mathcal{O}_{\mathbb{P}^3}(n)$.
Then $\mathbb{P}^3$ is $G$-birationally rigid,
and the group $\mathrm{Bir}^{G}(\mathbb{P}^3)$ is generated by $\Gamma$ and $\mathrm{Aut}^{G}(\mathbb{P}^3)$.
\end{theorem}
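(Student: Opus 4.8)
The plan is to deduce both assertions of the theorem — birational rigidity and the generation of $\mathrm{Bir}^{G}(\mathbb{P}^3)$ by $\Gamma$ and $\mathrm{Aut}^{G}(\mathbb{P}^3)$ — from a single application of the Noether--Fano--Iskovskikh inequality, using the hypothesis only to supply an untwisting element of $\Gamma$. Concretely, I would take an arbitrary $G$-birational map $\chi\colon\mathbb{P}^3\dashrightarrow X$ onto a $G$-Mori fibre space $X$ and show that $X$ is necessarily $G$-isomorphic to $\mathbb{P}^3$ with its standard Mori structure and that $\chi$ factors as an element of $\Gamma$ followed by a $G$-automorphism. Since $G$ is primitive we have $\mathrm{Pic}(\mathbb{P}^3)^{G}\cong\mathbb{Z}$, so that $\mathbb{P}^3$ is itself a $G$-Mori fibre space over a point; this will be used at the very end.

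First I would attach to $X$ a defining mobile $G$-invariant linear system. If $X\to B$ has positive-dimensional base — the two fibration cases in the definition of rigidity — I take $\mathcal{M}_X$ to be the pullback of a $G$-invariant very ample system from $B$; if $X$ is a terminal $G$-Fano threefold with $G$-class group of rank one — the third case — I take $\mathcal{M}_X=|-rK_X|$ for $r\gg 0$. Let $\mathcal{D}=(\chi^{-1})_{*}\mathcal{M}_X$ be the strict transform on $\mathbb{P}^3$; this is a non-empty mobile $G$-invariant system, so the hypothesis produces $\gamma\in\Gamma$ for which the pair $(\mathbb{P}^3,\tfrac{4}{n}\gamma(\mathcal{D}))$ is canonical, where $\gamma(\mathcal{D})\sim\mathcal{O}_{\mathbb{P}^3}(n)$.

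The key point is that the composite $\psi=\chi\circ\gamma^{-1}\colon\mathbb{P}^3\dashrightarrow X$ has $\gamma(\mathcal{D})$ as its own defining system, and that, writing $H$ for a plane,
$$
K_{\mathbb{P}^3}+\tfrac{4}{n}\gamma(\mathcal{D})\sim_{\mathbb{Q}}-4H+4H\equiv 0
$$
is nef, indeed $\mathbb{Q}$-trivial. I would then invoke the sharp form of the Noether--Fano--Iskovskikh inequality for threefold Mori fibre spaces: when the defining system of a $G$-birational map out of a $G$-Mori fibre space yields a canonical pair whose adjoint boundary $K+\tfrac{1}{\mu}\mathcal{M}$ is nef, the map must be a square birational equivalence of Mori fibre spaces. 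Applied to $\psi$, and using that the base of $\mathbb{P}^3$ is a point with $\mathrm{Pic}^{G}\cong\mathbb{Z}$, this forces the base of $X$ to be a point as well and $X$ to be $G$-isomorphic to $\mathbb{P}^3$ with its standard structure, with $\psi$ a $G$-automorphism. This at once excludes the two fibration cases (whose base would be positive-dimensional) and any genuinely different rank-one $G$-Fano $X$, giving rigidity; and it exhibits $\chi=\psi\circ\gamma$ with $\psi\in\mathrm{Aut}^{G}(\mathbb{P}^3)$ and $\gamma\in\Gamma$, giving the generation statement.

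The main obstacle is the last step, namely establishing that canonicity together with the nef (here trivial) boundary really forces a square equivalence, and not merely an isomorphism in codimension one. I expect to obtain this from the $G$-equivariant Sarkisov program in dimension three: take a common $G$-equivariant resolution of $\psi$, compare on it the discrepancies of the divisors extracted over $\mathbb{P}^3$ and over $X$, and use that every Sarkisov link strictly drops the Sarkisov degree unless the two Mori structures already coincide — the canonical-and-nef hypothesis being exactly the certificate that the degree cannot drop, so that the intervening link is trivial. The delicate bookkeeping is to run the entire argument $G$-equivariantly and to accommodate the mild (terminal, Gorenstein) singularities that the target $X$ is allowed to carry.
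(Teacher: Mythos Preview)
The paper does not supply its own proof of this theorem: it is quoted verbatim as \cite[Theorem~3.3.1]{CheltsovShramov} and used as a black box. So there is nothing in the paper to compare your argument against. What you have written is precisely the standard Noether--Fano--Iskovskikh argument underlying that reference: produce a defining mobile system on the target, pull back to $\mathbb{P}^3$, untwist by an element of $\Gamma$ so that the resulting pair is canonical with $K+\tfrac{4}{n}\mathcal{D}\equiv 0$, and then invoke the NFI inequality (equivalently, the termination of the equivariant Sarkisov program) to conclude that the untwisted map is a biregular square. Your sketch is correct in outline, and you have honestly flagged the one genuinely nontrivial step, namely that ``canonical plus nef adjoint'' forces the map to be an isomorphism of $G$-Mori fibre spaces; this is exactly the content of the cited theorem and its proof in \cite{CheltsovShramov}.

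One small correction: you do not need $G$ to be primitive to know that $\mathrm{Pic}(\mathbb{P}^3)^{G}\cong\mathbb{Z}$. The Picard group of $\mathbb{P}^3$ is already $\mathbb{Z}$, generated by $\mathcal{O}(1)$, and any automorphism (biregular or birational) preserves this class, so the $G$-invariant Picard group is $\mathbb{Z}$ for every finite $G\subset\mathrm{PGL}_4(\mathbb{C})$. The theorem as stated makes no primitivity assumption, and your argument should not either.
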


\begin{corollary}
\label{corollary:NFI}
Suppose that for every non-empty $G$-invariant mobile linear system $\mathcal{D}$ on~$\mathbb{P}^3$,
the log pair $(\mathbb{P}^3,\frac{4}{n}\mathcal{D})$ has canonical singularities,
where $n$ be a positive integer that is defined by $\mathcal{D}\sim\mathcal{O}_{\mathbb{P}^3}(n)$.
Then $\mathbb{P}^3$ is $G$-birationally super-rigid.
\end{corollary}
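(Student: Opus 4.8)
The plan is to deduce this directly from Theorem~\ref{theorem:NFI} by choosing the subset $\Gamma$ to be as small as possible. Specifically, I would take $\Gamma=\{\mathrm{id}_{\mathbb{P}^3}\}$, the singleton consisting of the identity map, which visibly lies in $\mathrm{Bir}^{G}(\mathbb{P}^3)$ (indeed in $\mathrm{Aut}^{G}(\mathbb{P}^3)$). First I would check that the hypothesis of Theorem~\ref{theorem:NFI} holds for this $\Gamma$: given a non-empty $G$-invariant mobile linear system $\mathcal{D}$ with $\mathcal{D}\sim\mathcal{O}_{\mathbb{P}^3}(n)$, the only candidate $\gamma\in\Gamma$ is the identity, and $\gamma(\mathcal{D})=\mathcal{D}$, so the assertion to be verified becomes precisely that the log pair $(\mathbb{P}^3,\frac{4}{n}\mathcal{D})$ has canonical singularities. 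This is exactly the assumption of the corollary.

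With the hypothesis verified, Theorem~\ref{theorem:NFI} yields two conclusions at once: that $\mathbb{P}^3$ is $G$-birationally rigid, and that the group $\mathrm{Bir}^{G}(\mathbb{P}^3)$ is generated by $\Gamma$ and $\mathrm{Aut}^{G}(\mathbb{P}^3)$. Since $\Gamma=\{\mathrm{id}_{\mathbb{P}^3}\}$ and $\mathrm{id}_{\mathbb{P}^3}\in\mathrm{Aut}^{G}(\mathbb{P}^3)$, the group generated by $\Gamma$ and $\mathrm{Aut}^{G}(\mathbb{P}^3)$ is simply $\mathrm{Aut}^{G}(\mathbb{P}^3)$, whence
\[
\mathrm{Bir}^{G}\big(\mathbb{P}^3\big)=\mathrm{Aut}^{G}\big(\mathbb{P}^3\big).
\]
Recalling from \cite[Definition~3.1.1]{CheltsovShramov} that $G$-birational super-rigidity means exactly $G$-birational rigidity together with this equality of normalizers, I would then conclude that $\mathbb{P}^3$ is $G$-birationally super-rigid.

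There is no genuine obstacle here: the corollary is a formal consequence of the quoted theorem, obtained by specializing $\Gamma$ to the trivial set. The only points requiring (trivial) attention are the observations that passing to a singleton $\Gamma$ does not weaken the hypothesis—one still quantifies over all linear systems $\mathcal{D}$—and that the resulting generation statement collapses to the desired equality precisely because $\Gamma$ contributes nothing beyond $\mathrm{Aut}^{G}(\mathbb{P}^3)$.
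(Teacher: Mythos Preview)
Your proposal is correct and is exactly the intended deduction: the paper states Corollary~\ref{corollary:NFI} immediately after Theorem~\ref{theorem:NFI} with no proof, precisely because it follows by specializing $\Gamma=\{\mathrm{id}_{\mathbb{P}^3}\}$ as you do.
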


The following simple fact is useful for the proof of $G$-birational rigidity of $\mathbb{P}^3$.

\begin{lemma}
\label{lemma:root-deg}
Let $\mathcal{D}$ be a non-empty $G$-invariant mobile linear system on $\mathbb{P}^3$,
and let $Z$ be a $G$-irreducible curve in $\mathbb{P}^3$. Then
$$
\mathrm{mult}_{Z}\big(\mathcal{D}\big)\leqslant \frac{n}{\sqrt{\mathrm{deg}\big(Z\big)}},
$$
where $n$ is a positive integer such that $\mathcal{D}\sim\mathcal{O}_{\mathbb{P}^3}(n)$.
\end{lemma}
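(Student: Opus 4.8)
The plan is to bound the multiplicity by intersecting two general members of $\mathcal{D}$ and comparing degrees. Write $Z = Z_1 + \dots + Z_k$ as the union of its irreducible components. Since $Z$ is $G$-irreducible, the group $G$ permutes the $Z_j$ transitively; because $\mathcal{D}$ is $G$-invariant, the multiplicity $\mathrm{mult}_{Z_j}(\mathcal{D})$ is independent of $j$ and equals the common value $m := \mathrm{mult}_{Z}(\mathcal{D})$. For the same reason all components share a single degree, $\mathrm{deg}(Z_j) = \mathrm{deg}(Z)/k$.

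First I would take two general members $D_1, D_2 \in \mathcal{D}$. Since $\mathcal{D}$ is mobile, $D_1$ and $D_2$ have no common irreducible component, so their intersection $D_1 \cdot D_2$ is a well-defined effective one-cycle. As $D_1 \sim D_2 \sim \mathcal{O}_{\mathbb{P}^3}(n)$, its degree is
$$
\mathrm{deg}\big(D_1 \cdot D_2\big) = D_1 \cdot D_2 \cdot H = n^2,
$$
where $H$ is a plane in $\mathbb{P}^3$.

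The key local step is to show that the cycle $D_1 \cdot D_2$ contains each component $Z_j$ with coefficient at least $m^2$. At the generic point of $Z_j$ the threefold $\mathbb{P}^3$ is smooth and $Z_j$ has codimension two, so the local ring $\mathcal{O}_{\mathbb{P}^3, Z_j}$ is a two-dimensional regular local ring with maximal ideal $\mathfrak{m}$. General members of $\mathcal{D}$ have multiplicity exactly $m$ along $Z_j$, so their local equations $f_1, f_2$ lie in $\mathfrak{m}^{m}$. The coefficient of $Z_j$ in $D_1 \cdot D_2$ is the length $\dim_{\mathbb{C}} \mathcal{O}_{\mathbb{P}^3, Z_j} / (f_1, f_2)$, and the classical bound for the intersection multiplicity of two curve germs on a smooth surface gives $\dim_{\mathbb{C}} \mathcal{O}_{\mathbb{P}^3, Z_j}/(f_1, f_2) \geqslant \mathrm{mult}(f_1) \cdot \mathrm{mult}(f_2) = m^2$. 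Putting this together, since $D_1 \cdot D_2$ is effective and dominates $\sum_j m^2 Z_j$,
$$
n^2 = \mathrm{deg}\big(D_1 \cdot D_2\big) \geqslant m^2 \sum_{j=1}^{k} \mathrm{deg}\big(Z_j\big) = m^2\, \mathrm{deg}\big(Z\big),
$$
whence $\mathrm{mult}_{Z}(\mathcal{D}) = m \leqslant n / \sqrt{\mathrm{deg}(Z)}$, as required.

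The main obstacle I anticipate is giving a clean justification of the local inequality $\dim_{\mathbb{C}} \mathcal{O}_{\mathbb{P}^3, Z_j}/(f_1, f_2) \geqslant m^2$: everything else is a degree bookkeeping argument, but this step is where the actual geometric content sits. The point is that passing to the generic point of $Z_j$ reduces the intersection of the two surfaces to the intersection of two plane-curve germs of multiplicity $m$ on the smooth surface $\mathrm{Spec}\,\mathcal{O}_{\mathbb{P}^3, Z_j}$, for which the product bound on intersection multiplicities is standard. One should also note that $m$ is realized by \emph{general} members (so that the $f_i$ genuinely have multiplicity $m$ and not more), which is automatic for a mobile system.
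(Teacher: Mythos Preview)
Your proposal is correct and follows essentially the same approach as the paper: intersect two general members of $\mathcal{D}$, observe that the coefficient of $Z$ in $D_1\cdot D_2$ is at least $\mathrm{mult}_Z(\mathcal{D})^2$, and compare with the total degree $n^2$. You give a more detailed justification of the local bound (via intersection multiplicity of curve germs on the smooth surface $\mathrm{Spec}\,\mathcal{O}_{\mathbb{P}^3,Z_j}$) than the paper, which simply asserts that the coefficient $m$ satisfies $m\geqslant\mathrm{mult}_Z^2(\mathcal{D})$.
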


\begin{proof}
Let $D_1$ and $D_2$ be two general surfaces in $\mathcal{D}$.
Then
$$
D_1\cdot D_2=mZ+\Delta,
$$
where $m$ is a positive integer such that $m\geqslant\mathrm{mult}_{Z}^2(\mathcal{D})$, and $\Delta$ is an effective one-cycle on~$\mathbb{P}^3$ such that $\mathrm{Supp}(\Delta)$ does not contain irreducible components of the curve $Z$.
Let $H$ be a general plane in $\mathbb{P}^3$. Then
$$
n^2=H\cdot D_1\cdot D_2=H\cdot\Big(mZ+\Delta\Big)\geqslant m\cdot\mathrm{deg}(Z)\geqslant\mathrm{mult}_{Z}^2(\mathcal{D})\cdot\mathrm{deg}(Z),
$$
which implies the required inequality.
\end{proof}

To illustrate Lemma~\ref{lemma:root-deg} and Corollary~\ref{corollary:NFI}, let us prove

\begin{lemma}
\label{lemma:rigidity-A6}
Suppose that $G$ contains a proper subgroup isomorphic to $\mathfrak{A}_6$.
Then $\mathbb{P}^3$ is $G$-birationally super-rigid.
\end{lemma}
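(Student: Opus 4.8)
The plan is to verify the hypothesis of Corollary~\ref{corollary:NFI}: that for every non-empty $G$-invariant mobile linear system $\mathcal{D}$ on $\mathbb{P}^3$ with $\mathcal{D}\sim\mathcal{O}_{\mathbb{P}^3}(n)$, the log pair $(\mathbb{P}^3,\frac{4}{n}\mathcal{D})$ is canonical. First I would record two elementary group-theoretic facts. Since $G$ contains the primitive subgroup $\mathfrak{A}_6$, the group $G$ is itself primitive; and since $\mathfrak{A}_6$ is simple of order $360$, it admits no nontrivial action on fewer than $6$ points, so every $\mathfrak{A}_6$-orbit, and hence every $G$-orbit, in $\mathbb{P}^3$ has length at least $6$. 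I then argue by contradiction: assuming $(\mathbb{P}^3,\frac{4}{n}\mathcal{D})$ is not canonical, its maximal non-canonical center is $G$-invariant, and after passing to a minimal such center it is either a $G$-orbit of points or a $G$-irreducible curve. I treat these two cases separately.

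For the point case, let $\Sigma$ be a $G$-orbit of points forming a non-canonical center, and let $D_1,D_2$ be two general members of $\mathcal{D}$. The standard multiplicity inequality for a non-canonical point of a smooth threefold gives $\mathrm{mult}_P(D_1\cdot D_2)>\frac{n^2}{4}$ for each $P\in\Sigma$; here one uses that the boundary coefficient is exactly $\frac{4}{n}$, so that the relevant bound $4/\lambda^2$ equals $n^2/4$. Summing over $\Sigma$ and comparing with the total degree $H\cdot D_1\cdot D_2=n^2$ for a general plane $H$ yields $n^2>|\Sigma|\,\frac{n^2}{4}$, hence $|\Sigma|<4$. This contradicts the bound $|\Sigma|\geq 6$ above, so no point can be a non-canonical center. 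I emphasize that this case uses only primitivity and does not yet require the proper inclusion.

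For the curve case, let $Z$ be a $G$-irreducible curve that is a non-canonical center. Non-canonicity along $Z$ forces $\mathrm{mult}_Z(\mathcal{D})>\frac{n}{4}$ (seen by blowing up the generic, smooth point of $Z$), while Lemma~\ref{lemma:root-deg} gives $\mathrm{mult}_Z(\mathcal{D})\leq\frac{n}{\sqrt{\deg(Z)}}$; together these force $\deg(Z)<16$. It therefore remains to exclude $G$-invariant curves of degree at most $15$, and this is the step I expect to be the main obstacle. Here the hypothesis that $\mathfrak{A}_6$ is a \emph{proper} subgroup of $G$ is essential: by \cite{ChSh09b} the $\mathfrak{A}_6$-invariant curves of small degree in $\mathbb{P}^3$ admit an explicit description, and none of them is invariant under a strictly larger primitive group, so a $G$-irreducible curve of degree $<16$ would have to be a union of several such $\mathfrak{A}_6$-curves genuinely permuted by $G/\mathfrak{A}_6$, whose total degree then exceeds $15$ --- a contradiction. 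It is precisely the presence of one such low-degree $\mathfrak{A}_6$-invariant curve that prevents $\mathbb{P}^3$ from being $\mathfrak{A}_6$-birationally \emph{super}-rigid while leaving it rigid, so passing to the larger symmetry group is exactly what upgrades rigidity to super-rigidity. Once both cases are excluded, Corollary~\ref{corollary:NFI} gives the $G$-birational super-rigidity of $\mathbb{P}^3$.
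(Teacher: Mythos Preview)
Your proposal has two genuine gaps.

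\textbf{Point case.} The inequality you invoke, $n^2=H\cdot D_1\cdot D_2\geqslant\sum_{P\in\Sigma}\mathrm{mult}_P(D_1\cdot D_2)$, is false as stated: a \emph{general} plane $H$ passes through none of the points of $\Sigma$, so intersecting with $H$ gives no control on the local multiplicities there. To get such a bound one needs a surface of some degree $r$ that actually contains $\Sigma$ and shares no curve component with $D_1\cdot D_2$; this is exactly the mechanism the paper uses later (the linear systems $\mathcal{M}_r$), and verifying that the base locus contains no bad curves is nontrivial work. Your conclusion $|\Sigma|<4$ does not follow from primitivity and \eqref{equation:4-n-2-inequality} alone.

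\textbf{Curve case.} Your claim that the $G$-orbit of any low-degree $\mathfrak{A}_6$-invariant curve has degree exceeding $15$ is false already for $G\cong\mathfrak{S}_6$: the two $\mathfrak{A}_6$-irreducible curves $\mathcal{L}_6$ and $\mathcal{L}_6^\prime$ of degree $6$ are swapped by $\mathfrak{S}_6$, so their union is a $G$-irreducible curve of degree $12<16$, and Lemma~\ref{lemma:root-deg} only gives $\mathrm{mult}\leqslant n/\sqrt{12}>n/4$. Thus your degree argument does not exclude this curve.

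The paper avoids both difficulties by importing the hard work from \cite{ChSh09b}: Theorem~4.3 there asserts that the log pair $(\mathbb{P}^3,\frac{4}{n}\mathcal{D})$ is canonical as soon as $\max\{\mathrm{mult}_{\mathcal{L}_6}(\mathcal{D}),\mathrm{mult}_{\mathcal{L}_6^\prime}(\mathcal{D})\}\leqslant\frac{n}{4}$, so that points and all other curves are handled simultaneously. The proof of Theorem~4.9 in \cite{ChSh09b} furnishes $\min\{\mathrm{mult}_{\mathcal{L}_6}(\mathcal{D}),\mathrm{mult}_{\mathcal{L}_6^\prime}(\mathcal{D})\}\leqslant\frac{n}{4}$. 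The paper then uses Blichfeldt's classification to list the three possible overgroups $\mathfrak{S}_6$, $\mathfrak{A}_7$, $\mathrm{PSp}_4(\mathbf{F}_3)$: for $\mathfrak{S}_6$ (and hence $\mathrm{PSp}_4(\mathbf{F}_3)\supset\mathfrak{S}_6$) the two curves are swapped, so the two multiplicities coincide and both equal the min; for $\mathfrak{A}_7$ the $G$-orbit of $\mathcal{L}_6$ has degree $42$, and now Lemma~\ref{lemma:root-deg} does give the bound.
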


\begin{proof}
Let $\mathcal{D}$ be a non-empty $G$-invariant mobile linear system on $\mathbb{P}^3$.
Then~\mbox{$\mathcal{D}\sim\mathcal{O}_{\mathbb{P}^3}(n)$} for some positive integer $n$.
By Corollary~\ref{corollary:NFI}, to complete the proof,
we have to show that the singularities of the log pair $(\mathbb{P}^3,\frac{4}{n}\mathcal{D})$ are canonical.
By assumption, the group $G$ contains a subgroup isomorphic to $\mathfrak{A}_6$. Denote it by $\Theta$.
It follows from \cite[\S4]{ChSh09b} that there are two $\Theta$-irreducible curves $\mathcal{L}_6$ and $\mathcal{L}_6^\prime$ in $\mathbb{P}^3$
such that $\mathcal{L}_6\cap\mathcal{L}_6^\prime=\varnothing$,
and each of them is a disjoint union of $6$ lines.
Moreover, it follows from \cite[Theorem~4.3]{ChSh09b} that the log pair~\mbox{$(\mathbb{P}^3,\frac{4}{n}\mathcal{D})$}
has canonical singularities provided that
$$
\mathrm{max}\Big\{\mathrm{mult}_{\mathcal{L}_{6}}\big(\mathcal{D}\big),\mathrm{mult}_{\mathcal{L}_{6}^\prime}\big(\mathcal{D}\big)\Big\}\leqslant\frac{n}{4}.
$$
On the other hand, it follows from the proof of \cite[Theorem~4.9]{ChSh09b} that
$$
\mathrm{min}\Big\{\mathrm{mult}_{\mathcal{L}_{6}}\big(\mathcal{D}\big),\mathrm{mult}_{\mathcal{L}_{6}^\prime}\big(\mathcal{D}\big)\Big\}\leqslant\frac{n}{4}.
$$

Note that $\Theta$ is a primitive subgroup in $\mathrm{PGL}_4(\mathbb{C})$.
In particular, it does not have orbits of length at most $4$, and it does not leave invariant any line or a pair of lines in $\mathbb{P}^3$.
This implies that $G$ also enjoys these properties, so that it is primitive as well.
Thus, it follows from Blichfeldt's classification \cite[Chapter~VII]{Blichfeldt1917} that $G$ is one of the following groups: $\mathfrak{S}_6$,  $\mathfrak{A}_7$, or a simple group of order $25920$.
If $G\cong\mathfrak{S}_6$, then the log pair~\mbox{$(\mathbb{P}^3,\frac{4}{n}\mathcal{D})$}
has canonical singularities, because $G$ permutes the curves $\mathcal{L}_6$ and $\mathcal{L}_6^\prime$ in this case,
so that
$$
\mathrm{mult}_{\mathcal{L}_{6}}(\mathcal{D})=\mathrm{mult}_{\mathcal{L}_{6}^\prime}(\mathcal{D}).
$$
Similarly, if $G$ is a simple group of order $25920$, then
the log pair $(\mathbb{P}^3,\frac{4}{n}\mathcal{D})$
also has canonical singularities, because this group contains $\mathfrak{S}_6$ as a subgroup (see \cite[p.~26]{Atlas}).

To complete the proof, we may assume that $G\cong\mathfrak{A}_7$.
Let $Z$ be a $G$-orbit of the curve~$\mathcal{L}_{6}$.
Then~\mbox{$\mathrm{deg}(Z)=42$}, so that $\mathrm{mult}_{\mathcal{L}_{6}}(\mathcal{D})\leqslant\frac{n}{4}$  by Lemma~\ref{lemma:root-deg}.
Similarly, we see that \mbox{$\mathrm{mult}_{\mathcal{L}_{6}^\prime}(\mathcal{D})\leqslant\frac{n}{4}$}.
Thus, it follows from \cite[Theorem~4.3]{ChSh09b} that the log pair $(\mathbb{P}^3,\frac{4}{n}\mathcal{D})$ has canonical singularities.
\end{proof}

\begin{remark}
\label{remark:subgroup-rigidity}
Let $\Theta$ be a subgroup of the group $G$.
If $\mathbb{P}^3$ is $\Theta$-birationally super-rigid,
then it is also $G$-birationally super-rigid. This follows from Corollary~\ref{corollary:NFI} and equivariant Minimal Model Program (see \cite[Corollary~1.4.3]{Ch05}).
However, if $\mathbb{P}^3$ is $\Theta$-birationally rigid,
then we cannot immediately conclude that $\mathbb{P}^3$ is $G$-birationally rigid.
A priori, the projective space $\mathbb{P}^3$ can be $G$-birational to a Fano threefold~$X$ with terminal singularities
such that the $G$-invariant class group of the threefold $X$ is of rank~$1$,
but the $\Theta$-invariant class group is not of rank~$1$.
We refer the reader to~\cite{Kollar-rigidity} for a discussion of a similar problem for the action of Galois groups.
\end{remark}

Now we will state two technical propositions and use them to prove Theorems~\ref{theorem:main} and~\ref{theorem:super-main}.
After this, we will present the proofs of these propositions.
Let $\overline{G}_{144}$ be the primitive subgroup in $\mathrm{PGL}_4(\mathbb{C})$ constructed in~\S\ref{section:Heisenberg} and dealt with in~\S\ref{section:144}.

\begin{proposition}
\label{proposition:main-144}
Let $\mathcal{D}$ be a non-empty $\overline{G}_{144}$-invariant mobile linear system on $\mathbb{P}^3$,
and let $n$ be a positive integer such that $\mathcal{D}\sim\mathcal{O}_{\mathbb{P}^3}(n)$.
Then the log pair $(\mathbb{P}^3,\frac{4}{n}\mathcal{D})$ has canonical singularities.
\end{proposition}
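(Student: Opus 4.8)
The plan is to verify canonicity directly, by analysing the possible non-canonical centres on the smooth threefold $\mathbb{P}^3$; once this is done, Corollary~\ref{corollary:NFI} applies to conclude super-rigidity. Put $\lambda=\frac{4}{n}$, so that $\lambda\mathcal{D}\sim_{\mathbb{Q}}-K_{\mathbb{P}^3}$. Since $\mathcal{D}$ is mobile, the pair $(\mathbb{P}^3,\lambda\mathcal{D})$ has no non-canonical centre of dimension $2$; hence if it fails to be canonical, the locus $\mathcal{S}\subset\mathbb{P}^3$ of non-canonical centres is a non-empty $\overline{G}_{144}$-invariant closed set of dimension $0$ or $1$. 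The argument then splits into ruling out a curve in $\mathcal{S}$ and ruling out an isolated point of $\mathcal{S}$.

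First I would treat curves. A one-dimensional component of $\mathcal{S}$ gives, after passing to its $\overline{G}_{144}$-orbit, a $\overline{G}_{144}$-irreducible curve $Z\subset\mathcal{S}$, and being a non-canonical centre means $\mathrm{mult}_{Z}(\mathcal{D})>\frac{n}{4}$. If $\deg(Z)\geqslant 16$, then Lemma~\ref{lemma:root-deg} gives $\mathrm{mult}_{Z}(\mathcal{D})\leqslant \frac{n}{\sqrt{\deg(Z)}}\leqslant\frac{n}{4}$, a contradiction. If $\deg(Z)\leqslant 15$, then by Theorem~\ref{theorem:144-curves} the curve $Z$ is one of $\mathcal{L}_4^1,\dots,\mathcal{L}_4^4$, $\mathcal{L}_6^1,\mathcal{L}_6^2$, one of the smooth curves $C_{12}^1,\dots,C_{12}^4$, or a union of $12$ disjoint lines lying on $\mathcal{Q}$. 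For the curves contained in $\mathcal{Q}$ the bound $\mathrm{mult}_{Z}(\mathcal{D})\leqslant\frac{n}{4}$ is exactly Corollary~\ref{corollary:144-L4-L6-L12-mult}, and for the curves $C_{12}^i$ it is Corollary~\ref{corollary:144-C12-mult}. In every case $\mathrm{mult}_{Z}(\mathcal{D})\leqslant\frac{n}{4}$, so $\mathcal{S}$ contains no curve.

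Consequently $\mathcal{S}$ is a finite $\overline{G}_{144}$-invariant set, and each of its points is an isolated non-canonical centre. Pick $P\in\mathcal{S}$ and let $\Sigma$ be its $\overline{G}_{144}$-orbit. The $4n^2$-inequality of Corti (see \cite[\S3.3]{CheltsovShramov}), applied to the isolated non-canonical point $P$ of $(\mathbb{P}^3,\frac{4}{n}\mathcal{D})$, yields for two general members $D_1,D_2\in\mathcal{D}$ the bound $\mathrm{mult}_{P}(D_1\cdot D_2)>\frac{n^2}{4}$, and the same holds at every point of $\Sigma$ by invariance. Since $D_1\cdot D_2$ is an effective one-cycle of degree $n^2$, summing over $\Sigma$ gives
$$
n^2\;\geqslant\;\sum_{P'\in\Sigma}\mathrm{mult}_{P'}\big(D_1\cdot D_2\big)\;>\;|\Sigma|\cdot\frac{n^2}{4},
$$
whence $|\Sigma|<4$. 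This is impossible, since by Lemma~\ref{lemma:144-orbits} every $\overline{G}_{144}$-orbit in $\mathbb{P}^3$ has length at least $12$. Therefore $\mathcal{S}=\varnothing$ and the pair $(\mathbb{P}^3,\frac{4}{n}\mathcal{D})$ is canonical.

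I expect the only genuinely delicate step to be the curve case, which is where all the geometric work of~\S\ref{section:144} is concentrated: the multiplicity estimates for the low-degree special curves (Corollaries~\ref{corollary:144-L4-L6-L12-mult} and~\ref{corollary:144-C12-mult}) rest on the complete classification of $\overline{G}_{144}$-irreducible curves of small degree in Theorem~\ref{theorem:144-curves}. Given those inputs, the point case is immediate from the $4n^2$-inequality together with the lower bound on orbit lengths, so the essential content has already been carried out in the preceding section.
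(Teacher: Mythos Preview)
Your treatment of the curve case is correct and matches the paper's Lemma~\ref{lemma:curves-exlusion}: the degree~$\geqslant 16$ case follows from Lemma~\ref{lemma:root-deg}, and the low-degree case is exactly the combination of Theorem~\ref{theorem:144-curves} with Corollaries~\ref{corollary:144-L4-L6-L12-mult} and~\ref{corollary:144-C12-mult}.

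The point case, however, contains a genuine error. You write
\[
n^2\;\geqslant\;\sum_{P'\in\Sigma}\mathrm{mult}_{P'}\big(D_1\cdot D_2\big),
\]
but this inequality is false in general. The degree $n^2$ of the one-cycle $D_1\cdot D_2$ is its intersection with a \emph{general} hyperplane, which does not pass through any point of $\Sigma$ and therefore gives no control over the multiplicities there. An effective one-cycle of degree $d$ can easily have total multiplicity at a finite set far exceeding $d$ (think of a plane curve of degree $d$ with $\binom{d-1}{2}$ nodes, or simply a line through several points of $\Sigma$). What you \emph{can} say is that if $M_r$ is a surface of degree $r$ through $\Sigma$ whose support contains no component of $D_1\cdot D_2$, then $rn^2=M_r\cdot D_1\cdot D_2\geqslant\sum_{P'\in\Sigma}\mathrm{mult}_{P'}(D_1\cdot D_2)$, yielding only $|\Sigma|<4r$; see~\eqref{equation:bound-for-orbit}. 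Since the smallest $\overline{G}_{144}$-orbits have length $12$, this would require $r\leqslant 3$, and one must then check that cubics through $\Sigma$ have zero-dimensional base locus --- which is Corollary~\ref{corollary:144-cubics-12-points} for $|\Sigma|=12$, but fails for longer orbits.

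The paper therefore proceeds in a substantially more elaborate way for the point case. After disposing of $|\Sigma|=12$ via cubics (Corollary~\ref{corollary:80-16-20}), it rules out $\Sigma\subset\mathcal{Q}$ by an Inversion of Adjunction and Nadel vanishing argument on the quadric (Lemma~\ref{lemma:144-Q-points-exlusion}), then rules out $\Sigma$ lying on any $\overline{G}_{144}$-irreducible curve of degree $\leqslant 15$ by a refined degree count on the residual cycle $\Delta$ (Lemma~\ref{lemma:curve-points-exlusion}), thereby forcing $|\Sigma|>35$. The contradiction is then obtained by the ``multiplication by two'' trick: one passes to a threshold $\mu<\frac{8}{n}$ at which $(\mathbb{P}^3,\mu\mathcal{D})$ is strictly log canonical at each point of $\Sigma$, shows these points are isolated in the non-klt locus (Lemma~\ref{lemma:isolation}), and applies Nadel vanishing with $\mathcal{O}_{\mathbb{P}^3}(4)$ to the multiplier ideal to get $|\Sigma|\leqslant 35$. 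Your proposal skips all of this machinery, and without it the point case does not close.
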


Proposition~\ref{proposition:main-144} and Corollary~\ref{corollary:NFI} imply the following:

\begin{corollary}
\label{corollary:144}
Suppose that $G$ contains $\overline{G}_{144}$.
Then $\mathbb{P}^3$ is $G$-birationally super-rigid.
\end{corollary}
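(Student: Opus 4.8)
The plan is to deduce the statement directly from Proposition~\ref{proposition:main-144} together with Corollary~\ref{corollary:NFI}, so that essentially no new geometric work is required: all of the substance lies in the proposition, and the corollary is a formal consequence. The one idea needed is that invariance only weakens when we pass to a subgroup: if $\mathcal{D}$ is a non-empty $G$-invariant mobile linear system on $\mathbb{P}^3$ and $\overline{G}_{144}\subseteq G$ by hypothesis, then $\mathcal{D}$ is in particular $\overline{G}_{144}$-invariant.

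First I would fix an arbitrary non-empty $G$-invariant mobile linear system $\mathcal{D}$ on $\mathbb{P}^3$ and let $n$ be the positive integer with $\mathcal{D}\sim\mathcal{O}_{\mathbb{P}^3}(n)$. Since $\overline{G}_{144}\subseteq G$, the system $\mathcal{D}$ is $\overline{G}_{144}$-invariant, so Proposition~\ref{proposition:main-144} applies verbatim and shows that the log pair $(\mathbb{P}^3,\frac{4}{n}\mathcal{D})$ has canonical singularities. As $\mathcal{D}$ was an arbitrary $G$-invariant mobile system, the hypothesis of Corollary~\ref{corollary:NFI} is satisfied for the group $G$, and that corollary then gives that $\mathbb{P}^3$ is $G$-birationally super-rigid.

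I would emphasize that this upward transfer is legitimate precisely because we are establishing super-rigidity rather than mere rigidity. As noted in Remark~\ref{remark:subgroup-rigidity}, $G$-birational rigidity does not in general follow from $H$-birational rigidity for a subgroup $H\subseteq G$, whereas super-rigidity does transfer upward. One can equally phrase the argument in that language: $\mathbb{P}^3$ is $\overline{G}_{144}$-birationally super-rigid by Proposition~\ref{proposition:main-144} and Corollary~\ref{corollary:NFI}, hence $G$-birationally super-rigid by Remark~\ref{remark:subgroup-rigidity}. The two formulations coincide, both resting on the canonicity criterion.

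The main, and indeed the only, obstacle is Proposition~\ref{proposition:main-144} itself, whose proof relies on the detailed analysis of $\overline{G}_{144}$-orbits and $\overline{G}_{144}$-irreducible curves of small degree carried out in \S\ref{section:144}; in particular the multiplicity bounds of Corollaries~\ref{corollary:144-L4-L6-L12-mult} and \ref{corollary:144-C12-mult} and the classification of low-degree invariant curves in Theorem~\ref{theorem:144-curves}. Once the proposition is in hand, the present corollary is purely formal and requires nothing further.
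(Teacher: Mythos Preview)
Your proof is correct and takes exactly the same approach as the paper: the paper simply states that Proposition~\ref{proposition:main-144} and Corollary~\ref{corollary:NFI} imply the corollary, with no further argument. Your explicit observation that a $G$-invariant system is automatically $\overline{G}_{144}$-invariant when $\overline{G}_{144}\subseteq G$ is the only step needed, and it is the intended one.
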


Similarly, let $\overline{G}_{80}$ and $\overline{G}_{160}$ be primitive subgroups in $\mathrm{PGL}_4(\mathbb{C})$ constructed in~\S\ref{section:Heisenberg} and dealt with in~\S\ref{section:80-160-320}.
Let $\mathcal{C}_1$ and $\mathcal{C}_2$ be two distinct $\overline{G}_{80}$-invariant irreducible smooth curves of degree $8$ and genus $5$ in $\mathbb{P}^3$ constructed in~\S\ref{section:80-160-320}.

\begin{proposition}
\label{proposition:main-80}
Let $\mathcal{D}$ be a non-empty $\overline{G}_{80}$-invariant mobile linear system on $\mathbb{P}^3$,
and let $n$ be a positive integer such that $\mathcal{D}\sim\mathcal{O}_{\mathbb{P}^3}(n)$.
Suppose that
$$
\mathrm{max}\Big\{\mathrm{mult}_{\mathcal{C}_1}\big(\mathcal{D}\big),\mathrm{mult}_{\mathcal{C}_2}\big(\mathcal{D}\big)\Big\}\leqslant\frac{n}{4}.
$$
Then the log pair $(\mathbb{P}^3,\frac{4}{n}\mathcal{D})$ has canonical singularities.
\end{proposition}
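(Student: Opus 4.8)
The plan is to verify the hypothesis of Corollary~\ref{corollary:NFI}, arguing by contradiction. Suppose the log pair $(\mathbb{P}^3,\frac{4}{n}\mathcal{D})$ is not canonical. Since $\mathcal{D}$ is mobile, its non-canonical locus has codimension at least $2$, so it is a $\overline{G}_{80}$-invariant subvariety of dimension $0$ or $1$; passing to a $\overline{G}_{80}$-irreducible piece, I may assume the center of non-canonicity is either a $\overline{G}_{80}$-irreducible curve $Z$ or a single $\overline{G}_{80}$-orbit $\Sigma$. In both cases the idea is to convert the local non-canonicity into a numerical inequality that collides with the geometry established in \S\ref{section:80-160-320}.

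Suppose first that the center is a $\overline{G}_{80}$-irreducible curve $Z$. Then the pair is already non-canonical in codimension two, which forces $\mathrm{mult}_Z(\mathcal{D})>\frac{n}{4}$. Combined with Lemma~\ref{lemma:root-deg} this gives $\frac{n}{4}<\mathrm{mult}_Z(\mathcal{D})\leqslant\frac{n}{\sqrt{\mathrm{deg}(Z)}}$, whence $\mathrm{deg}(Z)<16$. Now I invoke Theorem~\ref{theorem:80-curves}: a $\overline{G}_{80}$-irreducible curve of degree at most $15$ is either one of $\mathcal{L}_{10}$, $\mathcal{L}_{10}^\prime$, $\mathcal{L}_{10}^{\prime\prime}$, or a smooth irreducible curve of degree $12$, or a smooth irreducible curve of degree $8$, the latter being $\mathcal{C}_1$ or $\mathcal{C}_2$ by Remark~\ref{remark:McKelvey}. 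Each possibility contradicts $\mathrm{mult}_Z(\mathcal{D})>\frac{n}{4}$: the degree-$10$ curves by Lemma~\ref{lemma:80-L10-mult}, the degree-$12$ curves by Corollary~\ref{corollary:80-C12-mult}, and the degree-$8$ curves by the very assumption of the proposition. Hence no curve can be a center.

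Suppose now that the center is a $\overline{G}_{80}$-orbit $\Sigma$. Here I use the $4n^2$-inequality (see, e.g., \cite{CheltsovShramov}): for general members $D_1,D_2\in\mathcal{D}$, non-canonicity at a point $P\in\Sigma$ yields $\mathrm{mult}_P(D_1\cdot D_2)>\frac{n^2}{4}$, while the effective $1$-cycle $D_1\cdot D_2$ has degree $n^2$. By Corollary~\ref{corollary:80-small-orbits} together with Lemma~\ref{lemma:80-subgroups}, the only possible orbit lengths are $|\Sigma|\in\{16,20,40,80\}$. For each of these I choose an auxiliary surface $S\subset\mathbb{P}^3$ of degree $d$ through all points of $\Sigma$, general among such surfaces so that $S$ contains no irreducible component of $D_1\cdot D_2$; such a surface exists, and the corresponding linear system is positive-dimensional, as soon as $\binom{d+3}{3}>|\Sigma|$. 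Then $S$ meets $D_1\cdot D_2$ in a zero-dimensional scheme of degree $dn^2$, and at each $P\in\Sigma$ one has $(S\cdot D_1\cdot D_2)_P\geqslant\mathrm{mult}_P(D_1\cdot D_2)>\frac{n^2}{4}$, because $S$ passes through $P$ and meets every component of the cycle through $P$ properly. Summing over $\Sigma$ gives $dn^2>|\Sigma|\cdot\frac{n^2}{4}$, i.e. $d>\frac{|\Sigma|}{4}$. It therefore suffices to exhibit $S$ with $d<\frac{|\Sigma|}{4}$: one takes $d=3$ for $|\Sigma|=16$, $d=4$ for $|\Sigma|=20$, $d=6$ for $|\Sigma|=40$, and $d=7$ for $|\Sigma|=80$, each satisfying both $\binom{d+3}{3}>|\Sigma|$ and $d<\frac{|\Sigma|}{4}$. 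This rules out all point orbits and completes the proof.

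The curve case rests entirely on the classification Theorem~\ref{theorem:80-curves} and the three multiplicity estimates of \S\ref{section:80-160-320}, which are the genuinely hard inputs, so within the present argument it is immediate. The step demanding the most care is thus the point case, specifically the standard but essential verifications that the center may be taken to be a genuine orbit, that the $4n^2$-inequality applies, and that the auxiliary surface $S$ can be chosen general enough to meet the cycle $D_1\cdot D_2$ properly along $\Sigma$; the numerically tight instance is $|\Sigma|=16$, where $d=3$ is forced. (Alternatively, for $|\Sigma|=16$ one may use the smooth invariant quartic $S_0$, which contains the four orbits $\Sigma_{16}^j$ and whose invariant Picard group is generated by a hyperplane section by Corollary~\ref{corollary:80-Mukai}, so that $d=4=\frac{|\Sigma|}{4}$ still produces the contradiction by the strictness of the $4n^2$-inequality.)
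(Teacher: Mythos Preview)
Your curve case is correct and coincides with the paper's Lemma~\ref{lemma:curves-exlusion}: reduce to $\deg(Z)\leqslant 15$ via Lemma~\ref{lemma:root-deg}, then invoke Theorem~\ref{theorem:80-curves} together with Lemma~\ref{lemma:80-L10-mult}, Corollary~\ref{corollary:80-C12-mult}, and the hypothesis on $\mathcal{C}_1,\mathcal{C}_2$.

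The point case, however, has a genuine gap. Your key claim is that a general surface $S\in\mathcal{M}_d$ through $\Sigma$ contains no component of $D_1\cdot D_2$, and you justify this solely by the inequality $\binom{d+3}{3}>|\Sigma|$. That inequality guarantees only that $\mathcal{M}_d$ is positive-dimensional; it says nothing about the base locus of $\mathcal{M}_d$. If $\mathrm{Bs}(\mathcal{M}_d)$ contains a curve $W$, then every $S\in\mathcal{M}_d$ contains $W$, and $W$ could well be a component of $D_1\cdot D_2$ (the base locus is $\overline{G}_{80}$-invariant, but so is $\mathrm{Supp}(D_1\cdot D_2)$). In that case the product $S\cdot D_1\cdot D_2$ is not a zero-cycle and your inequality $dn^2>\frac{|\Sigma|}{4}n^2$ is not available. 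The paper is explicit on this point: right after the inequality~\eqref{equation:bound-for-orbit} it states that it \emph{expects} $\mathrm{Bs}(\mathcal{M}_r)$ to contain no curves for $r=\lfloor|\Sigma|/4\rfloor$ but \emph{failed to prove this}, which is why it takes another route. For $|\Sigma|=16$, $d=3$ the required statement is Corollary~\ref{corollary:80-cubics-16-points}, which you do not cite; for $|\Sigma|\in\{20,40,80\}$ no analogue is established anywhere in the paper, and your argument supplies none. Your parenthetical alternative via $S_0$ has the same defect: Corollary~\ref{corollary:80-Mukai} controls $\mathrm{Pic}(S_0)^{\overline{G}_{80}}$, but it does not prevent $S_0$ from containing a component of $D_1\cdot D_2$.

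For comparison, the paper's argument after Corollary~\ref{corollary:80-16-20} proceeds quite differently. It first proves the delicate Lemma~\ref{lemma:curve-points-exlusion}, which \emph{decomposes} $D_1\cdot D_2=mC+\Delta$ along each candidate low-degree curve $C$ and combines~\eqref{equation:4-n-2-inequality-refined} with the explicit linear systems $\mathcal{B}_r$ of surfaces through $C$ (whose base loci are controlled by Lemma~\ref{lemma:80-L10-sextics} and Corollary~\ref{corollary:80-C12-sextics}); this forces $\Sigma\not\subset\mathcal{L}_{10}\cup\mathcal{L}_{10}'\cup\mathcal{L}_{10}''$, hence $|\Sigma|>35$ via Lemma~\ref{lemma:80-L10}. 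The remaining orbits (length $40$ and $80$) are then excluded by the multiplication-by-two trick: pass to $(\mathbb{P}^3,\mu\mathcal{D})$ with $\mu<\tfrac{8}{n}$ strictly log canonical at $\Sigma$, show isolation (Lemma~\ref{lemma:isolation}, which again uses Lemma~\ref{lemma:curve-points-exlusion}), and apply Nadel vanishing with $\mathcal{O}_{\mathbb{P}^3}(4)$ to get $|\Sigma|\leqslant 35$. The substance you are missing is precisely this control of base loci and the multiplier-ideal endgame.
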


Proposition~\ref{proposition:main-80} and Theorem~\ref{theorem:NFI} imply the following:

\begin{corollary}
\label{corollary:80-160}
Suppose that $G=\overline{G}_{80}$ or $G=\overline{G}_{160}$.
Then $\mathbb{P}^3$ is $G$-birationally rigid.
\end{corollary}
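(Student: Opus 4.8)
The plan is to deduce the corollary from Theorem~\ref{theorem:NFI} by exhibiting an explicit untwisting set $\Gamma\subset\mathrm{Bir}^G(\mathbb{P}^3)$. For $i\in\{1,2\}$ let $\tau_i\colon\mathbb{P}^3\dashrightarrow\mathbb{P}^3$ be the birational map $\tau$ produced by Proposition~\ref{proposition:link-symmetric} for the curve $C=\mathcal{C}_i$, and set $\Gamma=\langle\tau_1,\tau_2\rangle$. Since the diagram~\eqref{equation:80-Sarkisov-link-symmetric} is both $\overline{G}_{80}$- and $\overline{G}_{160}$-commutative, and since both curves $\mathcal{C}_1$ and $\mathcal{C}_2$ are $\overline{G}_{160}$-invariant by Remark~\ref{remark:McKelvey}, each $\tau_i$ lies in $\mathrm{Bir}^G(\mathbb{P}^3)$ for $G=\overline{G}_{80}$ and for $G=\overline{G}_{160}$; hence $\Gamma\subset\mathrm{Bir}^G(\mathbb{P}^3)$. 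Because $\overline{G}_{80}$ is normal in $\overline{G}_{160}$, every $G$-invariant system is $\overline{G}_{80}$-invariant, so Proposition~\ref{proposition:main-80} and its contrapositive are available throughout.

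Next I would compute the effect of $\tau_i$ on the degree of a mobile linear system. Let $\mathcal{E}$ be a $\overline{G}_{80}$-invariant mobile linear system with $\mathcal{E}\sim\mathcal{O}_{\mathbb{P}^3}(n)$ and put $m=\mathrm{mult}_{\mathcal{C}_i}(\mathcal{E})$. Blowing up $\mathcal{C}_i$, the proper transform of $\mathcal{E}$ on $X$ has class $\pi^*(nH)-mE$. The map $\iota$ of Proposition~\ref{proposition:link-symmetric} is an involution of $\mathrm{Pic}(X)$ (its matrix $\left(\begin{smallmatrix}7&24\\-2&-7\end{smallmatrix}\right)$ in the basis $(\pi^*H,E)$ squares to the identity, so $\iota_*=\iota^*$), whence the proper transform of $\tau_i(\mathcal{E})$ on $X$ has class
\[
\iota^*\big(\pi^*(nH)-mE\big)=(7n-24m)\pi^*H-(2n-7m)E.
\]
Reading off the coefficient of $\pi^*H$ gives $\tau_i(\mathcal{E})\sim\mathcal{O}_{\mathbb{P}^3}(n')$ with $n'=7n-24m$. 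In particular $n'<n$ precisely when $m>\tfrac{n}{4}$, which is exactly the threshold in Proposition~\ref{proposition:main-80}; moreover $\tau_i(\mathcal{E})$ is again a genuine mobile linear system on $\mathbb{P}^3$, so automatically $n'\geqslant 1$.

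Finally I would run the standard descent. Given any non-empty $G$-invariant mobile $\mathcal{D}$, choose $\gamma\in\Gamma$ for which the degree $n$ of $\gamma(\mathcal{D})\sim\mathcal{O}_{\mathbb{P}^3}(n)$ is minimal; the minimum is attained since degrees are positive integers. I claim the log pair $(\mathbb{P}^3,\tfrac{4}{n}\gamma(\mathcal{D}))$ has canonical singularities, which by Theorem~\ref{theorem:NFI} finishes the proof. If it did not, then by the contrapositive of Proposition~\ref{proposition:main-80} we would have $\mathrm{mult}_{\mathcal{C}_i}(\gamma(\mathcal{D}))>\tfrac{n}{4}$ for some $i\in\{1,2\}$; applying $\tau_i$ and invoking the degree formula above, the system $\tau_i\gamma(\mathcal{D})\in\Gamma\cdot\mathcal{D}$ would have degree $7n-24m<n$ while remaining a positive-degree mobile system, contradicting the minimality of $n$.

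The genuine content is already packaged into Propositions~\ref{proposition:link-symmetric} and~\ref{proposition:main-80}; the only point requiring care here is the bookkeeping that makes the descent well-founded — namely that untwisting along the curve carrying the excess multiplicity strictly lowers the Sarkisov degree $n$ yet keeps it a positive integer, so that a degree-minimizing element of $\Gamma$ exists and must land on a canonical pair. I expect this to be routine once the action of $\iota$ on $\mathrm{Pic}(X)$ is in hand, the main subtlety being merely to always untwist along the $\mathcal{C}_i$ that actually carries the high multiplicity.
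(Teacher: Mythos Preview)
Your proposal is correct and follows essentially the same approach as the paper's own proof: define $\Gamma$ using the birational involutions from Proposition~\ref{proposition:link-symmetric}, pick $\gamma\in\Gamma$ minimizing the degree of $\gamma(\mathcal{D})$, and derive a contradiction via Proposition~\ref{proposition:main-80} and the degree drop $n\mapsto 7n-24m$. Your version is slightly more explicit in justifying the degree formula (via the matrix of $\iota^*$ and the observation that it squares to the identity, so $\iota_*=\iota^*$) and in noting that the minimum is attained because degrees are positive integers, but the argument is otherwise identical.
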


\begin{proof}
Let $\Gamma$ be a subgroup in $\mathrm{Bir}^{G}(\mathbb{P}^3)$ that is generated by the birational maps constructed in Proposition~\ref{proposition:link-symmetric}.
Let $\mathcal{D}$ be a non-empty $G$-invariant mobile linear system on~$\mathbb{P}^3$.
For every $\sigma\in\Gamma$, let $n_{\sigma}$ be a positive integer such that
$\sigma(\mathcal{D})\sim\mathcal{O}_{\mathbb{P}^3}(n_\sigma)$.
Then there is $\gamma\in\Gamma$ such that
$$
n_\gamma=\mathrm{min}\Big\{n_\sigma\ \Big\vert\ \sigma\in\Gamma\Big\}.
$$
Let $n=n_\gamma$ and $\mathcal{M}=\gamma(\mathcal{D})$.
By Theorem~\ref{theorem:NFI}, to prove the required assertion,
it is enough to show that the log pair $(\mathbb{P}^3,\frac{4}{n}\mathcal{M})$ has canonical singularities.
Suppose that this is not true.
By Proposition~\ref{proposition:main-80}, either
$\mathrm{mult}_{\mathcal{C}_1}(\mathcal{M})>\frac{n}{4}$ or $\mathrm{mult}_{\mathcal{C}_2}(\mathcal{M})>\frac{n}{4}$.
In the former case, we let $C=\mathcal{C}_1$.
In the latter case, we let $C=\mathcal{C}_2$.
Let $\tau$ be the birational map constructed in Proposition~\ref{proposition:link-symmetric} starting from the curve $C$.
Then $\tau\in\Gamma$ and
$$
\tau\big(\mathcal{M}\big)\sim\mathcal{O}_{\mathbb{P}^3}\Big(7n-24\mathrm{mult}_{C}\big(\mathcal{M}\big)\Big)
$$
by Proposition~\ref{proposition:link-symmetric}. This contradicts the minimality of $n$, since $7n-24\mathrm{mult}_{C}(\mathcal{D})<n$.
\end{proof}

To prove Theorems~\ref{theorem:main} and \ref{theorem:super-main}, we need the last auxiliary result.

\begin{lemma}
\label{lemma:rigidity-80}
Suppose that the group $G$ contains $\overline{G}_{80}$, and that $G$ does not coincide with~$\overline{G}_{80}$
and~$\overline{G}_{160}$.
Then $\mathbb{P}^3$ is $G$-birationally super-rigid.
\end{lemma}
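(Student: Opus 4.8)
The plan is to verify the hypothesis of Corollary~\ref{corollary:NFI}: that for every non-empty $G$-invariant mobile linear system $\mathcal{D}$ on $\mathbb{P}^3$, with $\mathcal{D}\sim\mathcal{O}_{\mathbb{P}^3}(n)$, the log pair $(\mathbb{P}^3,\frac{4}{n}\mathcal{D})$ has canonical singularities. By Proposition~\ref{proposition:main-80} this reduces to the single numerical estimate $\mathrm{mult}_{\mathcal{C}_1}(\mathcal{D})\leqslant\frac{n}{4}$ and $\mathrm{mult}_{\mathcal{C}_2}(\mathcal{D})\leqslant\frac{n}{4}$, where $\mathcal{C}_1$ and $\mathcal{C}_2$ are the two $\overline{G}_{80}$-invariant curves of degree $8$ and genus $5$. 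Unlike in Corollary~\ref{corollary:80-160}, we are not allowed to invoke the Sarkisov links of Proposition~\ref{proposition:link-symmetric} (they would enlarge $\mathrm{Bir}^G(\mathbb{P}^3)$), so the estimate must come directly from the geometry of the $G$-action.

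The key point is that the $G$-orbits of $\mathcal{C}_1$ and $\mathcal{C}_2$ are large once $G$ is bigger than $\overline{G}_{160}$. First I would record that $|G|>160$. Since $\overline{G}_{80}\subseteq G$, the order $80$ divides $|G|$, so if $|G|\leqslant 160$ then $|G|\in\{80,160\}$. The value $80$ gives $G=\overline{G}_{80}$, excluded. If $|G|=160$, then $\overline{G}_{80}$ has index $2$ and is therefore normal in $G$; since $\overline{\mathrm{H}}$ is the unique normal subgroup of $\overline{G}_{80}$ isomorphic to $\mumu_2^4$ by Lemma~\ref{lemma:80-subgroups}, it is characteristic in $\overline{G}_{80}$ and hence normal in $G$. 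Thus $G$ lies in the normalizer $\overline{\mathbf{N}}$ of $\overline{\mathrm{H}}$, whose quotient $\overline{\mathbf{N}}/\overline{\mathrm{H}}$ is isomorphic to $\mathfrak{S}_6$ by~\eqref{equation:exact-sequence}. The image of $G$ in $\mathfrak{S}_6$ then has order $10$ and contains a $5$-cycle; as $\mathfrak{S}_6$ has no element of order $10$, this image must be $\mathrm{D}_{10}$, which forces $G=\overline{G}_{160}$, again excluded. Hence $|G|>160$.

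Next I would bound the stabilizers. Each $\mathcal{C}_i$ is a smooth curve of degree $8$ and genus $5$, hence nondegenerate in $\mathbb{P}^3$ (a smooth plane octic would have genus $21$), so any element of $\mathrm{PGL}_4(\mathbb{C})$ fixing $\mathcal{C}_i$ pointwise is trivial; therefore $\mathrm{Stab}_G(\mathcal{C}_i)$ embeds into $\mathrm{Aut}(\mathcal{C}_i)\cong\overline{G}_{160}$ by Remark~\ref{remark:McKelvey}, giving $|\mathrm{Stab}_G(\mathcal{C}_i)|\leqslant 160<|G|$. Consequently the $G$-orbit $Z_i$ of $\mathcal{C}_i$ is a $G$-irreducible curve whose $[G:\mathrm{Stab}_G(\mathcal{C}_i)]\geqslant 2$ irreducible components each have degree $8$, so $\mathrm{deg}(Z_i)\geqslant 16$. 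Since $\mathcal{D}$ is $G$-invariant we have $\mathrm{mult}_{\mathcal{C}_i}(\mathcal{D})=\mathrm{mult}_{Z_i}(\mathcal{D})$, and Lemma~\ref{lemma:root-deg} then yields $\mathrm{mult}_{\mathcal{C}_i}(\mathcal{D})\leqslant n/\sqrt{\mathrm{deg}(Z_i)}\leqslant n/4$, as required; feeding this into Proposition~\ref{proposition:main-80} and Corollary~\ref{corollary:NFI} completes the argument. The main obstacle I anticipate is the group theory of the second paragraph: one must be certain that the only subgroups squeezed between $\overline{G}_{80}$ and order $160$ are $\overline{G}_{80}$ and $\overline{G}_{160}$, so that the hypotheses genuinely force $|G|>160$. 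The characteristic-subgroup argument combined with~\eqref{equation:exact-sequence} is precisely what makes this rigorous, and it has the pleasant feature of bypassing the full Blichfeldt list as well as any need to know that $G$ normalizes $\overline{G}_{80}$.
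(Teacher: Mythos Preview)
Your proof is correct and follows the same overall strategy as the paper: reduce via Corollary~\ref{corollary:NFI} and Proposition~\ref{proposition:main-80} to the bound $\mathrm{mult}_{\mathcal{C}_i}(\mathcal{D})\leqslant n/4$, then obtain it from Lemma~\ref{lemma:root-deg} after showing that the $G$-orbit $Z_i$ of $\mathcal{C}_i$ has degree at least~$16$.

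The only difference is how you justify $Z_i\ne\mathcal{C}_i$. The paper does this in one line from Remark~\ref{remark:McKelvey}: since $\overline{G}_{160}$ already stabilizes $\mathcal{C}_i$ and the stabilizer of $\mathcal{C}_i$ in $\mathrm{PGL}_4(\mathbb{C})$ injects into $\mathrm{Aut}(\mathcal{C}_i)\cong\overline{G}_{160}$, that stabilizer \emph{equals} $\overline{G}_{160}$; hence $Z_i=\mathcal{C}_i$ would force $G\subset\overline{G}_{160}$, so $G\in\{\overline{G}_{80},\overline{G}_{160}\}$, contradicting the hypotheses. Your third paragraph contains exactly this embedding argument, so your detour through the second paragraph (proving $|G|>160$ via the normalizer $\overline{\mathbf{N}}$ and the structure of $\mathfrak{S}_6$) is correct but unnecessary: once you know the full projective stabilizer of $\mathcal{C}_i$ is $\overline{G}_{160}$, the hypothesis $G\notin\{\overline{G}_{80},\overline{G}_{160}\}$ immediately gives a nontrivial orbit, without ever needing to bound~$|G|$.
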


\begin{proof}
Let $\mathcal{D}$ be a non-empty $G$-invariant mobile linear system on $\mathbb{P}^3$.
Then $\mathcal{D}\sim\mathcal{O}_{\mathbb{P}^3}(n)$ for some positive integer $n$.
By Corollary~\ref{corollary:NFI}, to complete the proof,
we have to show that the singularities of the log pair $(\mathbb{P}^3,\frac{4}{n}\mathcal{D})$ are canonical.
This follows from Proposition~\ref{proposition:main-80} provided that
$$
\mathrm{max}\Big\{\mathrm{mult}_{\mathcal{C}_1}\big(\mathcal{D}\big),\mathrm{mult}_{\mathcal{C}_2}\big(\mathcal{D}\big)\Big\}\leqslant\frac{n}{4},
$$
because $\mathcal{D}$ is also $\overline{G}_{80}$-invariant. Let us prove the latter inequality.

Let $Z$ be a $G$-orbit of the curve $\mathcal{C}_1$.
Recall from~\S\ref{section:80-160-320} that $\mathcal{C}_1$ is $\overline{G}_{160}$-invariant.
Moreover, it follows from Remark~\ref{remark:McKelvey} that $Z\ne\mathcal{C}_1$.
Thus, we have $\mathrm{deg}(Z)\geqslant 2\mathrm{deg}(\mathcal{C}_1)=16$,
so that $\mathrm{mult}_{\mathcal{C}_1}(\mathcal{D})\leqslant\frac{n}{4}$ by Lemma~\ref{lemma:root-deg}.
Similarly, we see that $\mathrm{mult}_{\mathcal{C}_2}(\mathcal{D})\leqslant\frac{n}{4}$.
Therefore, the log pair $(\mathbb{P}^3,\frac{4}{n}\mathcal{D})$ has canonical singularities by Proposition~\ref{proposition:main-80}.
\end{proof}

Now we are ready to prove the main results of our paper.

\begin{proof}[Proof of Theorems~\ref{theorem:main} and \ref{theorem:super-main}]
By Corollary~\ref{corollary:non-primitive}, we may assume that $G$ is a primitive subgroup in $\mathrm{PGL}_4(\mathbb{C})$.
By Corollary~\ref{corollary:A5-S5}, we may assume that $G$ is not isomorphic to $\mathfrak{A}_5$ and $\mathfrak{S}_5$.
For the remaining groups, we use the classification provided by Blichfeldt in~\mbox{\cite[Chapter~VII]{Blichfeldt1917}},
see Appendix~\ref{section:diagram}.

If $G\cong\mathrm{PSL}_2(\mathbf{F}_7)$,
then $\mathbb{P}^3$ is $G$-birationally rigid and is not $G$-birationally super-rigid by \cite[Theorem~1.9]{ChSh10a}.
Similarly, if $G\cong\mathfrak{A}_6$, then $\mathbb{P}^3$ is $G$-birationally rigid and is not $G$-birationally super-rigid by \cite[Theorem~1.24]{ChSh09b}.
Moreover, if $G$ contains $\mathfrak{A}_6$ as a proper subgroup, then $\mathbb{P}^3$ is $G$-birationally super-rigid by Lemma~\ref{lemma:rigidity-A6}.

Recall from \cite[Chapter~VII]{Blichfeldt1917} that up to conjugation all remaining finite primitive subgroups of  $\mathrm{PGL}_4(\mathbb{C})$
belongs to one of the following two classes.
The first class consists of primitive groups that leave a smooth quadric surface in $\mathbb{P}^3$ invariant and contain~$\overline{G}_{144}$ (see \cite[\S121]{Blichfeldt1917}).
The second class consists of primitive groups containing an imprimitive normal subgroup $\mumu_2^4$ (see \cite[\S124]{Blichfeldt1917}).
These classes overlap.
For instance, the group~\mbox{$\overline{G}_{144}\cong\mumu_2^4\rtimes(\mumu_3\times\mumu_3)$} itself is contained in both of them (see \cite[\S125]{Blichfeldt1917}).
However, if $G$ is in the second class and is not in the first class, then $G$ must contain the subgroup~$\overline{G}_{80}$ (see \cite[\S124]{Blichfeldt1917}).
Therefore, to complete the proof we may assume that $G$ contains either $\overline{G}_{144}$ or $\overline{G}_{80}$.

If the group $G$ contains $\overline{G}_{144}$ as a subgroup, then the projective space $\mathbb{P}^3$ is $G$-birationally super-rigid by Corollary~\ref{corollary:144}.
If $G=\overline{G}_{80}$ or $G=\overline{G}_{160}$,
then $\mathbb{P}^3$ is $G$-birationally rigid by Corollary~\ref{corollary:80-160},
and it is not $G$-birationally super-rigid by Proposition~\ref{proposition:link-symmetric}.
Finally, if $G$ contains $\overline{G}_{80}$ and neither $G=\overline{G}_{80}$ nor~\mbox{$G=\overline{G}_{160}$},
then $\mathbb{P}^3$ is $G$-birationally super-rigid by Lemma~\ref{lemma:rigidity-80}.
\end{proof}

Thus, to complete the proofs of our main results, it is enough to prove Propositions~\ref{proposition:main-144} and \ref{proposition:main-80}.
This is what we are going to do in the remaining part of this section.
Namely, suppose that either $G=\overline{G}_{144}$ or $G=\overline{G}_{80}$.
Let $\mathcal{D}$ be a $G$-invariant mobile linear system on $\mathbb{P}^3$.
Then $\mathcal{D}\sim\mathcal{O}_{\mathbb{P}^3}(n)$ for a positive integer $n$.
If $G=\overline{G}_{80}$, we also suppose that
\begin{equation}
\label{equation:C8-mult}
\mathrm{max}\Big\{\mathrm{mult}_{\mathcal{C}_1}\big(\mathcal{D}\big),\mathrm{mult}_{\mathcal{C}_2}\big(\mathcal{D}\big)\Big\}\leqslant\frac{n}{4}.
\end{equation}
To prove Propositions~\ref{proposition:main-144} and \ref{proposition:main-80}, we have to show that the singularities of the log pair~\mbox{$(\mathbb{P}^3,\frac{4}{n}\mathcal{D})$} are canonical.
Suppose that this is not true.
Let us show that this assumption leads to a contradiction.

\begin{lemma}
\label{lemma:curves-exlusion}
Let $C$ be a $G$-irreducible curve in $\mathbb{P}^3$. Then $\mathrm{mult}_{C}(\mathcal{D})\leqslant\frac{n}{4}$.
\end{lemma}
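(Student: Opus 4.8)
The plan is to argue by the degree of $C$, reducing every case to a multiplicity estimate already established in \S\ref{section:144} or \S\ref{section:80-160-320}, supplemented by the elementary bound of Lemma~\ref{lemma:root-deg}. Since $G$ is either $\overline{G}_{144}$ or $\overline{G}_{80}$, the classification of low-degree $G$-irreducible curves is available, and the whole argument is essentially bookkeeping on top of it.

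First I would dispose of curves of large degree. By Lemma~\ref{lemma:root-deg} one has $\mathrm{mult}_{C}(\mathcal{D})\leqslant n/\sqrt{\deg(C)}$, so whenever $\deg(C)\geqslant 16$ we immediately obtain $\mathrm{mult}_{C}(\mathcal{D})\leqslant n/4$. Thus it remains to treat $G$-irreducible curves with $\deg(C)\leqslant 15$, and for these I would invoke the relevant classification together with the corresponding multiplicity bounds.

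For $G=\overline{G}_{144}$, Theorem~\ref{theorem:144-curves} shows that a $G$-irreducible curve of degree at most $15$ is either one of the curves $\mathcal{L}_4^i$ or $\mathcal{L}_6^i$ contained in the quadric $\mathcal{Q}$, or a disjoint union of $12$ lines in $\mathcal{Q}$, or one of the smooth curves $C_{12}^1,\ldots,C_{12}^4$ of degree $12$. In the first three cases $C$ is a $\overline{G}_{144}$-irreducible curve in $\mathcal{Q}$, so Corollary~\ref{corollary:144-L4-L6-L12-mult} gives $\mathrm{mult}_{C}(\mathcal{D})\leqslant n/4$; in the last case the same bound follows from Corollary~\ref{corollary:144-C12-mult}. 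For $G=\overline{G}_{80}$, Theorem~\ref{theorem:80-curves} shows that a $G$-irreducible curve of degree at most $15$ has degree $8$, $10$, or $12$. If $\deg(C)=10$, then $C$ is one of $\mathcal{L}_{10}$, $\mathcal{L}_{10}^\prime$, $\mathcal{L}_{10}^{\prime\prime}$ and Lemma~\ref{lemma:80-L10-mult} applies; if $\deg(C)=12$, then Corollary~\ref{corollary:80-C12-mult} applies; and if $\deg(C)=8$, then by Theorem~\ref{theorem:80-curves} (see also Remark~\ref{remark:McKelvey}) $C$ is one of the two curves $\mathcal{C}_1=S_1\cap S_4$ or $\mathcal{C}_2=S_2\cap S_3$, for which the desired inequality is precisely the standing assumption~\eqref{equation:C8-mult}.

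There is no serious obstacle here: the lemma simply assembles the delicate estimates of the two preceding sections into a single uniform statement, and the only genuinely new ingredient is the trivial large-degree estimate via Lemma~\ref{lemma:root-deg}. The one point demanding care is verifying that the classification theorems really exhaust all $G$-irreducible curves of degree at most $15$, and in particular that in the $\overline{G}_{80}$-case the degree-$8$ curves are exactly $\mathcal{C}_1$ and $\mathcal{C}_2$, so that hypothesis~\eqref{equation:C8-mult} is directly applicable.
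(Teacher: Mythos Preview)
Your proposal is correct and follows essentially the same approach as the paper's own proof: reduce to $\deg(C)\leqslant 15$ via Lemma~\ref{lemma:root-deg}, then apply the classification Theorems~\ref{theorem:144-curves} and~\ref{theorem:80-curves} together with the corresponding multiplicity bounds (Corollaries~\ref{corollary:144-L4-L6-L12-mult}, \ref{corollary:144-C12-mult}, Lemma~\ref{lemma:80-L10-mult}, Corollary~\ref{corollary:80-C12-mult}) and the standing assumption~\eqref{equation:C8-mult} for the degree-$8$ curves. Your write-up is in fact a slightly more explicit version of the paper's brief proof.
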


\begin{proof}
By Lemma~\ref{lemma:root-deg}, we may assume that $\mathrm{deg}(Z)\leqslant 15$.
If $G=\overline{G}_{144}$, the assertion follows from Theorem~\ref{theorem:144-curves}
together with Corollaries~\ref{corollary:144-L4-L6-L12-mult} and \ref{corollary:144-C12-mult}.
If $G=\overline{G}_{80}$, the assertion follows from the classification of $\overline{G}_{80}$-invariant curves of small degree
done in Theorem~\ref{theorem:80-curves} together with Lemma~\ref{lemma:80-L10-mult}, Corollary~\ref{corollary:80-C12-mult},
and our assumption \eqref{equation:C8-mult} on curves of degree $8$.
\end{proof}

Thus, the log pair $(\mathbb{P}^3,\frac{4}{n}\mathcal{D})$ is canonical away from finitely many points in $\mathbb{P}^3$ (see, for instance, \cite[Proposition~5.14]{CoKoSm03}).
On the other hand, this log pair is not canonical at some point $P\in\mathbb{P}^3$ by assumption.
Denote by $\Sigma$ the $G$-orbit of the point $P$.
Let $D_1$ and $D_2$ be two sufficiently general surfaces in $\mathcal{D}$. Then
\begin{equation}
\label{equation:4-n-2-inequality}
\mathrm{mult}_{O}\Big(D_1\cdot D_2\Big)>\frac{n^2}{4}
\end{equation}
for every point $O\in\Sigma$ by \cite[Corollary 3.4]{Co00}.

For every $r\geqslant 2$, denote by $\mathcal{M}_r$ the linear system of surfaces of degree $r$ in $\mathbb{P}^3$ that contain $\Sigma$.
If $\mathcal{M}_r$ is not empty, let $M_r$ be a general surface in $\mathcal{M}_r$.
In this case \eqref{equation:4-n-2-inequality} implies that
$$
rn^2=M_r\cdot D_1\cdot D_2\geqslant\sum_{O\in\Sigma}\mathrm{mult}_{O}\Big(D_1\cdot D_2\Big)>\frac{n^2}{4}|\Sigma|
$$
provided that $M_r$ does not contain any irreducible component of the one-cycle $D_1\cdot D_2$.
Thus, if the base locus of the linear system $\mathcal{M}_r$ does not contain curves, then~\eqref{equation:4-n-2-inequality}
implies the inequality
\begin{equation}
\label{equation:bound-for-orbit}
|\Sigma|<4r.
\end{equation}
We expect that the base locus of the linear system $\mathcal{M}_r$ contains no curves for $r=\lfloor\frac{|\Sigma|}{4}\rfloor$.
Unfortunately, we failed to prove this, so instead we use an alternative approach
that utilizes results obtained in~\S\ref{section:144} and~\S\ref{section:80-160-320}.
Let us start with

\begin{corollary}
\label{corollary:80-16-20}
If $G=\overline{G}_{144}$, then $|\Sigma|\geqslant 16$.
Similarly, if $G=\overline{G}_{80}$, then $|\Sigma|\geqslant 20$.
\end{corollary}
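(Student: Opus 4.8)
The plan is to combine the orbit-size inequality \eqref{equation:bound-for-orbit} with the classification of small $G$-orbits and the base-locus statements for cubic surfaces established in \S\ref{section:144} and \S\ref{section:80-160-320}. Recall that \eqref{equation:bound-for-orbit} holds whenever the base locus of the system $\mathcal{M}_r$ of degree-$r$ surfaces through $\Sigma$ contains no curves: a general member $M_r$ then shares no component with the one-cycle $D_1\cdot D_2$, so that $rn^2=M_r\cdot D_1\cdot D_2\geqslant\sum_{O\in\Sigma}\mathrm{mult}_O(D_1\cdot D_2)>\frac{n^2}{4}|\Sigma|$ by \eqref{equation:4-n-2-inequality}, giving $|\Sigma|<4r$. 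Thus it suffices, for each orbit below the asserted threshold, to find a degree $r$ with $4r\leqslant|\Sigma|$ for which $\mathcal{M}_r$ has no base curves; then $|\Sigma|<4r\leqslant|\Sigma|$ is the desired contradiction. Since $G$ is primitive, $|\Sigma|>4$ to begin with, so only finitely many orbit sizes need to be examined.

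First I would take $G=\overline{G}_{144}$. By Lemma~\ref{lemma:144-orbits}, every $\overline{G}_{144}$-orbit of length at most $35$ is one of $\Sigma_{12}$, $\Sigma_{12}^\prime$, the four orbits $\Sigma_{16}^i$, or the four orbits $\Sigma_{24}^j$, while any other orbit has length at least $36$. Hence the only orbits that could violate $|\Sigma|\geqslant 16$ are $\Sigma_{12}$ and $\Sigma_{12}^\prime$, and it is enough to exclude these. For $\Sigma\in\{\Sigma_{12},\Sigma_{12}^\prime\}$, Corollary~\ref{corollary:144-cubics-12-points} shows that the base locus of the cubic system $\mathcal{M}_3$ equals $\Sigma$, which contains no curves; applying \eqref{equation:bound-for-orbit} with $r=3$ gives $|\Sigma|<12$, contradicting $|\Sigma|=12$. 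Therefore $|\Sigma|\geqslant 16$.

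Next I would treat $G=\overline{G}_{80}$. By Corollary~\ref{corollary:80-small-orbits}, every $\overline{G}_{80}$-orbit of length less than $80$ has length $16$, $20$, or $40$, and the orbits of length $16$ are precisely $\Sigma_{16}^1,\ldots,\Sigma_{16}^4$. Thus the only orbits below the threshold $|\Sigma|\geqslant 20$ are these four, and again it suffices to rule them out. For $\Sigma=\Sigma_{16}^i$, Corollary~\ref{corollary:80-cubics-16-points} states that $\Sigma$ is the base locus of the cubic system $\mathcal{M}_3$, so $\mathcal{M}_3$ has no base curves; applying \eqref{equation:bound-for-orbit} with $r=3$ yields $|\Sigma|<12$, contradicting $|\Sigma|=16$. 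Hence $|\Sigma|\geqslant 20$.

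The one point requiring care, and the reason cubics rather than the naive choice $r=\lfloor|\Sigma|/4\rfloor$ are used, is that for a $16$-point orbit the naive degree $r=4$ only gives $|\Sigma|<16$, which is not strict enough, and for a $12$-point orbit the naive $r=3$ gives exactly $|\Sigma|<12$ at the boundary. The gain comes entirely from the strong base-locus statements of Corollaries~\ref{corollary:144-cubics-12-points} and \ref{corollary:80-cubics-16-points}: because these particular orbits are the singular loci of the invariant quartics $S_i$, they are cut out scheme-theoretically by the partial derivatives of a single quartic, so already the cubics through them have no base curves and the smaller degree $r=3$ is admissible. Verifying that no other, smaller orbits exist is precisely the content of Lemma~\ref{lemma:144-orbits} and Corollary~\ref{corollary:80-small-orbits}, so beyond invoking those classifications no further computation is needed.
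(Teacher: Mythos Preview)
Your proof is correct and follows essentially the same approach as the paper: reduce to the smallest possible orbits via the orbit classifications (Lemma~\ref{lemma:144-orbits} and Corollary~\ref{corollary:80-small-orbits}), then apply the cubic base-locus statements (Corollaries~\ref{corollary:144-cubics-12-points} and~\ref{corollary:80-cubics-16-points}) to invoke~\eqref{equation:bound-for-orbit} with $r=3$. One small quibble with your final explanatory paragraph: for the $16$-point orbit, the naive $r=4$ would in fact give the strict inequality $|\Sigma|<16$, which is already a contradiction; the real reason for using $r=3$ is that the base-locus control is available for cubics via the partial-derivative argument, not that $r=4$ gives an insufficient bound.
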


\begin{proof}
If $G=\overline{G}_{144}$ and $|\Sigma|<16$, then $|\Sigma|=12$ by Lemma~\ref{lemma:144-orbits}.
If $G=\overline{G}_{80}$ and $|\Sigma|<20$, then $|\Sigma|=16$ by Lemma~\ref{corollary:80-small-orbits}.
Thus, the base locus of the linear system $\mathcal{M}_3$ contains no curves by Corollaries~\ref{corollary:144-cubics-12-points}, \ref{corollary:80-small-orbits} and \ref{corollary:80-cubics-16-points}.
This gives a contradiction with~\eqref{equation:bound-for-orbit}.
\end{proof}

Recall from Lemma~\ref{lemma:80-144-quadrics} that there is unique $\overline{G}_{144}$-invariant quadric surface in $\mathbb{P}^3$.

\begin{lemma}
\label{lemma:144-Q-points-exlusion}
Suppose that $G=\overline{G}_{144}$.
Let $\mathcal{Q}$ be the unique $G$-invariant quadric surface in $\mathbb{P}^3$.
Then $\Sigma$ is not contained in $\mathcal{Q}$.
\end{lemma}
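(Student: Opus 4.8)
The plan is to assume $\Sigma\subset\mathcal{Q}$ and derive a contradiction by intersection-theoretic bookkeeping on the smooth quadric $\mathcal{Q}\cong\mathbb{P}^1\times\mathbb{P}^1$. First I would record the two features that make $\mathcal{Q}$ special. The action of $\overline{G}_{144}\cong\mathfrak{A}_4\times\mathfrak{A}_4$ on $\mathcal{Q}$ is the product action coming from~\eqref{equation:product}, so every $\overline{G}_{144}$-orbit in $\mathcal{Q}$ is a product $\Sigma=\Sigma_1\times\Sigma_2$, where $\Sigma_i=\mathrm{pr}_i(\Sigma)$ is an $\mathfrak{A}_4$-orbit in the $i$-th factor $\mathbb{P}^1$. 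Moreover $|\Sigma|\geqslant 16$ by Corollary~\ref{corollary:80-16-20}. Writing $s_i=|\Sigma_i|$, every $\mathfrak{A}_4$-orbit on $\mathbb{P}^1$ has length at least $4$, so $s_1,s_2\geqslant 4$ and $|\Sigma|=s_1s_2$; in particular $|\Sigma|>8$, which is all I will need numerically.

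Next I would pass to two general members $D_1,D_2\in\mathcal{D}$ and to the effective $1$-cycle $Z=D_1\cdot D_2$, which has degree $n^2$. Since $(\mathbb{P}^3,\frac{4}{n}\mathcal{D})$ is assumed to be non-canonical at every point of $\Sigma$, inequality~\eqref{equation:4-n-2-inequality} gives $\mathrm{mult}_O(Z)>\frac{n^2}{4}$ for each $O\in\Sigma$, and hence $\sum_{O\in\Sigma}\mathrm{mult}_O(Z)>\frac{n^2}{4}|\Sigma|$. As $\mathcal{D}$ is mobile, $\mathcal{Q}$ is not a component of $D_1$ or $D_2$, so I may split $Z=Z_{\mathcal{Q}}+Z'$, where $Z_{\mathcal{Q}}$ collects the components lying in $\mathcal{Q}$ and $Z'$ meets $\mathcal{Q}$ properly. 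For the transverse part, smoothness of $\mathcal{Q}$ gives $\sum_{O\in\Sigma}\mathrm{mult}_O(Z')\leqslant Z'\cdot\mathcal{Q}=2\deg(Z')=2\big(n^2-\deg(Z_{\mathcal{Q}})\big)$.

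The heart of the argument is a uniform bound on the $\mathcal{Q}$-part. Writing $Z_{\mathcal{Q}}=\sum_C\gamma_C C$ with $C\subset\mathcal{Q}$ irreducible of bidegree $(p_C,q_C)$, I claim $\sum_{O\in\Sigma}\mathrm{mult}_O(C)\leqslant\frac{|\Sigma|}{4}\deg(C)$ for every such $C$. Indeed, let $R_1\subset\mathcal{Q}$ be the reduced union of the $s_1$ fibres of $\mathrm{pr}_1$ over the points of $\Sigma_1$; it is a smooth curve of bidegree $(s_1,0)$ containing all of $\Sigma$. If $C$ is not a fibre of $\mathrm{pr}_1$, then $C$ meets $R_1$ properly, so $\sum_{O\in\Sigma}\mathrm{mult}_O(C)\leqslant C\cdot R_1=q_C s_1\leqslant\frac{s_1s_2}{4}(p_C+q_C)=\frac{|\Sigma|}{4}\deg(C)$, where the middle step is the inequality $q_C\leqslant\frac{s_2}{4}(p_C+q_C)$, valid because $s_2\geqslant 4$; the remaining case of a $\mathrm{pr}_1$-fibre is handled symmetrically with the analogous curve $R_2$ and the bound $s_1\geqslant 4$. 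Summing over $C$ yields $\sum_{O\in\Sigma}\mathrm{mult}_O(Z_{\mathcal{Q}})\leqslant\frac{|\Sigma|}{4}\deg(Z_{\mathcal{Q}})$. Combining the three estimates,
\begin{equation*}
\frac{n^2}{4}|\Sigma|<\frac{|\Sigma|}{4}\deg(Z_{\mathcal{Q}})+2n^2-2\deg(Z_{\mathcal{Q}})=2n^2+\Big(\frac{|\Sigma|}{4}-2\Big)\deg(Z_{\mathcal{Q}}),
\end{equation*}
that is, $\big(\frac{|\Sigma|}{4}-2\big)n^2<\big(\frac{|\Sigma|}{4}-2\big)\deg(Z_{\mathcal{Q}})$. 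Since $|\Sigma|>8$ the factor $\frac{|\Sigma|}{4}-2$ is positive, so $n^2<\deg(Z_{\mathcal{Q}})$, contradicting $\deg(Z_{\mathcal{Q}})\leqslant\deg(Z)=n^2$.

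I expect the only delicate points to be bookkeeping rather than conceptual: verifying that the product structure of $\Sigma$ really forces $\Sigma\subset R_1$ (which is what yields the clean bidegree estimate), and checking the boundary and fibre cases of the ruling bound so that it holds for \emph{every} component of $Z_{\mathcal{Q}}$, including curves of bidegree $(1,0)$ or $(0,1)$. It is worth emphasising that the full classification of small orbits in Lemma~\ref{lemma:144-Q-orbits} is not needed here—only the elementary fact that $s_1,s_2\geqslant 4$—and that no lower bound on the coefficients $\gamma_C$ ever enters, so the argument is completely insensitive to the precise base locus of $\mathcal{D}$ along $\mathcal{Q}$.
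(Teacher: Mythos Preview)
Your argument is correct and takes a genuinely different route from the paper. The paper restricts to $\mathcal{Q}$ via Inversion of Adjunction: from $(\mathbb{P}^3,\mathcal{Q}+\frac{4}{n}\mathcal{D})$ not log canonical it passes to $(\mathcal{Q},\frac{4}{n}\mathcal{D}|_{\mathcal{Q}})$ not log canonical, then uses Corollary~\ref{corollary:144-Q-L4-L6-L12-mult} to ensure this log pair is klt in codimension one, and finally applies Nadel vanishing for $\mathcal{O}_{\mathcal{Q}}(2H)$ to force $|\Sigma|\leqslant 9$. Your proof instead stays with the cycle $D_1\cdot D_2$ and exploits the product structure of the $\overline{G}_{144}$-action on $\mathcal{Q}$: the ruling curves $R_1,R_2$ through $\Sigma$ give the clean bound $\sum_{O\in\Sigma}\mathrm{mult}_O(C)\leqslant\frac{|\Sigma|}{4}\deg(C)$ for every curve $C\subset\mathcal{Q}$, and the rest is elementary bookkeeping.

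The trade-offs are as follows. Your approach is more elementary---no adjunction, no multiplier ideals, no vanishing theorems---and in fact independent of Corollary~\ref{corollary:144-Q-L4-L6-L12-mult}; it only uses that every $\mathfrak{A}_4$-orbit on $\mathbb{P}^1$ has length at least $4$. The paper's approach, on the other hand, reuses the same Nadel-vanishing template that drives the final contradiction at the end of \S\ref{section:rigidity}, so it keeps the whole section methodologically uniform and does not need the explicit product description of orbits on $\mathcal{Q}$. Your ruling trick is specific to the quadric and would not transplant to the ambient $\mathbb{P}^3$ argument, whereas the multiplier-ideal method does.
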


\begin{proof}
Suppose that $\Sigma$ is contained in $\mathcal{Q}$.
Then the log pair $(\mathbb{P}^3,\mathcal{Q}+\frac{4}{n}\mathcal{D})$ is not log canonical at every point of $\Sigma$.
Thus, the log pair $(\mathcal{Q}, \frac{4}{n}\mathcal{D}\vert_{\mathcal{Q}})$
is not log canonical at every point of $\Sigma$ by Inversion of Adjunction (see \cite[Theorem~5.50]{KoMo98}).
Hence, there is $\mu<\frac{4}{n}$ such that the log pair $(\mathcal{Q},\mu\mathcal{D}\vert_{\mathcal{Q}})$
is not Kawamata log terminal.
By Corollary~\ref{corollary:144-Q-L4-L6-L12-mult}, this log pair is Kawamata log terminal away from finitely many points in $\mathcal{Q}$.

Let $\mathcal{I}$ be the multiplier ideal sheaf of the log pair~$(\mathcal{Q},\mu\mathcal{D}\vert_{\mathcal{Q}})$.
Then the ideal $\mathcal{I}$ defines a (non-empty) zero-dimensional subscheme $\mathcal{L}$ of the quadric $\mathcal{Q}$
such that the support of the subscheme $\mathcal{L}$ contains $\Sigma$.
Moreover, this subscheme is $G$-invariant, because $\mathcal{D}$ is $G$-invariant.
Let $H$ be a plane section of the quadric $\mathcal{Q}$.
Then
$$
h^1\Big(\mathcal{I}\otimes\mathcal{O}_{\mathcal{Q}}\big(2H\big)\Big)=0
$$
by Nadel vanishing theorem (see \cite[Theorem~9.4.8]{La04}).
Now using the exact sequence of sheaves
$$
0\longrightarrow\mathcal{I}\otimes\mathcal{O}_{\mathcal{Q}}\big(2H\big)\longrightarrow
\mathcal{O}_{\mathcal{Q}}\big(2H\big)\longrightarrow\mathcal{O}_{\mathcal{L}}\otimes\mathcal{O}_{\mathcal{Q}}\big(2H\big)\longrightarrow 0,
$$
we see that
$$
\big|\Sigma\big|\leqslant h^0\big(\mathcal{O}_{\mathcal{L}}\big)=h^0\Big(\mathcal{O}_{\mathcal{L}}\otimes\mathcal{O}_{\mathcal{Q}}\big(2H\big)\Big)\leqslant h^0\Big(\mathcal{O}_{\mathcal{Q}}\big(2H\big)\Big)=9,
$$
which is impossible by Lemma~\ref{lemma:144-Q-orbits}.
\end{proof}

\begin{lemma}
\label{lemma:curve-points-exlusion}
Let $C$ be a $G$-irreducible curve in $\mathbb{P}^3$ such that $\mathrm{deg}(C)\leqslant 15$.
Then $\Sigma\not\subset C$.
\end{lemma}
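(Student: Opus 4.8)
The plan is to assume $\Sigma\subset C$ and derive a contradiction from the numerical inequality \eqref{equation:4-n-2-inequality}, splitting into the finitely many types of low-degree $G$-irreducible curves provided by Theorem~\ref{theorem:144-curves} (if $G=\overline{G}_{144}$) and Theorem~\ref{theorem:80-curves} (if $G=\overline{G}_{80}$). Write $m=\mathrm{mult}_C(\mathcal{D})$; by Lemma~\ref{lemma:curves-exlusion} we have $m\leqslant\frac{n}{4}$, and by $G$-invariance the number $\mu=\mathrm{mult}_P(C)$ is the same for every point of $\Sigma$. For two general surfaces $D_1,D_2\in\mathcal{D}$ I would write the intersection one-cycle as $D_1\cdot D_2=m^2C+\Omega$, where $\Omega$ is effective and no irreducible component of $C$ lies in its support, so that $\deg(\Omega)=n^2-m^2\deg(C)$ and, by \eqref{equation:4-n-2-inequality}, $\mathrm{mult}_O(\Omega)>\frac{n^2}{4}-m^2\mu$ at each $O\in\Sigma$.

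The engine of the proof is the following observation: for each type of $C$ there is an explicit linear system of surfaces of some degree $r$ passing through $C$ whose base locus contains no curve other than $C$ itself. Indeed, one may take sextics for $C_{12}^i$ by Corollary~\ref{corollary:144-C12-sextics}, sextics for the degree-$12$ curves by Corollary~\ref{corollary:80-C12-sextics}, and the systems of Lemma~\ref{lemma:80-L10-sextics} for $\mathcal{L}_{10}$, $\mathcal{L}_{10}^\prime$, $\mathcal{L}_{10}^{\prime\prime}$; for the degree-$8$ curves $\mathcal{C}_1=S_1\cap S_4$ and $\mathcal{C}_2=S_2\cap S_3$ the pencil of quartics spanned by the two surfaces through them does the job. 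Choosing a general member $M$ of such a system, the surface $M$ contains $C$ but no component of $\Omega$, so $M\cdot\Omega$ is a genuine zero-cycle of degree $r\,\deg(\Omega)$ and I obtain
$$
r\big(n^2-m^2\deg(C)\big)=\deg\big(M\cdot\Omega\big)\geqslant\sum_{O\in\Sigma}\mathrm{mult}_O(\Omega)>\big|\Sigma\big|\Big(\frac{n^2}{4}-m^2\mu\Big).
$$
Since both sides are affine in $m^2\in[0,\frac{n^2}{16}]$, it suffices to contradict this at the two endpoints, that is, to verify $|\Sigma|\geqslant 4r$ and $r(16-\deg C)+|\Sigma|(\mu-4)\leqslant 0$. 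Both hold once the minimal admissible orbit length is pinned down: for $C_{12}^i$ Lemma~\ref{lemma:144-sporadic-genera} forces $|\Sigma|\geqslant 24$; for the smooth genus-$5$ curves of degree $8$ and $12$, Lemma~\ref{lemma:80-sporadic-genera} together with Corollary~\ref{corollary:80-16-20} forces $|\Sigma|\geqslant 40$; and for $\mathcal{L}_{10}^\prime$, $\mathcal{L}_{10}^{\prime\prime}$ (where $r=4$) the bound $|\Sigma|\geqslant 20$ from Corollary~\ref{corollary:80-16-20} already suffices, using $\mu=2$ at the nodes $\Sigma_{20}^\prime$, $\Sigma_{20}^{\prime\prime}$. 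For the degree-$8$ curves the hypothesis \eqref{equation:C8-mult} guarantees $m\leqslant\frac{n}{4}$, so the same computation applies.

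Two cases are dealt with separately. If $G=\overline{G}_{144}$ and $C$ is one of the line configurations $\mathcal{L}_4^i$, $\mathcal{L}_6^i$, or a union of twelve lines, then $C\subset\mathcal{Q}$, whence $\Sigma\subset\mathcal{Q}$, which is impossible by Lemma~\ref{lemma:144-Q-points-exlusion}; this leaves only $C=C_{12}^i$, handled above. The one genuinely delicate case is $G=\overline{G}_{80}$ with $C=\mathcal{L}_{10}$, which forces the larger degree $r=6$ and hence requires $|\Sigma|\geqslant 24$, a bound that fails for an orbit of length $20$. The main obstacle is therefore to rule out length-$20$ orbits lying on $\mathcal{L}_{10}$, and I would resolve it by invoking Lemma~\ref{lemma:80-Sigma-20}, according to which such a $\Sigma$ must be one of $\Sigma_{20}$, $\Sigma_{20}^\prime$, $\Sigma_{20}^{\prime\prime}$, and then Lemma~\ref{lemma:80-L10}, according to which each of these also lies on $\mathcal{L}_{10}^\prime$ or $\mathcal{L}_{10}^{\prime\prime}$ (as smooth points for $\Sigma_{20}$, as the nodes for $\Sigma_{20}^\prime$ and $\Sigma_{20}^{\prime\prime}$). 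Re-running the displayed inequality with $\mathcal{L}_{10}^\prime$ or $\mathcal{L}_{10}^{\prime\prime}$ in place of $\mathcal{L}_{10}$ lowers $r$ to $4$, and then the bound $|\Sigma|\geqslant 20$ is enough to reach the contradiction, completing the proof.
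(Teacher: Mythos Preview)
Your strategy and case analysis are exactly those of the paper: write the residual one-cycle, intersect with a general surface of degree $r$ passing through $C$ whose base locus contains no other $G$-irreducible curve, and play the resulting inequality against the lower bounds on $|\Sigma|$. The choice of auxiliary linear systems (Corollaries~\ref{corollary:144-C12-sextics}, \ref{corollary:80-C12-sextics}, Lemma~\ref{lemma:80-L10-sextics}, and the pencil of quartics through the degree-$8$ curves) and the handling of the delicate $\mathcal{L}_{10}$ case via $\mathcal{L}_{10}^\prime$ or $\mathcal{L}_{10}^{\prime\prime}$ all match the paper.

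There is, however, a genuine gap in your decomposition step. You assert $D_1\cdot D_2=m^2C+\Omega$ with $m=\mathrm{mult}_C(\mathcal{D})$ and $\Omega$ free of components of $C$, but the coefficient of $C$ in $D_1\cdot D_2$ is in general only $\geqslant m^2$, not equal to it: if $\mathcal{D}$ has infinitely near base curves over $C$, general members are tangent along $C$ and the intersection multiplicity jumps. The paper therefore writes $D_1\cdot D_2=m'C+\Delta$ with $m'$ an unspecified non-negative integer and uses only the degree constraint $m'\mathrm{deg}(C)\leqslant n^2$. Consequently your affine-in-$m^2$ argument, with endpoints $0$ and $n^2/16$ coming from Lemma~\ref{lemma:curves-exlusion}, does not cover the full admissible range, since $\mathrm{deg}(C)<16$ forces $n^2/\mathrm{deg}(C)>n^2/16$.

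The repair is immediate: run the same endpoint argument in the variable $m'\in[0,\,n^2/\mathrm{deg}(C)]$. At $m'=0$ the condition is still $|\Sigma|\geqslant 4r$, while at $m'=n^2/\mathrm{deg}(C)$ it becomes simply $\mathrm{deg}(C)\geqslant 4\mu$, and this holds in every case you treat (including $\mathcal{L}_{10}^\prime,\mathcal{L}_{10}^{\prime\prime}$ with $\mu\leqslant 2$). With this correction your proof coincides with the paper's. Note also that neither Lemma~\ref{lemma:curves-exlusion} nor the hypothesis~\eqref{equation:C8-mult} is actually used in the paper's argument for this lemma.
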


\begin{proof}
Suppose that $\Sigma\subset C$. Let us seek for a contradiction. We have
$$
D_1\cdot D_2=mC+\Delta,
$$
where $m$ is a non-negative integer, and $\Delta$ is an effective one-cycle on $\mathbb{P}^3$ such that $\mathrm{Supp}(\Delta)$ does not contain irreducible components of the curve $C$.
Hence, it follows from \eqref{equation:4-n-2-inequality} that
\begin{equation}
\label{equation:4-n-2-inequality-refined}
\mathrm{mult}_{O}\big(\Delta\big)>\frac{n^2}{4}-m\cdot\mathrm{mult}_O\big(C\big)
\end{equation}
for every point $O\in\Sigma$.
Let $H$ be a plane in $\mathbb{P}^3$. Thus
\begin{equation}
\label{equation:degree-mult}
n^2=H\cdot D_1\cdot D_2=H\cdot\Big(mC+\Delta\Big)=m\cdot\mathrm{deg}(C)+H\cdot\Delta\geqslant m\cdot\mathrm{deg}(C).
\end{equation}

For every $r\geqslant 2$, denote by $\mathcal{B}_r$ the linear system that consists of surfaces of degree $r$ in~$\mathbb{P}^3$ containing $C$.
If the base locus of $\mathcal{B}_r$ does not contain $G$-irreducible curves different from $C$,
then it follows from \eqref{equation:4-n-2-inequality-refined} that
\begin{multline}
\label{equation:m-lower-bound}
r\Big(n^2-m\cdot\mathrm{deg}(C)\Big)=B_r\cdot\Big(D_1\cdot D_2-mC\Big)=\\
=B_r\cdot\Delta\geqslant\sum_{O\in\Sigma}\mathrm{mult}_{O}\big(\Delta\big)>|\Sigma|\Bigg(\frac{n^2}{4}-m\cdot\mathrm{mult}_O\big(C\big)\Bigg),
\end{multline}
where $B_r$ is a sufficiently general surface in the linear system $\mathcal{B}_r$.

Suppose that $G=\overline{G}_{144}$.
By Lemma~\ref{lemma:144-Q-points-exlusion} and Theorem~\ref{theorem:144-curves},
the curve $C$ is a smooth irreducible curve of degree $12$ and genus $13$.
In particular, we have $|\Sigma|\geqslant 24$ by Lemma~\ref{lemma:144-sporadic-genera}.
Moreover, it follows from Corollary~\ref{corollary:144-C12-sextics} that
the base locus of the linear system $\mathcal{B}_6$ does not contain curves different from $C$.
Thus, it follows from \eqref{equation:m-lower-bound} that
$$
6(n^2-12m)>24\Big(\frac{n^2}{4}-m\Big),
$$
so that $0>48m$, which is absurd.

We see that $G=\overline{G}_{80}$.
Let us use the notation of~\S\ref{section:80-160-320}.
By Theorem~\ref{theorem:80-curves}, one of the following cases holds:
\begin{itemize}
\item $\mathrm{deg}(C)=8$, and $C$ is an irreducible smooth curve of genus $5$;

\item $\mathrm{deg}(C)=12$, and $C$ is an irreducible smooth curve of genus $5$;

\item $\mathrm{deg}(C)=10$, and $C$ is one of the curves $\mathcal{L}_{10}$, $\mathcal{L}_{10}^\prime$, $\mathcal{L}_{10}^{\prime\prime}$.
\end{itemize}
Note that in the former two cases one has $|\Sigma|\geqslant 40$ by
Lemma~\ref{lemma:80-sporadic-genera} and Corollary~\ref{corollary:80-16-20}. Also, we know from~\eqref{equation:degree-mult}
that~\mbox{$n^2\geqslant m\cdot\deg(C)>4m$}.
Let us consider the above possibilities case by case.

If $\mathrm{deg}(C)=8$, then the base locus of $\mathcal{B}_4$ does not contain $G$-irreducible curves different from $C$ by Theorem~\ref{theorem:80-curves}.
Hence~\eqref{equation:m-lower-bound} gives
$$
4n^2-32m=4\Big(n^2-8m\Big)>|\Sigma|\Bigg(\frac{n^2}{4}-m\Bigg)\geqslant 40\Bigg(\frac{n^2}{4}-m\Bigg)=10n^2-40m.
$$
This implies that $6n^2<8m$. On the other hand, one has $8m\leqslant n^2$ by~\eqref{equation:degree-mult}, which is absurd.

If $\mathrm{deg}(C)=12$, then the base locus of $\mathcal{B}_6$ does not contain curves different from $C$ by Corollary~\ref{corollary:80-C12-sextics}.
Hence \eqref{equation:m-lower-bound} gives
$$
6n^2-72m=6\Big(n^2-12m\Big)>|\Sigma|\Bigg(\frac{n^2}{4}-m\Bigg)\geqslant 40\Bigg(\frac{n^2}{4}-m\Bigg)=10n^2-40m,
$$
which is absurd.

Thus, we see that  $C$ is one of the curves $\mathcal{L}_{10}$, $\mathcal{L}_{10}^\prime$ or $\mathcal{L}_{10}^{\prime\prime}$.
In particular, by~\eqref{equation:degree-mult}
we have~\mbox{$n^2\geqslant 10m>8m$}.

If $C=\mathcal{L}_{10}^\prime$ or $C=\mathcal{L}_{10}^{\prime\prime}$, then
the base locus of $\mathcal{B}_4$ does not contain $G$-irreducible curves different from $C$ by Lemma~\ref{lemma:80-L10-sextics},
so that Lemma~\ref{lemma:80-L10} and \eqref{equation:m-lower-bound} give
$$
4n^2-40m=4\Big(n^2-10m\Big)>|\Sigma|\Bigg(\frac{n^2}{4}-2m\Bigg)\geqslant 20\Bigg(\frac{n^2}{4}-2m\Bigg)=5n^2-40m,
$$
which is absurd. Thus, neither $C=\mathcal{L}_{10}^\prime$ nor $C=\mathcal{L}_{10}^{\prime\prime}$
(that is, $\Sigma$ is not contained in~$\mathcal{L}_{10}^\prime$ and in~$\mathcal{L}_{10}^{\prime\prime}$).
We see that $C=\mathcal{L}_{10}$.

By Lemma~\ref{lemma:80-L10-sextics}, the base locus of $\mathcal{B}_6$ does not contain $G$-irreducible curves different from $C$.
If $|\Sigma|\ne 20$, then Lemma~\ref{lemma:80-L10} implies that $|\Sigma|\geqslant 40$, so that \eqref{equation:m-lower-bound} gives
$$
6n^2-60m=6\Big(n^2-10m\Big)>|\Sigma|\Bigg(\frac{n^2}{4}-m\Bigg)\geqslant 40\Bigg(\frac{n^2}{4}-m\Bigg)=10n^2-40m,
$$
which is absurd. We see that $C=\mathcal{L}_{10}$ and $|\Sigma|=20$.
Then $\Sigma\subset\mathcal{L}_{10}^\prime\cup\mathcal{L}_{10}^{\prime\prime}$ by Lemma~\ref{lemma:80-L10}.
However, we just proved that $\Sigma$ is not contained in~$\mathcal{L}_{10}^\prime$ and~$\mathcal{L}_{10}^{\prime\prime}$.
\end{proof}

\begin{corollary}
\label{corollary:long-orbit}
One has $|\Sigma|>35$.
\end{corollary}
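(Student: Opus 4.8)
The plan is to argue by contradiction, assuming $|\Sigma|\leqslant 35$, and to show that every $G$-orbit of this size has already been excluded by the preparatory lemmas. The corollary is essentially a bookkeeping step that combines the classification of short orbits (Lemma~\ref{lemma:144-orbits} and Corollary~\ref{corollary:80-small-orbits}), the lower bound of Corollary~\ref{corollary:80-16-20}, and the two exclusion results (Lemmas~\ref{lemma:144-Q-points-exlusion} and \ref{lemma:curve-points-exlusion}). I would split the argument according to whether $G=\overline{G}_{144}$ or $G=\overline{G}_{80}$.

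For $G=\overline{G}_{144}$: Corollary~\ref{corollary:80-16-20} already gives $|\Sigma|\geqslant 16$, which rules out the two orbits $\Sigma_{12}$ and $\Sigma_{12}^\prime$ of length $12$. By Lemma~\ref{lemma:144-orbits} the only remaining possibilities with $|\Sigma|\leqslant 35$ are the orbits $\Sigma_{16}^1,\dots,\Sigma_{16}^4$ and $\Sigma_{24}^1,\dots,\Sigma_{24}^4$. By their construction in \S\ref{section:144}, each of these orbits is an intersection of two of the curves $\mathcal{L}_4^j$ and $\mathcal{L}_6^k$, all of which lie on the invariant quadric $\mathcal{Q}$; hence each of them is contained in $\mathcal{Q}$. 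This contradicts Lemma~\ref{lemma:144-Q-points-exlusion}, completing this case.

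For $G=\overline{G}_{80}$: Corollary~\ref{corollary:80-16-20} gives $|\Sigma|\geqslant 20$, and then Corollary~\ref{corollary:80-small-orbits} forces $|\Sigma|=20$ with $\Sigma$ equal to one of $\Sigma_{20}$, $\Sigma_{20}^\prime$ or $\Sigma_{20}^{\prime\prime}$. By Lemma~\ref{lemma:80-L10} all three of these orbits are contained in the $\overline{G}_{80}$-irreducible curve $\mathcal{L}_{10}$, which has degree $10\leqslant 15$. This contradicts Lemma~\ref{lemma:curve-points-exlusion}, finishing the proof. The substantive work has already been carried out in the exclusion lemmas; here the main (and essentially only) thing to verify is that the short-orbit classifications genuinely leave no orbit of length at most $35$ off the quadric $\mathcal{Q}$ (for $\overline{G}_{144}$) or off the curve $\mathcal{L}_{10}$ (for $\overline{G}_{80}$), so there is no real obstacle beyond correctly matching each candidate orbit to the geometric locus that has been excluded.
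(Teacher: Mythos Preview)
Your proof is correct and follows essentially the same approach as the paper: both combine the lower bound of Corollary~\ref{corollary:80-16-20} with the orbit classifications and the exclusion Lemmas~\ref{lemma:144-Q-points-exlusion} and~\ref{lemma:curve-points-exlusion}. The only cosmetic difference is the order of the implications in the $\overline{G}_{144}$ case (the paper argues ``not in $\mathcal{Q}$ $\Rightarrow$ length $12$ $\Rightarrow$ contradiction'', while you argue the contrapositive), and in the $\overline{G}_{80}$ case you observe that all three length-$20$ orbits lie in $\mathcal{L}_{10}$ rather than distributing them among $\mathcal{L}_{10}$, $\mathcal{L}_{10}'$, $\mathcal{L}_{10}''$---both are valid by Lemma~\ref{lemma:80-L10}.
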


\begin{proof}
Assume that $|\Sigma|\leqslant 35$. Let us seek for a contradiction.

Suppose that $G=\overline{G}_{144}$. Then $\Sigma$ is not contained in $\mathcal{Q}$ by Lemma~\ref{lemma:144-Q-points-exlusion}.
Thus, $\Sigma$ is one of two $\overline{G}_{144}$-orbits of length $12$ by Lemma~\ref{lemma:144-orbits}.
This contradicts Corollary~\ref{corollary:80-16-20}.

Thus, we see that $G=\overline{G}_{80}$.
Then $|\Sigma|\geqslant 20$ by Corollary~\ref{corollary:80-16-20}, so that $|\Sigma|=20$.
Hence, $\Sigma$ is contained in one of the curves
$\mathcal{L}_{10}$, $\mathcal{L}_{10}^\prime$ or~$\mathcal{L}_{10}^{\prime\prime}$ by
Lemmas~\ref{lemma:80-Sigma-20} and \ref{lemma:80-L10}, which is impossible by Lemma~\ref{lemma:curve-points-exlusion}.
\end{proof}

Now we are going to use the \emph{multiplication by two trick} that was introduced in \cite{ChSh10a,ChSh09b}.
Recall that $P\in\Sigma$ is a center of non-canonical singularities of the log pair $(\mathbb{P}^3,\frac{4}{n}\mathcal{D})$,
so that $\mathrm{mult}_{P}(\mathcal{D})>\frac{n}{4}$ (see \cite[Proposition~5.14]{CoKoSm03}).
This immediately implies that $P$ is a center of non-log canonical singularities of the log pair $(\mathbb{P}^3,\frac{8}{n}\mathcal{D})$.
In particular, the log pair $(\mathbb{P}^3,\frac{8}{n}\mathcal{D})$ is not log canonical at $P$.
Thus, there is a positive rational number~\mbox{$\mu<\frac{8}{n}$}
such that $(\mathbb{P}^3,\mu\mathcal{D})$ is log canonical at the point $P$,
and it is not Kawamata log terminal at the point $P$.
Of course, the same holds for every point in~$\Sigma$, because $\Sigma$ is the $G$-orbit of the point $P$,
and the linear system $\mathcal{D}$ is $G$-invariant.

\begin{lemma}
\label{lemma:isolation}
The log pair $(\mathbb{P}^3,\mu\mathcal{D})$ is Kawamata log terminal in a punctured neighborhood of every point in $\Sigma$.
\end{lemma}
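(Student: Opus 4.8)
The plan is to prove the lemma by showing that the whole non-Kawamata-log-terminal locus of the pair $(\mathbb{P}^3,\mu\mathcal{D})$ contains no curves; once this locus is known to be zero-dimensional, every one of its points is isolated, and in particular each point of $\Sigma$ admits a punctured neighborhood on which the pair is Kawamata log terminal, which is exactly the assertion.

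First I would use that $\mathcal{D}$ is mobile: a general member $D$ of $\mathcal{D}$ has no fixed components, so the boundary $\mu D$ carries no divisorial component with coefficient $\geqslant 1$, and the pair $(\mathbb{P}^3,\mu\mathcal{D})$ is Kawamata log terminal away from a closed subset of codimension at least two. Hence its non-Kawamata-log-terminal locus is supported on curves and points, and it suffices to rule out one-dimensional components. Suppose then that some irreducible curve $Z_0$ lies in this locus. I would pass to its $G$-orbit $Z=\bigcup_{g\in G}g(Z_0)$, which is a $G$-irreducible curve; since $\mathcal{D}$ is $G$-invariant the whole of $Z$ lies in the non-Kawamata-log-terminal locus, and $\mathrm{mult}_Z(\mathcal{D})$ is the common multiplicity of $\mathcal{D}$ along the components of $Z$.

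Now I would work at the generic point of $Z$, where both $\mathbb{P}^3$ and $Z$ are smooth. Blowing up $Z$ produces an exceptional divisor $E$ whose discrepancy with respect to $(\mathbb{P}^3,\mu\mathcal{D})$ equals $1-\mu\,\mathrm{mult}_Z(\mathcal{D})$; because $\mathcal{D}$ is mobile, a general member is resolved after this single blow up, so $E$ is the governing valuation and failure of the Kawamata log terminal condition along $Z$ forces $1-\mu\,\mathrm{mult}_Z(\mathcal{D})\leqslant -1$, that is $\mu\,\mathrm{mult}_Z(\mathcal{D})\geqslant 2$ (see \cite[Proposition~5.14]{CoKoSm03} and \cite[Theorem~5.50]{KoMo98}). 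Since $\mu<\frac{8}{n}$, this yields $\mathrm{mult}_Z(\mathcal{D})\geqslant \frac{2}{\mu}>\frac{n}{4}$, which contradicts the bound $\mathrm{mult}_Z(\mathcal{D})\leqslant \frac{n}{4}$ furnished by Lemma~\ref{lemma:curves-exlusion}. Therefore no curve $Z$ lies in the non-Kawamata-log-terminal locus, the locus is finite, and the lemma follows.

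The hard part is purely the codimension-two estimate: one must invoke mobility of $\mathcal{D}$ to guarantee that the log canonical threshold of $(\mathbb{P}^3,\mathcal{D})$ along $Z$ is governed \emph{only} by $\mathrm{mult}_Z(\mathcal{D})$ through a single blow up, so that the relevant numerical coefficient is exactly the codimension $2$, rather than being affected by tangencies or nonreduced base behaviour of members of $\mathcal{D}$ along $Z$. Everything else is a direct numerical comparison using $\mu<\frac{8}{n}$ together with the already established multiplicity bound of Lemma~\ref{lemma:curves-exlusion}; note in particular that neither the classification of low-degree $G$-invariant curves nor the incidence results for $\Sigma$ are needed at this stage.
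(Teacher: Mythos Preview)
Your argument has a genuine gap at the crucial step. You claim that failure of the Kawamata log terminal condition along a curve $Z$ forces $\mu\,\mathrm{mult}_Z(\mathcal{D})\geqslant 2$, arguing that mobility guarantees a single blow-up of $Z$ computes the log canonical threshold there. This is false: mobility prevents fixed components but does not prevent the base \emph{scheme} from being non-reduced along~$Z$. For instance, take the mobile system $\mathcal{D}$ locally generated by $x$ and $y^2$ near $Z=\{x=y=0\}$ in~$\mathbb{A}^3$. Here $\mathrm{mult}_Z(\mathcal{D})=1$, yet the base ideal is $(x,y^2)$ and the log canonical threshold at a general point of $Z$ equals $\tfrac{3}{2}$, not~$2$. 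Concretely, after blowing up $Z$ the proper transforms of two general members together with the exceptional divisor $E_1$ still share a common curve; blowing that up produces a second exceptional divisor $E_2$ with discrepancy $2-2\mu$, which hits $-1$ already at $\mu=\tfrac{3}{2}$. So for any $\mu\in[\tfrac{3}{2},2)$ the pair is not Kawamata log terminal along $Z$ while $\mu\,\mathrm{mult}_Z(\mathcal{D})<2$. Neither \cite[Proposition~5.14]{CoKoSm03} nor \cite[Theorem~5.50]{KoMo98} yields the inequality you want.

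The paper proceeds along a different route. Instead of bounding $\mathrm{mult}_C(\mathcal{D})$, it applies Corti's inequality \cite[Theorem~3.1]{Co00} to the coefficient $m$ of $C$ in the intersection one-cycle $D_1\cdot D_2$ of two general members: non-klt along $C$ gives $m\geqslant 4/\mu^2>n^2/16$. Combined with $n^2\geqslant m\,\mathrm{deg}(C)$ this forces $\mathrm{deg}(C)\leqslant 15$, and one then invokes Lemma~\ref{lemma:curve-points-exlusion} (not Lemma~\ref{lemma:curves-exlusion}) to conclude that $\Sigma\not\subset C$, a contradiction. In particular the paper only rules out curves in the non-klt locus that pass through $\Sigma$, and this step \emph{does} rest on the classification of low-degree $G$-invariant curves together with the incidence analysis for~$\Sigma$, contrary to your closing remark.
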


\begin{proof}
Suppose that the required assertion is not true.
Then there exists a $G$-irreducible curve $C$ in $\mathbb{P}^3$ such that $\Sigma\subset C$ and
$(\mathbb{P}^3,\mu\mathcal{D})$ is not Kawamata log terminal in every point of the curve $C$.
Let $D_1$ and $D_2$ be two general surfaces in $\mathcal{D}$.
Then
$$
D_1\cdot D_1=mC+\Delta,
$$
where $m$ is a positive integer, and $\Delta$ is an effective one-cycle on $\mathbb{P}^3$ such that $\mathrm{Supp}(\Delta)$ does not contain irreducible components of the curve $C$.
By \cite[Theorem~3.1]{Co00}, one has
$$
m\geqslant\frac{4}{\mu^2}>\frac{n^2}{16}.
$$
Let $H$ be a general plane in $\mathbb{P}^3$. Then
$$
n^2=H\cdot D_1\cdot D_2=H\cdot\Big(mC+\Delta\Big)=m\cdot\mathrm{deg}(C)+H\cdot\Delta\geqslant m\cdot\mathrm{deg}(C)>\frac{n^2}{16}\cdot\mathrm{deg}(C),
$$
so that $\mathrm{deg}(C)\leqslant 15$, which is impossible by Lemma~\ref{lemma:curve-points-exlusion}.
\end{proof}

Let $\mathcal{I}$ be the multiplier ideal sheaf of the log pair~$(\mathbb{P}^3,\mu\mathcal{D})$.
Then the ideal $\mathcal{I}$ is not trivial, since $(\mathbb{P}^3,\mu\mathcal{D})$ is not Kawamata log terminal.
Thus, it defines a (non-empty) subscheme $\mathcal{L}$ of $\mathbb{P}^3$ such that the support of the subscheme $\mathcal{L}$ contains $\Sigma$.
Since $\mathcal{D}$ is mobile, the subscheme $\mathcal{L}$ has no two-dimensional components.
Moreover, the subscheme~$\mathcal{L}$ is reduced in a neighborhood of every point in $\Sigma$,
because the log pair $(\mathbb{P}^3,\mu\mathcal{D})$ is log canonical at every point of $\Sigma$.
Furthermore, every point of $\Sigma$ is an isolated irreducible component of the subscheme $\mathcal{L}$ by Lemma~\ref{lemma:isolation}.
On the other hand, it follows from Nadel vanishing theorem (see \cite[Theorem~9.4.8]{La04}) that
$$
h^1\Big(\mathcal{I}\otimes\mathcal{O}_{\mathbb{P}^3}\big(4\big)\Big)=0.
$$
Now using the exact sequence of sheaves
$$
0\longrightarrow\mathcal{I}\otimes\mathcal{O}_{\mathbb{P}^3}\big(4\big)\longrightarrow\mathcal{O}_{\mathbb{P}^3}\big(4\big)\longrightarrow\mathcal{O}_{\mathcal{L}}\otimes\mathcal{O}_{\mathbb{P}^3}\big(4\big)\longrightarrow 0,
$$
we see that
$$
|\Sigma|\leqslant h^0\Big(\mathcal{O}_{\mathcal{L}}\otimes\mathcal{O}_{\mathbb{P}^3}\big(4\big)\Big)\leqslant h^0\Big(\mathcal{O}_{\mathbb{P}^3}\big(4\big)\Big)=35,
$$
which is impossible by Corollary~\ref{corollary:long-orbit}.
The obtained contradiction completes the proof of Propositions~\ref{proposition:main-144} and \ref{proposition:main-80}.
This in turns completes the proof of Theorems~\ref{theorem:main} and \ref{theorem:super-main}.

\appendix

\section{Finite primitive groups}
\label{section:diagram}

In diagrams~\eqref{figure:Blichfeldt1},
\eqref{figure:Blichfeldt2}, and~\eqref{figure:Blichfeldt3} below,
we present all the finite primitive subgroups
of $\mathrm{PGL}_4(\mathbb{C})$ together with
some inclusions between them, including those  that
are necessary for the proof of Theorem~\ref{theorem:main}.
We emphasize that we will not need the whole
classification for the proof, but only the information about
the few smallest groups.

By $\mumu_n$ we denote the cyclic group of order~$n$.
The inclusions are depicted by arrows going from a smaller group
to a larger one.
We point out that there are indeed several additional
inclusions (apart from those that are obtained merely by
composing those inclusions that we list). For instance,
the group $\A_4\times\A_4$ from diagram~\eqref{figure:Blichfeldt1}
and the group $\A_6$ from diagram~\eqref{figure:Blichfeldt3}
are both subgroups of $\mumu_2^4\rtimes\A_6$
from diagram~\eqref{figure:Blichfeldt2}.

In diagram~\eqref{figure:Blichfeldt1}, we list all finite
primitive subgroups
of $\mathrm{PGL}_4(\mathbb{C})$ that leave invariant a quadric surface.
The larger ``connected component'' of the diagram contains
the groups~\mbox{$1^\circ$--$12^\circ$} described in
\cite[\S\S121,122]{Blichfeldt1917}; the smaller ``connected component''
contains
the group $(B)$ described in
\cite[\S102]{Blichfeldt1917} and the group $(H)$ described
in \cite[\S119]{Blichfeldt1917}.
We denote by~\mbox{$\widehat{G\times G}$} the group generated by $G\times G$
and an involution that exchanges the factors of~$G\times G$.
This group is isomorphic to~\mbox{$(G\times G)\rtimes\mumu_2$}.
However, we reserve
the notation~\mbox{$(\A_4\times\A_4)\rtimes\mumu_2$} not for
a group of the latter type, but for a different subgroup
of~\mbox{$\mathrm{PGL}_4(\mathbb{C})$} generated by~$\A_4$
and an element of order
$2$ that \emph{does not} exchange the factors; this element can be thought of
as a product of two elementary transpositions in the factors
of~\mbox{$\SS_4\times\SS_4\supset\A_4\times\A_4$}. We refer the reader to
\cite[\S121]{Blichfeldt1917} for details. The above group
is contained as a subgroup of index two in two other
primitive subgroups of $\mathrm{PGL}_4(\mathbb{C})$ preserving a quadric
surface so that in both cases
an additional element of order $2$ exchanges
the factors of~\mbox{$\A_4\times\A_4\subset (\A_4\times\A_4)\rtimes\mumu_2$},
see \cite[\S122]{Blichfeldt1917} for details.
We denote these two groups
by~\mbox{$\widehat{(\A_4\times\A_4)\rtimes\mumu_2^{(1)}}$}
and~\mbox{$\widehat{(\A_4\times\A_4)\rtimes\mumu_2^{(2)}}$}.

\begin{equation}
\label{figure:Blichfeldt1}
\xymatrix{
\widehat{\SS_4\times\SS_4} &  \widehat{\A_5\times\A_5}  &  \\
& \SS_4\times\SS_4\ar@{->}[lu] &
\SS_4\times\A_5 & \A_5\times\A_5\ar@{->}[llu] &  \\
\widehat{(\A_4\times\A_4)\rtimes\mumu_2^{(1)}} \ar@{->}[uu] &
\widehat{(\A_4\times\A_4)\rtimes\mumu_2^{(2)}} \ar@{->}[luu]
&  \A_4\times\SS_4\ar@{->}[ul]\ar@{->}[u]  &
\A_4\times\A_5\ar@{->}[lu]\ar@{->}[u]  & \SS_5\\
(\A_4\times\A_4)\rtimes\mumu_2\ar@{->}[ruu]\ar@{->}[ru]\ar@{->}[u] & &
\widehat{\A_4\times\A_4}\ar@/^/@{->}[llu]\ar@{->}[lu]\ar@{->}[luuu]
& & \A_5\ar@{->}[u] \\
& & & \A_4\times\A_4\ar@{->}[lllu]\ar@{->}[uu]\ar@{->}[luu]\ar@{->}[lu] &
}
\end{equation}

In diagram~\eqref{figure:Blichfeldt2}, we list all finite
primitive subgroups
of $\mathrm{PGL}_4(\mathbb{C})$ that
contain a subgroup isomorphic to $\mumu_2^4$ except for those that
leave invariant a quadric surface.
These are the groups~\mbox{$13^\circ$--$21^\circ$} described in
\cite[\S124]{Blichfeldt1917}.
We denote by $\mathrm{D}_{10}$ the dihedral group of order~$10$.
By~$F.G$ we mean a non-split extension of a group $G$ by a group~$F$.

\begin{equation}
\label{figure:Blichfeldt2}
\xymatrix{
& \mumu_2^4.\SS_6 & \\
\mumu_2^4.\SS_5\ar@{->}[ru] &
\mumu_2^4\rtimes\A_6\ar@{->}[u] &
\mumu_2^4.\SS_5\ar@{->}[lu]\\
\mumu_2^4\rtimes\A_5\ar@{->}[u]\ar@{->}[ru] &
\mumu_2^4.(\mumu_5\rtimes\mumu_4)\ar@{->}[lu]\ar@{->}[ru] &
\mumu_2^4\rtimes \A_5\ar@{->}[u]\ar@{->}[lu]\\
& \mumu_2^4\rtimes\mathrm{D}_{10}\ar@{->}[lu]\ar@{->}[ru]\ar@{->}[u] & \\
& \mumu_2^4\rtimes\mumu_5\ar@{->}[u] &
}
\end{equation}

Finally, in diagram~\eqref{figure:Blichfeldt3}, we list the remaining finite
primitive subgroups
of~$\mathrm{PGL}_4(\mathbb{C})$.
These are the groups~$(A)$, $(C)$, $(D)$, $(E)$, and~$(F)$ described in
\cite[\S102]{Blichfeldt1917},
and the groups~$(G)$ and~$(K)$ described in
\cite[\S119]{Blichfeldt1917}.

\begin{equation}
\label{figure:Blichfeldt3}
\xymatrix{
& \mathrm{PSp}_{4}(\mathbf{F}_3) & & & \\
& \SS_6\ar@{->}[u] & & \A_7 & \\
\SS_5\ar@{->}[ru] & & \A_6\ar@{->}[lu]\ar@{->}[ru] & &
\mathrm{PSL}_2(\mathbf{F}_7)\ar@{->}[lu]\\
&\A_5\ar@{->}[lu]\ar@{->}[ru] & & &
}
\end{equation}

\end{document}